\newcommand{\reals}{\mathbb{R}}
\newcommand{\integers}{\mathbb{Z}}
\newcommand{\naturals}{\mathbb{N}}
\newcommand{\pr}{\mathbb{P}}        
\newcommand{\ex}{\mathbb{E}}        
\newcommand{\var}{\textnormal{Var}} 
\newcommand{\cov}{\textnormal{Cov}} 
\newcommand{\normal}{N}        
\newcommand{\ind}{1} 
\newcommand{\kernel}{K} 
\theoremstyle{plain}
\newtheorem{theorem}{Theorem}[section]
\newtheorem{prop}{Proposition}[section]
\newtheorem{remark}{Remark}[section]
\newtheorem{propA}{Proposition}
\newtheorem{lemmaA}{Lemma}
\newcommand{\lefteqno}{\let\veqno\@@leqno}
\newcommand{\heading}[4]
{  \setcounter{page}{1}
   \begin{center}

   \phantom{Distance to upper boundary}
   \vspace{0.5cm}

   {\LARGE \textbf{#1}}\\[0.45cm]
   {\LARGE \textbf{#2}}\\[0.3cm]
   {\LARGE \textbf{#3}}\\[0.3cm]
   {\LARGE \textbf{#4}}
   \end{center}
}
\newcommand{\headingSupplement}[5]
{  \setcounter{page}{1}
   \begin{center}

   \phantom{Distance to upper boundary}

   {\LARGE \textbf{#1}}\\[0.3cm]
   {\LARGE \textbf{#2}}\\[0.45cm]
   {\LARGE \textbf{#3}}\\[0.3cm]
   {\LARGE \textbf{#4}}\\[0.3cm]
   {\LARGE \textbf{#5}}
   \end{center}
}
\newcommand{\authors}[4]
{  \parindent0pt
   \begin{center}
      \begin{minipage}[c][2cm][c]{5.25cm}
      \begin{center} 
      {\large #1} 
      \vspace{0.125cm}
      
      #2 
      \end{center}
      \end{minipage}
      \begin{minipage}[c][2cm][c]{5.25cm}
      \begin{center} 
      {\large #3}
      \vspace{0.125cm}

      #4 
      \end{center}
      \end{minipage}
   \end{center}
}
\begin{document}

\heading{Multiscale Inference}{and Long-Run Variance Estimation}{in Nonparametric Regression}{with Time Series Errors}


\authors{Marina Khismatullina\renewcommand{\thefootnote}{1}\footnotemark[1]}{University of Bonn}{Michael Vogt\renewcommand{\thefootnote}{2}\footnotemark[2]}{University of Bonn} 
\footnotetext[1]{Address: Bonn Graduate School of Economics, University of Bonn, 53113 Bonn, Germany. Email: \texttt{marina.k@uni-bonn.de}.}
\renewcommand{\thefootnote}{2}
\footnotetext[2]{Address: Department of Economics and Hausdorff Center for Mathematics, University of Bonn, 53113 Bonn, Germany. Email: \texttt{michael.vogt@uni-bonn.de}.}
\renewcommand{\thefootnote}{\arabic{footnote}}
\setcounter{footnote}{2}

\vspace{-0.9cm}

\renewcommand{\abstractname}{}
\begin{abstract}
\noindent In this paper, we develop new multiscale methods to test qualitative hypotheses about the function $m$ in the nonparametric regression model $Y_{t,T} = m(t/T) + \varepsilon_t$ with time series errors $\varepsilon_t$. In time series applications, $m$ represents a nonparametric time trend. Practitioners are often interested in whether the trend $m$ has certain shape properties. For example, they would like to know whether $m$ is constant or whether it is increasing/decreasing in certain time regions. Our multiscale methods allow to test for such shape properties of the trend $m$. In order to perform the methods, we require an estimator of the long-run error variance $\sigma^2 = \sum\nolimits_{\ell=-\infty}^{\infty} \cov(\varepsilon_0,\varepsilon_{\ell})$. We propose a new difference-based estimator of $\sigma^2$ for the case that $\{ \varepsilon_t \}$ is an AR($p$) process. In the technical part of the paper, we derive asymptotic theory for the proposed multiscale test and the estimator of the long-run error variance. The theory is complemented by a simulation study and an empirical application to climate data. 
\end{abstract}

\renewcommand{\baselinestretch}{1.2}\normalsize

\textbf{Key words:} Multiscale statistics; long-run variance; nonparametric regression; time series errors; shape constraints; strong approximations; anti-concentration bounds.

\textbf{AMS 2010 subject classifications:} 62E20; 62G10; 62G20; 62M10. 

\renewcommand{\theequation}{\thesection.\arabic{equation}}
\allowdisplaybreaks[1]

\setcounter{equation}{0}
\section{Introduction}\label{sec-intro}

The analysis of time trends is an important aspect of many time series applications. In a wide range of situations, practitioners are particularly interested in certain shape properties of the trend. They raise questions such as the following: Does the observed time series have a trend at all? If so, is the trend increasing/decreasing in certain time regions? Can one identify the regions of increase/decrease? As an example, consider the time series plotted in Figure \ref{temp_data} which shows the yearly mean temperature in Central England from 1659 to 2017. Climatologists are very much interested in learning about the trending behaviour of temperature time series like this; see e.g.\ \cite{Benner1999} and \cite{Rahmstorf2017}. Among other things, they would like to know whether there is an upward trend in the Central England mean temperature towards the end of the sample as visual inspection might suggest.

\begin{figure}
\centering
\includegraphics[width=0.9\textwidth]{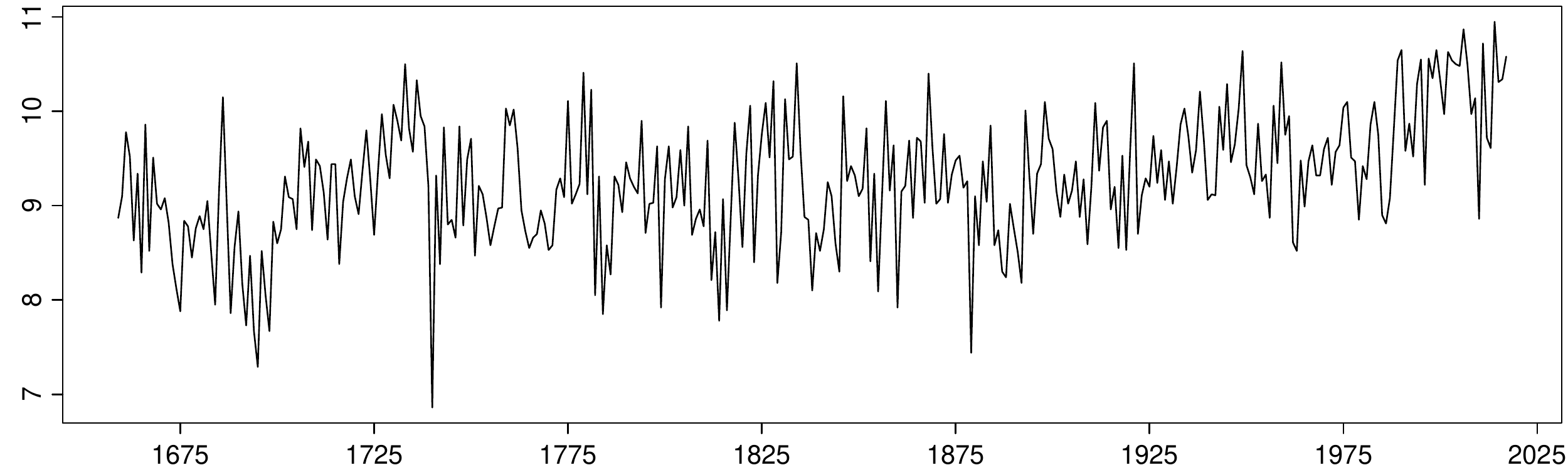}
\vspace{0.15cm}

\caption{Yearly mean temperature in Central England from 1659 to 2017 measured in $^\circ$C.}\label{temp_data}
\end{figure}

In this paper, we develop new methods to test for certain shape properties of a nonparametric time trend. We in particular construct a multiscale test which allows to identify local increases/decreases of the trend function. We develop our test in the context of the following model setting: We observe a time series $\{ Y_{t,T}: 1 \le t \le T \}$ of the form 
\begin{equation}\label{model-intro}
Y_{t,T} = m \Big( \frac{t}{T} \Big) + \varepsilon_t
\end{equation}
for $1 \le t \le T$, where $m: [0,1] \rightarrow \mathbb{R}$ is an unknown nonparametric regression function and the error terms $\varepsilon_t$ form a stationary time series process with $\ex[\varepsilon_t] = 0$. In a time series context, the design points $t/T$ represent the time points of observation and $m$ is a nonparametric time trend. As usual in nonparametric regression, we let the function $m$ depend on rescaled time $t/T$ rather than on real time $t$. A detailed description of model \eqref{model-intro} is provided in Section \ref{sec-model}.

Our multiscale test is developed step by step in Section \ref{sec-method}. Roughly speaking, the procedure can be outlined as follows: Let $H_0(u,h)$ be the hypothesis that $m$ is constant in the time window $[u-h,u+h] \subseteq [0,1]$, where $u$ is the midpoint and $2h$ the size of the window. In a first step, we set up a test statistic $\widehat{s}_T(u,h)$ for the hypothesis $H_0(u,h)$. In a second step, we aggregate the statistics $\widehat{s}_T(u,h)$ for a large number of different time windows $[u-h,u+h]$. We thereby construct a multiscale statistic which allows to test the hypothesis $H_0(u,h)$ simultaneously for many time windows $[u-h,u+h]$. In the technical part of the paper, we derive the theoretical properties of the resulting multiscale test. To do so, we come up with a proof strategy which combines strong approximation results for dependent processes with anti-concentration bounds for Gaussian random vectors. This strategy is of interest in itself and may be applied to other multiscale test problems for dependent data. As shown by our theoretical analysis, our multiscale test is a rigorous level-$\alpha$-test of the overall null hypothesis $H_0$ that $H_0(u,h)$ is simultaneously fulfilled for all time windows $[u-h,u+h]$ under consideration. Moreover, for a given significance level $\alpha \in (0,1)$, the test allows to make simultaneous confidence statements of the following form: We can claim, with statistical confidence $1-\alpha$, that there is an increase/decrease in the trend $m$ on all time windows $[u-h,u+h]$ for which the hypothesis $H_0(u,h)$ is rejected. Hence, the test allows to identify, with a pre-specified statistical confidence, time regions where the trend $m$ is increasing/decreasing.

For independent data, multiscale tests have been developed in a variety of different contexts in recent years. In the regression context, \cite{ChaudhuriMarron1999,ChaudhuriMarron2000} introduced the so-called SiZer method which has been extended in various directions; see e.g.\ \cite{HannigMarron2006} where a refined distribution theory for SiZer is derived. \cite{HallHeckman2000} constructed a multiscale test on monotonicity of a regression function. \cite{DuembgenSpokoiny2001} developed a multiscale approach which works with additively corrected supremum statistics and derived theoretical results in the context of a continuous Gaussian white noise model. Rank-based multiscale tests for nonparametric regression were proposed in \cite{Duembgen2002} and \cite{Rohde2008}. More recently, \cite{ProkschWernerMunk2018} have constructed multiscale tests for inverse regression models. In the context of density estimation, multiscale tests have been investigated in \cite{DuembgenWalther2008}, \cite{RufibachWalther2010}, \cite{SchmidtHieber2013} and \cite{EckleBissantzDette2017} among others.

Whereas a large number of multiscale tests for independent data have been developed in recent years, multiscale tests for dependent data are much rarer. Most notably, there are some extensions of the SiZer approach to a time series context. \cite{Rondonotti2004} and \cite{Rondonotti2007} have introduced SiZer methods for dependent data which can be used to find local increases/decreases of a trend and which may thus be regarded as an alternative to our multiscale test. However, these SiZer methods are mainly designed for data exploration rather than for rigorous statistical inference. Our multiscale method, in contrast, is a rigorous level-$\alpha$-test of the hypo\-thesis $H_0$ which allows to make simultaneous confidence statements about the time regions where the trend $m$ is increasing/decreasing. Some theoretical results for dependent SiZer methods have been derived in \cite{ParkHannigKang2009}, but only under a quite severe restriction: Only time windows $[u-h,u+h]$ with window sizes or scales $h$ are taken into account that remain bounded away from zero as the sample size $T$ grows. Scales $h$ that converge to zero as $T$ increases are excluded. This effectively means that only large time windows $[u-h,u+h]$ are taken into consideration. Our theory, in contrast, allows to simultaneously consider scales $h$ of fixed size and scales $h$ that converge to zero at various different rates. We are thus able to take into account time windows of many different sizes.

Our multiscale approach is also related to Wavelet-based methods: Similar to the latter, it takes into account different locations $u$ and resolution levels or scales $h$ simultaneously. However, while our multiscale approach is designed to test for local increases/decreases of a nonparametric trend, Wavelet methods are commonly used for other purposes. Among other things, they are employed for estimating/reconstructing nonparametric regression curves [see e.g.\ \cite{Donoho1995} or \cite{vonSachsMacGibbon2000}] and for change point detection [see e.g.\ \citet{ChoFryzlewicz2012}].

The test statistic of our multiscale method depends on the long-run error variance $\sigma^2 = \sum\nolimits_{\ell=-\infty}^{\infty} \cov(\varepsilon_0,\varepsilon_{\ell})$, which is usually unknown in practice. To carry out our multiscale test, we thus require an estimator of $\sigma^2$. Indeed, such an estimator is required for virtually all inferential procedures in the context of model \eqref{model-intro}. Hence, the problem of estimating $\sigma^2$ in model \eqref{model-intro} is of broader interest and has received a lot of attention in the literature; see \cite{MuellerStadtmueller1988}, \cite{Herrmann1992} and \cite{Hall2003} among many others. In Section \ref{sec-error-var}, we discuss several estimators of $\sigma^2$ which are valid under different conditions on the error process $\{\varepsilon_t\}$. Most notably, we introduce a new difference-based estimator of $\sigma^2$ for the case that $\{ \varepsilon_t \}$ is an AR($p$) process. This estimator improves on existing methods in several respects.

The methodological and theoretical analysis of the paper is complemented by a simulation study in Section \ref{sec-sim} and an empirical application in Section \ref{sec-data}. In the simulation study, we examine the finite sample properties of our multiscale test and compare it to the dependent SiZer methods introduced in \cite{Rondonotti2004} and \cite{Rondonotti2007}. Moreover, we investigate the small sample performance of our estimator of $\sigma^2$ in the AR($p$) case and compare it to the estimator of \cite{Hall2003}. In Section \ref{sec-data}, we use our methods to analyse the temperature data from Figure \ref{temp_data}.

\setcounter{equation}{0}
\section{The model}\label{sec-model}

We now describe the model setting in detail which was briefly outlined in the Introduction. We observe a time series $\{Y_{t,T}: 1 \le t \le T \}$ of length $T$ which satisfies the nonparametric regression equation 
\begin{equation}\label{model}
Y_{t,T} = m \Big( \frac{t}{T} \Big) + \varepsilon_t 
\end{equation}
for $1 \le t \le T$. Here, $m$ is an unknown nonparametric function defined on $[0,1]$ and $\{ \varepsilon_t: 1 \le t \le T \}$ is a zero-mean stationary error process. For simplicity, we restrict attention to equidistant design points $x_t = t/T$. However, our methods and theory can also be carried over to non-equidistant designs. The stationary error process $\{\varepsilon_t\}$ is assumed to have the following properties: 
\begin{enumerate}[label=(C\arabic*),leftmargin=1.05cm]

\item \label{C-err1} The variables $\varepsilon_t$ allow for the representation $\varepsilon_t = G(\ldots,\eta_{t-1},\eta_t,\eta_{t+1},\ldots)$, where $\eta_t$ are i.i.d.\ random variables and $G: \reals^\integers \rightarrow \reals$ is a measurable function. 

\item \label{C-err2} It holds that $\| \varepsilon_t \|_q < \infty$ for some $q > 4$, where $\| \varepsilon_t \|_q = (\ex|\varepsilon_t|^q)^{1/q}$. 

\end{enumerate}
Following \cite{Wu2005}, we impose conditions on the dependence structure of the error process $\{\varepsilon_t\}$ in terms of the physical dependence measure $d_{t,q} = \| \varepsilon_t - \varepsilon_t^\prime \|_q$, where $\varepsilon_t^\prime = G(\ldots,\eta_{-1},\eta_0^\prime,\eta_1,\ldots,\eta_{t-1},\eta_t,\eta_{t+1},\ldots)$ with $\{\eta_t^\prime\}$ being an i.i.d.\ copy of $\{\eta_t\}$. In particular, we assume the following: 
\begin{enumerate}[label=(C\arabic*),leftmargin=1.05cm]
\setcounter{enumi}{2}

\item \label{C-err3} Define $\Theta_{t,q} = \sum\nolimits_{|s| \ge t} d_{s,q}$ for $t \ge 0$. It holds that 
$\Theta_{t,q} = O ( t^{-\tau_q} (\log t)^{-A} )$,  
where $A > \frac{2}{3} (1/q + 1 + \tau_q)$ and $\tau_q = \{q^2 - 4 + (q-2) \sqrt{q^2 + 20q + 4}\} / 8q$. 

\end{enumerate}
The conditions \ref{C-err1}--\ref{C-err3} are fulfilled by a wide range of stationary processes $\{\varepsilon_t\}$. As a first example, consider linear processes of the form $\varepsilon_t = \sum\nolimits_{i=0}^{\infty} c_i \eta_{t-i}$ with $\| \varepsilon_t \|_q < \infty$, where $c_i$ are absolutely summable coefficients and $\eta_t$ are i.i.d.\ innovations with $\ex[\eta_t] = 0$ and $\| \eta_t\|_q < \infty$. Trivially, \ref{C-err1} and \ref{C-err2} are fulfilled in this case. Moreover, if $|c_i| = O(\rho^i)$ for some $\rho \in (0,1)$, then \ref{C-err3} is easily seen to be satisfied as well. As a special case, consider an ARMA process $\{\varepsilon_t\}$ of the form $\varepsilon_t - \sum\nolimits_{i=1}^p a_i \varepsilon_{t-i} = \eta_t + \sum\nolimits_{j=1}^r b_j \eta_{t-j}$  with $\| \varepsilon_t \|_q < \infty$, where $a_1,\ldots,a_p$ and $b_1,\ldots,b_r$ are real-valued parameters. As before, we let $\eta_t$ be i.i.d.\ innovations with $\ex[\eta_t] = 0$ and $\| \eta_t\|_q < \infty$. Moreover, as usual, we suppose that the complex polynomials $A(z) = 1 - \sum\nolimits_{j=1}^p a_jz^j$ and $B(z) = 1 + \sum\nolimits_{j=1}^r b_jz^j$ do not have any roots in common. If $A(z)$ does not have any roots inside the unit disc, then the ARMA process $\{ \varepsilon_t \}$ is stationary and causal. Specifically, it has the representation $\varepsilon_t = \sum\nolimits_{i=0}^{\infty} c_i \eta_{t-i}$ with $|c_i| = O(\rho^i)$ for some $\rho \in (0,1)$, implying that \ref{C-err1}--\ref{C-err3} are fulfilled. The results in \cite{WuShao2004} show that condition \ref{C-err3} (as well as the other two conditions) is not only fulfilled for linear time series processes but also for a variety of non-linear processes.

\setcounter{equation}{0}
\section{The multiscale test}\label{sec-method}

In this section, we introduce our multiscale method to test for local increases/decreases of the trend function $m$ and analyse its theoretical properties. We assume throughout that $m$ is continuously differentiable on $[0,1]$. The test problem under consideration can be formulated as follows: Let $H_0(u,h)$ be the hypothesis that $m$ is constant on the interval $[u-h,u+h]$. Since $m$ is continuously differentiable, $H_0(u,h)$ can be reformulated as
\[ H_0(u,h): m^\prime(w) = 0 \text { for all } w \in [u-h,u+h], \]
where $m^\prime$ is the first derivative of $m$. We want to test the hypothesis $H_0(u,h)$ not only for a single interval $[u-h,u+h]$ but simultaneously for many different intervals. The overall null hypothesis is thus given by
\[ H_0: \text{ The hypothesis } H_0(u,h) \text{ holds true for all } (u,h) \in \mathcal{G}_T, \]
where $\mathcal{G}_T$ is some large set of points $(u,h)$. The details on the set $\mathcal{G}_T$ are discussed at the end of Section \ref{subsec-method-stat} below. Note that $\mathcal{G}_T$ in general depends on the sample size $T$, implying that the null hypothesis $H_0 = H_{0,T}$ depends on $T$ as well. We thus consider a sequence of null hypotheses $\{H_{0,T}: T = 1,2,\ldots \}$ as $T$ increases. For simplicity of notation, we however suppress the dependence of $H_0$ on $T$. In Sections \ref{subsec-method-stat} and \ref{subsec-method-test}, we step by step construct the multiscale test of the hypothesis $H_0$. The theoretical properties of the test are analysed in Section \ref{subsec-method-theo}.

\subsection{Construction of the multiscale statistic}\label{subsec-method-stat}

We first construct a test statistic for the hypothesis $H_0(u,h)$, where $[u-h,u+h]$ is a given interval. To do so, we consider the kernel average
\begin{equation*}
\widehat{\psi}_T(u,h) = \sum\limits_{t=1}^T w_{t,T}(u,h) Y_{t,T}, 
\end{equation*}
where $w_{t,T}(u,h)$ is a kernel weight and $h$ is the bandwidth. In order to avoid boundary issues, we work with a local linear weighting scheme. We in particular set 
\begin{equation}\label{weights}
w_{t,T}(u,h) = \frac{\Lambda_{t,T}(u,h)}{ \{\sum\nolimits_{t=1}^T \Lambda_{t,T}(u,h)^2 \}^{1/2} }, 
\end{equation}
where
\[ \Lambda_{t,T}(u,h) = K\Big(\frac{\frac{t}{T}-u}{h}\Big) \Big[ S_{T,0}(u,h) \Big(\frac{\frac{t}{T}-u}{h}\Big) - S_{T,1}(u,h) \Big], \]
$S_{T,\ell}(u,h) = (Th)^{-1} \sum\nolimits_{t=1}^T K(\frac{\frac{t}{T}-u}{h}) (\frac{\frac{t}{T}-u}{h})^\ell$ for $\ell = 0,1,2$ and $K$ is a kernel function with the following properties: 
\begin{enumerate}[label=(C\arabic*),leftmargin=1.05cm]
\setcounter{enumi}{3}
\item \label{C-ker} The kernel $K$ is non-negative, symmetric about zero and integrates to one. Moreover, it has compact support $[-1,1]$ and is Lipschitz continuous, that is, $|K(v) - K(w)| \le C |v-w|$ for any $v,w \in \reals$ and some constant $C > 0$. 
\end{enumerate} 
The kernel average $\widehat{\psi}_T(u,h)$ is nothing else than a rescaled local linear estimator of the derivative $m^\prime(u)$ with bandwidth $h$.\footnote{Alternatively to the local linear weights defined in \eqref{weights}, we could also work with the weights $w_{t,T}(u,h) = K^\prime( h^{-1} [u - t/T] )/ \{ \sum\nolimits_{t=1}^T  K^\prime( h^{-1}[u - t/T] )^2 \}^{1/2}$, where the kernel function $K$ is assumed to be differentiable and $K^\prime$ is its derivative. We however prefer to use local linear weights as these have superior theoretical properties at the boundary.}

A test statistic for the hypothesis $H_0(u,h)$ is given by the normalized kernel average $\widehat{\psi}_T(u,h)/\widehat{\sigma}$, where $\widehat{\sigma}^2$ is an estimator of the long-run variance $\sigma^2 = \sum\nolimits_{\ell=-\infty}^{\infty} \cov(\varepsilon_0,\varepsilon_\ell)$ of the error process $\{\varepsilon_t\}$. The problem of estimating $\sigma^2$ is discussed in detail in Section \ref{sec-error-var}. For the time being, we suppose that $\widehat{\sigma}^2$ is an estimator with reasonable theoretical properties. Specifically, we assume that $\widehat{\sigma}^2 = \sigma^2 + o_p(\rho_T)$ with $\rho_T = o(1/\log T)$. This is a fairly weak condition which is in particular satisfied by the estimators of $\sigma^2$ analysed in Section \ref{sec-error-var}. The kernel weights $w_{t,T}(u,h)$ are chosen such that in the case of independent errors $\varepsilon_t$, $\var(\widehat{\psi}_T(u,h)) = \sigma^2$ for any location $u$ and bandwidth $h$, where the long-run error variance $\sigma^2$ simplifies to $\sigma^2 = \var(\varepsilon_t)$. In the more general case that the error terms satisfy the weak dependence conditions from Section \ref{sec-model}, $\var(\widehat{\psi}_T(u,h)) = \sigma^2 + o(1)$ for any $u$ and $h$ under consideration. Hence, for sufficiently large sample sizes $T$, the test statistic $\widehat{\psi}_T(u,h)/\widehat{\sigma}$ has approximately unit variance.

We now combine the test statistics $\widehat{\psi}_T(u,h)/\widehat{\sigma}$ for a wide range of different locations $u$ and bandwidths or scales $h$. There are different ways to do so, leading to different types of multiscale statistics. Our multiscale statistic is defined as
\begin{equation}\label{multiscale-stat}
\widehat{\Psi}_T = \max_{(u,h) \in \mathcal{G}_T} \Big\{ \Big|\frac{\widehat{\psi}_T(u,h)}{\widehat{\sigma}}\Big| - \lambda(h) \Big\}, 
\end{equation} 
where $\lambda(h) = \sqrt{2 \log \{ 1/(2h) \}}$ and $\mathcal{G}_T$ is the set of points $(u,h)$ that are taken into consideration. The details on the set $\mathcal{G}_T$ are given below. As can be seen, the statistic $\widehat{\Psi}_T$ does not simply aggregate the individual statistics $\widehat{\psi}_T(u,h)/\widehat{\sigma}$ by taking the supremum over all points $(u,h) \in \mathcal{G}_T$ as in more traditional multiscale approaches. We rather calibrate the statistics $\widehat{\psi}_T(u,h)/\widehat{\sigma}$ that correspond to the bandwidth $h$ by subtracting the additive correction term $\lambda(h)$. This approach was pioneered by \cite{DuembgenSpokoiny2001} and has been used in numerous other studies since then; see e.g.\ \cite{Duembgen2002}, \cite{Rohde2008}, \cite{DuembgenWalther2008}, \cite{RufibachWalther2010}, \cite{SchmidtHieber2013} and \cite{EckleBissantzDette2017}.

To see the heuristic idea behind the additive correction $\lambda(h)$, consider for a moment the uncorrected statistic
\[ \widehat{\Psi}_{T,\text{uncorrected}} = \max_{(u,h) \in \mathcal{G}_T} \Big|\frac{\widehat{\psi}_T(u,h)}{\widehat{\sigma}}\Big| \]
and suppose that the hypothesis $H_0(u,h)$ is true for all $(u,h) \in \mathcal{G}_T$. For simplicity, assume that the errors $\varepsilon_t$ are i.i.d.\ normally distributed and neglect the estimation error in $\widehat{\sigma}$, that is, set $\widehat{\sigma} = \sigma$. Moreover, suppose that the set $\mathcal{G}_T$ only consists of the points $(u_k,h_\ell) = ((2k - 1)h_\ell,h_\ell)$ with $k = 1,\ldots,\lfloor 1/2h_\ell \rfloor$ and $\ell = 1,\ldots,L$. In this case, we can write
\[ \widehat{\Psi}_{T,\text{uncorrected}} = \max_{1 \le \ell \le L} \max_{1 \le k \le \lfloor 1/2h_\ell \rfloor} \Big|\frac{\widehat{\psi}_T(u_k,h_\ell)}{\sigma}\Big|. \]
Under our simplifying assumptions, the statistics $\widehat{\psi}_T(u_k,h_\ell)/\sigma$ with $k = 1,\ldots,\lfloor 1/2h_\ell \rfloor$ are independent and standard normal for any given bandwidth $h_\ell$. Since the maximum over $\lfloor 1/2h \rfloor$ independent standard normal random variables is $\lambda(h) + o_p(1)$ as $h \rightarrow 0$, we obtain that $\max_{k} \widehat{\psi}_T(u_k,h_\ell)/\sigma$ is approximately of size $\lambda(h_\ell)$ for small bandwidths $h_\ell$. As $\lambda(h) \rightarrow \infty$ for $h \rightarrow 0$, this implies that $\max_{k} \widehat{\psi}_T(u_k,h_\ell)/\sigma$ tends to be much larger in size for small than for large bandwidths $h_\ell$. As a result, the stochastic behaviour of the uncorrected statistic $\widehat{\Psi}_{T,\text{uncorrected}}$ tends to be dominated by the statistics $\widehat{\psi}_T(u_k,h_\ell)$ corresponding to small bandwidths $h_\ell$. The additively corrected statistic $\widehat{\Psi}_T$, in contrast, puts the statistics $\widehat{\psi}_T(u_k,h_\ell)$ corresponding to different bandwidths $h_\ell$ on a more equal footing, thus counteracting the dominance of small bandwidth values.

The multiscale statistic $\widehat{\Psi}_T$ simultaneously takes into account all locations $u$ and bandwidths $h$ with $(u,h) \in \mathcal{G}_T$. Throughout the paper, we suppose that $\mathcal{G}_T$ is some subset of $\mathcal{G}_T^{\text{full}} = \{ (u,h): u = t/T \text{ for some } 1 \le t \le T \text{ and } h \in [h_{\min},h_{\max}] \}$, where $h_{\min}$ and $h_{\max}$ denote some minimal and maximal bandwidth value, respectively. For our theory to work, we require the following conditions to hold:
\begin{enumerate}[label=(C\arabic*),leftmargin=1.05cm]
\setcounter{enumi}{4}

\item \label{C-grid} $|\mathcal{G}_T| = O(T^\theta)$ for some arbitrarily large but fixed constant $\theta > 0$, where $|\mathcal{G}_T|$ denotes the cardinality of $\mathcal{G}_T$. 

\item \label{C-h} $h_{\min} \gg T^{-(1-\frac{2}{q})} \log T$, that is, $h_{\min} / \{ T^{-(1-\frac{2}{q})} \log T \} \rightarrow \infty$ with $q > 4$ defined in \ref{C-err2} and $h_{\max} < 1/2$.

\end{enumerate}
According to \ref{C-grid}, the number of points $(u,h)$ in $\mathcal{G}_T$ should not grow faster than $T^\theta$ for some arbitrarily large but fixed $\theta > 0$. This is a fairly weak restriction as it allows the set $\mathcal{G}_T$ to be extremely large compared to the sample size $T$. For example, we may work with the set 
\begin{align*}
\mathcal{G}_T = \big\{ & (u,h): u = t/T \text{ for some } 1 \le t \le T \text{ and } h \in [h_{\min},h_{\max}] \\ & \text{ with } h = t/T \text{ for some } 1 \le t \le T  \big\},
\end{align*}
which contains more than enough points $(u,h)$ for most practical applications. Condition \ref{C-h} imposes some restrictions on the minimal and maximal bandwidths $h_{\min}$ and $h_{\max}$. These conditions are fairly weak, allowing us to choose the bandwidth window $[h_{\min},h_{\max}]$ extremely large. The lower bound on $h_{\min}$ depends on the parameter $q$ defined in \ref{C-err2} which specifies the number of existing moments for the error terms $\varepsilon_t$. As one can see, we can choose $h_{\min}$ to be of the order $T^{-1/2}$ for any $q > 4$. Hence, we can let $h_{\min}$ converge to $0$ very quickly even if only the first few moments of the error terms $\varepsilon_t$ exist. If all moments exist (i.e.\ $q = \infty$), $h_{\min}$ may converge to $0$ almost as quickly as $T^{-1} \log T$. Furthermore, the maximal bandwidth $h_{\max}$ is not even required to converge to $0$, which implies that we can pick it very large.

\begin{remark}
The above construction of the multiscale statistic can be easily adapted to hypotheses other than $H_0$. To do so, one simply needs to replace the kernel weights $w_{t,T}(u,h)$ defined in \eqref{weights} by appropriate versions which are suited to test the hypothesis of interest. For example, if one wants to test for local convexity/concavity of $m$, one may define the kernel weights $w_{t,T}(u,h)$ such that the kernel average $\widehat{\psi}_T(u,h)$ is a (rescaled) estimator of the second derivative of $m$ at the location $u$ with bandwidth $h$. 
\end{remark}

\subsection{The test procedure}\label{subsec-method-test}

In order to formulate a test for the null hypothesis $H_0$, we still need to specify a critical value. To do so, we define the statistic
\begin{equation}\label{Phi-statistic}
\Phi_T = \max_{(u,h) \in \mathcal{G}_T} \Big\{ \Big|\frac{\phi_T(u,h)}{\sigma}\Big| - \lambda(h) \Big\},
\end{equation} 
where $\phi_T(u,h) = \sum\nolimits_{t=1}^T w_{t,T}(u,h) \, \sigma Z_t$ and $Z_t$ are independent standard normal random variables. The statistic $\Phi_T$ can be regarded as a Gaussian version of the test statistic $\widehat{\Psi}_T$ under the null hypothesis $H_0$. Let $q_T(\alpha)$ be the $(1-\alpha)$-quantile of $\Phi_T$. Importantly, the quantile $q_T(\alpha)$ can be computed by Monte Carlo simulations and can thus be regarded as known. Our multiscale test of the hypothesis $H_0$ is now defined as follows: For a given significance level $\alpha \in (0,1)$, we reject $H_0$ if $\widehat{\Psi}_T > q_T(\alpha)$.

\subsection{Theoretical properties of the test}\label{subsec-method-theo}

In order to examine the theoretical properties of our multiscale test, we introduce the auxiliary multiscale statistic 
\begin{align}
\widehat{\Phi}_T 
 & = \max_{(u,h) \in \mathcal{G}_T} \Big\{ \Big| \frac{\widehat{\phi}_T(u,h)}{\widehat{\sigma}} \Big| - \lambda(h) \Big\} \label{Phi-hat-statistic}
\end{align}
with $\widehat{\phi}_T(u,h) = \widehat{\psi}_T(u,h) - \ex [\widehat{\psi}_T(u,h)] = \sum\nolimits_{t=1}^T w_{t,T}(u,h) \varepsilon_t$. The following result is central to the theoretical analysis of our multiscale test. According to it, the (known) quantile $q_T(\alpha)$ of the Gaussian statistic $\Phi_T$ defined in Section \ref{subsec-method-test} can be used as a proxy for the $(1-\alpha)$-quantile of the multiscale statistic $\widehat{\Phi}_T$.
\begin{theorem}\label{theo-stat}
Let \ref{C-err1}--\ref{C-h} be fulfilled and assume that $\widehat{\sigma}^2 = \sigma^2 + o_p(\rho_T)$ with $\rho_T = o(1/\log T)$. Then 
\[ \pr \big( \widehat{\Phi}_T \le q_T(\alpha) \big) = (1 - \alpha) + o(1). \]
\end{theorem}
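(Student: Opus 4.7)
The plan is to compare $\widehat{\Phi}_T$ with $\Phi_T$ via a coupling argument in three stages: (i) replace the plug-in $\widehat{\sigma}$ by the true $\sigma$, (ii) strongly approximate the dependent weighted sums $\widehat{\phi}_T(u,h)$ by their Gaussian analogues $\phi_T(u,h)$, and (iii) use a Gaussian anti-concentration inequality to turn the coupling bound into a distributional statement. For stage (i), let $\widetilde{\Phi}_T$ be the variant of $\widehat{\Phi}_T$ in which $\widehat{\sigma}$ is replaced by $\sigma$. Then
\[ \big|\widehat{\Phi}_T - \widetilde{\Phi}_T\big| \le \big|\sigma/\widehat{\sigma} - 1\big| \cdot \max_{(u,h) \in \mathcal{G}_T}\big|\widehat{\phi}_T(u,h)/\sigma\big|. \]
Since $\widehat{\sigma}^2 = \sigma^2 + o_p(\rho_T)$ with $\rho_T = o(1/\log T)$, the first factor is $o_p(1/\log T)$, and a preliminary $O_p(\sqrt{\log T})$ bound on the maximum (a by-product of stage (ii)) then shows that the plug-in contributes only an $o_p(1/\sqrt{\log T})$ error.

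For stage (ii), I would invoke a strong invariance principle for stationary sequences under the physical-dependence structure encoded in \ref{C-err1}--\ref{C-err3} to obtain, on a suitably enriched probability space, i.i.d.\ $N(0,\sigma^2)$ variables $\sigma Z_t$ satisfying $\max_{1 \le t \le T}|\sum_{s=1}^{t}(\varepsilon_s - \sigma Z_s)| = o_p(T^{1/q} \log T)$. Summation by parts applied to $\widehat{\phi}_T(u,h) - \phi_T(u,h) = \sum_t w_{t,T}(u,h)(\varepsilon_t - \sigma Z_t)$ transfers this partial-sum bound to the weighted sums, using the Lipschitz continuity of $K$ from \ref{C-ker} together with the explicit size $|w_{t,T}(u,h)| \asymp (Th)^{-1/2}$ and the bounded total variation $O((Th)^{-1/2})$ of the local linear weights. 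The lower bound on $h_{\min}$ in \ref{C-h} is calibrated precisely so that the resulting pointwise bound is of order $o_p(1/\sqrt{\log T})$, and the polynomial cardinality $|\mathcal{G}_T| = O(T^\theta)$ from \ref{C-grid} lets a union bound over $\mathcal{G}_T$ deliver
\[ \max_{(u,h) \in \mathcal{G}_T} \big|\widehat{\phi}_T(u,h)/\sigma - \phi_T(u,h)/\sigma\big| = o_p(\delta_T) \]
for some $\delta_T = o(1/\sqrt{\log T})$. Combined with stage (i), this yields $|\widehat{\Phi}_T - \Phi_T| = o_p(\delta_T)$ on the coupling space.

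For stage (iii), $\Phi_T$ is a maximum of centred linear functionals of a Gaussian vector, shifted by the deterministic corrections $\lambda(h)$, so a Chernozhukov--Chetverikov--Kato anti-concentration inequality gives $\sup_{x \in \reals}\pr(|\Phi_T - x| \le \eta) \le C \eta \sqrt{\log|\mathcal{G}_T|}$ for every $\eta > 0$. Choosing $\eta$ that dominates the coupling error of stage (ii) while still satisfying $\eta \sqrt{\log T} \to 0$ (possible because $\delta_T = o(1/\sqrt{\log T})$) yields $\pr(\widehat{\Phi}_T \le q_T(\alpha)) = \pr(\Phi_T \le q_T(\alpha)) + o(1) = (1-\alpha) + o(1)$, which is the claim. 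The main obstacle is stage (ii): the strong-approximation rate is pinned down by $q$ and $\tau_q$, and it must be propagated through summation by parts against weights of sup-norm $(Th_{\min})^{-1/2}$. The precise form of $\tau_q$ in \ref{C-err3} and the lower bound on $h_{\min}$ in \ref{C-h} are engineered so that, after this propagation, the final coupling error beats the $1/\sqrt{\log T}$ benchmark uniformly over $\mathcal{G}_T$, and verifying that this calibration actually works out is the crux of the proof.
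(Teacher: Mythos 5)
Your three-stage plan --- couple the dependent weighted sums with Gaussian ones via a strong invariance principle, propagate the partial-sum bound through summation by parts against the local linear weights, and convert the resulting $o_p(\delta_T)$ coupling error into a statement about distribution functions via Gaussian anti-concentration --- is exactly the strategy of the paper (Propositions \ref{propA-strong-approx} and \ref{propA-anticon} combined through Lemma \ref{lemma1-theo-stat}); splitting off the $\widehat{\sigma}\to\sigma$ replacement as a separate first stage is only a cosmetic reorganization. Two points need attention, however. First, the strong-approximation rate you quote, $\max_{1\le t\le T}\big|\sum_{s=1}^{t}(\varepsilon_s-\sigma Z_s)\big|=o_p(T^{1/q}\log T)$, is too weak for your own calibration: under \ref{C-h} one has $\sqrt{Th_{\min}}\gg T^{1/q}\sqrt{\log T}$, so after multiplying by the weight total variation $O((Th_{\min})^{-1/2})$ your rate only yields $o_p(\sqrt{\log T})$, which need not even tend to zero, let alone beat the $1/\sqrt{\log T}$ benchmark. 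The result of \cite{BerkesLiuWu2014} used in the paper gives the sharper rate $o(T^{1/q})$ almost surely --- this is precisely what the condition on $\Theta_{t,q}$ and the exponent $\tau_q$ in \ref{C-err3} buys --- and with that rate the calibration closes as you describe. Second, the anti-concentration step is not a direct application of the centred Chernozhukov--Chetverikov--Kato bound: because of the additive corrections, $\Phi_T$ is a maximum of Gaussian coordinates with means $-\lambda(h)$ of size up to $C\sqrt{\log T}$, so one needs a non-centred extension (Proposition \ref{theo-anticon} in the supplement), which gives $\sup_{x\in\reals}\pr(|\Phi_T-x|\le\eta)\le C\eta\{\sqrt{\log T}+\sqrt{1\vee\log(1/\eta)}\}$; this still suffices since $\delta_T\sqrt{\log T}\to 0$. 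Finally, note that the coupling only produces a version $\widetilde{\Phi}_T$ equal in distribution to $\widehat{\Phi}_T$, so the concluding step must pass through this equality of laws, as you implicitly do when working ``on the coupling space.''
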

A full proof of Theorem \ref{theo-stat} is given in the Supplementary Material. 
We here shortly outline the proof strategy, which splits up into two main steps. 
In the first, we replace the statistic $\widehat{\Phi}_T$ for each $T \ge 1$ by a statistic $\widetilde{\Phi}_T$ with the same distribution as $\widehat{\Phi}_T$ and the property that 
\begin{equation}\label{eq-theo-stat-strategy-step1}
\big| \widetilde{\Phi}_T - \Phi_T \big| = o_p(\delta_T),
\end{equation}
where $\delta_T = o(1)$ and the Gaussian statistic $\Phi_T$ is defined in Section \ref{subsec-method-test}. We thus replace the statistic $\widehat{\Phi}_T$ by an identically distributed version which is close to a Gaussian statistic whose distribution is known. To do so, we make use of strong approximation theory for dependent processes as derived in \cite{BerkesLiuWu2014}. In the second step, we show that 
\begin{equation}\label{eq-theo-stat-strategy-claim}
\sup_{x \in \reals} \big| \pr(\widetilde{\Phi}_T \le x) - \pr(\Phi_T \le x) \big| = o(1), 
\end{equation}
which immediately implies the statement of Theorem \ref{theo-stat}. Importantly, the convergence result \eqref{eq-theo-stat-strategy-step1} is not sufficient for establishing \eqref{eq-theo-stat-strategy-claim}. Put differently, the fact that $\widetilde{\Phi}_T$ can be approximated by $\Phi_T$ in the sense that $\widetilde{\Phi}_T - \Phi_T = o_p(\delta_T)$ does not imply that the distribution of $\widetilde{\Phi}_T$ is close to that of $\Phi_T$ in the sense of \eqref{eq-theo-stat-strategy-claim}. For \eqref{eq-theo-stat-strategy-claim} to hold, we additionally require the distribution of $\Phi_T$ to have some sort of continuity property. Specifically, we prove that 
\begin{equation}\label{eq-theo-stat-strategy-step2}
\sup_{x \in \reals} \pr \big( |\Phi_T - x| \le \delta_T \big) = o(1),
\end{equation}
which says that $\Phi_T$ does not concentrate too strongly in small regions of the form $[x-\delta_T,x+\delta_T]$. The main tool for verifying \eqref{eq-theo-stat-strategy-step2} are anti-concentration results for Gaussian random vectors as derived in \cite{Chernozhukov2015}. The claim \eqref{eq-theo-stat-strategy-claim} can be proven by using \eqref{eq-theo-stat-strategy-step1} together with \eqref{eq-theo-stat-strategy-step2}, which in turn yields Theorem \ref{theo-stat}.

The main idea of our proof strategy is to combine strong approximation theory with anti-concentration bounds for Gaussian random vectors to show that the quantiles of the multiscale statistic $\widehat{\Phi}_T$ can be proxied by those of a Gaussian analogue. This strategy is quite general in nature and may be applied to other multiscale problems for dependent data. Strong approximation theory has also been used to investigate multiscale tests for independent data; see e.g.\ \cite{SchmidtHieber2013}. However, it has not been combined with anti-concentration results to approximate the quantiles of the multiscale statistic. As an alternative to strong approximation theory, \cite{EckleBissantzDette2017} and \cite{ProkschWernerMunk2018} have recently used Gaussian approximation results derived in \cite{Chernozhukov2014, Chernozhukov2017} to analyse multiscale tests for independent data. Even though it might be possible to adapt these techniques to the case of dependent data, this is not trivial at all as part of the technical arguments and the Gaussian approximation tools strongly rely on the assumption of independence.

We now investigate the theoretical properties of our multiscale test with the help of Theorem \ref{theo-stat}. The first result is an immediate consequence of Theorem \ref{theo-stat}. It says that the test has the correct (asymptotic) size. 
\begin{prop}\label{prop-test-1}
Let the conditions of Theorem \ref{theo-stat} be satisfied. Under the null hypothesis $H_0$, it holds that 
\[ \pr \big( \widehat{\Psi}_T \le q_T(\alpha) \big) = (1 - \alpha) + o(1). \]
\end{prop}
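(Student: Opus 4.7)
The plan is to reduce the claim to Theorem \ref{theo-stat} by showing that, under $H_0$, the multiscale statistic $\widehat{\Psi}_T$ coincides with its mean-centered counterpart $\widehat{\Phi}_T$ exactly (not merely approximately). Once this reduction is in place, the proposition follows immediately since $\pr(\widehat{\Psi}_T \le q_T(\alpha)) = \pr(\widehat{\Phi}_T \le q_T(\alpha)) = (1-\alpha) + o(1)$.

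The core observation I would first establish is that the local linear kernel weights defined in \eqref{weights} annihilate constants, i.e.\ $\sum_{t=1}^T w_{t,T}(u,h) = 0$ for every $(u,h)$ under consideration. This is a one-line calculation: by the definition of $\Lambda_{t,T}(u,h)$ and of $S_{T,0}, S_{T,1}$,
\[
\sum_{t=1}^T \Lambda_{t,T}(u,h) = S_{T,0}(u,h)\sum_{t=1}^T K\!\Big(\tfrac{\frac{t}{T}-u}{h}\Big)\Big(\tfrac{\frac{t}{T}-u}{h}\Big) - S_{T,1}(u,h)\sum_{t=1}^T K\!\Big(\tfrac{\frac{t}{T}-u}{h}\Big) = Th\bigl[S_{T,0} S_{T,1} - S_{T,1} S_{T,0}\bigr] = 0.
\]
Since the normalizing denominator in \eqref{weights} is strictly positive, the same vanishing property passes to the $w_{t,T}(u,h)$.

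Next, because the kernel $K$ in \ref{C-ker} has support $[-1,1]$, the weight $w_{t,T}(u,h)$ is nonzero only when $t/T \in [u-h,u+h]$. Under the overall null $H_0$, the local hypothesis $H_0(u,h)$ holds for every $(u,h) \in \mathcal{G}_T$, so $m$ takes a single constant value $c_{u,h}$ on the relevant window. Therefore
\[
\ex[\widehat{\psi}_T(u,h)] = \sum_{t=1}^T w_{t,T}(u,h)\, m(t/T) = c_{u,h}\sum_{t=1}^T w_{t,T}(u,h) = 0,
\]
and consequently $\widehat{\psi}_T(u,h) = \widehat{\psi}_T(u,h) - \ex[\widehat{\psi}_T(u,h)] = \widehat{\phi}_T(u,h)$ pointwise on the sample space. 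Taking $|\cdot|/\widehat{\sigma}$, subtracting $\lambda(h)$, and maximizing over $\mathcal{G}_T$ gives $\widehat{\Psi}_T = \widehat{\Phi}_T$ identically under $H_0$, at which point Theorem \ref{theo-stat} yields the stated conclusion.

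There is essentially no technical obstacle here: the entire content of the proposition is the identity $\widehat{\Psi}_T = \widehat{\Phi}_T$ under $H_0$, which is a deterministic consequence of the local-linear construction of the weights together with the support property of $K$. All the genuine analytic difficulty (strong approximation for dependent processes, anti-concentration for the Gaussian statistic $\Phi_T$) has been absorbed into Theorem \ref{theo-stat}.
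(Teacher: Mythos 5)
Your proof is correct and is exactly the argument the paper intends when it calls Proposition \ref{prop-test-1} an ``immediate consequence'' of Theorem \ref{theo-stat}: the local linear weights satisfy $\sum_{t=1}^T w_{t,T}(u,h)=0$ (a fact the paper itself invokes in the proof of Proposition \ref{prop-test-2}), so under $H_0$ one has $\ex[\widehat{\psi}_T(u,h)]=0$ for every $(u,h)\in\mathcal{G}_T$ and hence $\widehat{\Psi}_T=\widehat{\Phi}_T$ identically. No gaps; your write-up simply makes explicit what the paper leaves implicit.
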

The second result characterizes the power of the multiscale test against local alternatives. To formulate it, we consider any sequence of functions $m = m_T$ with the following property: There exists $(u,h) \in \mathcal{G}_T$ with $[u-h,u+h] \subseteq [0,1]$ such that 
\begin{equation}\label{loc-alt}
m_T^\prime(w) \ge c_T \sqrt{\frac{\log T}{Th^3}} \quad \text{for all } w \in [u-h,u+h], 
\end{equation}
where $\{c_T\}$ is any sequence of positive numbers with $c_T \rightarrow \infty$. Alternatively to \eqref{loc-alt}, we may also assume that $-m_T^\prime(w) \ge c_T \sqrt{\log T/(Th^3)}$ for all $w \in [u-h,u+h]$. According to the following result, our test has asymptotic power $1$ against local alternatives of the form \eqref{loc-alt}. 
\begin{prop}\label{prop-test-2}
Let the conditions of Theorem \ref{theo-stat} be satisfied and consider any sequence of functions $m_T$ with the property \eqref{loc-alt}. Then 
\[ \pr \big( \widehat{\Psi}_T \le q_T(\alpha) \big) = o(1). \]
\end{prop}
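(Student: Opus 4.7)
The plan is to lower-bound $\widehat\Psi_T$ by the contribution from the single pair $(u^\ast,h^\ast) \in \mathcal{G}_T$ guaranteed by condition \eqref{loc-alt}. Splitting $\widehat\psi_T(u^\ast,h^\ast) = \ex[\widehat\psi_T(u^\ast,h^\ast)] + \widehat\phi_T(u^\ast,h^\ast)$ and using the reverse triangle inequality gives
\[
\widehat\Psi_T \ge \frac{|\widehat\psi_T(u^\ast,h^\ast)|}{\widehat\sigma} - \lambda(h^\ast) \ge \frac{|\ex[\widehat\psi_T(u^\ast,h^\ast)]|}{\widehat\sigma} - \frac{|\widehat\phi_T(u^\ast,h^\ast)|}{\widehat\sigma} - \lambda(h^\ast).
\]
I will show that the deterministic bias term on the right is at least of order $c_T \sqrt{\log T}$, whereas the stochastic term together with $\lambda(h^\ast)$ is only $O_p(\sqrt{\log T})$. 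Since $c_T \to \infty$, this forces $\widehat\Psi_T/\sqrt{\log T} \to \infty$ in probability, whereas the critical value $q_T(\alpha)$ is of order $O(1)$ (bounded because of the Dümbgen--Spokoiny-type additive calibration in $\Phi_T$), and the claim follows.

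The main computation is the bias bound. Because \eqref{loc-alt} requires $[u^\ast - h^\ast, u^\ast + h^\ast] \subseteq [0,1]$, the Riemann sums $S_{T,\ell}(u^\ast,h^\ast)$ satisfy the interior approximations $S_{T,0}(u^\ast,h^\ast) = 1 + o(1)$, $S_{T,1}(u^\ast,h^\ast) = o(1)$ and $S_{T,2}(u^\ast,h^\ast) = \mu_2 + o(1)$, with $\mu_2 = \int v^2 K(v)\,dv > 0$. Writing $x_t = (t/T - u^\ast)/h^\ast$, each summand then satisfies $\Lambda_{t,T}(u^\ast,h^\ast)(t/T - u^\ast) = h^\ast K(x_t) x_t^2 (1 + o(1)) \ge 0$, and a direct computation yields the identity $\sum_{t=1}^T \Lambda_{t,T}(u^\ast,h^\ast)(t/T - u^\ast) = Th^{\ast 2}\{S_{T,0}S_{T,2} - S_{T,1}^2\} \asymp Th^{\ast 2}\mu_2$. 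Combining the local-linear annihilation property $\sum_t \Lambda_{t,T}(u^\ast,h^\ast) = 0$ with the Taylor expansion $m(t/T) - m(u^\ast) = m_T^\prime(\xi_t)(t/T - u^\ast)$ and factoring the uniform lower bound $m_T^\prime(\xi_t) \ge c_T \sqrt{\log T/(Th^{\ast 3})}$ through the non-negative terms $\Lambda_{t,T}(u^\ast,h^\ast)(t/T - u^\ast)$ gives
\[
\Big|\sum_{t=1}^T \Lambda_{t,T}(u^\ast,h^\ast)\, m(t/T)\Big| \gtrsim c_T \sqrt{\frac{\log T}{Th^{\ast 3}}} \cdot Th^{\ast 2} \asymp c_T \sqrt{Th^\ast \log T}.
\]
Dividing by the normalizer $\{\sum_t \Lambda_{t,T}(u^\ast,h^\ast)^2\}^{1/2} \asymp \sqrt{Th^\ast}$ and by $\widehat\sigma = \sigma + o_p(1)$ yields $|\ex[\widehat\psi_T(u^\ast,h^\ast)]|/\widehat\sigma \gtrsim c_T \sqrt{\log T}$ with probability tending to one.

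For the stochastic contribution, the definition of $\widehat\Phi_T$ as a maximum over $\mathcal{G}_T$ gives the trivial inequality $|\widehat\phi_T(u^\ast,h^\ast)|/\widehat\sigma \le \widehat\Phi_T + \lambda(h^\ast)$. Theorem \ref{theo-stat} implies that $\widehat\Phi_T$ is stochastically bounded by the quantiles of the Gaussian statistic $\Phi_T$ up to $o_p(1)$, and a union bound over the $O(T^\theta)$ pairs in $\mathcal{G}_T$ paired with the additive correction $\lambda(h) = \sqrt{2\log(1/(2h))}$ shows $q_T(\alpha) = O(1)$, so in particular $\widehat\Phi_T = O_p(1)$. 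Together with $\lambda(h^\ast) \le \sqrt{2\log T}$, this yields $|\widehat\phi_T(u^\ast,h^\ast)|/\widehat\sigma + \lambda(h^\ast) = O_p(\sqrt{\log T})$. Combining the two pieces, $\widehat\Psi_T \ge (C_1 c_T - C_2)\sqrt{\log T}$ on an event whose probability tends to one, hence $\pr(\widehat\Psi_T \le q_T(\alpha)) \to 0$.

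The main obstacle is the bias computation: one must verify, uniformly over the possibly sample-size-dependent pair $(u^\ast,h^\ast)$, that the summands $\Lambda_{t,T}(u^\ast,h^\ast)(t/T - u^\ast)$ carry a definite sign so that the pointwise lower bound on $m_T^\prime$ in \eqref{loc-alt} can be pulled out of the weighted sum, and this in turn requires uniform control of the local-linear design quantities $S_{T,\ell}(u^\ast,h^\ast)$. The remaining steps are direct consequences of Theorem \ref{theo-stat} and the standing assumption $\widehat\sigma^2 = \sigma^2 + o_p(\rho_T)$.
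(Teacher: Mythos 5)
Your proposal follows the same route as the paper's proof: split $\widehat{\psi}_T(u^\ast,h^\ast)$ into its mean and the centred part $\widehat{\phi}_T(u^\ast,h^\ast)$, show the bias is of order $c_T\sqrt{\log T}$ by exploiting the sign structure of the local linear weights, bound the stochastic part by $O_p(\sqrt{\log T})$ via the machinery behind Theorem \ref{theo-stat}, and compare with $q_T(\alpha)$. However, the justification you give for the crucial sign property is not sound as written. You claim that $S_{T,1}(u^\ast,h^\ast)=o(1)$ implies $\Lambda_{t,T}(u^\ast,h^\ast)(t/T-u^\ast) = h^\ast K(x_t)x_t^2(1+o(1)) \ge 0$ for every $t$. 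This pointwise statement fails: the summand equals $h^\ast K(x_t)\{S_{T,0}x_t^2 - S_{T,1}x_t\}$, and for $x_t$ of order $1/(Th^\ast)$ the two terms in braces are comparable in magnitude whenever $S_{T,1}$ is only $o(1)$ (its generic Riemann-sum error is itself of order $1/(Th^\ast)$), so individual summands can be negative. Since \eqref{loc-alt} only provides a one-sided bound on $m_T^\prime$, you cannot pull that bound through a sum with summands of indeterminate sign, and your "factoring through" step breaks down exactly at the point you yourself flag as the main obstacle. The paper closes this gap not by an $o(1)$ approximation but by an exact identity: because $K$ is symmetric, $u^\ast$ is a design point $t/T$, and $[u^\ast-h^\ast,u^\ast+h^\ast]\subseteq[0,1]$, the grid points entering the sum are exactly symmetric about $u^\ast$, whence $S_{T,1}(u^\ast,h^\ast)=0$ identically and every summand equals $h^\ast K(x_t)S_{T,0}x_t^2 \ge 0$. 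With that exact cancellation in hand, the rest of your bias computation (including the normalizer $\asymp \sqrt{Th^\ast}$ and the resulting $c_T\sqrt{\log T}$ lower bound) matches the paper's.

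Two smaller remarks. First, your claim that $q_T(\alpha)=O(1)$ does not follow from a union bound over the $O(T^\theta)$ points combined with the additive correction; the correction $\lambda(h)$ compensates for the number of essentially independent statistics at scale $h$, not for the full cardinality of the grid, and establishing tightness of $\Phi_T$ would require a chaining argument in the style of D\"umbgen and Spokoiny. This is harmless here, since the crude bound $q_T(\alpha)=O(\sqrt{\log T})$ (a maximum of $O(T^\theta)$ standard normals) is all the argument needs, and it is what the paper uses. Second, your bound $|\widehat{\phi}_T(u^\ast,h^\ast)|/\widehat{\sigma} \le \widehat{\Phi}_T + \lambda(h^\ast) = O_p(\sqrt{\log T})$ via Theorem \ref{theo-stat} is a legitimate shortcut; the paper instead invokes the strong-approximation argument of its Proposition \ref{propA-strong-approx} directly to get $\max_{(u,h)}|\widehat{\psi}_T^A(u,h)| = O_p(\sqrt{\log T})$, which amounts to the same thing.
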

The proof of Proposition \ref{prop-test-2} can be found in the Supplementary Material. To formulate the next result, we define 
\begin{align*}
\Pi_T^\pm   & = \big\{ I_{u,h} = [u-h,u+h]: (u,h) \in \mathcal{A}_T^\pm \big\} \\
\Pi_T^+ & = \big\{ I_{u,h} = [u-h,u+h]: (u,h) \in \mathcal{A}_T^+ \text{ and } I_{u,h} \subseteq [0,1] \big\} \\
\Pi_T^- & = \big\{ I_{u,h} = [u-h,u+h]: (u,h) \in \mathcal{A}_T^- \text{ and } I_{u,h} \subseteq [0,1] \big\} 
\end{align*}
together with 
\begin{align*}
\mathcal{A}_T^\pm & = \Big\{ (u,h) \in \mathcal{G}_T: \Big|\frac{\widehat{\psi}_T(u,h)}{\widehat{\sigma}}\Big| > q_T(\alpha) + \lambda(h) \Big\} \\ 
\mathcal{A}_T^+  & = \Big\{ (u,h) \in \mathcal{G}_T: \frac{\widehat{\psi}_T(u,h)}{\widehat{\sigma}} > q_T(\alpha) + \lambda(h) \Big\} \\ 
\mathcal{A}_T^-  & = \Big\{ (u,h) \in \mathcal{G}_T: -\frac{\widehat{\psi}_T(u,h)}{\widehat{\sigma}} > q_T(\alpha) + \lambda(h) \Big\}. 
\end{align*}
$\Pi_T^\pm$ is the collection of intervals $I_{u,h} = [u-h,u+h]$ for which the (corrected) test statistic $|\widehat{\psi}_T(u,h)/\widehat{\sigma}| - \lambda(h)$ lies above the critical value $q_T(\alpha)$, that is, for which our multiscale test rejects the hypothesis $H_0(u,h)$. $\Pi_T^+$ and $\Pi_T^-$ can be interpreted analogously but take into account the sign of the statistic $\widehat{\psi}_T(u,h)/\widehat{\sigma}$. With this notation at hand, we consider the events 
\begin{align*}
E_T^\pm & = \Big\{ \forall I_{u,h} \in \Pi_T^\pm: m^\prime(v) \ne 0 \text{ for some } v \in I_{u,h} = [u-h,u+h] \Big\} \\
E_T^+  & = \Big\{ \forall I_{u,h} \in \Pi_T^+: m^\prime(v) > 0 \text{ for some } v \in I_{u,h} = [u-h,u+h] \Big\} \\
E_T^-  & = \Big\{ \forall I_{u,h} \in \Pi_T^-: m^\prime(v) < 0 \text{ for some } v \in I_{u,h} = [u-h,u+h] \Big\}.
\end{align*}
$E_T^\pm$ ($E_T^+$, $E_T^-$) is the event that the function $m$ is non-constant (increasing, decreasing) on all intervals $I_{u,h} \in \Pi_T^\pm$ ($\Pi_T^+$, $\Pi_T^-$). More precisely, $E_T^\pm$ ($E_T^+$, $E_T^-$) is the event that for each interval $I_{u,h} \in \Pi_T^\pm$ ($\Pi_T^+$, $\Pi_T^-$), there is a subset $J_{u,h} \subseteq I_{u,h}$ with $m$ being a non-constant (increasing, decreasing) function on $J_{u,h}$. We can make the following formal statement about the events $E_T^\pm$, $E_T^+$ and $E_T^-$, whose proof is given in the Supplementary Material. 
\begin{prop}\label{prop-test-3}
Let the conditions of Theorem \ref{theo-stat} be fulfilled. Then for $\ell \in \{ \pm,+,-\}$, it holds that
\[ \pr \big( E_T^\ell \big) \ge (1-\alpha) + o(1). \]
\end{prop}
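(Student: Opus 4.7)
The plan is to show that the ``good event'' $\mathcal{E}_T := \{\widehat{\Phi}_T \le q_T(\alpha)\}$, which by Theorem \ref{theo-stat} has probability $(1-\alpha) + o(1)$, is contained in each of $E_T^\pm$, $E_T^+$ and $E_T^-$; the conclusion then follows at once. All three containments are proved by contradiction using two elementary structural properties of the local linear weights defined in \eqref{weights}: (a) $\sum_{t=1}^T w_{t,T}(u,h) = 0$, which follows from the exact identity $\sum_t \Lambda_{t,T}(u,h) = S_{T,0}(u,h)\,(Th)\,S_{T,1}(u,h) - S_{T,1}(u,h)\,(Th)\,S_{T,0}(u,h) = 0$; and (b) the weight $w_{t,T}(u,h)$ changes sign exactly at $t/T = u^\star := u + h\,S_{T,1}(u,h)/S_{T,0}(u,h)$, with $u^\star \in [u-h,u+h]$ because $|S_{T,1}(u,h)| \le S_{T,0}(u,h)$ (the kernel $K$ being supported on $[-1,1]$).

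For $E_T^\pm$, suppose for contradiction that some $I_{u,h} \in \Pi_T^\pm$ satisfies $m'(v) = 0$ for every $v \in I_{u,h} \cap [0,1]$. Then $m$ is constant on $[u-h,u+h] \cap [0,1]$, which contains every design point $t/T$ carrying a nonzero weight, so property (a) gives $\ex[\widehat{\psi}_T(u,h)] = 0$ and hence $\widehat{\phi}_T(u,h) = \widehat{\psi}_T(u,h)$. But membership in $\Pi_T^\pm$ means $|\widehat{\psi}_T(u,h)/\widehat{\sigma}| - \lambda(h) > q_T(\alpha)$, which would force $\widehat{\Phi}_T > q_T(\alpha)$, contradicting $\mathcal{E}_T$. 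For $E_T^+$, take $I_{u,h} \in \Pi_T^+$ (so $[u-h,u+h] \subseteq [0,1]$) and suppose for contradiction that $m'(v) \le 0$ for all $v \in I_{u,h}$. Then $m$ is non-increasing on $[u-h,u+h]$, so by (b), for every $t$ with $t/T < u^\star$ we have $w_{t,T}(u,h) < 0$ and $m(t/T) - m(u^\star) \ge 0$, while for $t/T > u^\star$ both signs reverse; in either case $w_{t,T}(u,h)[m(t/T) - m(u^\star)] \le 0$. Summing and eliminating the constant term via (a) yields $\ex[\widehat{\psi}_T(u,h)] = \sum_t w_{t,T}(u,h)\,m(t/T) \le 0$. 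On the other hand, on $\mathcal{E}_T$ the inequalities $\widehat{\psi}_T(u,h)/\widehat{\sigma} > q_T(\alpha) + \lambda(h) \ge \widehat{\phi}_T(u,h)/\widehat{\sigma}$ force $\ex[\widehat{\psi}_T(u,h)] = \widehat{\psi}_T(u,h) - \widehat{\phi}_T(u,h) > 0$, a contradiction. The event $E_T^-$ is handled symmetrically.

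The only step that requires a little thought is verifying the sign structure (b) and then converting monotonicity of $m$ into a sign statement for $\ex[\widehat{\psi}_T(u,h)]$; but once the balance point $u^\star$ has been identified the argument is a few lines of bookkeeping, and no technical machinery beyond Theorem \ref{theo-stat} itself enters the proof.
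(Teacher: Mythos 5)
Your proposal is correct and follows essentially the same route as the paper: both reduce the claim to the containment $\{\widehat{\Phi}_T \le q_T(\alpha)\} \subseteq E_T^\ell$ and then invoke Theorem \ref{theo-stat}, with the only substantive work being the implication that $\ex[\widehat{\psi}_T(u,h)] > 0$ forces $m^\prime(v)>0$ somewhere on $[u-h,u+h]\subseteq[0,1]$. The paper establishes that implication via the mean value theorem together with the inequality $w_{t,T}(u,h)\,(t/T-u)/h \ge 0$ (using $S_{T,1}(u,h)=0$ at interior design points), whereas you prove the contrapositive directly from the sign-change point $u^\star$ of the weights and the monotonicity of $m$ -- a cosmetic variation on the same idea, and your weight signs for $t/T<u^\star$ should read $\le 0$ rather than $<0$ (harmless, since the product inequality still holds).
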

According to Proposition \ref{prop-test-3}, we can make simultaneous confidence statements of the following form: With (asymptotic) probability $\ge (1-\alpha)$, the trend function $m$ is non-constant (increasing, decreasing) on some part of the interval $I_{u,h}$ for all $I_{u,h} \in \Pi_T^\pm$ ($\Pi_T^+$, $\Pi_T^-$). Hence, our multiscale procedure allows to identify, with a pre-specified confidence, time regions where there is an increase/decrease in the time trend $m$.

\begin{remark}
Unlike $\Pi_T^\pm$, the sets $\Pi_T^+$ and $\Pi_T^-$ only contain intervals $I_{u,h} = [u-h,u+h]$ which are subsets of $[0,1]$. We thus exclude points $(u,h) \in \mathcal{A}_T^+$ and $(u,h) \in \mathcal{A}_T^-$ which lie at the boundary, that is, for which $I_{u,h} \nsubseteq [0,1]$. The reason is as follows: Let $(u,h) \in \mathcal{A}_T^+$ with $I_{u,h} \nsubseteq [0,1]$. Our technical arguments allow us to say, with asymptotic confidence $\ge 1 - \alpha$, that $m^\prime(v) \ne 0$ for some $v \in I_{u,h}$. However, we cannot say whether $m^\prime(v) > 0$ or $m^\prime(v) < 0$, that is, we cannot make confidence statements about the sign. Crudely speaking, the problem is that the local linear weights $w_{t,T}(u,h)$ behave quite differently at boundary points $(u,h)$ with $I_{u,h} \nsubseteq [0,1]$. As a consequence, we can include boundary points $(u,h)$ in $\Pi_T^\pm$ but not in $\Pi_T^+$ and $\Pi_T^-$.
\end{remark}

The statement of Proposition \ref{prop-test-3} suggests to graphically present the results of our multiscale test by plotting the intervals $I_{u,h} \in \Pi_T^\ell$ for $\ell \in \{\pm, +,-\}$, that is, by plotting the intervals where (with asymptotic confidence $\ge 1-\alpha$) our test detects a violation of the null hypothesis. The drawback of this graphical presentation is that the number of intervals in $\Pi_T^\ell$ is often quite large. To obtain a better graphical summary of the results, we replace $\Pi_T^\ell$ by a subset $\Pi_T^{\ell,\min}$ which is constructed as follows: As in \cite{Duembgen2002}, we call an interval $I_{u,h} \in \Pi_T^\ell$ minimal if there is no other interval $I_{u^\prime,h^\prime} \in \Pi_T^\ell$ with $I_{u^\prime,h^\prime} \subset I_{u,h}$. Let $\Pi_T^{\ell,\min}$ be the set of all minimal intervals in $\Pi_T^\ell$ for $\ell \in \{\pm, +,-\}$ and define the events
\begin{align*}
E_T^{\pm,\min} & = \Big\{ \forall I_{u,h} \in \Pi_T^{\pm,\min}: m^\prime(v) \ne 0 \text{ for some } v \in I_{u,h} = [u-h,u+h] \Big\} \\
E_T^{+,\min} & = \Big\{ \forall I_{u,h} \in \Pi_T^{+,\min}: m^\prime(v) > 0 \text{ for some } v \in I_{u,h} = [u-h,u+h] \Big\} \\ 
E_T^{-,\min} & = \Big\{ \forall I_{u,h} \in \Pi_T^{-,\min}: m^\prime(v) < 0 \text{ for some } v \in I_{u,h} = [u-h,u+h] \Big\}.  
\end{align*}
It is easily seen that $E_T^\ell = E_T^{\ell,\min}$ for $\ell \in \{\pm, +,-\}$. Hence, by Proposition \ref{prop-test-3}, it holds that 
\[ \pr \big(E_T^{\ell,\min}\big) \ge (1-\alpha) + o(1) \] 
for $\ell \in \{\pm, +,-\}$. This suggests to plot the minimal intervals in $\Pi_T^{\ell,\min}$ rather than the whole collection of intervals $\Pi_T^\ell$ as a graphical summary of the test results. We in particular use this way of presenting the test results in our application in Section \ref{sec-data}.

\setcounter{equation}{0}
\section{Estimation of the long-run error variance}\label{sec-error-var}

In this section, we discuss how to estimate the long-run variance $\sigma^2 = \sum\nolimits_{\ell=-\infty}^{\infty} \cov(\varepsilon_0,\varepsilon_{\ell})$ of the error terms in model \eqref{model}. There are two broad classes of estimators: residual- and difference-based estimators. In residual-based approaches, $\sigma^2$ is estimated from the residuals $\widehat{\varepsilon}_t = Y_{t,T} - \widehat{m}_h(t/T)$, where $\widehat{m}_h$ is a nonparametric estimator of $m$ with the bandwidth or smoothing parameter $h$. Difference-based methods proceed by estimating $\sigma^2$ from the $\ell$-th differences $Y_{t,T} - Y_{t-\ell,T}$ of the observed time series $\{Y_{t,T}\}$ for certain orders $\ell$. In what follows, we focus attention on difference-based methods as these do not involve a nonparametric estimator of the function $m$ and thus do not require to specify a bandwidth $h$ for the estimation of $m$. To simplify notation, we let $\Delta_\ell Z_t = Z_t - Z_{t-\ell}$ denote the $\ell$-th differences of a general time series $\{Z_t\}$ throughout the section.

\subsection{Weakly dependent error processes}

We first consider the case that $\{ \varepsilon_t \}$ is a general stationary error process. We do not impose any time series model such as a moving average (MA) or an autoregressive (AR) model on $\{\varepsilon_t\}$ but only require that $\{\varepsilon_t\}$ satisfies certain weak dependence conditions such as those from Section \ref{sec-model}. These conditions imply that the autocovariances $\gamma_\varepsilon(\ell) = \cov(\varepsilon_0,\varepsilon_{\ell})$ decay to zero at a certain rate as $|\ell| \rightarrow \infty$. For simplicity of exposition, we assume that the decay is exponential, that is, $|\gamma_\varepsilon(\ell)| \le C \rho^{|\ell|}$ for some $C > 0$ and $0 < \rho < 1$. In addition to these weak dependence conditions, we suppose that the trend $m$ is smooth. Specifically, we assume $m$ to be Lipschitz continuous on $[0,1]$, that is, $|m(u) - m(v)| \le C|u-v|$ for all $u,v \in [0,1]$ and some constant $C < \infty$.

Under these conditions, a difference-based estimator of $\sigma^2$ can be obtained as follows: To start with, we construct an estimator of the short-run error variance $\gamma_\varepsilon(0) = \var(\varepsilon_0)$. As $m$ is Lipschitz continuous, it holds that $\Delta_q Y_{t,T} = \Delta_q \varepsilon_t + O(q/T)$. Hence, the differences $\Delta_q Y_{t,T}$ of the observed time series are close to the differences $\Delta_q \varepsilon_t$ of the unobserved error process as long as $q$ is not too large in comparison to $T$. Moreover, since $|\gamma_\varepsilon(q)| \le C \rho^{q}$, we have that $\ex[(\Delta_q \varepsilon_t)^2]/2 = \gamma_\varepsilon(0) - \gamma_\varepsilon(q) = \gamma_\varepsilon(0) + O(\rho^q)$. Taken together, these considerations yield that $\gamma_\varepsilon(0) = \ex[(\Delta_q Y_{t,T})^2]/2 + O(\{q/T\}^2 + \rho^q)$, which motivates to estimate $\gamma_\varepsilon(0)$ by  
\begin{equation}\label{est-g0-general}
\widehat{\gamma}_\varepsilon(0) = \frac{1}{2(T-q)} \sum\limits_{t=q+1}^T (\Delta_q Y_{t,T})^2,
\end{equation}
where we assume that $q = q_T \rightarrow \infty$ with $q_T/\log T \rightarrow \infty$ and $q_T/\sqrt{T} \rightarrow 0$. Estimators of the autocovariances $\gamma_\varepsilon(\ell)$ for $\ell \ne 0$ can be derived by similar considerations. Since $\gamma_\varepsilon(\ell) = \gamma_\varepsilon(0) - \ex[(\Delta_\ell \varepsilon_t)^2]/2 = \gamma_\varepsilon(0) - \ex[(\Delta_\ell Y_{t,T})^2]/2 + O(\{\ell/T\}^2)$, we may in particular define  
\begin{equation}\label{est-gl-general}
\widehat{\gamma}_\varepsilon(\ell) = \widehat{\gamma}_\varepsilon(0) - \frac{1}{2(T-|\ell|)} \sum\limits_{t=|\ell|+1}^T (\Delta_{|\ell|} Y_{t,T} )^2
\end{equation}
for any $\ell \ne 0$. Difference-based estimators of the type \eqref{est-g0-general} and \eqref{est-gl-general} have been used in different contexts in the literature before. Estimators similar to \eqref{est-g0-general} and \eqref{est-gl-general} were analysed, for example, in \cite{MuellerStadtmueller1988} and \cite{Hall2003} in the context of $m$-dependent and autoregressive error terms, respectively. In order to estimate the long-run error variance $\sigma^2$, we may employ HAC-type estimation procedures as discussed in \cite{Andrews1991} or \cite{DeJong2000}. In particular, an estimator of $\sigma^2$ may be defined as 
\begin{equation}\label{est-lrv-general}
\widehat{\sigma}^2 = \sum_{|\ell| \le b_T} W \Big( \frac{\ell}{b_T} \Big) \, \widehat{\gamma}_\varepsilon(\ell), 
\end{equation}
where $W: [-1,1] \rightarrow \reals$ is a kernel (e.g.\ of Bartlett or Parzen type) and $b_T$ is a bandwidth parameter with $b_T \rightarrow \infty$ and $b_T/q_T \rightarrow 0$. The additional bandwidth $b_T$ comes into play because estimating $\sigma^2$ under general weak dependence conditions is a nonparametric problem. In particular, it is equivalent to estimating the (nonparametric) spectral density $f_\varepsilon$ of the process $\{\varepsilon_t\}$ at frequency $0$ (assuming that $f_\varepsilon$ exists).

Estimating the long-run error variance $\sigma^2$ under general weak dependence conditions is a notoriously difficult problem. Estimators of $\sigma^2$ such as $\widehat{\sigma}^2$ from \eqref{est-lrv-general} tend to be quite imprecise and are usually very sensitive to the choice of the smoothing parameter, that is, to $b_T$ in the case of $\widehat{\sigma}^2$ from \eqref{est-lrv-general}. To circumvent this issue in practice, it may be beneficial to impose a time series model on the error process $\{\varepsilon_t\}$. Estimating $\sigma^2$ under the restrictions of such a model may of course create some misspecification bias. However, as long as the model gives a reasonable approximation to the true error process, the produced estimates of $\sigma^2$ can be expected to be fairly reliable even though they are a bit biased. Which time series model is appropriate of course depends on the application at hand. In the sequel, we follow authors such as \cite{Hart1994} and \cite{Hall2003} and impose an autoregressive structure on the error terms $\{\varepsilon_t\}$, which is a very popular error model in many application contexts. We thus do not dwell on the nonparametric estimator $\widehat{\sigma}^2$ from \eqref{est-lrv-general} any further but rather give an in-depth analysis of the case of autoregressive error terms.

\subsection{Autoregressive error processes}\label{subsec-error-var-AR}

Estimators of the long-run error variance $\sigma^2$ in model \eqref{model} have been developed for different kinds of error processes $\{\varepsilon_t\}$. A number of authors have analysed the case of MA($m$) or, more generally, $m$-dependent error terms. Difference-based estimators of $\sigma^2$ for this case were proposed in \cite{MuellerStadtmueller1988}, \cite{Herrmann1992} and \cite{Munk2017} among others. Under the assumption of $m$-dependence, $\gamma_\varepsilon(\ell) = 0$ for all $|\ell| > m$. Even though $m$-dependent time series are a reasonable error model in some applications, the condition that $\gamma_\varepsilon(\ell)$ is exactly equal to $0$ for sufficiently large lags $\ell$ is quite restrictive in many situations. Presumably the most widely used error model in practice is an AR($p$) process. Residual-based methods to estimate $\sigma^2$ in model \eqref{model} with AR($p$) errors can be found for example in \cite{Truong1991}, \cite{ShaoYang2011} and \cite{QiuShaoYang2013}. A difference-based method was proposed in \cite{Hall2003}.

In what follows, we introduce a difference-based estimator of $\sigma^2$ for the AR($p$) case which improves on existing methods in several respects. As in \cite{Hall2003}, we consider the following situation: $\{\varepsilon_t\}$ is a stationary and causal AR($p$) process of the form 
\begin{equation}\label{AR-errors} 
\varepsilon_t = \sum_{j=1}^p a_j \varepsilon_{t-j} + \eta_t, 
\end{equation} 
where $a_1,\ldots,a_p$ are unknown parameters and $\eta_t$ are i.i.d.\ innovations with $\ex[\eta_t] = 0$ and $\ex[\eta_t^2] = \nu^2$. The AR order $p$ is known and $m$ is Lipschitz continuous on $[0,1]$, that is, $|m(u) - m(v)| \le C|u-v|$ for all $u,v \in [0,1]$ and some constant $C < \infty$. Since $\{\varepsilon_t\}$ is causal, the variables $\varepsilon_t$ have an MA($\infty$) representation of the form $\varepsilon_t =  \sum_{k=0}^\infty c_k \eta_{t-k}$. The coefficients $c_k$ can be computed iteratively from the equations 
\begin{equation}\label{c-recursion}
c_k - \sum_{j=1}^p a_j c_{k-j} = b_k 
\end{equation}
for $k = 0,1,2,\ldots$, where $b_0 = 1$, $b_k = 0$ for $k > 0$ and $c_k = 0$ for $ k < 0$. Moreover, the coefficients $c_k$ can be shown to decay exponentially fast to zero as $k \rightarrow \infty$, in particular, $|c_k| \le C \rho^k$ with some $C > 0$ and $0 < \rho < 1$.

Our estimation method relies on the following simple observation: If $\{\varepsilon_t\}$ is an AR($p$) process of the form \eqref{AR-errors}, then the time series $\{ \Delta_q \varepsilon_t \}$ of the differences $\Delta_q \varepsilon_t = \varepsilon_t - \varepsilon_{t-q}$ is an ARMA($p,q$) process of the form 
\begin{equation}\label{AR-diff-errors} 
\Delta_q \varepsilon_t - \sum_{j=1}^p a_j \Delta_q \varepsilon_{t-j} = \eta_t - \eta_{t-q}. 
\end{equation}
As $m$ is Lipschitz, the differences $\Delta_q \varepsilon_t$ of the unobserved error process are close to the differences $\Delta_q Y_{t,T}$ of the observed time series in the sense that 
\begin{equation}\label{diff-Y-eps}
\Delta_q Y_{t,T} = \big[\varepsilon_t  - \varepsilon_{t-q} \big] + \Big[ m \Big(\frac{t}{T}\Big) - m \Big(\frac{t-q}{T}\Big) \Big] = \Delta_q \varepsilon_t + O \Big( \frac{q}{T} \Big).  
\end{equation} 
Taken together, \eqref{AR-diff-errors} and \eqref{diff-Y-eps} imply that the differenced time series $\{ \Delta_q Y_{t,T} \}$ is approximately an ARMA($p,q$) process of the form \eqref{AR-diff-errors}. It is precisely this point which is exploited by our estimation methods.

We first construct an estimator of the parameter vector $\boldsymbol{a} = (a_1,\ldots,a_p)^\top$. For any $q \ge 1$, the ARMA($p,q$) process $\{ \Delta_q \varepsilon_t \}$ satisfies the Yule-Walker equations
\begin{align}
\gamma_q(\ell) - \sum\limits_{j=1}^p a_j \gamma_q(\ell-j) & = -\nu^2 c_{q-\ell} \hspace{-1.5cm} & & \text{for } 1 \le \ell < q+1 \label{diff-eq-1} \\
\gamma_q(\ell) - \sum\limits_{j=1}^p a_j \gamma_q(\ell-j) & = 0 \hspace{-1.5cm} & & \text{for } \ell \ge q+1, \label{diff-eq-2}  
\end{align}
where $\gamma_q(\ell) = \cov(\Delta_q \varepsilon_t,$ $\Delta_q \varepsilon_{t-\ell})$ and $c_k$ are the coefficients from the MA($\infty$) expansion of $\{ \varepsilon_t \}$. From \eqref{diff-eq-1} and \eqref{diff-eq-2}, we get that 
\begin{equation}\label{YW-eq} 
\boldsymbol{\Gamma}_q \boldsymbol{a} = \boldsymbol{\gamma}_q + \nu^2 \boldsymbol{c}_q,  
\end{equation} 
where $\boldsymbol{c}_q = (c_{q-1},\dots,c_{q-p})^\top$, $\boldsymbol{\gamma}_q = (\gamma_q(1),\dots,\gamma_q(p))^\top$ and $\boldsymbol{\Gamma}_q$ denotes the $p \times p$ covariance matrix $\boldsymbol{\Gamma}_q = (\gamma_q(i-j): 1 \le i,j \le p)$. Since the coefficients $c_k$ decay exponentially fast to zero, $\boldsymbol{c}_q \approx \boldsymbol{0}$ and thus $\boldsymbol{\Gamma}_q \boldsymbol{a} \approx \boldsymbol{\gamma}_q$ for large values of $q$. This suggests to estimate $\boldsymbol{a}$ by 
\begin{equation}\label{est-AR-FS}
\widetilde{\boldsymbol{a}}_q = \widehat{\boldsymbol{\Gamma}}_q^{-1} \widehat{\boldsymbol{\gamma}}_q, 
\end{equation}
where $\widehat{\boldsymbol{\Gamma}}_q$ and $\widehat{\boldsymbol{\gamma}}_q$ are defined analogously as $\boldsymbol{\Gamma}_q$ and $\boldsymbol{\gamma}_q$ with $\gamma_q(\ell)$ replaced by the sample autocovariances $\widehat{\gamma}_q(\ell) = (T-q)^{-1} \sum_{t=q+\ell+1}^T \Delta_q Y_{t,T} \Delta_q Y_{t-\ell,T}$ and $q = q_T$ goes to infinity sufficiently fast as $T \rightarrow \infty$, specifically, $q = q_T \rightarrow \infty$ with $q_T / \log T \rightarrow \infty$ and $q_T/\sqrt{T} \rightarrow 0$.

The estimator $\widetilde{\boldsymbol{a}}_q$ depends on the tuning parameter $q$, which is very similar in nature to the two tuning parameters of the methods in \cite{Hall2003}. An appropriate choice of $q$ needs to take care of the following two points: 
(i) $q$ should be chosen large enough to ensure that the vector $\boldsymbol{c}_q = (c_{q-1},\dots,c_{q-p})^\top$ is close to zero. As we have already seen, the constants $c_k$ decay exponentially fast to zero and can be computed from the recursive equations \eqref{c-recursion} for given AR parameters $a_1,\ldots,a_p$. In the AR($1$) case, for example, one can readily calculate that $c_k \le 0.0035$ for any $k \ge 20$ and any $|a_1| \le 0.75$. Hence, if we have an AR($1$) model for the errors $\varepsilon_t$ and the error process is not too persistent, choosing $q$ such that $q \ge 20$ should make sure that $\boldsymbol{c}_q$ is close to zero. Generally speaking, the recursive equations \eqref{c-recursion} can be used to get some idea for which values of $q$ the vector $\boldsymbol{c}_q$ can be expected to be approximately zero. 
(ii) $q$ should not be chosen too large in order to ensure that the trend $m$ is appropriately eliminated by taking $q$-th differences. As long as the trend $m$ is not very strong, the two requirements (i) and (ii) can be fulfilled without much difficulty. For example, by choosing $q = 20$ in the AR($1$) case just discussed, we do not only take care of (i) but also make sure that moderate trends $m$ are differenced out appropriately.

When the trend $m$ is very pronounced, in contrast, even moderate values of $q$ may be too large to eliminate the trend appropriately. As a result, the estimator $\widetilde{\boldsymbol{a}}_q$ will have a strong bias. In order to reduce this bias, we refine our estimation procedure as follows: By solving the recursive equations \eqref{c-recursion} with $\boldsymbol{a}$ replaced by $\widetilde{\boldsymbol{a}}_q$, we can compute estimators $\widetilde{c}_k$ of the coefficients $c_k$ and thus estimators $\widetilde{\boldsymbol{c}}_r$ of the vectors $\boldsymbol{c}_r$ for any $r \ge 1$. Moreover, the innovation variance $\nu^2$ can be estimated by $\widetilde{\nu}^2 = (2T)^{-1} \sum_{t=p+1}^T \widetilde{r}_{t,T}^2$, where $\widetilde{r}_{t,T} = \Delta_1 Y_{t,T} - \sum_{j=1}^p \widetilde{a}_j \Delta_1 Y_{t-j,T}$ and $\widetilde{a}_j$ is the $j$-th entry of the vector $\widetilde{\boldsymbol{a}}_q$. Plugging the expressions $\widehat{\boldsymbol{\Gamma}}_r$, $\widehat{\boldsymbol{\gamma}}_r$, $\widetilde{\boldsymbol{c}}_r$ and $\widetilde{\nu}^2$ into \eqref{YW-eq}, we can estimate $\boldsymbol{a}$ by 
\begin{equation}\label{est-AR-SS} 
\widehat{\boldsymbol{a}}_r = \widehat{\boldsymbol{\Gamma}}_r^{-1} (\widehat{\boldsymbol{\gamma}}_r + \widetilde{\nu}^2 \widetilde{\boldsymbol{c}}_r),
\end{equation} 
where $r$ is any fixed number with $r \ge 1$. In particular, unlike $q$, the parameter $r$ does not diverge to infinity but remains fixed as the sample size $T$ increases. As one can see, the estimator $\widehat{\boldsymbol{a}}_r$ is based on differences of some small order $r$; only the pilot estimator $\widetilde{\boldsymbol{a}}_q$ relies on differences of a larger order $q$. As a consequence, $\widehat{\boldsymbol{a}}_r$ should eliminate the trend $m$ more appropriately and should thus be less biased than the pilot estimator $\widetilde{\boldsymbol{a}}_q$. In order to make the method more robust against estimation errors in $\widetilde{\boldsymbol{c}}_r$, we finally average the estimators $\widehat{\boldsymbol{a}}_r$ for a few small values of $r$. In particular, we define  
\begin{equation}\label{est-AR}
\widehat{\boldsymbol{a}} = \frac{1}{\overline{r}} \sum\limits_{r=1}^{\overline{r}} \widehat{\boldsymbol{a}}_r, 
\end{equation}
where $\overline{r}$ is a small natural number. For ease of notation, we suppress the dependence of $\widehat{\boldsymbol{a}}$ on the parameter $\overline{r}$. Once $\widehat{\boldsymbol{a}} =(\widehat{a}_1,\ldots,\widehat{a}_p)^\top$ is computed, the long-run variance $\sigma^2$ can be estimated by 
\begin{equation} \label{est-lrv}
\widehat{\sigma}^2 = \frac{\widehat{\nu}^2}{(1 - \sum_{j=1}^p \widehat{a}_j)^2}, 
\end{equation}
where $\widehat{\nu}^2 = (2T)^{-1} \sum_{t=p+1}^T \widehat{r}_{t,T}^2$ with $\widehat{r}_{t,T} = \Delta_1 Y_{t,T} - \sum_{j=1}^p \widehat{a}_j \Delta_1 Y_{t-j,T}$ is an estimator of the innovation variance $\nu^2$ and we make use of the fact that $\sigma^2 = \nu^2 / (1 - \sum_{j=1}^p a_j)^2$ for the AR($p$) process $\{\varepsilon_t\}$.

We briefly compare the estimator $\widehat{\boldsymbol{a}}$ to competing methods. Presumably closest to our approach is the procedure of \cite{Hall2003}. Nevertheless, the two approaches differ in several respects. The two main advantages of our method are as follows: 
\begin{enumerate}[label=(\alph*),leftmargin=0.7cm]
\item Our estimator produces accurate estimation results even when the AR process $\{\varepsilon_t\}$ is quite persistent, that is, even when the AR polynomial $A(z) = 1 - \sum_{j=1}^p a_j z^j$ has a root close to the unit circle. The estimator of \cite{Hall2003}, in contrast, may have very high variance and may thus produce unreliable results when the AR polynomial $A(z)$ is close to having a unit root. This difference in behaviour can be explained as follows: Our pilot estimator $\widetilde{\boldsymbol{a}}_q = (\widetilde{a}_1,\ldots,\widetilde{a}_p)^\top$ has the property that the estimated AR polynomial $\widetilde{A}(z) = 1 - \sum_{j=1}^p \widetilde{a}_j z^j$ has no root inside the unit disc, that is, $\widetilde{A}(z) \ne 0$ for all complex numbers $z$ with $|z| \le 1$.\footnote{More precisely, $\widetilde{A}(z) \ne 0$ for all $z$ with $|z| \le 1$, whenever the covariance matrix $(\widehat{\gamma}_q(i-j): 1 \le i,j \le p+1)$ is non-singular. Moreover, $(\widehat{\gamma}_q(i-j): 1 \le i,j \le p+1)$ is non-singular whenever $\widehat{\gamma}_q(0) > 0$, which is the generic case.} Hence, the fitted AR model with the coefficients $\widetilde{\boldsymbol{a}}_q$ is ensured to be stationary and causal. Even though this may seem to be a minor technical detail, it has a huge effect on the performance of the estimator: It keeps the estimator stable even when the AR process is very persistent and the AR polynomial $A(z)$ has almost a unit root. This in turn results in a reliable behaviour of the estimator $\widehat{\boldsymbol{a}}$ in the case of high persistence. The estimator of \cite{Hall2003}, in contrast, may produce non-causal results when the AR polynomial $A(z)$ is close to having a unit root. As a consequence, it may have unnecessarily high variance in the case of high persistence. We illustrate this difference between the estimators by the simulation exercises in Section \ref{subsec-sim-3}. A striking example is Figure \ref{fig:hist_scenario1}, which presents the simulation results for the case of an AR($1$) process $\varepsilon_t = a_1 \varepsilon_{t-1} + \eta_t$ with $a_1 = -0.95$ and clearly shows the much better performance of our method.  
\item Both our pilot estimator $\widetilde{\boldsymbol{a}}_q$ and the estimator of \cite{Hall2003} tend to have a substantial bias when the trend $m$ is pronounced. Our estimator $\widehat{\boldsymbol{a}}$ reduces this bias considerably as demonstrated in the simulations of Section \ref{subsec-sim-3}. Unlike the estimator of \cite{Hall2003}, it thus produces accurate results even in the presence of a very strong trend. 
\end{enumerate}

We now derive some basic asymptotic properties of the estimators $\widetilde{\boldsymbol{a}}_q$, $\widehat{\boldsymbol{a}}$ and $\widehat{\sigma}^2$. The following proposition shows that they are $\sqrt{T}$-consistent. 
\begin{prop}\label{prop-lrv}
Let $\{\varepsilon_t\}$ be a causal AR($p$) process of the form \eqref{AR-errors}. Suppose that the innovations $\eta_t$ have a finite fourth moment and let $m$ be Lipschitz continuous. If $q \rightarrow \infty$ with $q/\log T \rightarrow \infty$ and $q/\sqrt{T} \rightarrow 0$, then $\widetilde{\boldsymbol{a}}_q - \boldsymbol{a} = O_p(T^{-1/2})$ as well as $\widehat{\boldsymbol{a}} - \boldsymbol{a} = O_p(T^{-1/2})$ and $\widehat{\sigma}^2 - \sigma^2 = O_p(T^{-1/2})$.
\end{prop}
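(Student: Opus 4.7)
The plan is to propagate a $\sqrt{T}$-rate through the three estimators in turn: first $\widetilde{\boldsymbol{a}}_q$, then $\widehat{\boldsymbol{a}}$, and finally $\widehat{\sigma}^2$. The starting point is the Yule--Walker identity \eqref{YW-eq}, $\boldsymbol{\Gamma}_q\boldsymbol{a} = \boldsymbol{\gamma}_q + \nu^2 \boldsymbol{c}_q$. Combining it with the defining equation $\widehat{\boldsymbol{\Gamma}}_q \widetilde{\boldsymbol{a}}_q = \widehat{\boldsymbol{\gamma}}_q$ yields the exact identity
\[
\widetilde{\boldsymbol{a}}_q - \boldsymbol{a} \;=\; \widehat{\boldsymbol{\Gamma}}_q^{-1} \bigl[ (\widehat{\boldsymbol{\gamma}}_q - \boldsymbol{\gamma}_q) - (\widehat{\boldsymbol{\Gamma}}_q - \boldsymbol{\Gamma}_q)\boldsymbol{a} - \nu^2 \boldsymbol{c}_q \bigr],
\]
so the task splits into controlling each bracketed piece together with the prefactor. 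A direct computation gives $\gamma_q(\ell) = 2\gamma_\varepsilon(\ell) - \gamma_\varepsilon(q-\ell) - \gamma_\varepsilon(q+\ell) = 2\gamma_\varepsilon(\ell) + O(\rho^q)$, so $\boldsymbol{\Gamma}_q$ converges to $2\boldsymbol{\Gamma}_\varepsilon$, which is positive definite for a causal AR($p$). Combined with the stochastic consistency of $\widehat{\boldsymbol{\Gamma}}_q$ established below, this will ensure $\|\widehat{\boldsymbol{\Gamma}}_q^{-1}\| = O_p(1)$.

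\textbf{The pilot rate.} The deterministic term $\nu^2 \boldsymbol{c}_q = O(\rho^q)$ is $o(T^{-c})$ for every $c > 0$, because $q/\log T \to \infty$ forces $\rho^q = \exp(-q|\log\rho|)$ to decay faster than any polynomial in $T$. For the two stochastic terms, I would use the Lipschitz property of $m$ to write $\Delta_q Y_{t,T} = \Delta_q \varepsilon_t + \Delta_q m(t/T)$ with $|\Delta_q m(t/T)| \le Cq/T$, so that
\[
\widehat{\gamma}_q(\ell) \;=\; \frac{1}{T-q}\sum_{t} \Delta_q \varepsilon_t \, \Delta_q \varepsilon_{t-\ell} \;+\; R_T(\ell),
\]
where $R_T(\ell)$ collects the trend$\times$error cross terms of order $O(q/T)$ and a term $O((q/T)^2)$. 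A direct second moment calculation, exploiting the exponential decay of the autocovariances of $\{\varepsilon_t\}$, gives $R_T(\ell) = o_p(T^{-1/2})$ under $q/\sqrt{T} \to 0$. The main task is then to show the leading piece equals $\gamma_q(\ell) + O_p(T^{-1/2})$; expanding $\Delta_q \varepsilon_t \Delta_q \varepsilon_{t-\ell}$ into four products $\varepsilon_a \varepsilon_b$ at lags $\ell$, $q-\ell$, $q+\ell$ reduces the claim to a uniform sampling bound $T^{-1}\sum_t \varepsilon_t \varepsilon_{t-k} - \gamma_\varepsilon(k) = O_p(T^{-1/2})$ across the lags $k$ that appear. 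This uniform step is the main obstacle, since the lag $q$ grows with $T$ and one must verify via a fourth-order cumulant bound that the variance of each such average is $O(1/T)$ regardless of $k$. The same argument applied to $\widehat{\boldsymbol{\Gamma}}_q$ gives the needed $\widehat{\boldsymbol{\Gamma}}_q - \boldsymbol{\Gamma}_q = O_p(T^{-1/2})$, and assembling the three bounds yields $\widetilde{\boldsymbol{a}}_q - \boldsymbol{a} = O_p(T^{-1/2})$.

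\textbf{From $\widetilde{\boldsymbol{a}}_q$ to $\widehat{\boldsymbol{a}}$ and to $\widehat{\sigma}^2$.} The same decomposition applied with fixed $r$ gives
\[
\widehat{\boldsymbol{a}}_r - \boldsymbol{a} \;=\; \widehat{\boldsymbol{\Gamma}}_r^{-1} \bigl[ (\widehat{\boldsymbol{\gamma}}_r - \boldsymbol{\gamma}_r) - (\widehat{\boldsymbol{\Gamma}}_r - \boldsymbol{\Gamma}_r)\boldsymbol{a} + (\widetilde{\nu}^2 \widetilde{\boldsymbol{c}}_r - \nu^2 \boldsymbol{c}_r) \bigr],
\]
where the sample covariance terms are handled exactly as in the pilot step and are easier because $r$ is fixed. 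Each $c_k$ is a polynomial function of $\boldsymbol{a}$ through the recursion \eqref{c-recursion}, so a first-order Taylor expansion transports the pilot rate to $\widetilde{\boldsymbol{c}}_r - \boldsymbol{c}_r = O_p(T^{-1/2})$. For $\widetilde{\nu}^2$, I would expand $\widetilde{r}_{t,T} = \Delta_1 \eta_t + \sum_{j=1}^p (a_j - \widetilde{a}_j)\Delta_1 \varepsilon_{t-j} + O(1/T)$, use $\ex[(\Delta_1 \eta_t)^2] = 2\nu^2$, and apply a standard law of large numbers to $(2T)^{-1}\sum_t (\Delta_1 \eta_t)^2$ to obtain $\widetilde{\nu}^2 - \nu^2 = O_p(T^{-1/2})$. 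Combining these bounds and averaging over $r$ yields $\widehat{\boldsymbol{a}} - \boldsymbol{a} = O_p(T^{-1/2})$. The bound for $\widehat{\sigma}^2$ then follows from the identity $\sigma^2 = \nu^2/(1-\sum_j a_j)^2$ by a delta-method argument applied to the smooth map $(\boldsymbol{a},\nu^2) \mapsto \nu^2/(1-\sum_j a_j)^2$, whose denominator is nonzero by causality, together with the analogous bound $\widehat{\nu}^2 - \nu^2 = O_p(T^{-1/2})$ obtained as for $\widetilde{\nu}^2$.
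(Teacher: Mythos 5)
Your proposal is correct and follows essentially the same route as the paper: the same exact Yule--Walker decomposition of $\widetilde{\boldsymbol{a}}_q - \boldsymbol{a}$, the same splitting of $\widehat{\gamma}_q(\ell)$ into an error--error term plus Lipschitz-controlled trend remainders of order $O(q/T) = o_p(T^{-1/2})$, the same fourth-cumulant second-moment bound (uniform over lags growing with $T$) for the sample autocovariances, the same use of $q/\log T \to \infty$ to kill $\nu^2\boldsymbol{c}_q$, and the same propagation through $\widehat{\boldsymbol{a}}_r$, $\widetilde{\nu}^2$ and the delta method to $\widehat{\sigma}^2$. The step you flag as the main obstacle is exactly the content of the paper's Lemma S.5, which it verifies by the explicit fourth-moment computation you describe.
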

It can also be shown that $\widetilde{\boldsymbol{a}}_q$, $\widehat{\boldsymbol{a}}$ and $\widehat{\sigma}^2$ are asymptotically normal. In general, their asymptotic variance is somewhat larger than that of the estimators in \cite{Hall2003}. They are thus a bit less efficient in terms of asymptotic variance. However, this theoretical loss of efficiency is more than compensated by the advantages discussed in (a) and (b) above, which lead to a substantially better small sample performance as demonstrated in the simulations of Section \ref{subsec-sim-3}.

\setcounter{equation}{0}
\section{Simulations}\label{sec-sim}

To assess the finite sample performance of our methods, we conduct a number of simulations. In Sections \ref{subsec-sim-1} and \ref{subsec-sim-2}, we investigate the performance of our multiscale test and compare it to the SiZer methods for time series developed in \cite{Rondonotti2004}, \cite{Rondonotti2007} and \cite{ParkHannigKang2009}. In Section \ref{subsec-sim-3}, we analyse the finite sample properties of our long-run variance estimator from Section \ref{subsec-error-var-AR} and compare it to the estimator of \cite{Hall2003}.

\subsection{Size and power properties of the multiscale test}\label{subsec-sim-1}

Our simulation design mimics the situation in the application example of Section \ref{sec-data}. We generate data from the model $Y_{t,T} = m(t/T) + \varepsilon_t$ for different trend functions $m$, error processes $\{\varepsilon_t\}$ and time series lengths $T$. The error terms are supposed to have the AR($1$) structure $\varepsilon_t = a_1 \varepsilon_{t-1} + \eta_t$, where $a_1 \in \{-0.5,-0.25,0.25,0.5\}$ and $\eta_t$ are i.i.d.\ standard normal. In addition, we consider the AR($2$) specification $\varepsilon_t = a_1 \varepsilon_{t-1} + a_2 \varepsilon_{t-2} + \eta_t$, where $\eta_t$ are normally distributed with $\ex[\eta_t] = 0$ and $\ex[\eta_t^2] = \nu^2$. We set $a_1 = 0.167$, $a_2 = 0.178$ and $\nu^2 = 0.322$, thus matching the estimated values obtained in the application of Section \ref{sec-data}. To simulate data under the null hypothesis, we let $m$ be a constant function. In particular, we set $m = 0$ without loss of generality. To generate data under the alternative, we consider the trend functions $m(u) = \beta (u - 0.5) \cdot \ind(0.5 \le u \le 1)$ with $\beta = 1.5,2.0,2.5$. These functions are broken lines with a kink at $u = 0.5$ and different slopes $\beta$. Their shape roughly resembles the trend estimates in the application of Section \ref{sec-data}. The slope parameter $\beta$ corresponds to a trend with the value $m(1) = 0.5 \beta$ at the right endpoint $u = 1$. We thus consider broken lines with the values $m(1) = 0.75, 1.0, 1.25$. Inspecting the middle panel of Figure \ref{plot-results-app1}, the broken lines with the endpoints $m(1) = 1.0$ and $m(1) = 1.25$ (that is, with $\beta = 2.0$ and $\beta = 2.5$) can be seen to resemble the local linear trend estimates in the real-data example the most (where we neglect the nonlinearities of the local linear fits at the beginning of the observation period). The broken line with $\beta = 1.5$ is closer to the null, making it harder for our test to detect this alternative.\footnote{The broken lines $m$ are obviously non-differentiable at the kink point. We could replace them by slightly smoothed versions to satisfy the differentiability assumption that is imposed in the theoretical part of the paper. However, as this leaves the simulation results essentially unchanged but only creates additional notation, we stick to the broken lines.}

\begin{sidewaystable}
\centering
\footnotesize{
\caption{Size of our multiscale test for different AR parameters $a_1$ and $a_2$, sample sizes $T$ and nominal sizes $\alpha$.}\label{tab:size_test}
\newcolumntype{C}[1]{>{\hsize=#1\hsize\centering\arraybackslash}X}
\newcolumntype{Z}{>{\centering\arraybackslash}X}
\begin{tabularx}{\textwidth}{C{2} C{0.1} ZZZ C{0.1} ZZZ C{0.1} ZZZ C{0.1} ZZZ C{0.1} ZZZ} 
\toprule
 & &  \multicolumn{3}{c}{$a_1 = -0.5$} & &  \multicolumn{3}{c}{$a_1 = -0.25$} & &  \multicolumn{3}{c}{$a_1 = 0.25$} & &  \multicolumn{3}{c}{$a_1 = 0.5$} & &  \multicolumn{3}{c}{$(a_1,a_2) = (0.167,0.178)$} \\
\cmidrule[0.4pt]{3-5} \cmidrule[0.4pt]{7-9} \cmidrule[0.4pt]{11-13} \cmidrule[0.4pt]{15-17} \cmidrule[0.4pt]{19-21}
 & &  \multicolumn{3}{c}{nominal size $\alpha$} & &  \multicolumn{3}{c}{nominal size $\alpha$} & &  \multicolumn{3}{c}{nominal size $\alpha$} & &  \multicolumn{3}{c}{nominal size $\alpha$} & &  \multicolumn{3}{c}{nominal size $\alpha$} \\
 & &  0.01 & 0.05  & 0.1 & &  0.01 & 0.05  & 0.1  & &  0.01 & 0.05  & 0.1  & &  0.01 & 0.05  & 0.1  & &  0.01 & 0.05  & 0.1   \\
\cmidrule[0.4pt]{1-21}
$T = 250$ &  & 0.015 & 0.050 & 0.127 &  & 0.014 & 0.057 & 0.120 &  & 0.011 & 0.046 & 0.116 &  & 0.013 & 0.042 & 0.108 &  & 0.011 & 0.052 & 0.117 \\ 
 $T= 350$ &  & 0.009 & 0.067 & 0.120 &  & 0.010 & 0.055 & 0.095 &  & 0.009 & 0.055 & 0.096 &  & 0.010 & 0.049 & 0.090 &  & 0.010 & 0.059 & 0.114 \\ 
 $T= 500$ &  & 0.015 & 0.053 & 0.128 &  & 0.015 & 0.047 & 0.100 &  & 0.018 & 0.048 & 0.101 &  & 0.015 & 0.042 & 0.106 &  & 0.015 & 0.056 & 0.107 \\ 
\bottomrule
\end{tabularx}
\vspace{0.5cm}

\caption{Power of our multiscale test for different AR parameters $a_1$ and $a_2$, sample sizes $T$ and nominal sizes $\alpha$. The three panels (a)--(c) corresponds to different slope parameters $\beta$ of the broken line $m$.}\label{tab:power_test}

\begin{tabularx}{\textwidth}{C{2} C{0.1} ZZZ C{0.1} ZZZ C{0.1} ZZZ C{0.1} ZZZ C{0.1} ZZZ} 
\multicolumn{21}{c}{(a) $\beta = 1.5$} \\[0.2cm]
\toprule
 & &  \multicolumn{3}{c}{$a_1 = -0.5$} & &  \multicolumn{3}{c}{$a_1 = -0.25$} & &  \multicolumn{3}{c}{$a_1 = 0.25$} & &  \multicolumn{3}{c}{$a_1 = 0.5$} & &  \multicolumn{3}{c}{$(a_1,a_2) = (0.167,0.178)$} \\
\cmidrule[0.4pt]{3-5} \cmidrule[0.4pt]{7-9} \cmidrule[0.4pt]{11-13} \cmidrule[0.4pt]{15-17} \cmidrule[0.4pt]{19-21}
 & &  \multicolumn{3}{c}{nominal size $\alpha$} & &  \multicolumn{3}{c}{nominal size $\alpha$} & &  \multicolumn{3}{c}{nominal size $\alpha$} & &  \multicolumn{3}{c}{nominal size $\alpha$} & &  \multicolumn{3}{c}{nominal size $\alpha$} \\
 & &  0.01 & 0.05  & 0.1   & &  0.01 & 0.05  & 0.1   & &  0.01 & 0.05  & 0.1    & &  0.01 & 0.05  & 0.1    & &  0.01 & 0.05  & 0.1   \\
\cmidrule[0.4pt]{1-21}
$T=250$ &  & 0.484 & 0.726 & 0.853 &  & 0.319 & 0.548 & 0.702 &  & 0.077 & 0.177 & 0.324 &  & 0.036 & 0.097 & 0.181 &  & 0.269 & 0.460 & 0.612 \\ 
 $T= 350$ &  & 0.735 & 0.913 & 0.955 &  & 0.463 & 0.753 & 0.834 &  & 0.116 & 0.273 & 0.385 &  & 0.050 & 0.141 & 0.221 &  & 0.390 & 0.654 & 0.770 \\ 
  $T=500$ &  & 0.945 & 0.988 & 0.997 &  & 0.775 & 0.925 & 0.972 &  & 0.195 & 0.389 & 0.551 &  & 0.060 & 0.162 & 0.285 &  & 0.623 & 0.815 & 0.907 \\ 
\bottomrule
\end{tabularx}
\vspace{0.25cm}

\begin{tabularx}{\textwidth}{C{2} C{0.1} ZZZ C{0.1} ZZZ C{0.1} ZZZ C{0.1} ZZZ C{0.1} ZZZ} 
\multicolumn{21}{c}{(b) $\beta = 2.0$} \\[0.2cm]
\toprule
 & &  \multicolumn{3}{c}{$a_1 = -0.5$} & &  \multicolumn{3}{c}{$a_1 = -0.25$} & &  \multicolumn{3}{c}{$a_1 = 0.25$} & &  \multicolumn{3}{c}{$a_1 = 0.5$} & &  \multicolumn{3}{c}{$(a_1,a_2) = (0.167,0.178)$} \\
\cmidrule[0.4pt]{3-5} \cmidrule[0.4pt]{7-9} \cmidrule[0.4pt]{11-13} \cmidrule[0.4pt]{15-17} \cmidrule[0.4pt]{19-21}
 & &  \multicolumn{3}{c}{nominal size $\alpha$} & &  \multicolumn{3}{c}{nominal size $\alpha$} & &  \multicolumn{3}{c}{nominal size $\alpha$} & &  \multicolumn{3}{c}{nominal size $\alpha$} & &  \multicolumn{3}{c}{nominal size $\alpha$} \\
 & &  0.01 & 0.05  & 0.1   & &  0.01 & 0.05  & 0.1   & &  0.01 & 0.05  & 0.1    & &  0.01 & 0.05  & 0.1    & &  0.01 & 0.05  & 0.1   \\
\cmidrule[0.4pt]{1-21}
$T = 250$ &  & 0.869 & 0.961 & 0.985 &  & 0.663 & 0.846 & 0.916 &  & 0.164 & 0.340 & 0.520 &  & 0.062 & 0.143 & 0.259 &  & 0.549 & 0.724 & 0.851 \\ 
$T=  350$ &  & 0.979 & 0.997 & 1.000 &  & 0.863 & 0.969 & 0.986 &  & 0.262 & 0.483 & 0.615 &  & 0.092 & 0.231 & 0.334 &  & 0.759 & 0.922 & 0.958 \\ 
 $T= 500$ &  & 1.000 & 1.000 & 1.000 &  & 0.983 & 0.997 & 0.999 &  & 0.469 & 0.716 & 0.821 &  & 0.137 & 0.309 & 0.451 &  & 0.933 & 0.983 & 0.994 \\ 
\bottomrule
\end{tabularx}
\vspace{0.25cm}

\begin{tabularx}{\textwidth}{C{2} C{0.1} ZZZ C{0.1} ZZZ C{0.1} ZZZ C{0.1} ZZZ C{0.1} ZZZ} 
\multicolumn{21}{c}{(c) $\beta = 2.5$} \\[0.2cm]
\toprule
 & &  \multicolumn{3}{c}{$a_1 = -0.5$} & &  \multicolumn{3}{c}{$a_1 = -0.25$} & &  \multicolumn{3}{c}{$a_1 = 0.25$} & &  \multicolumn{3}{c}{$a_1 = 0.5$} & &  \multicolumn{3}{c}{$(a_1,a_2) = (0.167,0.178)$} \\
\cmidrule[0.4pt]{3-5} \cmidrule[0.4pt]{7-9} \cmidrule[0.4pt]{11-13} \cmidrule[0.4pt]{15-17} \cmidrule[0.4pt]{19-21}
 & &  \multicolumn{3}{c}{nominal size $\alpha$} & &  \multicolumn{3}{c}{nominal size $\alpha$} & &  \multicolumn{3}{c}{nominal size $\alpha$} & &  \multicolumn{3}{c}{nominal size $\alpha$} & &  \multicolumn{3}{c}{nominal size $\alpha$} \\
 & &  0.01 & 0.05  & 0.1   & &  0.01 & 0.05  & 0.1   & &  0.01 & 0.05  & 0.1    & &  0.01 & 0.05  & 0.1    & &  0.01 & 0.05  & 0.1   \\
\cmidrule[0.4pt]{1-21}
$T=250$ &  & 0.989 & 1.000 & 1.000 &  & 0.901 & 0.971 & 0.993 &  & 0.322 & 0.543 & 0.703 &  & 0.100 & 0.224 & 0.367 &  & 0.804 & 0.918 & 0.958 \\ 
 $T= 350$ &  & 1.000 & 1.000 & 1.000 &  & 0.990 & 1.000 & 1.000 &  & 0.470 & 0.737 & 0.833 &  & 0.162 & 0.361 & 0.481 &  & 0.950 & 0.988 & 0.997 \\ 
 $T= 500$ &  & 1.000 & 1.000 & 1.000 &  & 0.999 & 1.000 & 1.000 &  & 0.773 & 0.919 & 0.968 &  & 0.285 & 0.473 & 0.649 &  & 0.994 & 0.999 & 1.000 \\ 
\bottomrule
\end{tabularx}
}
\end{sidewaystable}

\newpage

To implement our test, we choose $K$ to be an Epanechnikov kernel and define the set $\mathcal{G}_T$ of location-scale points $(u,h)$ as
\begin{align}
\mathcal{G}_T = \big\{ (u, h): & \, \, u = 5k/T \text{ for some } 1 \le k \le T/5 \text{ and } \nonumber \\ & \, \, h = (3+5\ell)/T \text{ for some } 0 \le \ell \le T/20 \big\}. \label{grid-sim-app}
\end{align}
We thus take into account all rescaled time points $u \in [0,1]$ on an equidistant grid with step length $5/T$. For the bandwidth $h = (3 + 5\ell)/T$ and any $u \in [h,1-h]$, the kernel weights $\kernel(h^{-1} \{t/T-u\})$ are non-zero for exactly $5 + 10 \ell$ observations. Hence, the bandwidths $h$ in $\mathcal{G}_T$ correspond to effective sample sizes of $5, 15, 25, \ldots$ up to approximately $T/4$ data points. As a robustness check, we have re-run the simulations for a number of other grids. As the results are very similar, we do however not report them here. The long-run error variance $\sigma^2$ is estimated by the procedures from Section \ref{subsec-error-var-AR}: We first compute the estimator $\widehat{\boldsymbol{a}}$ of the AR parameter(s), where we use $\overline{r} = 10$ and the pilot estimator $\widetilde{\boldsymbol{a}}_q$ with $q = 25$. Based on $\widehat{\boldsymbol{a}}$, we then compute the estimator $\widehat{\sigma}^2$ of the long-run error variance $\sigma^2$. As a further robustness check, we have re-run the simulations for other choices of the parameters $q$ and $\overline{r}$, which yields very similar results. The dependence of the estimators $\widehat{\boldsymbol{a}}$ and $\widehat{\sigma}^2$ on $q$ and $\overline{r}$ is further explored in Section \ref{subsec-sim-3}. To compute the critical values of the multiscale test, we simulate $1000$ values of the statistic $\Phi_T$ defined in Section \ref{subsec-method-test} and compute their empirical $(1-\alpha)$ quantile $q_T(\alpha)$.

Tables \ref{tab:size_test} and \ref{tab:power_test} report the simulation results for the sample sizes $T=250,350,500$ and the significance levels $\alpha = 0.01, 0.05, 0.10$. The sample size $T = 350$ is approximately equal to the time series length $359$ in the real-data example of Section \ref{sec-data}. To produce our simulation results, we generate $S=1000$ samples for each model specification and carry out the multiscale test for each sample. The entries of Tables \ref{tab:size_test} and \ref{tab:power_test} are computed as the number of simulations in which the test rejects divided by the total number of simulations. As can be seen from Table \ref{tab:size_test}, the actual size of the test is fairly close to the nominal target $\alpha$ for all the considered AR specifications and sample sizes. Hence, the test has approximately the correct size. Inspecting Table \ref{tab:power_test}, one can further see that the test has reasonable power properties. For all the considered AR specifications, the power increases quickly (i) as the sample size gets larger and (ii) as we move away from the null by increasing the slope parameter $\beta$. The power is of course quite different across the various AR specifications. In particular, it is much lower for positive than for negative values of $a_1$ in the AR($1$) case, the lowest power numbers being obtained for the largest positive value $a_1 = 0.5$ under consideration. This reflects the fact that it is more difficult to detect a trend when there is strong positive autocorrelation in the data. For the AR($2$) specification of the errors, the sample size $T=350$ and the slopes $\beta = 2.0$ and $\beta = 2.5$, which yield the two model specifications that resemble the real-life data in Section \ref{sec-data} the most, the power of the test is above $92\%$ for the significance levels $\alpha = 0.05$ and $\alpha = 0.1$ and above $75\%$ for $\alpha = 0.01$. Hence, our method has substantial power in the two simulation scenarios which are closest to the situation in the application.

\subsection{Comparison with SiZer}\label{subsec-sim-2}

We now compare our multiscale test to SiZer for times series which was developed in \cite{Rondonotti2004}, \cite{Rondonotti2007} and \cite{ParkHannigKang2009}. Roughly speaking, the SiZer method proceeds as follows: For each location $u$ and bandwidth $h$ in a pre-specified set, SiZer computes an estimator $\widehat{m}_h^\prime(u)$ of the derivative $m^\prime(u)$ and a corresponding confidence interval. For each $(u,h)$, it then checks whether the confidence interval includes the value $0$. The set $\Pi_T^{\text{SiZer}}$ of points $(u,h)$ for which the confidence interval does not include $0$ corresponds to the set of intervals $\Pi_T^\pm$ for which our multiscale test finds an increase/decrease in the trend $m$. In order to explore how our test performs in comparison to SiZer, we compare the two sets $\Pi_T^\pm$ and $\Pi_T^{\text{SiZer}}$ in different ways to each other in what follows.

In order to implement SiZer for time series, we follow the exposition in \cite{ParkHannigKang2009}.\footnote{We have also examined the somewhat different implementation from \cite{Rondonotti2007}. As this yields worse simulation results than the procedure from \cite{ParkHannigKang2009}, we however do not report them here.} The details are given in Section S.3 in the Supplementary Material. To simplify the implementation of SiZer, we assume that the autocovariance function $\gamma_\varepsilon(\cdot)$ of the error process and thus the long-run error variance $\sigma^2$ is known. Our multiscale test is implemented in the same way as in Section \ref{subsec-sim-1}. To keep the comparison fair, we treat $\sigma^2$ as known also when implementing our method. Moreover, we use the same grid $\mathcal{G}_T$ of points $(u,h)$ for both methods. To achieve this, we start off with the grid $\mathcal{G}_T$ from \eqref{grid-sim-app}. We then follow \cite{Rondonotti2007} and \cite{ParkHannigKang2009} and restrict attention to those points $(u,h) \in \mathcal{G}_T$ for which the effective sample size $\text{ESS}^*(u,h)$ for correlated data is not smaller than $5$. This yields the grid $\mathcal{G}_T^* = \{ (u, h) \in \mathcal{G}_T : \text{ESS}^*(u, h) \geq 5 \}$. A detailed discussion of the effective sample size $\text{ESS}^*(u,h)$ for correlated data can be found in \cite{Rondonotti2007}.

\begin{table}[t!]
\centering
\footnotesize{
\caption{Size of our multiscale test (MT) and SiZer for different model specifications.}\label{tab:size_comparison}
\newcolumntype{C}[1]{>{\hsize=#1\hsize\centering\arraybackslash}X}
\newcolumntype{Z}{>{\centering\arraybackslash}X}
\begin{tabularx}{\textwidth}{C{2} C{0.0001} ZZZZZZ C{0.0001} ZZZZZZ} 
\toprule
        & & \multicolumn{6}{c}{$a_1 = -0.25$} & & \multicolumn{6}{c}{$a_1 = 0.25$} \\ 
\cmidrule[0.4pt]{3-8} \cmidrule[0.4pt]{10-15}
        & & \multicolumn{2}{c}{$\alpha=0.01$} & \multicolumn{2}{c}{$\alpha=0.05$}  & \multicolumn{2}{c}{$\alpha=0.1$} 
        & & \multicolumn{2}{c}{$\alpha=0.01$} & \multicolumn{2}{c}{$\alpha=0.05$}  & \multicolumn{2}{c}{$\alpha=0.1$} \\[0.1cm]
        & & MT & SiZer & MT & SiZer & MT & SiZer & & MT & SiZer & MT & SiZer & MT & SiZer \\
\cmidrule[0.4pt]{1-15}
$T = 250$ &  & 0.018 & 0.112 & 0.040 & 0.374 & 0.104 & 0.575 &  & 0.017 & 0.106 & 0.034 & 0.347 & 0.092 & 0.522 \\ 
  $T= 350$ &  & 0.012 & 0.140 & 0.058 & 0.426 & 0.080 & 0.621 &  & 0.012 & 0.130 & 0.046 & 0.399 & 0.074 & 0.578 \\ 
  $T=500$ &  & 0.005 & 0.140 & 0.041 & 0.489 & 0.097 & 0.680 &  & 0.006 & 0.136 & 0.039 & 0.452 & 0.097 & 0.639 \\ 
\bottomrule
\end{tabularx}
\vspace{0.5cm}

\caption{Power of our multiscale test (MT) and SiZer for different model specifications. The three panels (a)--(c) corresponds to different slope parameters $\beta$ of the linear tend $m$.}\label{tab:power_comparison}
\newcolumntype{C}[1]{>{\hsize=#1\hsize\centering\arraybackslash}X}
\newcolumntype{Z}{>{\centering\arraybackslash}X}
\begin{tabularx}{\textwidth}{C{2} C{0.0001} ZZZZZZ C{0.0001} ZZZZZZ} 
\multicolumn{15}{c}{(a) $\beta = 1.0$ for negative $a_1$ and $\beta = 2.0$ for positive $a_1$} \\[0.2cm]
\toprule
        & & \multicolumn{6}{c}{$a_1 = -0.25$} & & \multicolumn{6}{c}{$a_1 = 0.25$} \\ 
\cmidrule[0.4pt]{3-8} \cmidrule[0.4pt]{10-15}
        & & \multicolumn{2}{c}{$\alpha=0.01$} & \multicolumn{2}{c}{$\alpha=0.05$}  & \multicolumn{2}{c}{$\alpha=0.1$} 
        & & \multicolumn{2}{c}{$\alpha=0.01$} & \multicolumn{2}{c}{$\alpha=0.05$}  & \multicolumn{2}{c}{$\alpha=0.1$} \\[0.1cm]
        & & MT & SiZer & MT & SiZer & MT & SiZer & & MT & SiZer & MT & SiZer & MT & SiZer \\
\cmidrule[0.4pt]{1-15}
$T = 250$ &  & 0.218 & 0.544 & 0.454 & 0.869 & 0.664 & 0.949 &  & 0.359 & 0.717 & 0.653 & 0.947 & 0.829 & 0.989 \\ 
 $T= 350$ &  & 0.385 & 0.707 & 0.665 & 0.958 & 0.753 & 0.986 &  & 0.599 & 0.888 & 0.864 & 0.995 & 0.913 & 0.998 \\ 
  $T=500$ &  & 0.581 & 0.899 & 0.862 & 0.993 & 0.949 & 0.999 &  & 0.851 & 0.981 & 0.983 & 1.000 & 0.999 & 1.000 \\ 
 \bottomrule
\end{tabularx}
\vspace{0.25cm}

\begin{tabularx}{\textwidth}{C{2} C{0.0001} ZZZZZZ C{0.0001} ZZZZZZ} 
\multicolumn{15}{c}{(b) $\beta = 1.25$ for negative $a_1$ and $\beta = 2.25$ for positive $a_1$} \\[0.2cm]
\toprule
      & & \multicolumn{6}{c}{$a_1 = -0.25$} & & \multicolumn{6}{c}{$a_1 = 0.25$} \\ 
\cmidrule[0.4pt]{3-8} \cmidrule[0.4pt]{10-15}
      & & \multicolumn{2}{c}{$\alpha=0.01$} & \multicolumn{2}{c}{$\alpha=0.05$}  & \multicolumn{2}{c}{$\alpha=0.1$} 
      & & \multicolumn{2}{c}{$\alpha=0.01$} & \multicolumn{2}{c}{$\alpha=0.05$}  & \multicolumn{2}{c}{$\alpha=0.1$} \\[0.1cm]
      & & MT & SiZer & MT & SiZer & MT & SiZer & & MT & SiZer & MT & SiZer & MT & SiZer \\
\cmidrule[0.4pt]{1-15}
$T=250$ &  & 0.426 & 0.771 & 0.705 & 0.969 & 0.878 & 0.996 &  & 0.537 & 0.861 & 0.791 & 0.987 & 0.932 & 0.999 \\ 
  $T=350$ &  & 0.645 & 0.912 & 0.882 & 0.993 & 0.954 & 1.000 &  & 0.773 & 0.955 & 0.948 & 0.999 & 0.985 & 1.000 \\ 
 $T= 500$ &  & 0.915 & 0.994 & 0.993 & 1.000 & 0.998 & 1.000 &  & 0.962 & 0.999 & 1.000 & 1.000 & 0.999 & 1.000 \\ 
\bottomrule
\end{tabularx}
\vspace{0.25cm}

\begin{tabularx}{\textwidth}{C{2} C{0.0001} ZZZZZZ C{0.0001} ZZZZZZ} 
\multicolumn{15}{c}{(c) $\beta = 1.5$ for negative $a_1$ and $\beta = 2.5$ for positive $a_1$} \\[0.2cm]
\toprule
       & & \multicolumn{6}{c}{$a_1 = -0.25$} & & \multicolumn{6}{c}{$a_1 = 0.25$} \\ 
\cmidrule[0.4pt]{3-8} \cmidrule[0.4pt]{10-15}
       & & \multicolumn{2}{c}{$\alpha=0.01$} & \multicolumn{2}{c}{$\alpha=0.05$}  & \multicolumn{2}{c}{$\alpha=0.1$} 
       & & \multicolumn{2}{c}{$\alpha=0.01$} & \multicolumn{2}{c}{$\alpha=0.05$}  & \multicolumn{2}{c}{$\alpha=0.1$} \\[0.1cm]
       & & MT & SiZer & MT & SiZer & MT & SiZer & & MT & SiZer & MT & SiZer & MT & SiZer \\
\cmidrule[0.4pt]{1-15}
$T=250$ &  & 0.701 & 0.942 & 0.911 & 0.992 & 0.972 & 1.000 &  & 0.698 & 0.941 & 0.908 & 0.993 & 0.970 & 1.000 \\ 
 $T= 350$ &  & 0.895 & 0.994 & 0.981 & 1.000 & 0.996 & 1.000 &  & 0.893 & 0.993 & 0.980 & 1.000 & 0.996 & 1.000 \\ 
  $T=500$ &  & 0.995 & 1.000 & 1.000 & 1.000 & 1.000 & 1.000 &  & 0.995 & 1.000 & 1.000 & 1.000 & 1.000 & 1.000 \\ 
\bottomrule
\end{tabularx}
}
\end{table}

In the first part of the comparison study, we analyse the size and power of the two methods. To do so, we treat SiZer as a rigorous statistical test of the null hypothesis $H_0$ that $m$ is constant on all intervals $[u-h,u+h]$ with $(u,h) \in \mathcal{G}_T^*$. In particular, we let SiZer reject the null if the set $\Pi_T^{\text{SiZer}}$ is non-empty, that is, if the value $0$ is not included in the confidence interval for at least one point $(u,h) \in \mathcal{G}_T^*$. We simulate data from the model $Y_{t,T} = m(t/T) + \varepsilon_t$ with different AR($1$) error processes and different trends $m$. In particular, we let $\{\varepsilon_t\}$ be an AR($1$) process of the form $\varepsilon_t = a_1 \varepsilon_{t-1} + \eta_t$ with $a_1 \in \{ -0.25,0.25\}$ and i.i.d.\ standard normal innovations $\eta_t$. To simulate data under the null, we set $m = 0$ as in the previous section. To generate data under the alternative, we consider the linear trends $m(u) = \beta (u - 0.5)$ with different slopes $\beta$. As it is more difficult to detect a trend $m$ in the data when the error terms are positively autocorrelated, we choose the slopes $\beta$ larger in the AR($1$) case with $a_1 = 0.25$ than in the case with $a_1 = -0.25$. In particular, we let $\beta \in \{ 1.0,1.25,1.5 \}$ when $a_1 = -0.25$ and $\beta \in \{ 2.0,2.25,2.5 \}$ when $a_1 = 0.25$. Further model specifications with nonlinear trends are considered in the second part of the comparison study. To produce our simulation results, we generate $S=1000$ samples for each model specification and carry out the two methods for each sample.

The simulation results are reported in Tables \ref{tab:size_comparison} and \ref{tab:power_comparison}. Both for our multiscale test and SiZer, the entries in the tables are computed as the number of simulations in which the respective method rejects the null hypothesis $H_0$ divided by the total number of simulations. As can be seen from Table \ref{tab:size_comparison}, our test has approximately correct size in all of the considered settings, whereas SiZer is very liberal and rejects the null way too often. Examining Table \ref{tab:power_comparison}, one can further see that our procedure has reasonable power against the considered alternatives. The power numbers are of course higher for SiZer, which is a trivial consequence of the fact that SiZer is extremely liberal. These numbers should thus be treated with caution. All in all, the simulations suggest that SiZer can hardly be regarded as a rigorous statistical test of the null hypothesis $H_0$ that $m$ is constant on all intervals $[u-h,u+h]$ with $(u,h) \in \mathcal{G}_T^*$. This is not very surprising as SiZer is not designed to be such a test but to produce informative SiZer maps. In particular, the confidence intervals of SiZer are not constructed to control the level $\alpha$ under $H_0$. In what follows, we thus attempt to compare the two methods in a different way which goes beyond mere size and power comparisons.

Both our method and SiZer can be regarded as statistical tools to identify time regions where the curve $m$ is increasing/decreasing.\footnote{More precisely speaking, SiZer is usually interpreted as investigating the curve $m$, viewed at different levels of resolution, rather than the curve $m$ itself. Put differently, the underlying object of interest is a family of smoothed versions of $m$ rather than $m$ itself.} Suppose that $m$ is increasing/decreasing in the time region $\mathcal{R} \subset [0,1]$ but constant otherwise, that is, $m^\prime(u) \ne 0$ for all $u \in \mathcal{R}$ and $m^\prime(u) = 0$ for all $u \notin \mathcal{R}$. A natural question is the following: How well can the two methods identify the time region $\mathcal{R}$? In our framework, information on the region $\mathcal{R}$ is contained in the minimal intervals of the set $\Pi_T^\pm$. In particular, the union $\mathcal{R}_T^\pm$ of the minimal intervals in $\Pi_T^\pm$ can be regarded as an estimate of $\mathcal{R}$. This follows from the results in Propositions \ref{prop-test-2} and \ref{prop-test-3}. Let $\mathcal{R}_T^{\text{SiZer}}$ be the union of the minimal intervals in $\Pi_T^{\text{SiZer}}$. In what follows, we compare $\mathcal{R}_T^\pm$ and $\mathcal{R}_T^{\text{SiZer}}$ to the region $\mathcal{R}$. This gives us information on how well the two methods approximate the true region where $m$ is increasing/decreasing.\footnote{The same exercise could of course also be carried out separately for the time region where the trend $m$ increases and the region where it decreases.}

\begin{figure}[t]
\begin{subfigure}{.5\textwidth}
\centering
\includegraphics[width=.9\linewidth]{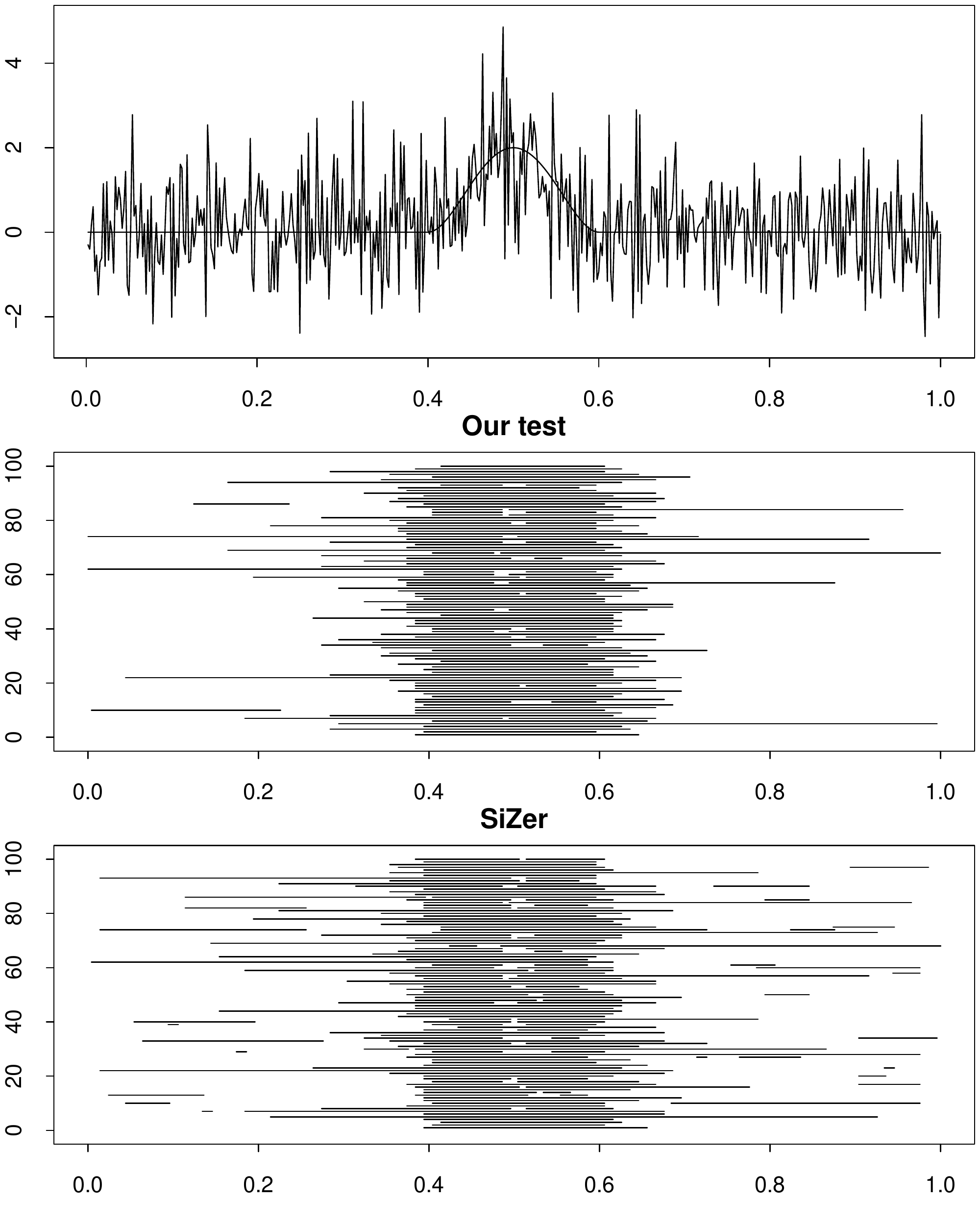}
\caption{$a_1 = -0.25$}
\end{subfigure}
\begin{subfigure}{.5\textwidth}
\centering
\includegraphics[width=.9\linewidth]{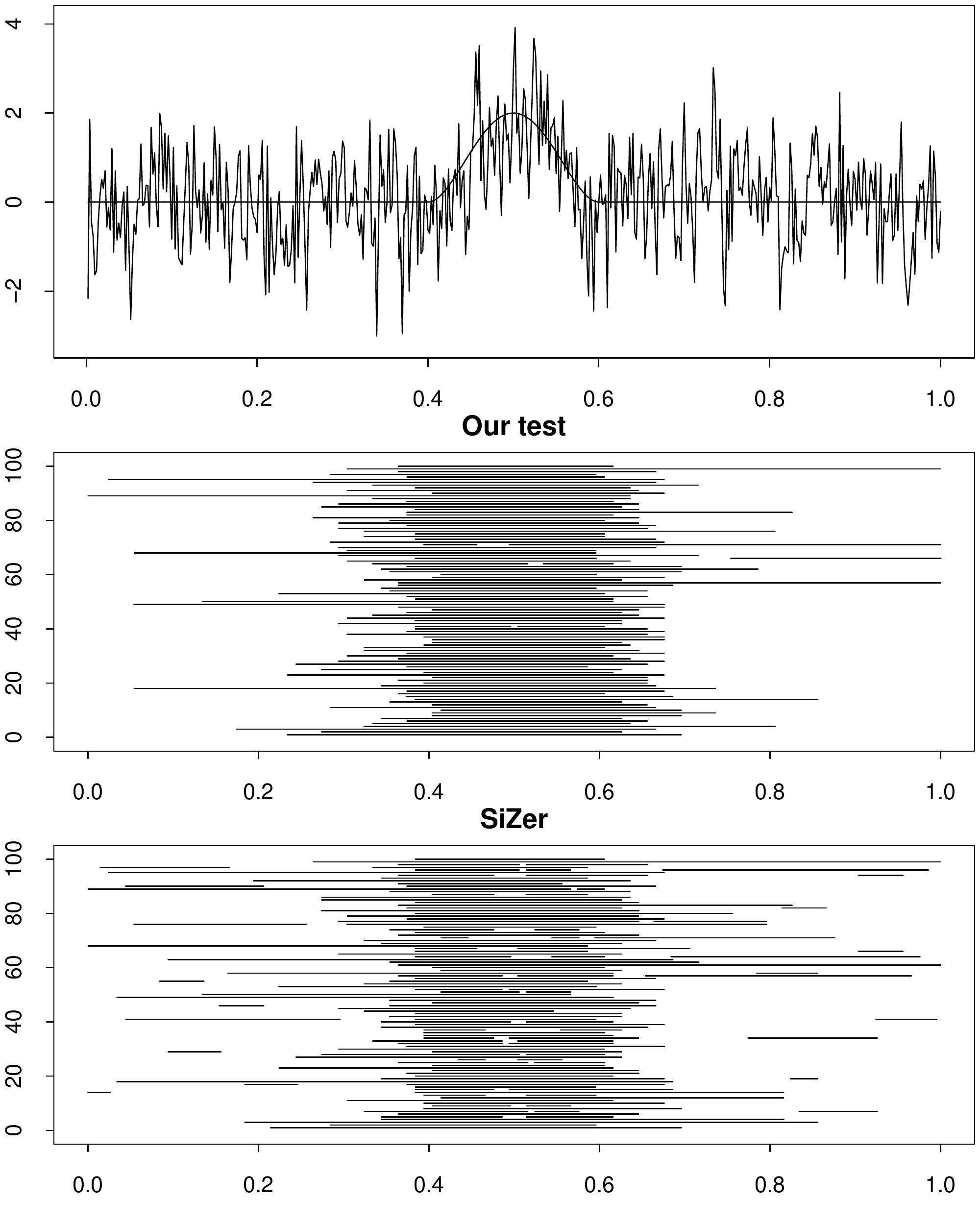}
\caption{$a_1 = 0.25$}
\end{subfigure}
\caption{Comparison of the regions $\mathcal{R}_T^\pm$ and $\mathcal{R}_T^{\text{SiZer}}$. Subfigure (a) corresponds to the model setting with the AR parameter $a_1 = -0.25$, subfigure (b) to the setting with $a_1 = 0.25$. The upper panel of each subfigure shows a simulated time series path together with the underlying trend function $m$. The middle panel depicts the regions $\mathcal{R}_T^\pm$ produced by our multiscale test for $100$ simulation runs. The lower panel presents the regions $\mathcal{R}_T^{\text{SiZer}}$ produced by SiZer.}  
\label{fig:comparison_SiZer}
\end{figure}

We consider the same simulation setup as in the first part of the comparison study, only the trend function $m$ is different. We let $m$ be defined as $m(u) = 2 \cdot \ind(u \in [0.4,0.6]) \cdot (1 - 100 \{u-0.5\}^2)^2$, which implies that $\mathcal{R} = (0.4,0.5) \cup (0.5,0.6)$. The function $m$ is plotted in the two upper panels of Figure \ref{fig:comparison_SiZer}. We set the significance level to $\alpha= 0.05$ and the sample size to $T=500$. For each AR parameter $a_1 \in \{ -0.25,0.25 \}$, we simulate $S=100$ samples and compute $\mathcal{R}_T^\pm$ and $\mathcal{R}_T^{\text{SiZer}}$ for each sample. The simulation results are depicted in Figure \ref{fig:comparison_SiZer}, the two subfigures (a) and (b) corresponding to different AR parameters. The upper panel of each subfigure displays the time series path of a representative simulation together with the trend function $m$. The middle panel shows the regions $\mathcal{R}_T^\pm$ produced by our multiscale approach for the $100$ simulation runs: On the $y$-axis, the simulation runs $i$ are enumerated for $1 \le i \le 100$, and the black line at $y$-level $i$ represents $\mathcal{R}_T^\pm$ for the $i$-th simulation. Finally, the lower panel of each subfigure depicts the regions $\mathcal{R}_T^{\text{SiZer}}$ in an analogous way.

Inspecting Figure \ref{fig:comparison_SiZer}, our multiscale method can be seen to approximate the region $\mathcal{R}$ fairly well in both simulation scenarios under consideration. In particular, $\mathcal{R}_T^\pm$ gives a good approximation to the region $\mathcal{R}$ for most simulations. Only in some simulation runs, $\mathcal{R}_T^\pm$ is too large compared to $\mathcal{R}$, which means that our method is not able to locate the region $\mathcal{R}$ sufficiently precisely. Overall, the SiZer method also produces quite satisfactory results. However, the SiZer estimates of $\mathcal{R}$ are not as precise as ours. In particular, SiZer spuriously finds regions of decrease/increase outside the interval $\mathcal{R}$ much more often than our method. It thus frequently mistakes fluctuations in the time series which are due to the dependence in the error terms for increases/decreases in the trend $m$.

To sum up, our multiscale test exhibits good size and power properties in the simulations, and the minimal intervals produced by it identify the time regions where $m$ increases/decreases in a quite reliable way. SiZer performs clearly worse in these respects. Nevertheless, it may still produce informative SiZer plots. All in all, we would like to regard the two methods as complementary rather than direct competitors. SiZer is an explorative tool which aims to give an overview of the increases/decreases in $m$ by means of a SiZer plot. Our method, in contrast, is tailored to be a rigorous statistical test of the hypothesis $H_0$. In particular, it allows to make rigorous confidence statements about the time regions where the trend $m$ increases/decreases.

\subsection{Small sample properties of the long-run variance estimator}\label{subsec-sim-3}

In the final part of the simulation study, we examine the estimators of the AR para\-meters and the long-run error variance from Section \ref{subsec-error-var-AR}. We simulate data from the model $Y_{t,T} = m(t/T) + \varepsilon_t$, where $\{ \varepsilon_t\}$ is an AR($1$) process of the form $\varepsilon_t = a_1 \varepsilon_{t-1} + \eta_t$. We consider the AR parameters $a_1 \in \{-0.95,-0.75,-0.5,-0.25,0.25,0.5,0.75,0.95\}$ and let $\eta_t$ be i.i.d.\ standard normal innovation terms. We report our findings for a specific sample size $T$, in particular for $T=500$, as the results for other sample sizes are very similar. For simplicity, $m$ is chosen to be a linear function of the form $m(u) = \beta u$ with the slope parameter $\beta$. For each value of $a_1$, we consider two different slopes $\beta$, one corresponding to a moderate and one to a pronounced trend $m$. In particular, we let $\beta = s_\beta \sqrt{\var(\varepsilon_t)}$ with $s_\beta \in \{1,10\}$. When $s_\beta = 1$, the slope $\beta$ is equal to the standard deviation $\sqrt{\var(\varepsilon_t)}$ of the error process, which yields a moderate trend $m$. When $s_\beta = 10$, in contrast, the slope $\beta$ is $10$ times as large as $\sqrt{\var(\varepsilon_t)}$, which results in a quite pronounced trend $m$.

For each model specification, we generate $S=1000$ data samples and compute the following quantities for each simulated sample: 
\begin{enumerate}[label=(\roman*),leftmargin=0.9cm]
\item the pilot estimator $\widetilde{a}_q$ from \eqref{est-AR-FS} with the tuning parameter $q$.
\item the estimator $\widehat{a}$ from \eqref{est-AR} with the tuning parameter $\overline{r}$ as well as the long-run variance estimator $\widehat{\sigma}^2$ from \eqref{est-lrv}. 
\item the estimators of $a_1$ and $\sigma^2$ from \cite{Hall2003}, which are denoted by $\widehat{a}_{\text{HvK}}$ and $\widehat{\sigma}^2_{\text{HvK}}$ for ease of reference. The estimator $\widehat{a}_{\text{HvK}}$ is computed as described in Section 2.2 of \cite{Hall2003} and $\widehat{\sigma}^2_{\text{HvK}}$ as defined at the bottom of p.447 in Section 2.3. The estimator $\widehat{a}_{\text{HvK}}$ (as well as $\widehat{\sigma}^2_{\text{HvK}}$) depends on two tuning parameters which we denote by $m_1$ and $m_2$ as in \cite{Hall2003}. 
\item oracle estimators $\widehat{a}_{\text{oracle}}$ and $\widehat{\sigma}^2_{\text{oracle}}$ of $a_1$ and $\sigma^2$, which are constructed under the assumption that the error process $\{\varepsilon_t\}$ is observed. For each simulation run, we compute $\widehat{a}_{\text{oracle}}$ as the maximum likelihood estimator of $a_1$ from the time series of simulated error terms $\varepsilon_1,\ldots,\varepsilon_T$. We then calculate the residuals $r_t = \varepsilon_t - \widehat{a}_{\text{oracle}} \, \varepsilon_{t-1}$ and estimate the innovation variance $\nu^2 = \ex[\eta_t^2]$ by $\widehat{\nu}_{\text{oracle}}^2 = (T-1)^{-1} \sum_{t=2}^T r_t^2$. Finally, we set $\widehat{\sigma}^2_{\text{oracle}} = \widehat{\nu}_{\text{oracle}}^2 / (1 - \widehat{a}_{\text{oracle}})^2$. 
\end{enumerate}
Throughout the section, we set $q = 25$, $\overline{r} = 10$ and $(m_1,m_2) = (20,30)$. We in particular choose $q$ to be in the middle of $m_1$ and $m_2$ to make the tuning parameters of the estimators $\widetilde{a}_q$ and $\widehat{a}_{\text{HvK}}$ more or less comparable. In order to assess how sensitive our estimators are to the choice of $q$ and $\overline{r}$, we carry out a number of robustness checks, considering a range of different values for $q$ and $\overline{r}$. In addition, we vary the tuning parameters $m_1$ and $m_2$ of the estimators from \cite{Hall2003} in order to make sure that the results of our comparison study are not driven by the particular choice of any of the involved tuning parameters. The results of our robustness checks are reported in Section S.3 of the Supplementary Material. They show that the results of our comparison study are robust to 
different choices of the parameters $q$, $\overline{r}$ and $(m_1,m_2)$. Moreover, they indicate that our estimators are rather insensitive to the choice of tuning parameters.

\begin{figure}[t!]
\begin{subfigure}[b]{0.475\textwidth}
\includegraphics[width=\textwidth]{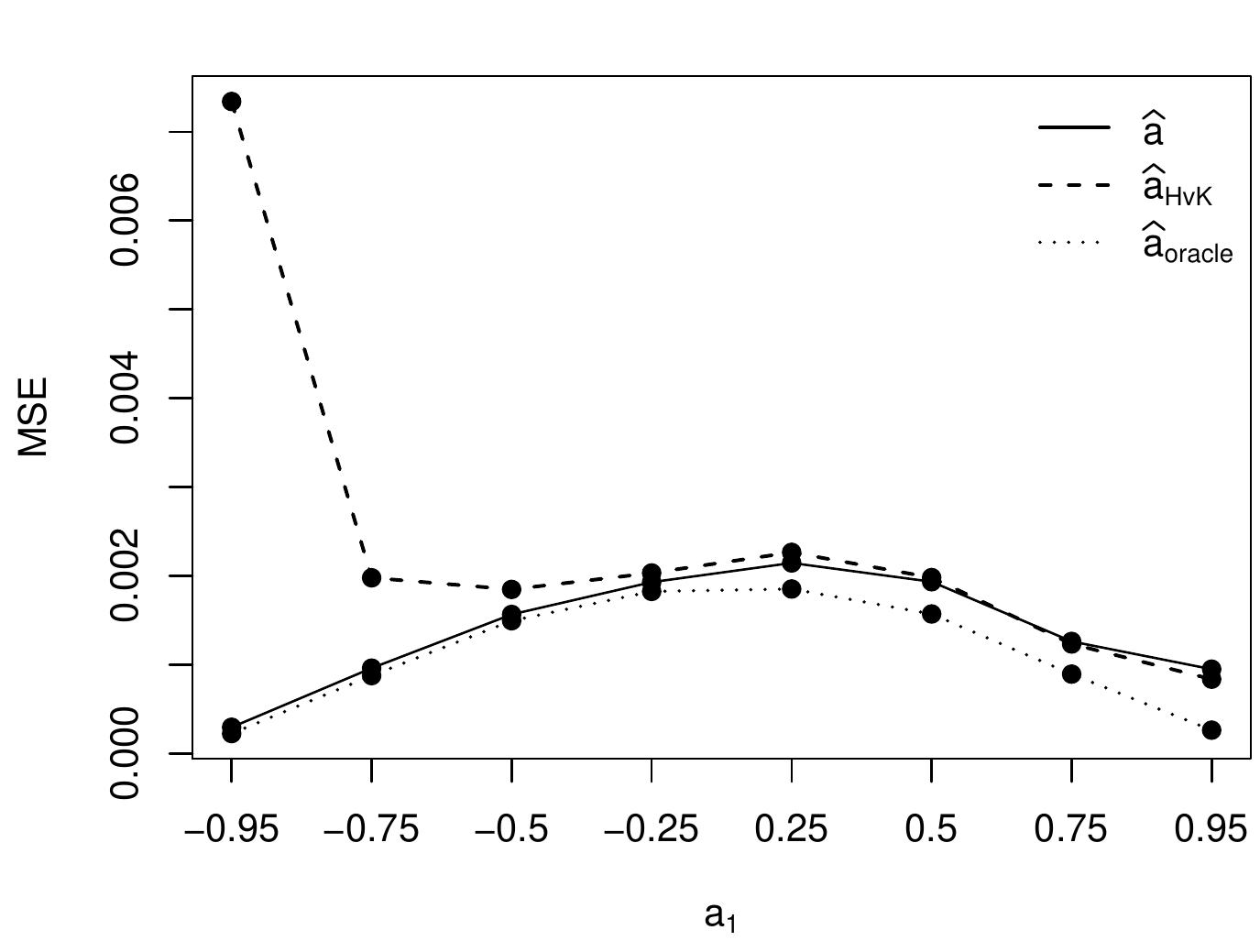}
\end{subfigure}\hspace{0.25cm}
\begin{subfigure}[b]{0.475\textwidth}
\includegraphics[width=\textwidth]{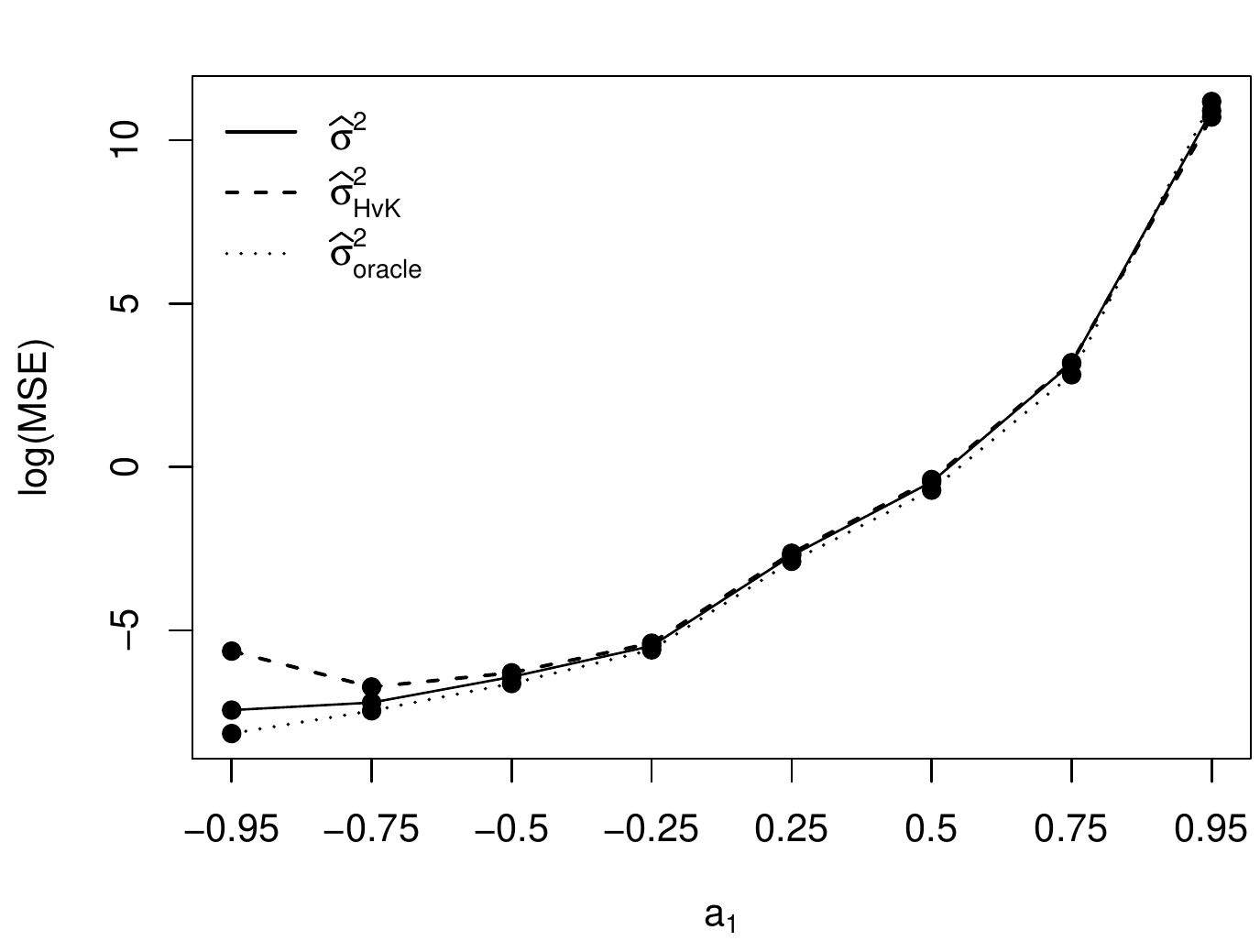}
\end{subfigure}
\caption{MSE values for the estimators $\widehat{a}$, $\widehat{a}_{\text{HvK}}$, $\widehat{a}_{\text{oracle}}$ and $\widehat{\sigma}^2$, $\widehat{\sigma}^2_{\text{HvK}}$, $\widehat{\sigma}^2_{\text{oracle}}$ in the simulation scenarios with a moderate trend ($s_\beta=1$).}\label{fig:MSE_slope1}
\end{figure}

\begin{figure}[t!]
\begin{subfigure}[b]{0.475\textwidth}
\includegraphics[width=\textwidth]{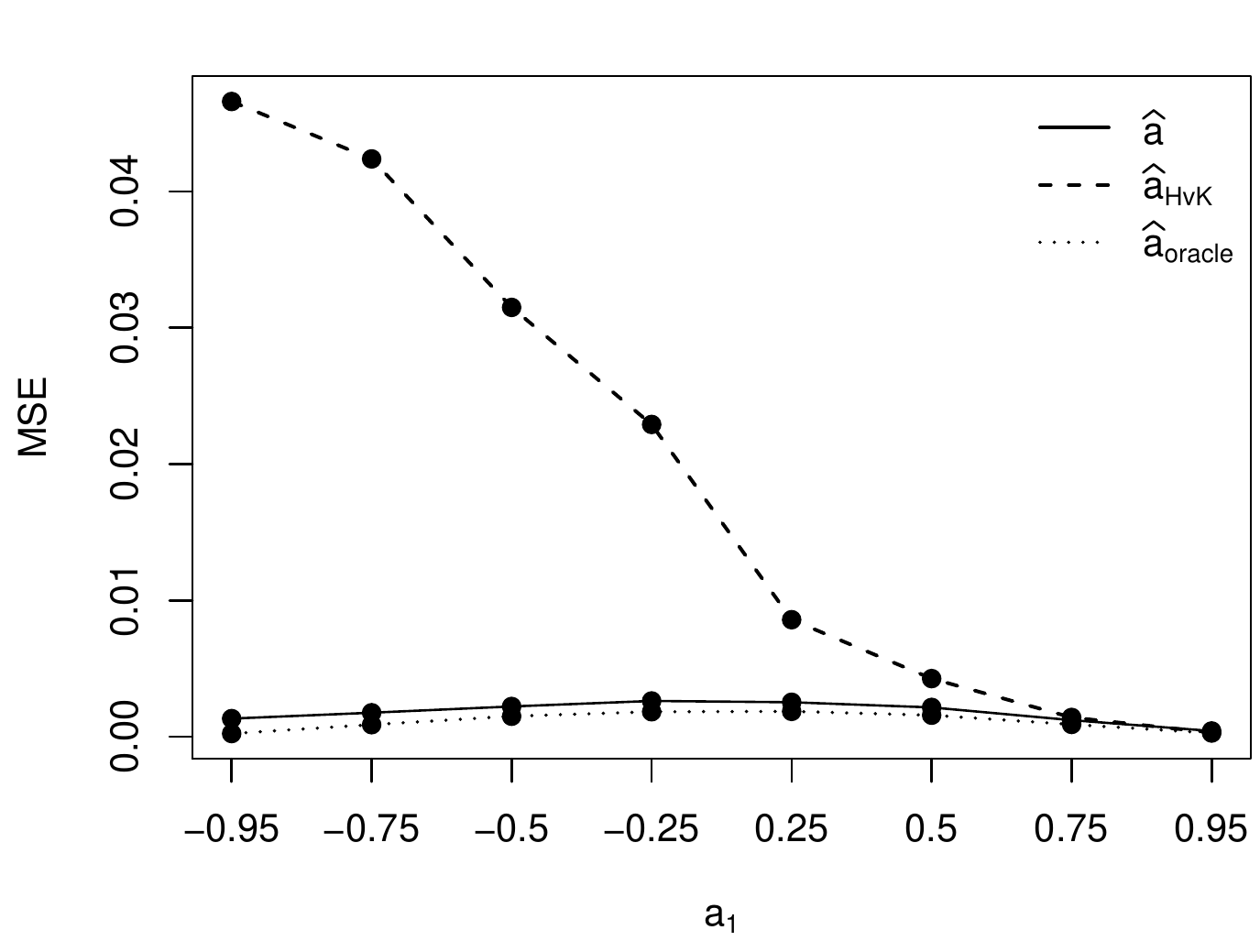}
\end{subfigure}\hspace{0.25cm}
\begin{subfigure}[b]{0.475\textwidth}
\includegraphics[width=\textwidth]{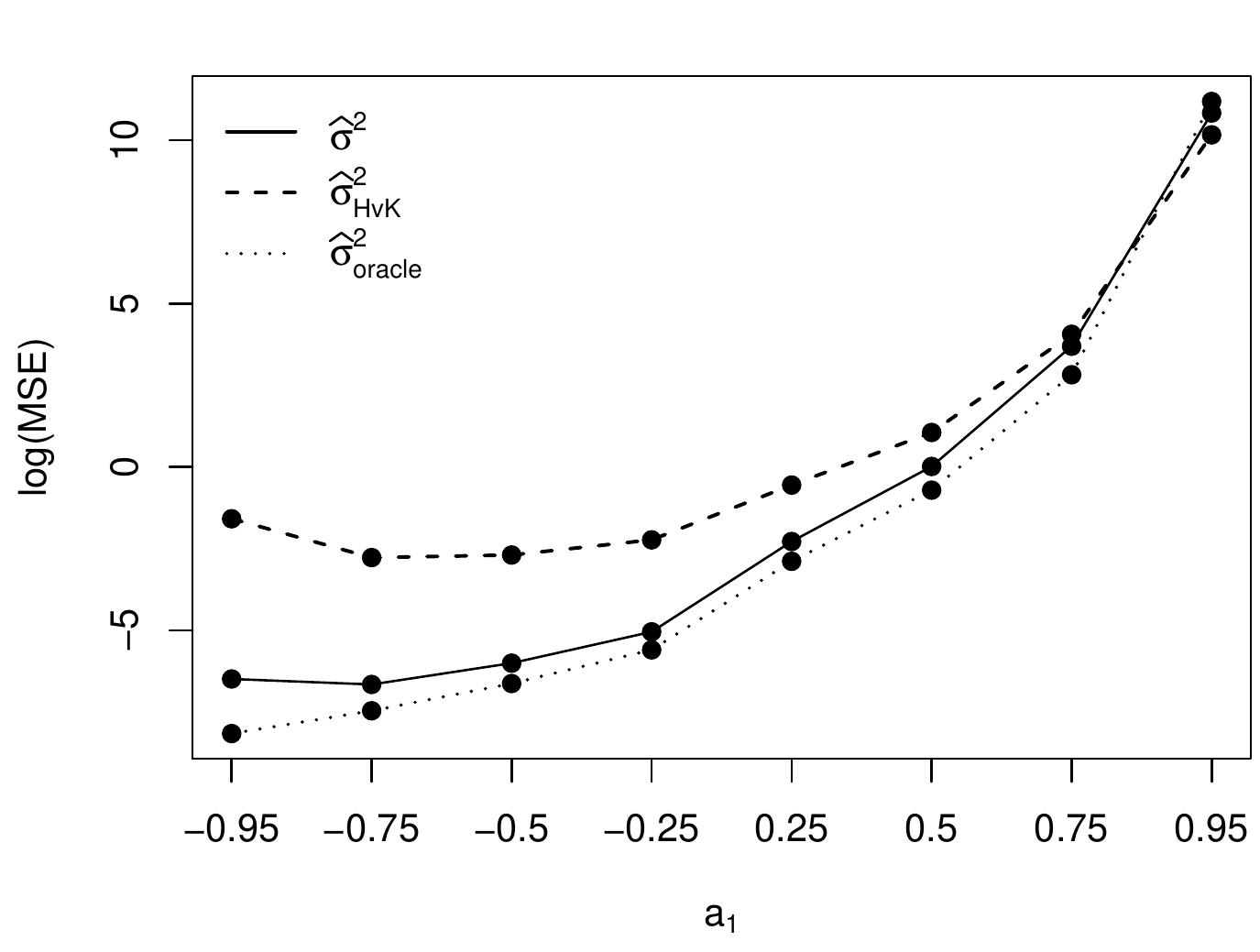}
\end{subfigure}
\caption{MSE values for the estimators $\widehat{a}$, $\widehat{a}_{\text{HvK}}$, $\widehat{a}_{\text{oracle}}$ and $\widehat{\sigma}^2$, $\widehat{\sigma}^2_{\text{HvK}}$, $\widehat{\sigma}^2_{\text{oracle}}$ in the simulation scenarios with a pronounced trend ($s_\beta=10$).}\label{fig:MSE_slope10}
\end{figure}

\begin{figure}[t!]
\centering
\begin{subfigure}[b]{\textwidth}
\includegraphics[width=\textwidth]{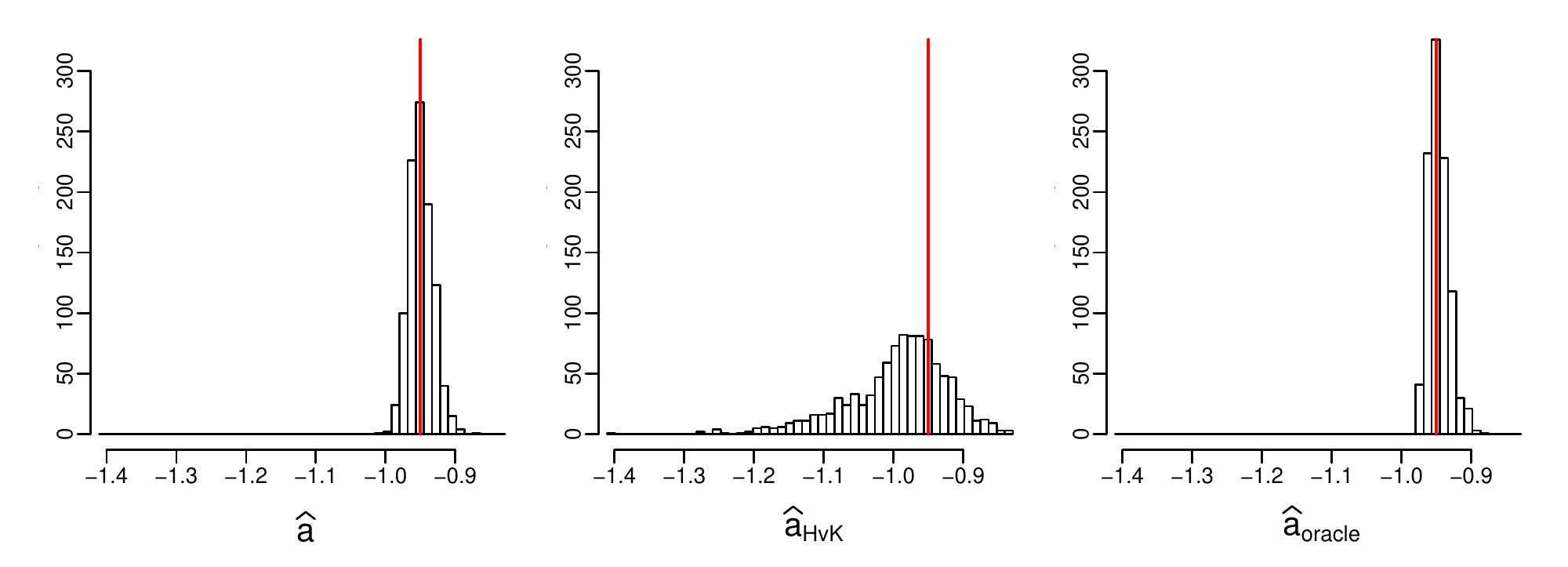}
\end{subfigure}
\begin{subfigure}[b]{\textwidth}
\includegraphics[width=\textwidth]{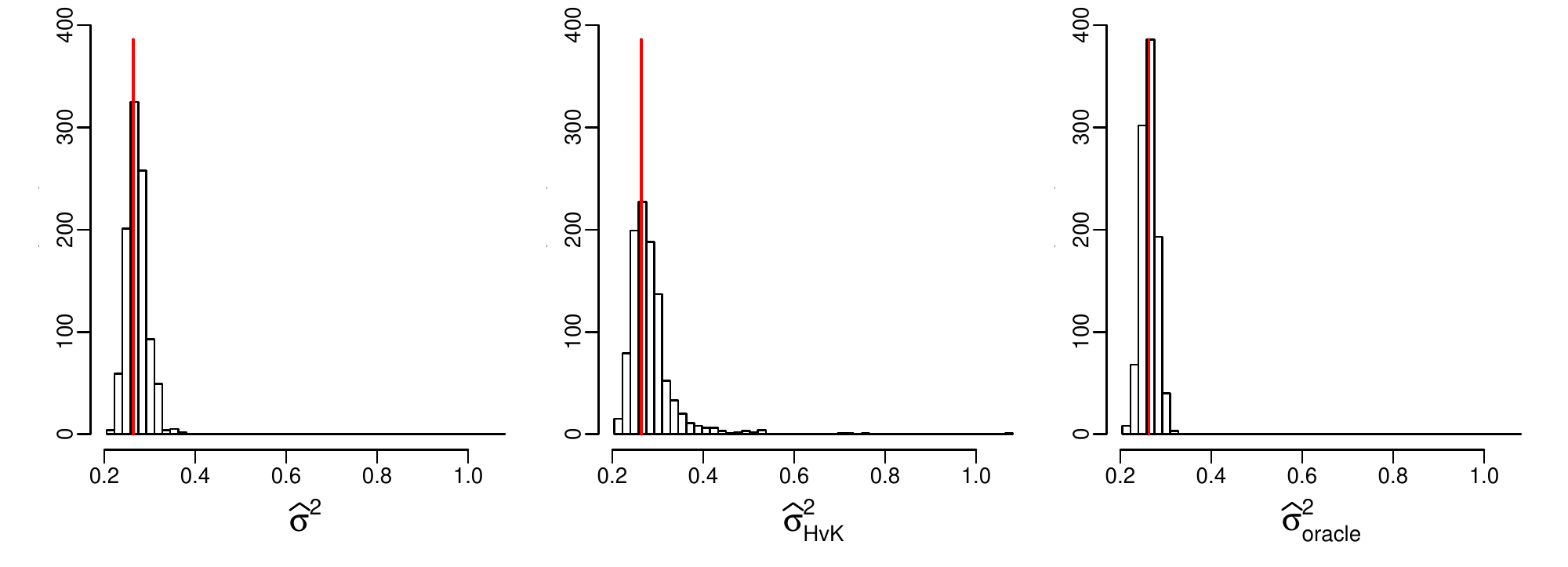}
\end{subfigure}
\caption{Histograms of the simulated values produced by the estimators $\widehat{a}$, $\widehat{a}_{\text{HvK}}$, $\widehat{a}_{\text{oracle}}$ and $\widehat{\sigma}^2$, $\widehat{\sigma}^2_{\text{HvK}}$, $\widehat{\sigma}^2_{\text{oracle}}$ in the scenario with $a_1 = -0.95$ and $s_\beta = 1$. The vertical red lines indicate the true values of $a_1$ and $\sigma^2$.}\label{fig:hist_scenario1} 
\end{figure}

\begin{figure}[t!]
\centering
\begin{subfigure}[b]{\textwidth}
\includegraphics[width=\textwidth]{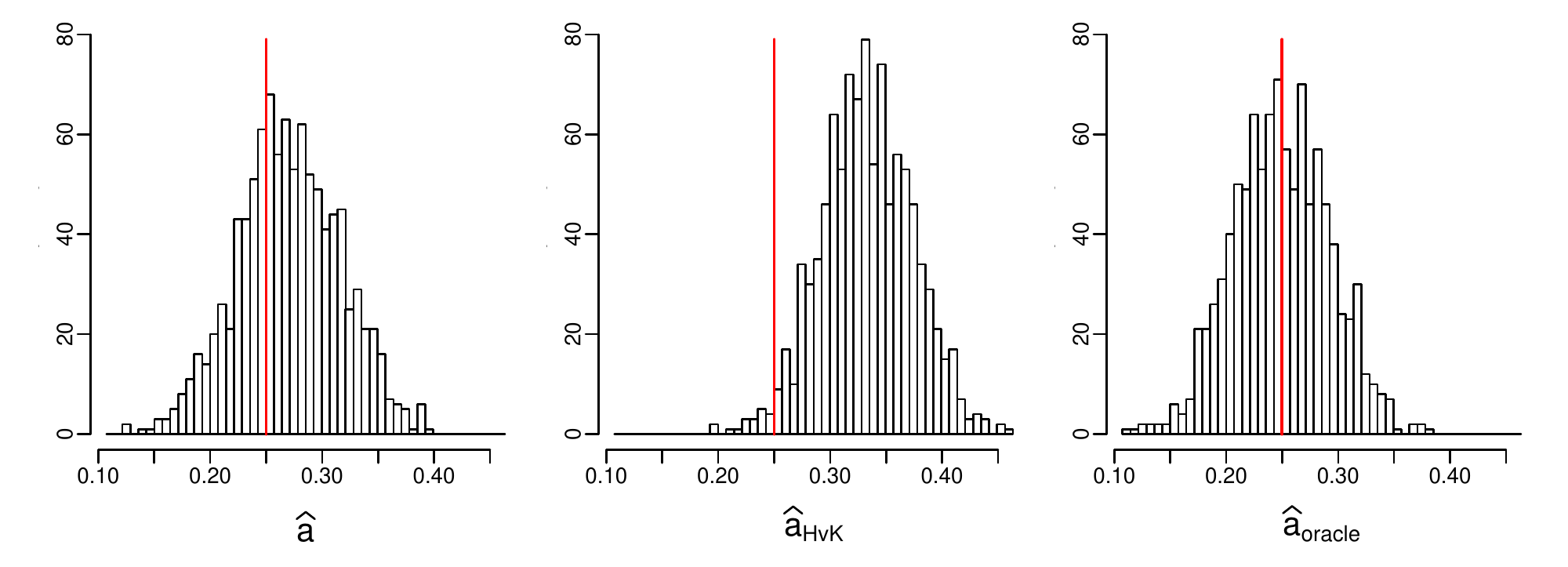}
\end{subfigure}
\begin{subfigure}[b]{\textwidth}
\includegraphics[width=\textwidth]{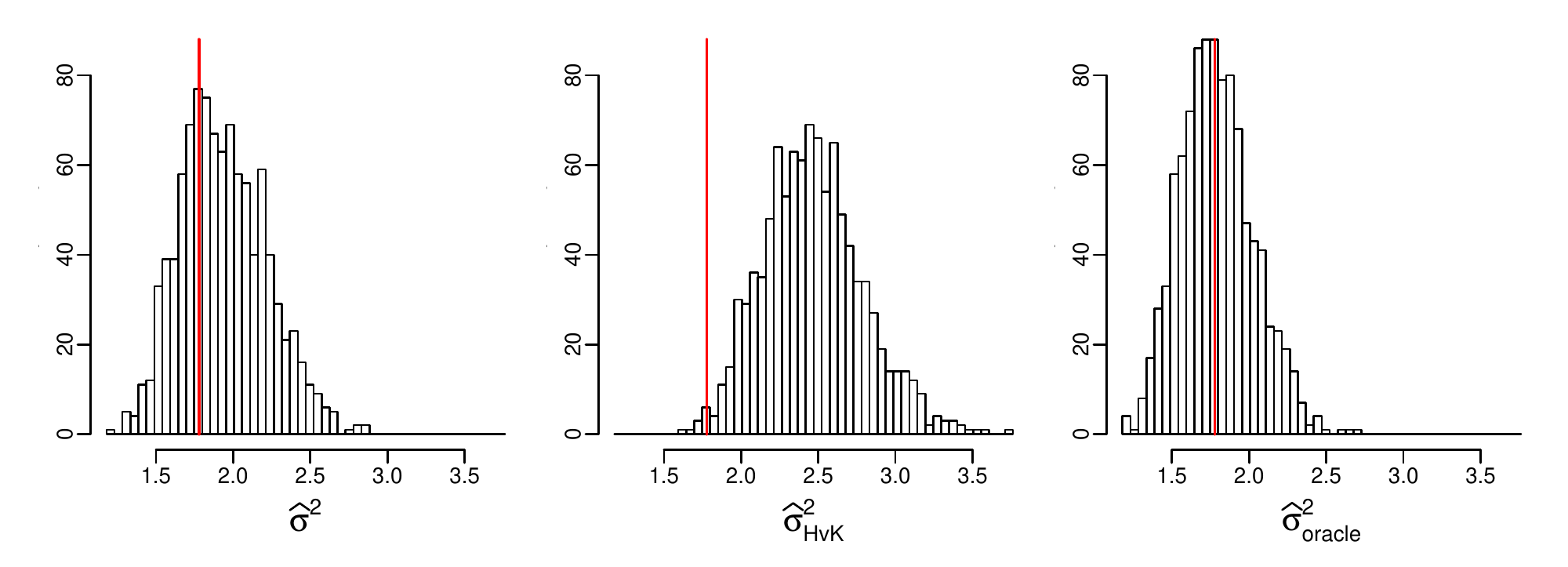}
\end{subfigure}
\caption{Histograms of the simulated values produced by the estimators $\widehat{a}$, $\widehat{a}_{\text{HvK}}$, $\widehat{a}_{\text{oracle}}$ and $\widehat{\sigma}^2$, $\widehat{\sigma}^2_{\text{HvK}}$, $\widehat{\sigma}^2_{\text{oracle}}$ in the scenario with $a_1 = 0.25$ and $s_\beta = 10$. The vertical red lines indicate the true values of $a_1$ and $\sigma^2$.}\label{fig:hist_scenario2} 
\end{figure}

For each estimator $\widehat{a}$, $\widehat{a}_{\text{HvK}}$, $\widehat{a}_{\text{oracle}}$ and $\widehat{\sigma}^2$, $\widehat{\sigma}^2_{\text{HvK}}$, $\widehat{\sigma}^2_{\text{oracle}}$ and for each model specification, the simulation output consists in a vector of length $S=1000$ which contains the $1000$ simulated values of the respective estimator. Figures \ref{fig:MSE_slope1} and \ref{fig:MSE_slope10} report the mean squared error (MSE) of these $1000$ simulated values for each estimator. On the $x$-axis of each plot, the various values of the AR parameter $a_1$ are listed which are considered. The solid line in each plot gives the MSE values of our estimators. The dashed and dotted lines specify the MSE values of the HvK and the oracle estimators, respectively. Note that for the long-run variance estimators, the plots report the logarithm of the MSE rather than the MSE itself since the MSE values are too different across simulation scenarios to obtain a reasonable graphical presentation. In addition to the MSE values presented in Figures \ref{fig:MSE_slope1} and \ref{fig:MSE_slope10}, we depict histograms of the $1000$ simulated values produced by the estimators $\widehat{a}$, $\widehat{a}_{\text{HvK}}$, $\widehat{a}_{\text{oracle}}$ and $\widehat{\sigma}^2$, $\widehat{\sigma}^2_{\text{HvK}}$, $\widehat{\sigma}^2_{\text{oracle}}$ for two specific simulation scenarios in Figures \ref{fig:hist_scenario1} and \ref{fig:hist_scenario2}. The main findings can be summarized as follows:  
\begin{enumerate}[label=(\alph*),leftmargin=0.7cm]

\item In the simulation scenarios with a moderate trend ($s_\beta = 1$), the estimators $\widehat{a}_{\text{HvK}}$ and $\widehat{\sigma}^2_{\text{HvK}}$ of \cite{Hall2003} exhibit a similar performance as our estimators $\widehat{a}$ and $\widehat{\sigma}^2$ as long as the AR parameter $a_1$ is not too close to $-1$. For strongly negative values of $a_1$ (in particular for $a_1 = -0.75$ and $a_1 = -0.95$), the estimators perform much worse than ours. This can be clearly seen from the much larger MSE values of the estimators  $\widehat{a}_{\text{HvK}}$ and $\widehat{\sigma}^2_{\text{HvK}}$ for $a_1 = -0.75$ and $a_1 = -0.95$ in Figure \ref{fig:MSE_slope1}. Figure \ref{fig:hist_scenario1} gives some further insights into what is happening here. It shows the histograms of the simulated values produced by the estimators $\widehat{a}$, $\widehat{a}_{\text{HvK}}$, $\widehat{a}_{\text{oracle}}$ and the corresponding long-run variance estimators in the scenario with $a_1=-0.95$ and $s_\beta = 1$. As can be seen, the estimator $\widehat{a}_{\text{HvK}}$ does not obey the causality restriction $|a_1| \le 1$ but frequently takes values substantially smaller than $-1$. This results in a very large spread of the histogram and thus in a disastrous performance of the estimator.\footnote{One could of course set $\widehat{a}_{\text{HvK}}$ to $-(1 - \delta)$ for some small $\delta > 0$ whenever it takes a value smaller than $-1$. This modified estimator, however, is still far from performing in a satisfying way when $a_1$ is close to $-1$.} A similar point applies to the histogram of the long-run variance estimator $\widehat{\sigma}^2_{\text{HvK}}$. Our estimators $\widehat{a}$ and $\widehat{\sigma}^2$, in contrast, exhibit a stable behaviour in this case. \\ 
Interestingly, the estimator $\widehat{a}_{\text{HvK}}$ (as well as the corresponding long-run variance estimator $\widehat{\sigma}^2_{\text{HvK}}$) performs much worse than ours for large negative values but not for large positive values of $a_1$. This can be explained as follows: In the special case of an AR($1$) process, the estimator $\widehat{a}_{\text{HvK}}$ may produce estimates smaller than $-1$ but it cannot become larger than $1$. This can be easily seen upon inspecting the definition of the estimator. Hence, for large positive values of $a_1$, the estimator $\widehat{a}_{\text{HvK}}$ performs well as it satisfies the causality restriction that the estimated AR parameter should be smaller than $1$. 

\item In the simulation scenarios with a pronounced trend ($s_\beta = 10$), the estimators of \cite{Hall2003} are clearly outperformed by ours for most of the AR parameters $a_1$ under consideration. In particular, their MSE values reported in Figure \ref{fig:MSE_slope10} are much larger than the values produced by our estimators for most parameter values $a_1$. The reason is the following: The HvK estimators have a strong bias since the pronounced trend with $s_\beta = 10$ is not eliminated appropriately by the underlying differencing methods. This point is illustrated by Figure \ref{fig:hist_scenario2} which shows histograms of the simulated values for the estimators $\widehat{a}$, $\widehat{a}_{\text{HvK}}$, $\widehat{a}_{\text{oracle}}$ and the corresponding long-run variance estimators in the scenario with $a_1=0.25$ and $s_\beta = 10$. As can be seen, the histogram produced by our estimator $\widehat{a}$ is approximately centred around the true value $a_1 = 0.25$, whereas that of the estimator $\widehat{a}_{\text{HvK}}$ is strongly biased upwards. A similar picture arises for the long-run variance estimators $\widehat{\sigma}^2$ and $\widehat{\sigma}^2_{\text{HvK}}$. \\
Whereas the methods of \cite{Hall2003} perform much worse than ours for negative and moderately positive values of $a_1$, the performance (in terms of MSE) is fairly similar for large values of $a_1$. This can be explained as follows: When the trend $m$ is not eliminated appropriately by taking differences, this creates spurious persistence in the data. Hence, the estimator $\widehat{a}_{\text{HvK}}$ tends to overestimate the AR parameter $a_1$, that is, $\widehat{a}_{\text{HvK}}$ tends to be larger in absolute value than $a_1$. Very loosely speaking, when the parameter $a_1$ is close to $1$, say $a_1 = 0.95$, there is not much room for overestimation since $\widehat{a}_{\text{HvK}}$ cannot become larger than $1$. Consequently, the effect of not eliminating the trend appropriately has a much smaller impact on $\widehat{a}_{\text{HvK}}$ for large positive values of $a_1$. 

\end{enumerate}

\setcounter{equation}{0}
\section{Application}\label{sec-data}

The analysis of time trends in long temperature records is an important task in climatology. Information on the shape of the trend is needed in order to better understand long-term climate variability. The Central England temperature record is the longest instrumental temperature time series in the world. It is a valuable asset for analysing climate variability over the last few hundred years. The data is publicly available on the webpage of the UK Met Office. A detailed description of the data can be found in \cite{Parker1992}. For our analysis, we use the dataset of yearly mean temperatures which consists of $T=359$ observations covering the years from $1659$ to $2017$.

We assume that the data follow the nonparametric trend model $Y_{t,T} = m(t/T) + \varepsilon_t$, where $m$ is the unknown time trend of interest. The error process $\{ \varepsilon_t \}$ is supposed to have the AR($p$) structure $\varepsilon_t = \sum_{j=1}^p a_j \varepsilon_{t-j} + \eta_t$, where $\eta_t$ are i.i.d.\ innovations with mean $0$ and variance $\nu^2$. As pointed out in \cite{Mudelsee2010} among others, this is the most widely used error model for discrete climate time series. To select the AR order $p$, we proceed as follows: We estimate the AR parameters and the corresponding variance of the innovation terms for different AR orders by our methods from Section \ref{subsec-error-var-AR} and choose $p$ to be the minimizer of the Bayesian information criterion (BIC). This yields the AR order $p = 2$. We then estimate the parameters $\boldsymbol{a} = (a_1,a_2)$ and the long-run error variance $\sigma^2$ by the estimators $\widehat{\boldsymbol{a}} = (\widehat{a}_1,\widehat{a}_2)$ and $\widehat{\sigma}^2$, which gives the values  $\widehat{a}_1 = 0.167$, $\widehat{a}_2 = 0.178$ and $\widehat{\sigma}^2 = 0.749$. To select the AR order $p$ and to produce the estimators $\widehat{\boldsymbol{a}}$ and $\widehat{\sigma}^2$, we set $q = 25$ and $\overline{r} = 10$ as in the simulation study of Section \ref{subsec-sim-1}.\footnote{As a robustness check, we have repeated the process of order selection and parameter estimation for other values of $q$ and $\overline{r}$ as well as for other criteria such as FPE, AIC and AICC, which gave similar results.}

\begin{figure}[t]
\centering
\includegraphics[width=0.8\textwidth]{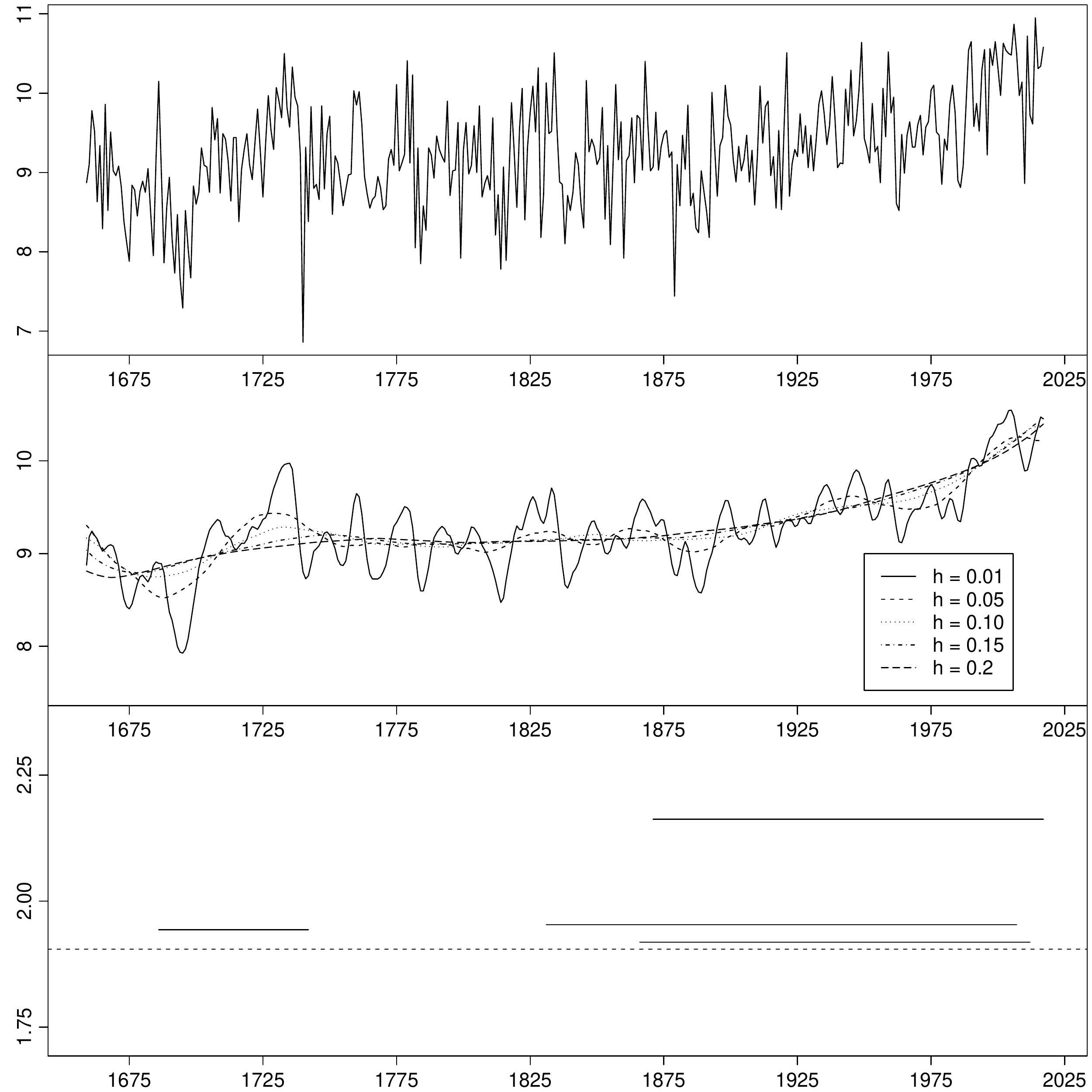}
\caption{Summary of the application results. The upper panel shows the Central England mean temperature time series. The middle panel depicts local linear kernel estimates of the time trend for a number of different bandwidths $h$. The lower panel presents the minimal intervals in the set $\Pi_T^+$ produced by the multiscale test. These are $[1686,1742]$, $[1831,2007]$, $[1866, 2012]$ and $[1871,2017]$.}\label{plot-results-app1}
\end{figure}

\newpage
With the help of our multiscale method from Section \ref{sec-method}, we now test the null hypothesis $H_0$ that $m$ is constant on all intervals $[u-h,u+h]$ with $(u,h) \in \mathcal{G}_T$, where we use the grid $\mathcal{G}_T$ defined in \eqref{grid-sim-app}. To do so, we set the significance level to $\alpha = 0.05$ and implement the test in exactly the same way as in the simulations of Section \ref{subsec-sim-1}. The results are presented in Figure \ref{plot-results-app1}. The upper panel shows the raw temperature time series, whereas the middle panel depicts local linear kernel estimates of the trend $m$ for different bandwidths $h$. As one can see, the shape of the estimated time trend strongly differs with the chosen bandwidth. When the bandwidth is small, there are many local increases and decreases in the estimated trend. When the bandwidth is large, most of these local variations get smoothed out. Hence, by themselves, the nonparametric fits do not give much information on whether the trend $m$ is increasing or decreasing in certain time regions.

Our multiscale test provides this kind of information, which is summarized in the lower panel of Figure \ref{plot-results-app1}. The plot depicts the minimal intervals contained in the set $\Pi_T^+$, which is defined in Section \ref{subsec-method-theo}. The set of intervals $\Pi_T^-$ is empty in the present case. The height at which a minimal interval $I_{u,h} = [u-h,u+h] \in \Pi_T^+$ is plotted indicates the value of the corresponding (additively corrected) test statistic $\widehat{\psi}_T(u,h) / \widehat{\sigma} - \lambda(h)$. The dashed line specifies the critical value $q_T(\alpha)$, where $\alpha = 0.05$ as already mentioned above. According to Proposition \ref{prop-test-3}, we can make the following simultaneous confidence statement about the collection of minimal intervals in $\Pi_T^+$. We can claim, with confidence of about $95\%$, that the trend function $m$ has some increase on each minimal interval. More specifically, we can claim with this confidence that there has been some upward movement in the trend both in the period from around $1680$ to $1740$ and in the period from about $1870$ onwards. Hence, our test in particular provides evidence that there has been some warming trend in the period over approximately the last $150$ years. On the other hand, as the set $\Pi_T^-$ is empty, there is no evidence of any downward movement of the trend.

\bibliographystyle{ims}
{\small
\setlength{\bibsep}{0.55em}
\bibliography{bibliography}}

\newpage
\renewcommand{\baselinestretch}{1.0}\normalsize

\headingSupplement{Supplement to}{``Multiscale Inference and}{Long-Run Variance Estimation}{in Nonparametric Regression}{with Time Series Errors''}
\authors{Marina Khismatullina}{University of Bonn}{Michael Vogt}{University of Bonn} 

\vspace{-1.0cm}

\renewcommand{\abstractname}{}
\begin{abstract}
\noindent In this supplement, we provide the technical details and proofs that are omitted in the paper. In addition, we report the results of some robustness checks which complement the simulation exercises in Section \ref{sec-sim} of the paper. 
\end{abstract}

\renewcommand{\baselinestretch}{1.2}\normalsize
\renewcommand{\theequation}{S.\arabic{equation}}
\def\thesection{S.\arabic{section}}
\def\thefigure{S.\arabic{figure}}
\def\thetable{S.\arabic{table}}
\setcounter{section}{0}
\setcounter{equation}{0}
\setcounter{figure}{0}
\setcounter{table}{0}
\allowdisplaybreaks[4]

\section{Proofs of the results from Section \ref{sec-method}}\label{sec-supp-proofs1}

In this section, we prove the theoretical results from Section \ref{sec-method}. We use the following notation: The symbol $C$ denotes a universal real constant which may take a different value on each occurrence. For $a,b \in \reals$, we write $a_+ = \max \{0,a\}$ and $a \vee b = \max\{a,b\}$. For any set $A$, the symbol $|A|$ denotes the cardinality of $A$. The notation $X \stackrel{\mathcal{D}}{=} Y$ means that the two random variables $X$ and $Y$ have the same distribution. Finally, $f_0(\cdot)$ and $F_0(\cdot)$ denote the density and distribution function of the standard normal distribution, respectively.

\subsection*{Auxiliary results using strong approximation theory}

The main purpose of this section is to prove that there is a version of the multiscale statistic $\widehat{\Phi}_T$ defined in \eqref{Phi-hat-statistic} which is close to a Gaussian statistic whose distribution is known. More specifically, we prove the following result. 
\begin{propA}\label{propA-strong-approx}
Under the conditions of Theorem \ref{theo-stat}, there exist statistics $\widetilde{\Phi}_T$ for $T = 1,2,\ldots$ with the following two properties: (i) $\widetilde{\Phi}_T$ has the same distribution as $\widehat{\Phi}_T$ for any $T$, and (ii)
\[ \big| \widetilde{\Phi}_T - \Phi_T \big| = o_p \Big( \frac{T^{1/q}}{\sqrt{T h_{\min}}} + \rho_T \sqrt{\log T} \Big), \]
where $\Phi_T$ is a Gaussian statistic as defined in \eqref{Phi-statistic}. 
\end{propA}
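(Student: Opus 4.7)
The proposition should be proved by coupling $\widehat{\Phi}_T$ to $\Phi_T$ through a strong approximation and then bounding the difference via summation by parts. The plan has three steps: (i) construct the coupling, (ii) control the pointwise difference $\widetilde{\phi}_T(u,h)/\widetilde\sigma - \phi_T(u,h)/\sigma$ through Abel summation together with weight estimates, and (iii) pass to the max over $\mathcal{G}_T$ while absorbing the replacement of $\sigma$ by its estimator.

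\textbf{Step 1: Coupling.} On a sufficiently rich probability space, I would invoke the strong approximation result of \cite{BerkesLiuWu2014} for causal processes obeying the physical dependence conditions \ref{C-err1}--\ref{C-err3}. This yields a copy $\{\tilde\varepsilon_t\}$ of $\{\varepsilon_t\}$ coupled with the same i.i.d.\ standard normals $Z_t$ that define $\Phi_T$ such that
\[
\max_{1 \le k \le T} \Big| \sum_{t=1}^k (\tilde\varepsilon_t - \sigma Z_t) \Big| = o_{\mathrm{a.s.}}(T^{1/q}),
\]
where the rate is precisely what \ref{C-err2}--\ref{C-err3} deliver. Writing $\widetilde\phi_T(u,h) = \sum_{t=1}^T w_{t,T}(u,h)\, \tilde\varepsilon_t$ and letting $\widetilde\sigma$ denote the functional of $\{\tilde\varepsilon_t\}$ corresponding to the estimator $\widehat\sigma$ based on $\{\varepsilon_t\}$, set
\[
\widetilde\Phi_T = \max_{(u,h) \in \mathcal{G}_T} \Big\{\Big|\frac{\widetilde\phi_T(u,h)}{\widetilde\sigma}\Big| - \lambda(h)\Big\}.
\]
Then $\widetilde\Phi_T \stackrel{\mathcal{D}}{=} \widehat\Phi_T$ and the hypothesis $\widehat\sigma^2 - \sigma^2 = o_p(\rho_T)$ transfers to $\widetilde\sigma$ verbatim, so (i) in the statement holds.

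\textbf{Step 2: Summation by parts and weight estimates.} Let $S_k = \sum_{t=1}^k(\tilde\varepsilon_t - \sigma Z_t)$, so $\max_k |S_k| = o_{\mathrm{a.s.}}(T^{1/q})$ by Step~1. Abel summation gives
\[
\widetilde\phi_T(u,h) - \phi_T(u,h) = \sum_{t=1}^{T-1}\big(w_{t,T}(u,h) - w_{t+1,T}(u,h)\big) S_t + w_{T,T}(u,h) S_T.
\]
A direct inspection of \eqref{weights}, using the compact support and Lipschitz continuity of $K$ from \ref{C-ker}, shows that for every $(u,h) \in \mathcal{G}_T$: (a) at most $O(Th)$ weights are nonzero; (b) $|w_{t,T}(u,h)| = O((Th)^{-1/2})$; (c) consecutive weight differences obey $|w_{t,T}(u,h) - w_{t+1,T}(u,h)| = O((Th)^{-3/2})$, with constants independent of $(u,h)$. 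Combining (a)--(c) with the strong-approximation rate yields
\[
\max_{(u,h) \in \mathcal{G}_T} |\widetilde\phi_T(u,h) - \phi_T(u,h)| = o_{\mathrm{a.s.}}\!\Big(\frac{T^{1/q}}{\sqrt{T h_{\min}}}\Big).
\]

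\textbf{Step 3: Replacing $\widetilde\sigma$ by $\sigma$ and passing to the maximum.} Decompose
\[
\frac{\widetilde\phi_T(u,h)}{\widetilde\sigma} - \frac{\phi_T(u,h)}{\sigma} = \frac{\widetilde\phi_T(u,h) - \phi_T(u,h)}{\widetilde\sigma} + \frac{\phi_T(u,h)}{\sigma}\cdot\frac{\sigma - \widetilde\sigma}{\widetilde\sigma}.
\]
The first summand is $o_p(T^{1/q}/\sqrt{T h_{\min}})$ uniformly in $(u,h)$ by Step~2 combined with $\widetilde\sigma \convp \sigma > 0$. For the second, $\mathrm{Var}(\phi_T(u,h)/\sigma) = 1$ together with $|\mathcal{G}_T| = O(T^\theta)$ from \ref{C-grid} and a standard Gaussian maximal inequality give $\max_{(u,h) \in \mathcal{G}_T}|\phi_T(u,h)/\sigma| = O_p(\sqrt{\log T})$; combined with $(\widetilde\sigma - \sigma) = o_p(\rho_T)$, this summand is $o_p(\rho_T \sqrt{\log T})$ uniformly. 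Using the elementary inequality $\big|\,|a|-\lambda - (|b|-\lambda)\,\big| \le |a-b|$ termwise and then taking maxima:
\[
|\widetilde\Phi_T - \Phi_T| \le \max_{(u,h) \in \mathcal{G}_T}\Big|\frac{\widetilde\phi_T(u,h)}{\widetilde\sigma} - \frac{\phi_T(u,h)}{\sigma}\Big| = o_p\!\Big(\frac{T^{1/q}}{\sqrt{T h_{\min}}} + \rho_T \sqrt{\log T}\Big).
\]

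\textbf{Main obstacle.} The substantial technical work is the uniform weight-increment estimate (c), which must hold simultaneously for all bandwidths down to $h_{\min}$ and at all design points including those near the boundary of $[0,1]$. One has to expand $\Lambda_{t,T}(u,h)$ and the normalizing factor $(\sum_s \Lambda_{s,T}(u,h)^2)^{1/2}$, show that the normalizer is bounded below by a multiple of $\sqrt{Th}$ uniformly in $(u,h) \in \mathcal{G}_T$ (this is where condition \ref{C-h} on $h_{\min}$ matters, to guarantee enough design points fall in each window), and then use Lipschitzness of $K$ to bound the numerator increment. Everything else is routine: Step~1 quotes \cite{BerkesLiuWu2014}, Step~3 quotes a Gaussian maximal inequality and the assumption on $\widehat\sigma$.
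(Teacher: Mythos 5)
Your proposal is correct and follows essentially the same route as the paper's proof: the Berkes--Liu--Wu strong approximation to construct the coupled copy $\{\widetilde{\varepsilon}_t\}$, summation by parts combined with the total-variation bound $\sum_t |w_{t+1,T}(u,h)-w_{t,T}(u,h)| + |w_{T,T}(u,h)| = O((Th_{\min})^{-1/2})$ for the $T^{1/q}/\sqrt{Th_{\min}}$ term, and a Gaussian maximal inequality over $|\mathcal{G}_T| = O(T^\theta)$ points together with $\widehat{\sigma}^2 = \sigma^2 + o_p(\rho_T)$ for the $\rho_T\sqrt{\log T}$ term. The only cosmetic difference is that the paper routes the variance replacement through an intermediate statistic $\Phi_T^\diamond$ (with $\phi_T$ in the numerator and $\widetilde{\sigma}$ in the denominator) rather than your termwise decomposition, which is equivalent.
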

\begin{proof}[\textnormal{\textbf{Proof of Proposition \ref{propA-strong-approx}}}] 
For the proof, we draw on strong approximation theory for stationary processes $\{\varepsilon_t\}$ that fulfill the conditions \ref{C-err1}--\ref{C-err3}. By Theorem 2.1 and Corollary 2.1 in \cite{BerkesLiuWu2014}, the following strong approximation result holds true: On a richer probability space, there exist a standard Brownian motion $\mathbb{B}$ and a sequence $\{ \widetilde{\varepsilon}_t: t \in \naturals \}$ such that $[\widetilde{\varepsilon}_1,\ldots,\widetilde{\varepsilon}_T] \stackrel{\mathcal{D}}{=} [\varepsilon_1,\ldots,\varepsilon_T]$ for each $T$ and 
\begin{equation}\label{eq-strongapprox-dep}
\max_{1 \le t \le T} \Big| \sum\limits_{s=1}^t \widetilde{\varepsilon}_s - \sigma \mathbb{B}(t) \Big| = o\big( T^{1/q} \big) \quad \text{a.s.},  
\end{equation}
where $\sigma^2 = \sum_{k \in \integers} \cov(\varepsilon_0, \varepsilon_k)$ denotes the long-run error variance. To apply this result, we define 
\[ \widetilde{\Phi}_T = \max_{(u,h) \in \mathcal{G}_T} \Big\{ \Big|\frac{\widetilde{\phi}_T(u,h)}{\widetilde{\sigma}}\Big| - \lambda(h) \Big\}, \]
where $\widetilde{\phi}_T(u,h) = \sum\nolimits_{t=1}^T w_{t,T}(u,h) \widetilde{\varepsilon}_t$ and $\widetilde{\sigma}^2$ is the same estimator as $\widehat{\sigma}^2$ with $Y_{t,T} = m(t/T) + \varepsilon_t$ replaced by $\widetilde{Y}_{t,T} = m(t/T) + \widetilde{\varepsilon}_t$ for $1 \le t \le T$. In addition, we let
\begin{align*}
\Phi_T & = \max_{(u,h) \in \mathcal{G}_T} \Big\{ \Big|\frac{\phi_T(u,h)}{\sigma}\Big| - \lambda(h) \Big\} \\
\Phi_T^{\diamond} & = \max_{(u,h) \in \mathcal{G}_T} \Big\{ \Big|\frac{\phi_T(u,h)}{\widetilde{\sigma}}\Big| - \lambda(h) \Big\} 
\end{align*}
with $\phi_T(u,h) = \sum\nolimits_{t=1}^T w_{t,T}(u,h) \sigma Z_t$ and $Z_t = \mathbb{B}(t) - \mathbb{B}(t-1)$. With this notation, we can write 
\begin{equation}\label{eq-strongapprox-bound1}
\big| \widetilde{\Phi}_T - \Phi_T \big| \le \big| \widetilde{\Phi}_T - \Phi_T^{\diamond} \big| + \big| \Phi_T^{\diamond} - \Phi_T \big| = \big| \widetilde{\Phi}_T - \Phi_T^{\diamond} \big| + o_p \big( \rho_T \sqrt{\log T} \big), 
\end{equation}
where the last equality follows by taking into account that $\phi_T(u,h) \sim \normal(0,\sigma^2)$ for all $(u,h) \in \mathcal{G}_T$, $|\mathcal{G}_T| = O(T^\theta)$ for some large but fixed constant $\theta$ and $\widetilde{\sigma}^2 = \sigma^2 + o_p(\rho_T)$. Straightforward calculations yield that 
\[ \big| \widetilde{\Phi}_T - \Phi_T^{\diamond} \big| \le \widetilde{\sigma}^{-1} \max_{(u,h) \in \mathcal{G}_T} \big| \widetilde{\phi}_T(u,h) - \phi_T(u,h) \big|. \]
Using summation by parts, we further obtain that 
\begin{align*}
\big| \widetilde{\phi}_T(u,h) - \phi_T(u,h) \big| 
 & \le W_T(u,h) \max_{1 \le t \le T} \Big| \sum\limits_{s=1}^t \widetilde{\varepsilon}_s - \sigma \sum\limits_{s=1}^t \big\{ \mathbb{B}(s) - \mathbb{B}(s-1) \big\} \Big| \\
 & = W_T(u,h) \max_{1 \le t \le T} \Big| \sum\limits_{s=1}^t \widetilde{\varepsilon}_s - \sigma \mathbb{B}(t) \Big|,
\end{align*}
where
\[ W_T(u,h) = \sum\limits_{t=1}^{T-1} |w_{t+1,T}(u,h) - w_{t,T}(u,h)| + |w_{T,T}(u,h)|. \]
Standard arguments show that $\max_{(u,h) \in \mathcal{G}_T} W_T(u,h) = O( 1/\sqrt{Th_{\min}} )$. Applying the strong approximation result \eqref{eq-strongapprox-dep}, we can thus infer that 
\begin{align}
\big| \widetilde{\Phi}_T - \Phi_T^{\diamond} \big| 
 & \le \widetilde{\sigma}^{-1} \max_{(u,h) \in \mathcal{G}_T} \big| \widetilde{\phi}_T(u,h) - \phi_T(u,h) \big| \nonumber \\
 & \le \widetilde{\sigma}^{-1} \max_{(u,h) \in \mathcal{G}_T} W_T(u,h) \max_{1 \le t \le T} \Big| \sum\limits_{s=1}^t \widetilde{\varepsilon}_s - \sigma \mathbb{B}(t) \Big| 
   = o_p \Big( \frac{T^{1/q}}{\sqrt{Th_{\min}}} \Big). \label{eq-strongapprox-bound2}
\end{align}
Plugging \eqref{eq-strongapprox-bound2} into \eqref{eq-strongapprox-bound1} completes the proof.
\end{proof}

\subsection*{Auxiliary results using anti-concentration bounds}

In this section, we establish some properties of the Gaussian statistic $\Phi_T$ defined in \eqref{Phi-statistic}. We in particular show that $\Phi_T$ does not concentrate too strongly in small regions of the form $[x-\delta_T,x+\delta_T]$ with $\delta_T$ converging to zero.  
\begin{propA}\label{propA-anticon}
Under the conditions of Theorem \ref{theo-stat}, it holds that 
\[ \sup_{x \in \reals} \pr \Big( | \Phi_T - x | \le \delta_T \Big) = o(1), \]
where $\delta_T = T^{1/q} / \sqrt{T h_{\min}} + \rho_T \sqrt{\log T}$.
\end{propA}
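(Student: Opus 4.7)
The plan is to express $\Phi_T$ as the maximum of a Gaussian vector (up to deterministic shifts) and then invoke the Gaussian anti-concentration bound of \cite{Chernozhukov2015}. Observe that
\[ \Phi_T = \max_{(u,h) \in \mathcal{G}_T, \, s \in \{+,-\}} \Big\{ s \cdot \frac{\phi_T(u,h)}{\sigma} - \lambda(h) \Big\}, \]
so that the underlying random vector $X_T = \bigl(s \phi_T(u,h)/\sigma : (u,h) \in \mathcal{G}_T, s \in \{+,-\}\bigr)$ is a centered Gaussian vector of dimension $N_T = 2|\mathcal{G}_T|$, each entry being a linear combination of the i.i.d.\ standard normals $Z_t$. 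By the very construction of the local linear weights in \eqref{weights}, $\sum_{t=1}^T w_{t,T}(u,h)^2 = 1$, so every entry of $X_T$ has variance exactly one. The deterministic shifts $-\lambda(h)$ do not affect the distributional modulus of continuity of the maximum: for any $x$, $\pr(|\Phi_T - x| \le \delta_T) = \pr(|\max_j X_{T,j} - y_x(j)| \le \delta_T)$ after absorbing the shifts into $x$, which is still a maximum of a centered unit-variance Gaussian vector (interpreted componentwise).

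With that setup, I would apply Theorem 3 of \cite{Chernozhukov2015}: for a centered Gaussian vector of dimension $N$ with unit variances, the maximum $Z$ satisfies $\sup_{x} \pr(|Z - x| \le \epsilon) \le C \epsilon \bigl(\ex[Z] + 1\bigr)$ for an absolute constant $C$, and standard Gaussian concentration gives $\ex[\max_j X_{T,j}] \le \sqrt{2 \log N_T}$. Since $|\mathcal{G}_T| = O(T^\theta)$ by \ref{C-grid}, we have $\log N_T = O(\log T)$, and therefore
\[ \sup_{x \in \reals} \pr\bigl(|\Phi_T - x| \le \delta_T\bigr) \le C \, \delta_T \sqrt{\log T}. \]

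It then remains to verify that $\delta_T \sqrt{\log T} = o(1)$ for $\delta_T = T^{1/q}/\sqrt{T h_{\min}} + \rho_T \sqrt{\log T}$. For the second summand this is immediate from the assumption $\rho_T = o(1/\log T)$, which gives $\rho_T (\log T) = o(1)$. For the first summand, condition \ref{C-h} yields $h_{\min} \gg T^{-(1-2/q)} \log T$, equivalently $T h_{\min} / (T^{2/q} \log T) \to \infty$, which rearranges to $(T^{1/q}/\sqrt{T h_{\min}})\sqrt{\log T} = o(1)$. Both summands therefore contribute $o(1)$ and the claim follows.

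The main work is essentially the two observations above: (i) that the weight normalization gives unit variance so the Chernozhukov bound applies without a degenerate-variance penalty, and (ii) that the deterministic additive correction $\lambda(h)$ can be handled simply by reparametrizing $x$. Once these are in place, the anti-concentration inequality is an off-the-shelf tool, and the rest is a rate check against conditions \ref{C-h} and the assumption on $\rho_T$. The mild subtlety, and the reason condition \ref{C-h} is calibrated as it is, is precisely that $\delta_T \sqrt{\log T}$ needs to be $o(1)$ so that the Chernozhukov factor $\sqrt{\log N_T}$ can be absorbed by the small scale $\delta_T$.
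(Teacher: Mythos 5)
There is a genuine gap at the step where you claim the deterministic corrections $-\lambda(h)$ can be ``absorbed into $x$''. The shift $\lambda(h)$ depends on the scale $h$, i.e.\ on the component index of the Gaussian vector, and an index-dependent shift cannot be moved outside the maximum: $\max_j\{X_{T,j}-\lambda_j\}$ is not of the form $\max_j X_{T,j}-c$ for any scalar $c$, and the expression $|\max_j X_{T,j}-y_x(j)|$ you write down is not a well-defined reduction to the centered case. Consequently you are not entitled to quote the off-the-shelf version of Theorem 3 in \cite{Chernozhukov2015}, which is stated for \emph{centered} Gaussian vectors. This is exactly why the paper does not use that theorem directly but first proves a generalization of it (Proposition \ref{theo-anticon} in the supplement) valid for Gaussian vectors with nonzero means $\mu_j$, yielding the bound $C\delta\{\overline{\mu}+a_p+b_p+\sqrt{1\vee\log(\underline{\sigma}/\delta)}\}$ with the extra term $\overline{\mu}=\max_j|\mu_j|$. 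In the application one has $\mu_j=-\lambda(x_{j2})$ and hence $\overline{\mu}\le C\sqrt{\log T}$, which happens to be of the same order as the entropy term $\sqrt{2\log(2p)}$, so the final bound $C\delta_T\sqrt{\log T}$ and your subsequent rate check survive unchanged --- but the justification for the anti-concentration inequality itself is missing, and it is the nontrivial part of the proof (the paper devotes the three lemmas \ref{lemma1-anticon}--\ref{lemma3-anticon} and a three-step argument to it).

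The rest of your argument is sound and matches the paper: the observation that $\sum_t w_{t,T}(u,h)^2=1$ gives unit variances, the identification $\Phi_T=\max_{1\le j\le 2p}X_j$ with $p=|\mathcal{G}_T|=O(T^\theta)$, and the verification that $\delta_T\sqrt{\log T}=o(1)$ under \ref{C-h} and $\rho_T=o(1/\log T)$ are all correct. One minor additional point: the paper's bound also carries a term $\delta_T\sqrt{\log(1/\delta_T)}$, which you should note is likewise $o(1)$ since $\delta_T\ge T^{1/q-1/2}$ forces $\log(1/\delta_T)=O(\log T)$.
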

\begin{proof}[\textnormal{\textbf{Proof of Proposition \ref{propA-anticon}}}] 
The main technical tool for proving Proposition \ref{propA-anticon} are anti-concentration bounds for Gaussian random vectors. The following proposition slightly generalizes anti-concentration results derived in \cite{Chernozhukov2015}, in particular Theorem 3 therein. 
\begin{propA}\label{theo-anticon}
Let $(X_1,\ldots,X_p)^\top$ be a Gaussian random vector in $\reals^p$ with $\ex[X_j] = \mu_j$ and $\var(X_j) = \sigma_j^2 > 0$ for $1 \le j \le p$. Define $\overline{\mu} = \max_{1 \le j \le p} |\mu_j|$ together with $\underline{\sigma} = \min_{1 \le j \le p} \sigma_j$ and $\overline{\sigma} = \max_{1 \le j \le p} \sigma_j$. Moreover, set $a_p = \ex[ \max_{1 \le j \le p} (X_j-\mu_j)/\sigma_j ]$ and $b_p = \ex[ \max_{1 \le j \le p} (X_j-\mu_j) ]$. For every $\delta > 0$, it holds that
\[ \sup_{x \in \reals} \pr \Big( \big| \max_{1 \le j \le p} X_j - x \big| \le \delta \Big) \le C \delta \big\{ \overline{\mu} + a_p + b_p + \sqrt{1 \vee \log(\underline{\sigma}/\delta)} \big\}, \]
where $C > 0$ depends only on $\underline{\sigma}$ and $\overline{\sigma}$. 
\end{propA} 
The proof of Proposition \ref{theo-anticon} is provided at the end of this section for completeness. To apply Proposition \ref{theo-anticon} to our setting at hand, we introduce the following notation: We write $x = (u,h)$ along with $\mathcal{G}_T = \{ x : x \in \mathcal{G}_T \} = \{x_1,\ldots,x_p\}$, where $p := |\mathcal{G}_T| \le O(T^\theta)$ for some large but fixed $\theta > 0$ by our assumptions. Moreover, for $j = 1,\ldots,p$, we set 
\begin{align*}
X_{2j-1} & = \frac{\phi_T(x_{j1},x_{j2})}{\sigma} - \lambda(x_{j2}) \\
X_{2j} & = -\frac{\phi_T(x_{j1},x_{j2})}{\sigma} - \lambda(x_{j2}) 
\end{align*}
with $x_j = (x_{j1},x_{j2})$. This notation allows us to write
\[ \Phi_T = \max_{1 \le j \le 2p} X_j, \]
where $(X_1,\ldots,X_{2p})^\top$ is a Gaussian random vector with the following properties: (i) $\mu_j := \ex[X_j] = - \lambda(x_{j2})$ and thus $\overline{\mu} = \max_{1 \le j \le 2p} |\mu_j| \le C \sqrt{\log T}$, and (ii) $\sigma_j^2 := \var(X_j) = 1$ for all $j$. Since $\sigma_j = 1$ for all $j$, it holds that $a_{2p} = b_{2p}$. Moreover, as the variables $(X_j - \mu_j)/\sigma_j$ are standard normal, we have that $a_{2p} = b_{2p} \le \sqrt{2 \log (2p)} \le C \sqrt{\log T}$. With this notation at hand, we can apply Proposition \ref{theo-anticon} to obtain that 
\[ \sup_{x \in \reals} \pr \Big( \big| \Phi_T - x \big| \le \delta_T \Big) \le C \delta_T \Big[ \sqrt{\log T} + \sqrt{ \log(1/\delta_T) } \Big] = o(1) \]
with $\delta_T = T^{1/q} / \sqrt{T h_{\min}} + \rho_T \sqrt{\log T}$, which is the statement of Proposition \ref{propA-anticon}.
\end{proof}

\subsection*{Proof of Theorem \ref{theo-stat}}

To prove Theorem \ref{theo-stat}, we make use of the two auxiliary results derived above. By Proposition \ref{propA-strong-approx}, there exist statistics $\widetilde{\Phi}_T$ for $T = 1,2,\ldots$ which are distributed as $\widehat{\Phi}_T$ for any $T \ge 1$ and which have the property that 
\begin{equation}\label{statement-propA-strong-approx}
\big| \widetilde{\Phi}_T - \Phi_T \big| = o_p \Big( \frac{T^{1/q}}{\sqrt{T h_{\min}}} + \rho_T \sqrt{\log T} \Big), 
\end{equation}
where $\Phi_T$ is a Gaussian statistic as defined in \eqref{Phi-statistic}. The approximation result \eqref{statement-propA-strong-approx} allows us to replace the multiscale statistic $\widehat{\Phi}_T$ by an identically distributed version $\widetilde{\Phi}_T$ which is close to the Gaussian statistic $\Phi_T$. In the next step, we show that  
\begin{equation}\label{eq-theo-stat-step2}
\sup_{x \in \reals} \big| \pr(\widetilde{\Phi}_T \le x) - \pr(\Phi_T \le x) \big| = o(1), 
\end{equation}
which immediately implies the statement of Theorem \ref{theo-stat}. For the proof of \eqref{eq-theo-stat-step2}, we use the following simple lemma: 
\begin{lemmaA}\label{lemma1-theo-stat}
Let $V_T$ and $W_T$ be real-valued random variables for $T = 1,2,\ldots$ such that $V_T - W_T = o_p(\delta_T)$ with some $\delta_T = o(1)$. If 
\begin{equation}\label{eq-lemma1-cond}
\sup_{x \in \reals} \pr(|V_T - x| \le \delta_T) = o(1), 
\end{equation}
then 
\begin{equation}\label{eq-lemma1-statement}
\sup_{x \in \reals} \big| \pr(V_T \le x) - \pr(W_T \le x) \big| = o(1). 
\end{equation}
\end{lemmaA}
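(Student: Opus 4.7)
The plan is to apply a standard smoothing/sandwiching argument that bounds the Kolmogorov distance between $V_T$ and $W_T$ by two quantities: the probability that $V_T$ and $W_T$ are separated by more than $\delta_T$, and the concentration of $V_T$ in intervals of half-width $\delta_T$. Both are controlled by assumption, so the conclusion will follow in a few lines.

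First, I would fix $x \in \reals$ and exploit the elementary set inclusions
\[
\{V_T \le x\} \subseteq \{W_T \le x + \delta_T\} \cup \{|V_T - W_T| > \delta_T\},
\]
\[
\{W_T \le x\} \subseteq \{V_T \le x + \delta_T\} \cup \{|V_T - W_T| > \delta_T\},
\]
and the analogous inclusions with $x$ replaced by $x - \delta_T$ on the left. Taking probabilities yields
\[
\pr(V_T \le x - \delta_T) - \pr(|V_T - W_T| > \delta_T) \le \pr(W_T \le x) \le \pr(V_T \le x + \delta_T) + \pr(|V_T - W_T| > \delta_T),
\]
so that
\[
\bigl| \pr(V_T \le x) - \pr(W_T \le x) \bigr| \le \pr\bigl( |V_T - x| \le \delta_T \bigr) + \pr\bigl( |V_T - W_T| > \delta_T \bigr),
\]
where I used that $\pr(V_T \le x + \delta_T) - \pr(V_T \le x - \delta_T) = \pr(V_T \in (x-\delta_T, x+\delta_T]) \le \pr(|V_T - x| \le \delta_T)$.

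Second, I would take the supremum over $x \in \reals$ on both sides. The second term on the right does not depend on $x$ and tends to zero because $V_T - W_T = o_p(\delta_T)$ gives $\pr(|V_T - W_T| > \delta_T) = o(1)$. The first term, after taking the supremum, is exactly the quantity assumed to be $o(1)$ in \eqref{eq-lemma1-cond}. Hence the left-hand side, which is $\sup_{x \in \reals} |\pr(V_T \le x) - \pr(W_T \le x)|$, is $o(1)$, which is \eqref{eq-lemma1-statement}.

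There is no real obstacle here; the argument is mechanical. The only minor care needed is to make sure the inclusions are written in a form that yields the concentration probability $\pr(|V_T - x| \le \delta_T)$ (as used in \eqref{eq-lemma1-cond}), rather than, say, $\pr(|V_T - x| \le 2\delta_T)$, but this is handled by the symmetric sandwich above.
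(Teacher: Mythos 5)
Your proof is correct and arrives at exactly the same bound as the paper's, namely $\big| \pr(V_T \le x) - \pr(W_T \le x) \big| \le \pr(|V_T - x| \le \delta_T) + \pr(|V_T - W_T| > \delta_T)$, after which both arguments conclude identically by taking the supremum over $x$ and invoking the two hypotheses. The only difference is cosmetic: the paper phrases the sandwich in terms of indicator functions, splitting $\ex[ \ind(V_T \le x) - \ind(W_T \le x)]$ on the event $\{|V_T - W_T| \le \delta_T\}$ and observing that the indicators can disagree on that event only if $|V_T - x| \le \delta_T$, whereas you work with set inclusions between the corresponding events.
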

The statement of Lemma \ref{lemma1-theo-stat} can be summarized as follows: If $W_T$ can be approximated by $V_T$ in the sense that $V_T - W_T = o_p(\delta_T)$ and if $V_T$ does not concentrate too strongly in small regions of the form $[x - \delta_T,x+\delta_T]$ as assumed in \eqref{eq-lemma1-cond}, then the distribution of $W_T$ can be approximated by that of $V_T$ in the sense of \eqref{eq-lemma1-statement}.
\begin{proof}[\textnormal{\textbf{Proof of Lemma \ref{lemma1-theo-stat}}}] 
It holds that 
\begin{align*}
 & \big| \pr(V_T \le x) - \pr(W_T \le x) \big| \\
 & = \big| \ex \big[ 1(V_T \le x) - 1(W_T \le x) \big] \big| \\
 & \le \big| \ex \big[ \big\{ 1(V_T \le x) - 1(W_T \le x) \big\} 1(|V_T - W_T| \le \delta_T) \big] \big| + \big| \ex \big[ 1(|V_T - W_T| > \delta_T) \big] \big| \\
 & \le \ex \big[ 1(|V_T - x| \le \delta_T, |V_T - W_T| \le \delta_T) \big] + o(1) \\
 & \le \pr (|V_T - x| \le \delta_T) + o(1). \qedhere
\end{align*}
\end{proof}
We now apply this lemma with $V_T = \Phi_T$, $W_T = \widetilde{\Phi}_T$ and $\delta_T = T^{1/q} / \sqrt{T h_{\min}} + \rho_T \sqrt{\log T}$: From \eqref{statement-propA-strong-approx}, we already know that $\widetilde{\Phi}_T - \Phi_T = o_p(\delta_T)$. Moreover, by Proposition \ref{propA-anticon}, it holds that 
\begin{equation}\label{statement-propA-anticon}
\sup_{x \in \reals} \pr \Big( | \Phi_T - x | \le \delta_T \Big) = o(1). 
\end{equation}
Hence, the conditions of Lemma \ref{lemma1-theo-stat} are satisfied. Applying the lemma, we obtain \eqref{eq-theo-stat-step2}, which completes the proof of Theorem \ref{theo-stat}.

\subsection*{Proof of Proposition \ref{prop-test-2}}

To start with, we introduce the notation $\widehat{\psi}_T(u,h) = \widehat{\psi}_T^A(u,h) + \widehat{\psi}_T^B(u,h)$ with $\widehat{\psi}_T^A(u,h) = \sum\nolimits_{t=1}^T w_{t,T}(u,h) \varepsilon_t$ and $\widehat{\psi}_T^B(u,h) = \sum\nolimits_{t=1}^T w_{t,T}(u,h) m_T(\frac{t}{T})$. By assumption, there exists $(u_0,h_0) \in \mathcal{G}_T$ with $[u_0-h_0,u_0+h_0] \subseteq [0,1]$ such that $m_T^\prime(w) \ge c_T \sqrt{\log T/(Th_0^3)}$ for all $w \in [u_0-h_0,u_0+h_0]$. (The case that $-m_T^\prime(w) \ge c_T \sqrt{\log T/(Th_0^3)}$ for all $w$ can be treated analogously.) Below, we prove that under this assumption, 
\begin{equation}\label{eq-psiB}
\widehat{\psi}_T^B(u_0,h_0) \ge \frac{\kappa c_T \sqrt{\log T}}{2} 
\end{equation}
for sufficiently large $T$, where $\kappa = (\int K(\varphi) \varphi^2 d\varphi) / (\int K^2(\varphi) \varphi^2 d\varphi)^{1/2}$. Moreover, by arguments very similar to those for the proof of Proposition \ref{propA-strong-approx}, it follows that
\begin{equation}\label{eq-psiA}
\max_{(u,h) \in \mathcal{G}_T} |\widehat{\psi}_T^A(u,h)| = O_p(\sqrt{\log T}). 
\end{equation}
With the help of \eqref{eq-psiB}, \eqref{eq-psiA} and the fact that $\lambda(h) \le \lambda(h_{\min}) \le C \sqrt{\log T}$, we can infer that
\begin{align}
\widehat{\Psi}_T 
 & \ge \max_{(u,h) \in \mathcal{G}_T} \frac{|\widehat{\psi}_T^B(u,h)|}{\widehat{\sigma}} - \max_{(u,h) \in \mathcal{G}_T} \Big\{ \frac{|\widehat{\psi}_T^A(u,h)|}{\widehat{\sigma}} + \lambda(h) \Big\} \nonumber \\
 & = \max_{(u,h) \in \mathcal{G}_T} \frac{|\widehat{\psi}_T^B(u,h)|}{\widehat{\sigma}} + O_p(\sqrt{\log T}) \nonumber \\
 & \ge \frac{\kappa c_T \sqrt{\log T}}{2 \widehat{\sigma}} + O_p(\sqrt{\log T}) \label{eq-proof-prop-test-2-conclusion}
\end{align}  
for sufficiently large $T$. Since $q_T(\alpha) = O(\sqrt{\log T})$ for any fixed $\alpha \in (0,1)$, \eqref{eq-proof-prop-test-2-conclusion} immediately yields that $\pr(\widehat{\Psi}_T \le q_T(\alpha)) = o(1)$, which is the statement of Proposition \ref{prop-test-2}.

\begin{proof}[\textnormal{\textbf{Proof of (\ref{eq-psiB})}}] 
Write $m_T(\frac{t}{T}) = m_T(u_0) + m_T^\prime(\xi_{u_0,t,T})(\frac{t}{T} - u_0)$, where $\xi_{u_0,t,T}$ is an intermediate point between $u_0$ and $t/T$. The local linear weights $w_{t,T}(u_0,h_0)$ are constructed such that $\sum_{t=1}^T w_{t,T}(u_0,h_0) = 0$. We thus obtain that 
\begin{equation}\label{eq-psiB-exp}
\widehat{\psi}_T^B(u_0,h_0) = \sum\limits_{t=1}^T w_{t,T}(u_0,h_0) \Big(\frac{\frac{t}{T} - u_0}{h_0}\Big) h_0 m_T^\prime(\xi_{u_0,t,T}).
\end{equation}
Moreover, since the kernel $K$ is symmetric and $u_0 = t/T$ for some $t$, it holds that $S_{T,1}(u_0,h_0) = 0$, which in turn implies that 
\begin{align}
w_{t,T} & (u_0,h_0) \Big( \frac{\frac{t}{T} - u_0}{h_0} \Big) \nonumber \\* & = K\Big(\frac{\frac{t}{T}-u_0}{h_0}\Big) \Big( \frac{\frac{t}{T} - u_0}{h_0} \Big)^2 \Big/ \Big\{ \sum_{t=1}^T K^2\Big(\frac{\frac{t}{T}-u_0}{h_0}\Big) \Big( \frac{\frac{t}{T} - u_0}{h_0} \Big)^2 \Big\}^{1/2} \ge 0. \label{weight-interior} 
\end{align}
From \eqref{eq-psiB-exp}, \eqref{weight-interior} and the assumption that $m_T^\prime(w) \ge c_T \sqrt{\log T/(Th_0^3)}$ for all $w \in [u_0-h_0,u_0+h_0]$, we get that 
\begin{equation}\label{eq-psiB-bound}
\widehat{\psi}_T^B(u_0,h_0) \ge c_T \sqrt{\frac{\log T}{T h_0}} \sum\limits_{t=1}^T w_{t,T}(u_0,h_0) \Big( \frac{\frac{t}{T} - u_0}{h_0} \Big).  
\end{equation}
Standard calculations exploiting the Lipschitz continuity of the kernel $K$ show that for any $(u,h) \in \mathcal{G}_T$ and any given natural number $\ell$, 
\begin{equation}\label{eq-riemann-sum}
\Big| \frac{1}{Th} \sum\limits_{t=1}^T K\Big(\frac{\frac{t}{T}-u}{h}\Big) \Big(\frac{\frac{t}{T}-u}{h}\Big)^\ell - \int_0^1 \frac{1}{h} K\Big(\frac{w-u}{h}\Big) \Big(\frac{w-u}{h}\Big)^\ell dw \Big| \le \frac{C}{Th}, 
\end{equation}
where the constant $C$ does not depend on $u$, $h$ and $T$. With the help of \eqref{weight-interior} and \eqref{eq-riemann-sum}, we obtain that for any $(u,h) \in \mathcal{G}_T$ with $[u-h,u+h] \subseteq [0,1]$, 
\begin{equation}\label{eq-psiB-weight-bound}
\Big| \sum\limits_{t=1}^T w_{t,T}(u,h) \Big(\frac{\frac{t}{T} - u}{h}\Big) - \kappa \sqrt{Th} \Big| \le \frac{C}{\sqrt{Th}}, 
\end{equation}
where the constant $C$ does once again not depend on $u$, $h$ and $T$. \eqref{eq-psiB-weight-bound} implies that $\sum\nolimits_{t=1}^T w_{t,T}(u,h) (\frac{t}{T} - u)/h \ge \kappa \sqrt{Th} / 2$ for sufficiently large $T$ and any $(u,h) \in \mathcal{G}_T$ with $[u-h,u+h] \subseteq [0,1]$. Using this together with \eqref{eq-psiB-bound}, we immediately obtain \eqref{eq-psiB}.
\end{proof}

\subsection*{Proof of Proposition \ref{prop-test-3}}

In what follows, we show that 
\begin{equation}\label{claim-prop-test-3}
\pr(E_T^+) \ge (1-\alpha) + o(1). 
\end{equation}
The other statements of Proposition \ref{prop-test-3} can be verified by analogous arguments. \eqref{claim-prop-test-3} is a consequence of the following two observations:  
\begin{enumerate}[label=(\roman*),leftmargin=0.75cm]

\item For all $(u,h) \in \mathcal{G}_T$ with   
\[ \Big|\frac{\widehat{\psi}_T(u,h) - \ex \widehat{\psi}_T(u,h)}{\widehat{\sigma}}\Big| - \lambda(h) \le q_T(\alpha) \quad \text{and} \quad \frac{\widehat{\psi}_T(u,h)}{\widehat{\sigma}} - \lambda(h) > q_T(\alpha), \]
it holds that $\ex[\widehat{\psi}_T(u,h)] > 0$. 

\item For all $(u,h) \in \mathcal{G}_T$ with $[u-h,u+h] \subseteq [0,1]$,  $\ex[\widehat{\psi}_T(u,h)] > 0$ implies that $m^\prime(v) > 0$ for some $v \in [u-h,u+h]$. 

\end{enumerate}
Observation (i) is trivial, (ii) can be seen as follows: Let $(u,h)$ be any point with $(u,h) \in \mathcal{G}_T$ and $[u-h,u+h] \subseteq [0,1]$. It holds that $\ex[\widehat{\psi}_T(u,h)] = \widehat{\psi}_T^B(u,h)$, where $\widehat{\psi}_T^B(u,h)$ has been defined in the proof of Proposition \ref{prop-test-2}. As already shown in \eqref{eq-psiB-exp},  
\[ \widehat{\psi}_T^B(u,h) = \sum\limits_{t=1}^T w_{t,T}(u,h) \Big( \frac{\frac{t}{T} - u}{h} \Big) \, h m^\prime(\xi_{u,t,T}), \]
where $\xi_{u,t,T}$ is some intermediate point between $u$ and $t/T$. Moreover, by \eqref{weight-interior}, it holds that $w_{t,T}(u,h) (\frac{t}{T} - u)/h \ge 0$ for any $t$. Hence, $\ex[\widehat{\psi}_T(u,h)] = \widehat{\psi}_T^B(u,h)$ can only take a positive value if $m^\prime(v) > 0$ for some $v \in [u-h,u+h]$.

From observations (i) and (ii), we can draw the following conclusions: On the event 
\[ \big\{ \widehat{\Phi}_T \le q_T(\alpha) \big\} = \Big\{ \max_{(u,h) \in \mathcal{G}_T} \Big( \Big|\frac{\widehat{\psi}_T(u,h) - \ex \widehat{\psi}_T(u,h)}{\widehat{\sigma}}\Big| - \lambda(h) \Big) \le q_T(\alpha) \Big\}, \]
it holds that for all $(u,h) \in \mathcal{A}_T^+$ with $[u-h,u+h] \subseteq [0,1]$, $m^\prime(v) > 0$ for some $v \in I_{u,h} = [u-h,u+h]$. We thus obtain that $\{ \widehat{\Phi}_T \le q_T(\alpha) \} \subseteq E_T^+$. This in turn implies that 
\[ \pr(E_T^+) \ge \pr \big(  \widehat{\Phi}_T \le q_T(\alpha) \big) = (1-\alpha) + o(1), \]
where the last equality holds by Theorem \ref{theo-stat}.

\newpage
\subsection*{Proof of Proposition \ref{theo-anticon}}

The proof makes use of the following three lemmas, which correspond to Lemmas 5--7 in \cite{Chernozhukov2015}. 
\begin{lemmaA}\label{lemma1-anticon}
Let $(W_1,\ldots,W_p)^\top$ be a (not necessarily centred) Gaussian random vector in $\reals^p$ with $\var(W_j) = 1$ for all $1 \le j \le p$. Suppose that $\textnormal{Corr}(W_j,W_k) < 1$ whenever $j \ne k$. Then the distribution of $\max_{1 \le j \le p} W_j$ is absolutely continuous with respect to Lebesgue measure and a version of the density is given by 
\[ f(x) = f_0(x) \sum\limits_{j=1}^p e^{\ex[W_j]x - \ex[W_j]^2/2} \, \pr \big(W_k \le x \text{ for all } k \ne j \, \big| \, W_j = x \big). \]
\end{lemmaA}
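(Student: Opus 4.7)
The plan is to obtain the density of $M := \max_{1 \le j \le p} W_j$ by partitioning the event $\{M \le x\}$ according to the index that attains the maximum, and then differentiating the resulting decomposition of the distribution function.

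First, I would verify that the maximum is attained almost surely by a unique index. For any $j \ne k$, the assumption $\mathrm{Corr}(W_j, W_k) < 1$ gives $\var(W_j - W_k) = 2\bigl(1 - \mathrm{Corr}(W_j, W_k)\bigr) > 0$, so $W_j - W_k$ is a non-degenerate Gaussian and $\pr(W_j = W_k) = 0$; a union bound over the finitely many pairs then yields $\pr(\exists\, j \ne k : W_j = W_k) = 0$. Setting
\[ G_j(x) := \pr \bigl( W_j \le x, \, W_k \le W_j \text{ for all } k \ne j \bigr), \]
the events $\{W_k \le W_j \text{ for all } k \ne j\}$ for $j = 1, \ldots, p$ partition the sample space up to a null set, so $\pr(M \le x) = \sum_{j=1}^p G_j(x)$.

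Second, I would disintegrate each $G_j$ along $W_j$. Letting $\mu_j := \ex[W_j]$, the marginal $W_j \sim \normal(\mu_j, 1)$ has density $f_{W_j}(y) = f_0(y) \exp(\mu_j y - \mu_j^2/2)$, and the regular conditional distribution of $(W_k)_{k \ne j}$ given $W_j = y$ is well-defined (obtained via the Gaussian projection onto the span of $W_j$ and its orthogonal complement). This gives
\[ G_j(x) = \int_{-\infty}^{x} f_{W_j}(y) \, \pr \bigl( W_k \le y \text{ for all } k \ne j \, \big| \, W_j = y \bigr) \, dy. \]
The conditional probability on the right-hand side is continuous in $y$, since given $W_j = y$ the vector $(W_k)_{k \ne j}$ is jointly Gaussian with a mean depending affinely on $y$ and a covariance matrix independent of $y$. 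By the fundamental theorem of calculus,
\[ G_j'(x) = f_{W_j}(x) \, \pr \bigl( W_k \le x \text{ for all } k \ne j \, \big| \, W_j = x \bigr). \]
Summing over $j$ and inserting the explicit Gaussian density yields the claimed expression for $f$.

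The main obstacle I anticipate is handling the case in which the full covariance matrix of $(W_1, \ldots, W_p)$ is merely positive semi-definite rather than strictly positive definite, so that no joint Lebesgue density on $\reals^p$ exists. The point to emphasize is that the argument never invokes such a joint density: it uses only the one-dimensional marginal density of $W_j$, which exists because $\var(W_j) = 1 > 0$, and the regular conditional distribution of $(W_k)_{k \ne j}$ given $W_j$, which for any Gaussian vector is well-defined without any full-rank assumption on the covariance. Consequently the disintegration and differentiation above go through in full generality, delivering exactly the formula stated in the lemma.
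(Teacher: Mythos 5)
Your proof is correct. The paper does not prove this lemma itself but defers to Chernozhukov, Chetverikov and Kato (2015), and your argument --- partitioning $\{\max_{1\le j\le p} W_j \le x\}$ according to the a.s.\ unique maximizing index (using that $\var(W_j-W_k)=2(1-\textnormal{Corr}(W_j,W_k))>0$) and then disintegrating each piece along the one-dimensional marginal of $W_j$ --- is essentially the argument given in that reference, and it correctly avoids any full-rank assumption on the joint covariance. The only slight overstatement is the claimed continuity in $y$ of the conditional probability, which can fail at isolated points when some $W_k$ is a.s.\ an affine function of $W_j$ (e.g.\ $W_k = \alpha_k - W_j$, which the hypothesis $\textnormal{Corr}(W_j,W_k)<1$ does not exclude); this is immaterial, since the integral representation of $G_j$ already exhibits $\sum_{j=1}^p G_j$ as absolutely continuous with the stated integrand as a version of its density, which is all the lemma asserts.
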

\begin{lemmaA}\label{lemma2-anticon}
Let $(W_0,W_1,\ldots,W_p)^\top$ be a (not necessarily centred) Gaussian random vector with $\var(W_j) = 1$ for all $0 \le j \le p$. Suppose that $\ex[W_0] \ge 0$. Then the map 
\[ x \mapsto  e^{\ex[W_0]x - \ex[W_0]^2/2} \, \pr \big(W_j \le x \text{ for } 1 \le j \le p \, \big| \, W_0 = x \big) \]
is non-decreasing on $\reals$. 
\end{lemmaA}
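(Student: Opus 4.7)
The plan is to factor the map as a product of two non-negative pieces and show each is non-decreasing under the hypothesis $\ex[W_0] \ge 0$. The exponential prefactor $x \mapsto e^{\mu_0 x - \mu_0^2/2}$, with $\mu_0 := \ex[W_0] \ge 0$, is trivially non-decreasing, so the real work lies in showing that
\[ h(x) := \pr\bigl(W_j \le x \text{ for all } j = 1, \ldots, p \, \big| \, W_0 = x\bigr) \]
is itself non-decreasing on $\reals$.

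To handle $h$, I would use the Gaussian regression decomposition. Writing $\mu_j = \ex[W_j]$ and $\sigma_{0j} = \cov(W_0, W_j)$, the assumption $\var(W_0) = 1$ gives
\[ W_j = \mu_j + \sigma_{0j}(W_0 - \mu_0) + R_j, \qquad j = 1, \ldots, p, \]
where the residual vector $(R_1, \ldots, R_p)^\top$ is centred Gaussian and, being uncorrelated with $W_0$ in the jointly Gaussian setting, fully independent of $W_0$. On the event $\{W_0 = x\}$ the constraint $W_j \le x$ becomes $R_j \le x(1 - \sigma_{0j}) + \sigma_{0j}\mu_0 - \mu_j$, and independence of $(R_1, \ldots, R_p)$ from $W_0$ turns the conditional probability into an unconditional one:
\[ h(x) = \pr\bigl(R_j \le x(1 - \sigma_{0j}) + \sigma_{0j}\mu_0 - \mu_j \text{ for all } j = 1, \ldots, p\bigr). \]

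The key observation is then immediate. Since $\var(W_0) = \var(W_j) = 1$, Cauchy--Schwarz forces $|\sigma_{0j}| \le 1$, so $1 - \sigma_{0j} \ge 0$ for every $j$. Hence each threshold $x(1 - \sigma_{0j}) + \sigma_{0j}\mu_0 - \mu_j$ is a non-decreasing affine function of $x$, and the intersection of the corresponding half-spaces only grows as $x$ grows. Monotonicity of probability yields $h(x) \le h(y)$ whenever $x \le y$. Multiplying the non-negative, non-decreasing function $h$ by the non-negative, non-decreasing function $e^{\mu_0 x - \mu_0^2/2}$ then gives the claim.

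I do not anticipate a genuine obstacle; the whole statement collapses to the one-line observation $1 - \sigma_{0j} \ge 0$ once the regression decomposition is in place. The only subtlety worth noting is the degenerate case $|\sigma_{0j}| = 1$, in which $R_j$ equals zero almost surely. Then the $j$th constraint reduces to the deterministic inequality $0 \le x(1 - \sigma_{0j}) + \sigma_{0j}\mu_0 - \mu_j$, whose indicator is either identically $0$, identically $1$, or a non-decreasing step function of $x$ (because the coefficient $1 - \sigma_{0j}$ is non-negative), and the intersection argument is unaffected.
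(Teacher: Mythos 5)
Your proof is correct. The paper does not actually prove this lemma itself --- it defers to Lemma 6 of \cite{Chernozhukov2015} --- and the argument there is essentially the one you give: condition on $W_0=x$ via the Gaussian regression decomposition, observe that the resulting thresholds have slope $1-\cov(W_0,W_j)\ge 0$ because both variables have unit variance, and multiply the resulting non-negative non-decreasing function by the non-decreasing exponential factor (which is where $\ex[W_0]\ge 0$ enters). Your handling of the degenerate case $|\cov(W_0,W_j)|=1$ is also fine.
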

\begin{lemmaA}\label{lemma3-anticon}
Let $(X_1,\ldots,X_p)^\top$ be a centred Gaussian random vector in $\reals^p$ with $\max_{1 \le j \le p} \ex[X_j^2] \le \sigma_X^2$ for some $\sigma_X^2 > 0$. Then for any $r > 0$, 
\[ \pr \Big( \max_{1 \le j \le p} X_j \ge \ex \Big[ \max_{1 \le j \le p} X_j \Big] + r \Big) \le e^{-r^2/(2\sigma_X^2)}. \]
\end{lemmaA}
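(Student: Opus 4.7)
The plan is to recognize this as the classical Borell--Tsirelson--Ibragimov--Sudakov (BTIS) Gaussian concentration inequality applied to the maximum functional, and to derive it via the standard route: represent the Gaussian vector as a linear image of an i.i.d.\ standard Gaussian, observe that the maximum is a Lipschitz function of that underlying Gaussian, and then invoke Gaussian concentration for Lipschitz functions.

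Concretely, I would first write $X = A Z$ where $Z = (Z_1,\ldots,Z_K)^\top$ is a vector of i.i.d.\ $\normal(0,1)$ variables and $A$ is any matrix satisfying $A A^\top = \cov(X)$ (such a representation exists since the covariance matrix of a centred Gaussian vector is positive semi-definite). Writing $a_j^\top$ for the $j$-th row of $A$, we have $X_j = a_j^\top Z$ and $\|a_j\|_2^2 = \ex[X_j^2] \le \sigma_X^2$ by assumption. Define $F: \reals^K \to \reals$ by $F(z) = \max_{1 \le j \le p} a_j^\top z$. For any $z, z' \in \reals^K$, the inequality $a_j^\top z \le a_j^\top z' + \|a_j\|_2 \|z - z'\|_2 \le a_j^\top z' + \sigma_X \|z - z'\|_2$ holds uniformly in $j$, which after taking maxima yields $|F(z) - F(z')| \le \sigma_X \|z - z'\|_2$. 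Hence $F$ is Lipschitz with constant at most $\sigma_X$ in Euclidean norm.

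With these two observations in hand, the conclusion follows by the classical Gaussian concentration inequality for Lipschitz functions (due to Borell, and independently Tsirelson--Ibragimov--Sudakov): if $F: \reals^K \to \reals$ is $L$-Lipschitz and $Z$ is standard Gaussian on $\reals^K$, then $\pr(F(Z) \ge \ex[F(Z)] + r) \le \exp(-r^2/(2L^2))$ for all $r > 0$. Applying this with $L = \sigma_X$ to $F(Z) = \max_{j} X_j$ yields the stated bound.

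The only real obstacle is a potential subtlety in the Lipschitz-constant computation when $\cov(X)$ is singular and the representation $A$ is non-square; this is handled uniformly by the argument above, since $\|a_j\|_2^2 = (AA^\top)_{jj} = \var(X_j)$ regardless of the dimension of $Z$. Everything else is either a direct consequence of the BTIS inequality or a linear-algebraic triviality, so no further calculation is needed.
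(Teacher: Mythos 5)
Your proof is correct and is in substance the same as the paper's treatment: the paper does not prove this lemma itself but cites it as a standard Gaussian concentration result (Theorem 7.1 in Ledoux, 2001), and your argument --- writing $X = AZ$, checking that $z \mapsto \max_j a_j^\top z$ is $\sigma_X$-Lipschitz, and invoking the Borell--Tsirelson--Ibragimov--Sudakov inequality for Lipschitz functions of a standard Gaussian --- is exactly the standard derivation behind that citation. Your handling of the Lipschitz constant via $\|a_j\|_2^2 = \var(X_j) \le \sigma_X^2$, including the possibly singular covariance case, is correct.
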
 
The proof of Lemmas \ref{lemma1-anticon} and \ref{lemma2-anticon} can be found in \cite{Chernozhukov2015}. Lemma \ref{lemma3-anticon} is a standard result on Gaussian concentration whose proof is given e.g.\ in \cite{Ledoux2001}; see Theorem 7.1 therein. We now closely follow the arguments for the proof of Theorem 3 in \cite{Chernozhukov2015}. The proof splits up into three steps. 
\vspace{7pt}

\textit{Step 1.} Pick any $x \ge 0$ and set 
\[ W_j = \frac{X_j - x}{\sigma_j} + \frac{\overline{\mu} + x}{\underline{\sigma}}. \]
By construction, $\ex[W_j] \ge 0$ and $\var(W_j) = 1$. Defining $Z = \max_{1 \le j \le p} W_j$, it holds that  
\begin{align*}
\pr \Big( \Big| \max_{1 \le j \le p} X_j - x \Big| \le \delta \Big) 
 & \le \pr \Big( \Big| \max_{1 \le j \le p} \frac{X_j - x}{\sigma_j} \Big| \le \frac{\delta}{\underline{\sigma}} \Big) \\
 & \le \sup_{y \in \reals} \pr \Big( \Big| \max_{1 \le j \le p} \frac{X_j - x}{\sigma_j} + \frac{\overline{\mu} + x}{\underline{\sigma}} - y \Big| \le \frac{\delta}{\underline{\sigma}} \Big) \\
 & = \sup_{y \in \reals} \pr \Big( |Z - y| \le \frac{\delta}{\underline{\sigma}} \Big). 
\end{align*}

\textit{Step 2.} We now bound the density of $Z$. Without loss of generality, we assume that $\text{Corr}(W_j,W_k) < 1$ for $k \ne j$. The marginal distribution of $W_j$ is $\normal(\nu_j,1)$ with $\nu_j = \ex[W_j] = (\mu_j/\sigma_j + \overline{\mu}/{\underline{\sigma}}) + (x/\underline{\sigma} - x/\sigma_j) \ge 0$. Hence, by Lemmas \ref{lemma1-anticon} and \ref{lemma2-anticon}, the random variable $Z$ has a density of the form
\begin{equation}\label{eq-dens-Z}
f_p(z) = f_0(z) G_p(z), 
\end{equation}
where the map $z \mapsto G_p(z)$ is non-decreasing. Define $\overline{Z} = \max_{1 \le j \le p} (W_j - \ex[W_j])$ and set $\overline{z} = 2 \overline{\mu}/\underline{\sigma} + x(1/\underline{\sigma} - 1/\overline{\sigma})$ such that $\ex[W_j] \le \overline{z}$ for any $1 \le j \le p$. With these definitions at hand, we obtain that  
\begin{align*}
\int_z^{\infty} f_0(u)du \, G_p(z) & \le \int_z^{\infty} f_0(u) G_p(u) du = \pr(Z > z) \\ 
 & \le P(\overline{Z} > z - \overline{z}) \le \exp \Big( - \frac{(z - \overline{z} - \ex[\overline{Z}])^2_+}{2} \Big), 
\end{align*}
where the last inequality follows from Lemma \ref{lemma3-anticon}. Since $W_j - \ex[W_j] = (X_j - \mu_j)/\sigma_j$, it holds that 
\[ \ex[\overline{Z}] = \ex \Big[ \max_{1 \le j \le p} \Big\{ \frac{X_j-\mu_j}{\sigma_j} \Big\} \Big] =: a_p. \]
Hence, for every $z \in \reals$, 
\begin{equation}\label{eq-bound-Gp}
G_p(z) \le \frac{1}{1 - F_0(z)} \exp\Big( - \frac{(z - \overline{z} - a_p)_+^2}{2} \Big). 
\end{equation}
Mill's inequality states that for $z > 0$, 
\[ z \le \frac{f_0(z)}{1-F_0(z)} \le z \frac{1+z^2}{z^2}. \]
Since $(1+z^2)/z^2 \le 2$ for $z \ge 1$ and $f_0(z)/\{1-F_0(z)\} \le 1.53 \le 2$ for $z \in (-\infty,1)$, we can infer that
\[ \frac{f_0(z)}{1-F_0(z)} \le 2 (z \vee 1) \quad \text{for any } z \in \reals. \]
This together with \eqref{eq-dens-Z} and \eqref{eq-bound-Gp} yields that
\[ f_p(z) \le 2 (z \vee 1)  \exp\Big( - \frac{(z - \overline{z} - a_p)_+^2}{2} \Big) \quad \text{for any } z \in \reals. \]

\textit{Step 3.} By Step 2, we get that for any $y \in \reals$ and $u > 0$, 
\[ \pr( |Z - y| \le u) = \int_{y - u}^{y + u} f_p(z) dz \le 2u \max_{z \in [y-u,y+u]} f_p(z) \le 4u (\overline{z} + a_p + 1), \] 
where the last inequality follows from the fact that the map $z \mapsto z e^{-(z-a)^2/2}$ (with $a > 0$) is non-increasing on $[a+1,\infty)$. Combining this bound with Step 1, we further obtain that for any $x \ge 0$ and $\delta > 0$, 
\begin{equation}\label{eq-bound1-Levy}
\pr \Big( \Big| \max_{1 \le j \le p} X_j - x \Big| \le \delta \Big) \le 4\delta \Big\{ \frac{2\overline{\mu}}{\underline{\sigma}} + |x| \Big(\frac{1}{\underline{\sigma}} - \frac{1}{\overline{\sigma}}\Big) + a_p + 1 \Big\} \big/ \underline{\sigma}. 
\end{equation} 
This inequality also holds for $x < 0$ by an analogous argument, and hence for all $x \in \reals$.

Now let $0 < \delta \le \underline{\sigma}$ and define $b_p = \ex \max_{1 \le j \le p} \{X_j - \mu_j\}$. For any $|x| \le \delta + \overline{\mu} + b_p + \overline{\sigma} \sqrt{2\log(\underline{\sigma}/\delta)}$, \eqref{eq-bound1-Levy} yields that 
\begin{align}
\pr \Big( \Big| \max_{1 \le j \le p} X_j - x \Big| \le \delta \Big) 
 & \le \frac{4 \delta}{\underline{\sigma}} \Big\{ \overline{\mu} \Big( \frac{3}{\underline{\sigma}} - \frac{1}{\overline{\sigma}} \Big) + a_p + \Big( \frac{1}{\underline{\sigma}} - \frac{1}{\overline{\sigma}} \Big) b_p \nonumber \\ & \phantom{\le \frac{4 \delta}{\underline{\sigma}} \Big\{} + \Big( \frac{\overline{\sigma}}{\underline{\sigma}} - 1 \Big) \sqrt{2\log\Big(\frac{\underline{\sigma}}{\delta}\Big)} + 2 - \frac{\underline{\sigma}}{\overline{\sigma}} \Big\} \nonumber \\[0.2cm]
 & \le C \delta \big\{ \overline{\mu} + a_p + b_p + \sqrt{1 \vee \log(\underline{\sigma}/\delta)} \big\} \label{eq-bound2-Levy}
\end{align}
with a sufficiently large constant $C > 0$ that depends only on $\underline{\sigma}$ and $\overline{\sigma}$. For $|x| \ge \delta + \overline{\mu} + b_p + \overline{\sigma}\sqrt{2\log(\underline{\sigma}/\delta)}$, we obtain that 
\begin{equation}\label{eq-bound3-Levy}
\pr \Big( \Big| \max_{1 \le j \le p} X_j - x \Big| \le \delta \Big) \le \frac{\delta}{\underline{\sigma}}, 
\end{equation}
which can be seen as follows: If $x > \delta + \overline{\mu}$, then $|\max_j X_j - x| \le \delta$ implies that $|x| - \delta \le \max_j X_j \le \max_j \{ X_j - \mu_j \} + \overline{\mu}$ and thus $\max_j \{ X_j - \mu_j \} \ge |x| - \delta - \overline{\mu}$. Hence, it holds that 
\begin{equation}\label{eq-bound3-Levy-prep1}
\pr \Big( \Big| \max_{1 \le j \le p} X_j - x \Big| \le \delta \Big) \le \pr \Big( \max_{1 \le j \le p} \big\{ X_j - \mu_j \} \ge |x| - \delta - \overline{\mu} \Big). 
\end{equation}
If $x < - (\delta + \overline{\mu})$, then $|\max_j X_j - x| \le \delta$ implies that $\max_j \{ X_j - \mu_j \} \le -|x| + \delta + \overline{\mu}$. Hence, in this case,
\begin{align}
\pr \Big( \Big| \max_{1 \le j \le p} X_j - x \Big| \le \delta \Big) 
 & \le \pr \Big( \max_{1 \le j \le p} \big\{ X_j - \mu_j \} \le -|x| + \delta + \overline{\mu} \Big) \nonumber \\
 & \le \pr \Big( \max_{1 \le j \le p} \big\{ X_j - \mu_j \} \ge |x| - \delta - \overline{\mu} \Big), \label{eq-bound3-Levy-prep2}
\end{align}
where the last inequality follows from the fact that for centred Gaussian random variables $V_j$ and $v > 0$, $\pr(\max_j V_j \le -v) \le \pr(V_1 \le -v) = P(V_1 \ge v) \le \pr(\max_j V_j \ge v)$. With \eqref{eq-bound3-Levy-prep1} and \eqref{eq-bound3-Levy-prep2}, we obtain that for any $|x| \ge \delta + \overline{\mu} + b_p + \overline{\sigma}\sqrt{2\log(\underline{\sigma}/\delta)}$,
\begin{align*} 
\pr \Big( & \Big| \max_{1 \le j \le p} X_j - x \Big| \le \delta \Big) \le \pr \Big( \max_{1 \le j \le p} \big\{ X_j - \mu_j \} \ge |x| - \delta - \overline{\mu} \Big) \\
 & \le \pr \Big( \max_{1 \le j \le p} \big\{ X_j - \mu_j \big\} \ge \ex \Big[ \max_{1 \le j \le p} \big\{ X_j-\mu_j \big\} \Big] + \overline{\sigma} \sqrt{2\log(\underline{\sigma}/\delta)} \Big) \le \frac{\delta}{\underline{\sigma}}, 
\end{align*}
the last inequality following from Lemma \ref{lemma3-anticon}. To sum up, we have established that for any $0 < \delta \le \underline{\sigma}$ and any $x \in \reals$, 
\begin{equation}\label{claim-prop-anticon}
\pr \Big( \Big| \max_{1 \le j \le p} X_j - x \Big| \le \delta \Big) \le C \delta \big\{ \overline{\mu} + a_p + b_p + \sqrt{1 \vee \log(\underline{\sigma}/\delta)} \big\} 
\end{equation}
with some constant $C > 0$ that does only depend on $\underline{\sigma}$ and $\overline{\sigma}$. For $\delta > \underline{\sigma}$, \eqref{claim-prop-anticon} trivially follows upon setting $C \ge 1/\underline{\sigma}$. This completes the proof.

\section{Proofs of the results from Section \ref{sec-error-var}}\label{sec-supp-proofs2}

In what follows, we prove Proposition \ref{prop-lrv} from Section \ref{sec-error-var}. The notation is the same as in the previous section. In particular, we use the symbol $C$ to denote a generic constant which may take a different value on each occurrence.

\enlargethispage{0.1cm}
\subsection*{Auxiliary results}

To start with, we derive some auxiliary results needed for the proof of Proposition \ref{prop-lrv}. The first lemma analyses the term 
\[ \xi(\ell_1,\ell_2,L) =  \frac{1}{T-L} \sum\limits_{t=L+1}^{T} \varepsilon_{t-\ell_1} \varepsilon_{t-\ell_2}, \]
where $\ell_1,\ell_2$ and $L$ are natural numbers with $0 \le \ell_1, \ell_2 \le L$ that may depend on the sample size $T$, that is, $L = L_T$ as well as $\ell_1 = \ell_{1,T}$ and $\ell_2 = \ell_{2,T}$. 
\begin{lemmaA}\label{lemma-lrv-1} 
For any $L = L_T$ with $L_T/T \rightarrow 0$, it holds that 
\[ \ex \Big[ \big\{ \xi(\ell_1,\ell_2,L) - \gamma_\varepsilon(\ell_2-\ell_1) \big\}^2 \Big] = O(T^{-1}), \]
where $\gamma_\varepsilon(\ell) = \cov(\varepsilon_t,\varepsilon_{t-\ell})$.
\end{lemmaA}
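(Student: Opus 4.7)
The plan is to first observe that by stationarity of $\{\varepsilon_t\}$,
\[ \ex[\xi(\ell_1,\ell_2,L)] = \ex[\varepsilon_{t-\ell_1}\varepsilon_{t-\ell_2}] = \gamma_\varepsilon(\ell_2-\ell_1), \]
so the claim reduces to showing $\var(\xi(\ell_1,\ell_2,L)) = O(T^{-1})$. Expanding,
\[ \var(\xi(\ell_1,\ell_2,L)) = \frac{1}{(T-L)^2}\sum_{s,t=L+1}^{T} \cov\big(\varepsilon_{s-\ell_1}\varepsilon_{s-\ell_2},\,\varepsilon_{t-\ell_1}\varepsilon_{t-\ell_2}\big), \]
so it suffices to show that the sum of the absolute covariances over $s,t$ is $O(T-L)$, which will give $\var(\xi(\ell_1,\ell_2,L)) = O((T-L)^{-1}) = O(T^{-1})$ since $L/T\to 0$.

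For bounding each covariance, I will exploit the causal AR($p$) structure of $\{\varepsilon_t\}$, which provides an MA($\infty$) expansion $\varepsilon_t = \sum_{k\ge 0} c_k \eta_{t-k}$ with $|c_k| \le C\rho^k$ for some $\rho\in(0,1)$, and the assumption $\ex[\eta_t^4]<\infty$. For mean-zero random variables the covariance of products decomposes as
\[ \cov(X_1X_2,X_3X_4) = \ex[X_1X_3]\ex[X_2X_4] + \ex[X_1X_4]\ex[X_2X_3] + \mathrm{cum}_4(X_1,X_2,X_3,X_4). \]
Applied to $(X_1,X_2,X_3,X_4) = (\varepsilon_{s-\ell_1},\varepsilon_{s-\ell_2},\varepsilon_{t-\ell_1},\varepsilon_{t-\ell_2})$, the two autocovariance-product terms are bounded by $C\rho^{|s-t|}$ (using $|\gamma_\varepsilon(k)| \le C\rho^{|k|}$), while the fourth cumulant can be written, using the MA representation and independence of the $\eta_j$, as
\[ \mathrm{cum}_4\big(\varepsilon_{s-\ell_1},\varepsilon_{s-\ell_2},\varepsilon_{t-\ell_1},\varepsilon_{t-\ell_2}\big) = \kappa_4^\eta \sum_{j} c_{s-\ell_1-j}c_{s-\ell_2-j}c_{t-\ell_1-j}c_{t-\ell_2-j}, \]
where $\kappa_4^\eta$ is the fourth cumulant of $\eta_t$. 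The exponential decay of $c_k$ together with summing out $j$ as a geometric series yields a bound of the form $C\rho^{|s-t|}$ for this term as well. Summing $C\rho^{|s-t|}$ over $s,t\in\{L+1,\ldots,T\}$ gives at most $C(T-L)$, which completes the argument.

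The routine but slightly delicate step is the bookkeeping for the fourth cumulant: one must verify that, after shifting the summation index $j$, the four factors $c_{s-\ell_1-j}c_{s-\ell_2-j}c_{t-\ell_1-j}c_{t-\ell_2-j}$ can be controlled by $C\rho^{|s-t|}$ uniformly in $\ell_1,\ell_2\in\{0,\ldots,L\}$. I expect this to be the main place where care is needed, but it follows from splitting the sum over $j$ around the two ``clusters'' $\{s-\ell_1,s-\ell_2\}$ and $\{t-\ell_1,t-\ell_2\}$ and bounding the product by $\rho^{|s-t|}$ times a convergent geometric series. Once this is in hand, the two deterministic terms and the cumulant term all contribute $O(T-L)$ to the double sum, and the lemma follows.
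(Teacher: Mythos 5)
Your proposal is correct and follows essentially the same route as the paper's proof: both reduce the claim to bounding the double sum of covariances of products via the MA($\infty$) representation $\varepsilon_t = \sum_k c_k \eta_{t-k}$, decompose each fourth moment into a fourth-cumulant term plus products of autocovariances, and use the exponential decay of the $c_k$ (hence of $\gamma_\varepsilon$) to sum the result to $O(T-L)$. The only cosmetic difference is that you subtract the mean first and work with $\var(\xi)$, whereas the paper expands $\ex[\xi^2]$ directly and subtracts $\gamma_\varepsilon^2(\ell_1-\ell_2)$ at the end; these are the same computation.
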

\begin{proof}[\textnormal{\textbf{Proof of Lemma \ref{lemma-lrv-1}}}] 
Since the variables $\varepsilon_t$ have the expansion $\varepsilon_t = \sum_{k=0}^{\infty} c_k \eta_{t-k}$ and $\gamma_\varepsilon(\ell) = (\sum_{k=0}^\infty c_k c_{k+\ell}) \nu^2$, it holds that 
\[ \ex \big[ \xi^2(\ell_1,\ell_2,L)\big] = \frac{1}{(T-L)^2} \sum\limits_{t,t^\prime=L+1}^T \ex \big[ \varepsilon_{t-\ell_1} \varepsilon_{t-\ell_2} \varepsilon_{t^\prime-\ell_1} \varepsilon_{t^\prime-\ell_2} \big], \]
where
\begin{align*} 
 & \ex \big[ \varepsilon_{t-\ell_1} \varepsilon_{t-\ell_2} \varepsilon_{t^\prime-\ell_1} \varepsilon_{t^\prime-\ell_2} \big] \\*
 & = \Big( \sum_{k=0}^{\infty} c_k c_{k+\ell_1-\ell_2} c_{k+t^\prime-t} c_{k+t^\prime-t+\ell_1-\ell_2} \Big) \kappa +  \Big( \sum_{k=0}^{\infty} c_k c_{k+\ell_1-\ell_2} \Big)^2 \nu^4 \\*
 & \quad + \Big( \sum_{k=0}^{\infty} c_k c_{k+t^\prime-t} \Big)^2 \nu^4 + \Big( \sum_{k=0}^{\infty} c_k c_{k+t^\prime-t-\ell_1+\ell_2} \Big) \Big( \sum_{k=0}^{\infty} c_k c_{k+t^\prime-t+\ell_1-\ell_2} \Big) \nu^4 \\
 & = \Big( \sum_{k=0}^{\infty} c_k c_{k+\ell_1-\ell_2} c_{k+t^\prime-t} c_{k+t^\prime-t+\ell_1-\ell_2} \Big) \kappa + \gamma_\varepsilon^2(\ell_1-\ell_2) \\*
 & \quad + \gamma_\varepsilon^2(t^\prime-t) + \gamma_\varepsilon(t^\prime-t-\ell_1+\ell_2) \gamma_\varepsilon(t^\prime-t+\ell_1-\ell_2)
\end{align*}
with $\kappa=\ex[\eta_0^4] - 3 \nu^4$ and $c_k=0$ for $k < 0$. Noting that 
\[ \ex \Big[ \big\{ \xi(\ell_1,\ell_2,L) - \gamma_\varepsilon(\ell_1-\ell_2) \big\}^2 \Big] = \ex \big[\xi^2(\ell_1,\ell_2,L)\big] - \gamma_\varepsilon^2(\ell_1-\ell_2), \] 
we can infer that
\begin{align*} 
 & \ex \Big[ \big\{ \xi(\ell_1,\ell_2,L) - \gamma_\varepsilon(\ell_1-\ell_2) \big\}^2 \Big] \\
 & = \frac{1}{(T-L)^2} \sum\limits_{t,t^\prime=L+1}^T \Big( \sum_{k=0}^{\infty} c_k c_{k+\ell_1-\ell_2} c_{k+t^\prime-t} c_{k+t^\prime-t+\ell_1-\ell_2} \Big) \kappa + \frac{1}{(T-L)^2} \sum\limits_{t,t^\prime=L+1}^T \gamma_\varepsilon^2(t^\prime-t) \\
 & \quad + \frac{1}{(T-L)^2} \sum\limits_{t,t^\prime=L+1}^T  \gamma_\varepsilon(t^\prime-t-\ell_1+\ell_2) \gamma_\varepsilon(t^\prime-t+\ell_1-\ell_2) \\
 & = O(T^{-1}),
\end{align*} 
the last equality following from the fact that the autocovariances $\gamma_\varepsilon(\ell)$ are absolutely summable and the coefficients $c_k$ decay exponentially fast to zero. 
\end{proof}

We next show that the empirical autocovariances 
\[ \widehat{\gamma}_q(\ell) = \frac{1}{T-q} \sum_{t=q+\ell+1}^T \Delta_q Y_{t,T} \, \Delta_q Y_{t-\ell,T} \]
of the process $\{ \Delta_q Y_{t,T} \}$ have the following property. 
\begin{lemmaA}\label{lemma-lrv-2}
For any $q = q_T$ with $q_T/\sqrt{T} \rightarrow 0$ and any $1 \le \ell \le p + 1$, it holds that  
\[ \widehat{\gamma}_q(\ell) - \gamma_q(\ell) = O_p(T^{-1/2}), \]
where $\gamma_q(\ell) = \cov(\Delta_q \varepsilon_t,\Delta_q \varepsilon_{t-\ell})$. 
\end{lemmaA}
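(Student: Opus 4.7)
\textbf{Proof plan for Lemma \ref{lemma-lrv-2}.} The plan is to peel off the deterministic trend from $\Delta_q Y_{t,T}$, leaving a quadratic functional of the errors that can be decomposed into four terms each directly covered by Lemma \ref{lemma-lrv-1}. Using $Y_{t,T} = m(t/T) + \varepsilon_t$, I would write $\Delta_q Y_{t,T} = \Delta_q \varepsilon_t + \Delta_q m_t$ with $\Delta_q m_t := m(t/T) - m((t-q)/T)$, and observe that Lipschitz continuity of $m$ yields $|\Delta_q m_t| \le C q/T$. Expanding the product $\Delta_q Y_{t,T}\, \Delta_q Y_{t-\ell,T}$ gives a main term $\Delta_q \varepsilon_t \, \Delta_q \varepsilon_{t-\ell}$, two mixed terms of the form $\Delta_q \varepsilon_{\cdot}\, \Delta_q m_{\cdot}$, and a purely deterministic term. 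The deterministic term, once averaged and multiplied by $1/(T-q)$, is bounded by $C(q/T)^2 = o(T^{-1})$ since $q_T/\sqrt{T} \to 0$. For a mixed term such as $R_T = (T-q)^{-1}\sum_t \Delta_q m_{t-\ell}\, \Delta_q \varepsilon_t$, I would expand the second moment and use $|\Delta_q m_{t-\ell}| \le Cq/T$ together with the absolute summability of $\gamma_\varepsilon$ (guaranteed by the exponential decay $|\gamma_\varepsilon(\ell)| \le C\rho^{|\ell|}$ for a causal AR($p$) process) to bound $\ex[R_T^2] \le C(q/T)^2 \cdot T^{-1} = o(T^{-2})$. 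Thus the trend-related contributions are $o_p(T^{-1/2})$.

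For the remaining main stochastic piece, I would set $L := q + \ell$ and note that the prefactor $(T-q)^{-1}$ differs from $(T-L)^{-1}$ by $O(T^{-1})$, which after multiplication by a term of order $O_p(1)$ contributes only $O_p(T^{-1})$. Then I would expand
\[
\Delta_q \varepsilon_t\, \Delta_q \varepsilon_{t-\ell} \;=\; \varepsilon_t \varepsilon_{t-\ell} - \varepsilon_t \varepsilon_{t-\ell-q} - \varepsilon_{t-q}\varepsilon_{t-\ell} + \varepsilon_{t-q}\varepsilon_{t-\ell-q},
\]
so the average becomes $\xi(0,\ell,L) - \xi(0,\ell+q,L) - \xi(q,\ell,L) + \xi(q,\ell+q,L)$ in the notation of Lemma \ref{lemma-lrv-1}. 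Since $q_T/\sqrt{T} \to 0$ implies $L_T/T \to 0$, each application of Lemma \ref{lemma-lrv-1} together with Markov's inequality gives $\xi(\ell_1,\ell_2,L) - \gamma_\varepsilon(\ell_2 - \ell_1) = O_p(T^{-1/2})$.

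Finally I would identify the limit of the four-term combination with $\gamma_q(\ell)$ by the elementary computation
\[
\gamma_q(\ell) = \cov(\Delta_q \varepsilon_t,\Delta_q \varepsilon_{t-\ell}) = 2\gamma_\varepsilon(\ell) - \gamma_\varepsilon(\ell+q) - \gamma_\varepsilon(\ell-q),
\]
using stationarity and the symmetry $\gamma_\varepsilon(\cdot) = \gamma_\varepsilon(-\cdot)$. Summing the four $O_p(T^{-1/2})$ pieces with the appropriate signs yields $\widehat{\gamma}_q(\ell) - \gamma_q(\ell) = O_p(T^{-1/2})$, as required.

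The main technical obstacle is the second-moment bound for the mixed trend/error term: it requires combining the deterministic Lipschitz bound $|\Delta_q m_{t-\ell}| \le Cq/T$ with a covariance calculation $\sum_{t,t'} \cov(\Delta_q \varepsilon_t, \Delta_q \varepsilon_{t'}) = O(T)$ — which is where the exponential decay of $\gamma_\varepsilon$ (not merely summability) is convenient to guarantee that $q$-differenced covariances remain absolutely summable uniformly in $q$. Once this is in hand, the remaining steps are bookkeeping around Lemma \ref{lemma-lrv-1}.
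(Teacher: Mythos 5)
Your proposal is correct and follows essentially the same route as the paper's proof: the same decomposition of $\widehat{\gamma}_q(\ell)$ into an error-only term handled via Lemma \ref{lemma-lrv-1}, two mixed terms and a purely deterministic term, with the trend contributions controlled by the Lipschitz bound $|\Delta_q m_t| \le C q/T$ and the condition $q/\sqrt{T} \to 0$. The only cosmetic difference is that the paper bounds the mixed terms by Cauchy--Schwarz (yielding $O_p(q/T)$), whereas you compute their second moment directly through the covariance sum (yielding the sharper $o_p(T^{-1})$); both bounds suffice, and your more explicit reduction of the error-only average to the four $\xi(\ell_1,\ell_2,L)$ terms is exactly what the paper leaves as ``straightforward.''
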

\begin{proof}[\textnormal{\textbf{Proof of Lemma \ref{lemma-lrv-2}}}] 
To analyse the term $\widehat{\gamma}_q(\ell)$, we decompose it as follows:  
\[ \widehat{\gamma}_q(\ell) = \widehat{\gamma}_q^*(\ell) + R_A + R_B + R_C, \]
where 
\[ \widehat{\gamma}_q^*(\ell) =  \frac{1}{T-q} \sum\limits_{t=q+\ell+1}^T \Delta_q \varepsilon_t \, \Delta_q \varepsilon_{t-\ell} \]
as well as $R_A = (T-q)^{-1} \sum_{t=q+\ell+1}^T \Delta_q m_t \Delta_q \varepsilon_{t-\ell}$, $R_B = (T-q)^{-1} \sum_{t=q+\ell+1}^T \Delta_q \varepsilon_t \Delta_q m_{t-\ell}$ and $R_C = (T-q)^{-1} \sum_{t=q+\ell+1}^T \Delta_q m_t \Delta_q m_{t-\ell}$ with $\Delta_q m_t = m(\frac{t}{T}) - m(\frac{t-q}{T})$. With the help of Lemma \ref{lemma-lrv-1}, it is straightforward to show that  
\[ \widehat{\gamma}_q^*(\ell) - \gamma_q(\ell) = O_p(T^{-1/2}). \]
Moreover, the Cauchy-Schwarz inequality yields that 
\[ \ex [R_A^2] \le \Big\{ \frac{1}{T-q} \sum\limits_{t=q+\ell+1}^T (\Delta_q m_t)^2 \Big\} \ex \Big[ \frac{1}{T-q} \sum\limits_{t=q+\ell+1}^T (\Delta_q \varepsilon_{t-\ell})^2 \Big]. \]
Since $m$ is Lipschitz by assumption, we get that $(T-q)^{-1} \sum_{t=q+\ell+1}^T (\Delta_q m_t)^2 \le C (q/T)^2$. In addition, it obviously holds that 
$\ex [ (T-q)^{-1} \sum_{t=q+\ell+1}^T (\Delta_q \varepsilon_{t-\ell})^2 ] = O(1)$.  
Hence, we can infer that 
\[ \ex [R_A^2] = O\Big( \Big\{ \frac{q}{T} \Big\}^2 \Big), \]
which implies that $R_A = o_p(T^{-1/2})$. Similar arguments yield that $R_j = o_p(T^{-1/2})$ for $j = B,C$ as well. Putting everything together, we arrive at the statement of Lemma \ref{lemma-lrv-2}. 
\end{proof}

\subsection*{Proof of Proposition \ref{prop-lrv}}

We first show that the pilot estimator $\widetilde{\boldsymbol{a}}_q$ converges to $\boldsymbol{a}$. In particular, we verify that $\widetilde{\boldsymbol{a}}_q - \boldsymbol{a} = O_p(T^{-1/2})$. By Lemma \ref{lemma-lrv-2}, it holds that $\widehat{\boldsymbol{\Gamma}}_q = \boldsymbol{\Gamma}_q + O_p(T^{-1/2})$ and $\widehat{\boldsymbol{\gamma}}_q = \boldsymbol{\gamma}_q + O_p(T^{-1/2})$. Since $\boldsymbol{\Gamma}_q$ is invertible, this implies that 
\[ \widetilde{\boldsymbol{a}}_q = \boldsymbol{\Gamma}_q^{-1} \boldsymbol{\gamma}_q + O_p(T^{-1/2}). \]
With the help of equation \eqref{YW-eq}, we can further infer that 
\[ \widetilde{\boldsymbol{a}}_q - \boldsymbol{a} = -\nu^2 \boldsymbol{\Gamma}_q^{-1} \boldsymbol{c}_q + O_p(T^{-1/2}). \]
As already noted in Section \ref{subsec-error-var-AR}, the entries of the vector $\boldsymbol{c}_q = (c_{q-1},\ldots,c_{c-p})^\top$ decay exponentially fast to zero, that is, $|c_k| \le C \rho^k$ for some $0 < \rho < 1$. Moreover, it holds that $\gamma_q(\ell) \rightarrow 2 \gamma_\varepsilon(\ell)$ for any fixed $\ell$ as $q \rightarrow \infty$. Consequently, $\| \nu^2 \boldsymbol{\Gamma}_q^{-1} \boldsymbol{c}_q \|_\infty = o(T^{-1/2})$, where $\| \cdot \|_\infty$ denotes the usual supremum norm for vectors. As a result, we obtain that $\widetilde{\boldsymbol{a}}_q - \boldsymbol{a} = O_p(T^{-1/2})$.

We next show that $\widehat{\boldsymbol{a}}_r - \boldsymbol{a} = O_p(T^{-1/2})$, where $r \ge 1$ is any fixed integer that does not grow with the sample size $T$. By definition, it holds that $\widehat{\boldsymbol{a}}_r = \widehat{\boldsymbol{\Gamma}}_r^{-1} (\widehat{\boldsymbol{\gamma}}_r + \widetilde{\nu}^2 \widetilde{\boldsymbol{c}}_r)$.  From Lemma \ref{lemma-lrv-2}, it follows that $\widehat{\boldsymbol{\Gamma}}_r^{-1} = \boldsymbol{\Gamma}_r^{-1} + O_p(T^{-1/2})$ and $\widehat{\boldsymbol{\gamma}}_r = \boldsymbol{\gamma}_r + O_p(T^{-1/2})$. Moreover, with the help of the fact that $\widetilde{\boldsymbol{a}}_q - \boldsymbol{a} = O_p(T^{-1/2})$, it is straightforward to verify that $\widetilde{\nu}^2 - \nu^2 = O_p(T^{-1/2})$ and $\widetilde{\boldsymbol{c}}_r - \boldsymbol{c}_r =  O_p(T^{-1/2})$. Hence, we arrive at  
\begin{equation}\label{eq-conv-est-AR-SS}
\widehat{\boldsymbol{a}}_r = \boldsymbol{\Gamma}_r^{-1} (\boldsymbol{\gamma}_r + \nu^2 \boldsymbol{c}_r) + O_p(T^{-1/2}) = \boldsymbol{a} + O_p(T^{-1/2}), 
\end{equation}
where the last equality is due to equation \eqref{YW-eq}.

From \eqref{eq-conv-est-AR-SS}, it immediately follows that $\widehat{\boldsymbol{a}} - \boldsymbol{a} = O_p(T^{-1/2})$, which in turn allows us to infer that $\widehat{\nu}^2 - \nu^2 = O_p(T^{-1/2})$ and $\widehat{\sigma}^2 = \sigma^2 + O_p(T^{-1/2})$ by straightforward arguments.

\section{Robustness checks and implementation details for the simulations in Section \ref{sec-sim}}\label{sec-supp-sim}

\subsection*{Robustness checks for Section \ref{subsec-sim-3}}

In what follows, we carry out some robustness checks to assess how sensitive the estimators $\widehat{a}$ and $\widehat{\sigma}^2$ are to the choice of the tuning parameters $q$ and $\overline{r}$. To do so, we repeat the simulation exercises of Section \ref{subsec-sim-3} for different values of $q$ and $\overline{r}$. In addition, we consider different choices of the tuning parameters $(m_1,m_2)$ on which the estimators of \cite{Hall2003} depend. As in Section \ref{subsec-sim-3}, we choose $m_1$ and $m_2$ such that $q$ lies between these values. We thus keep the parameters $q$ and $(m_1,m_2)$ roughly comparable.

To start with, we consider the simulation scenarios with a moderate trend ($s_\beta = 1$). The MSE values of the estimators $\widehat{a}$, $\widehat{a}_{\text{HvK}}$, $\widehat{a}_{\text{oracle}}$ and $\widehat{\sigma}^2$, $\widehat{\sigma}^2_{\text{HvK}}$, $\widehat{\sigma}^2_{\text{oracle}}$ for these scenarios are presented in Figure \ref{fig:MSE_slope1} of Section \ref{subsec-sim-3}. These MSEs are re-calculated in Figures \ref{fig:MSE_slope1_AR_robust} and \ref{fig:MSE_slope1_lrv_robust} for a range of different choices of $q$, $\overline{r}$ and $(m_1,m_2)$. As one can see, the MSEs in the different plots of Figures \ref{fig:MSE_slope1_AR_robust} and \ref{fig:MSE_slope1_lrv_robust} are very similar. Hence, the MSE results reported in Section \ref{subsec-sim-3} for the scenarios with a moderate trend appear to be fairly robust to different choices of the tuning parameters. In particular, our estimators $\widehat{a}$ and $\widehat{\sigma}^2$ seem to be quite insensitive to the choice of tuning parameters, at least as far as their MSEs are concerned.

We next turn to the simulation designs with a pronounced trend ($s_\beta = 10$). The MSE values of the estimators in these scenarios are reported in Figure \ref{fig:MSE_slope10} of Section \ref{subsec-sim-3}. Analogously as before, we re-calculate these MSEs for different tuning parameters in Figures \ref{fig:MSE_slope10_AR_robust}--\ref{fig:MSE_slope10_lrv_robust}. Figure \ref{fig:MSE_slope10_AR_zoom_robust} is a zoomed-in version of Figure \ref{fig:MSE_slope10_AR_robust} which is added for better visibility. As can be seen, our estimators appear to be barely influenced by the choice of $q$. However, the MSE values become somewhat larger when $\overline{r}$ is chosen bigger. This is of course not very surprising: The main reason why the estimator $\widehat{a}$ works well in the presence of a strong trend is that it is only based on differences of small orders. If we increase $\overline{r}$, we use larger differences to compute $\widehat{a}$, which results in not eliminating the trend $m$ appropriately any more. This becomes visible in somewhat larger MSE values. Nevertheless, overall, our estimators appear not to be strongly influenced by the choice of tuning parameters (in terms of MSE) as long as these are chosen within reasonable bounds.

\begin{figure}[h!]
\begin{subfigure}[b]{0.45\textwidth}
\includegraphics[width=\textwidth]{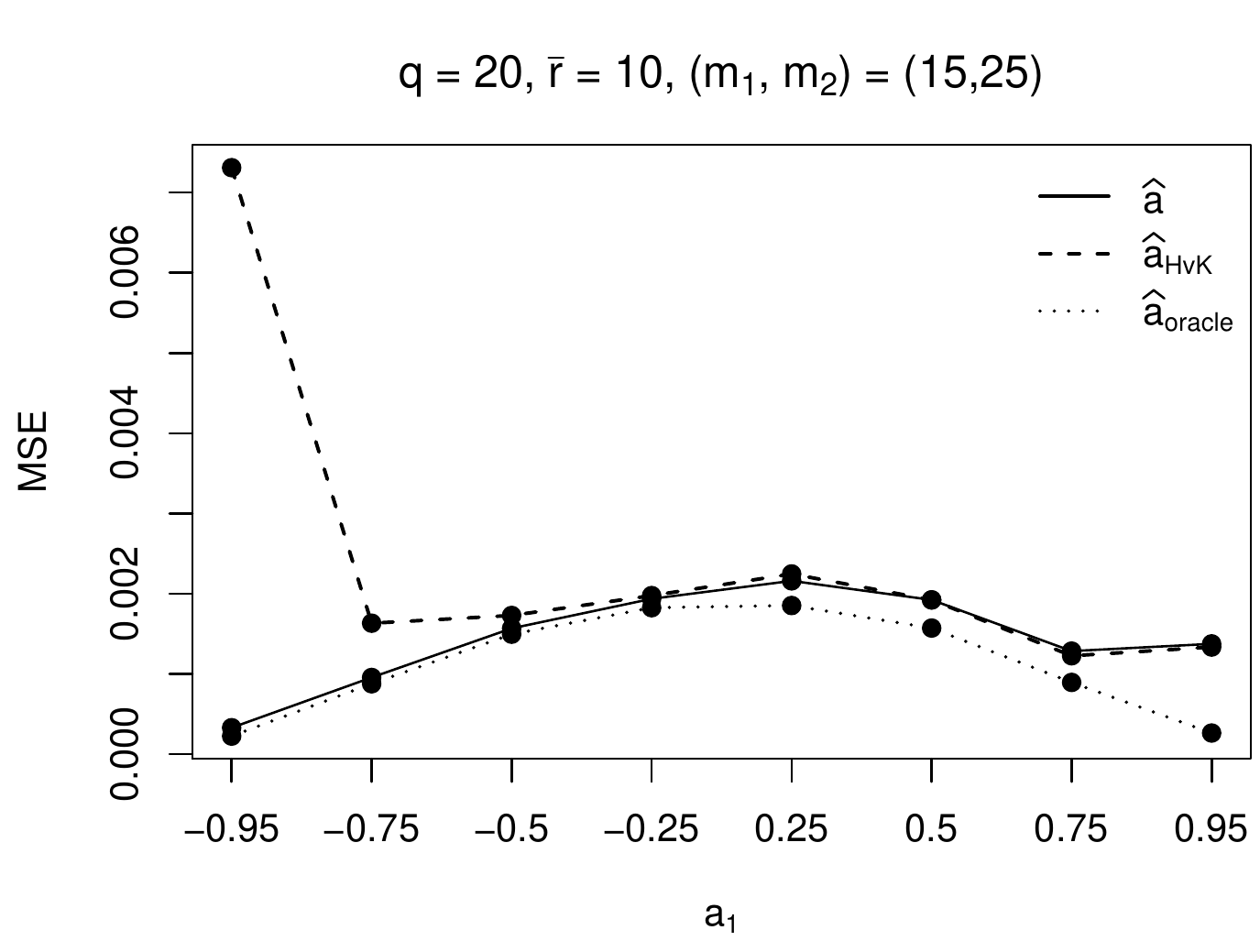}
\end{subfigure}
\hspace{0.25cm}
\begin{subfigure}[b]{0.45\textwidth}
\includegraphics[width=\textwidth]{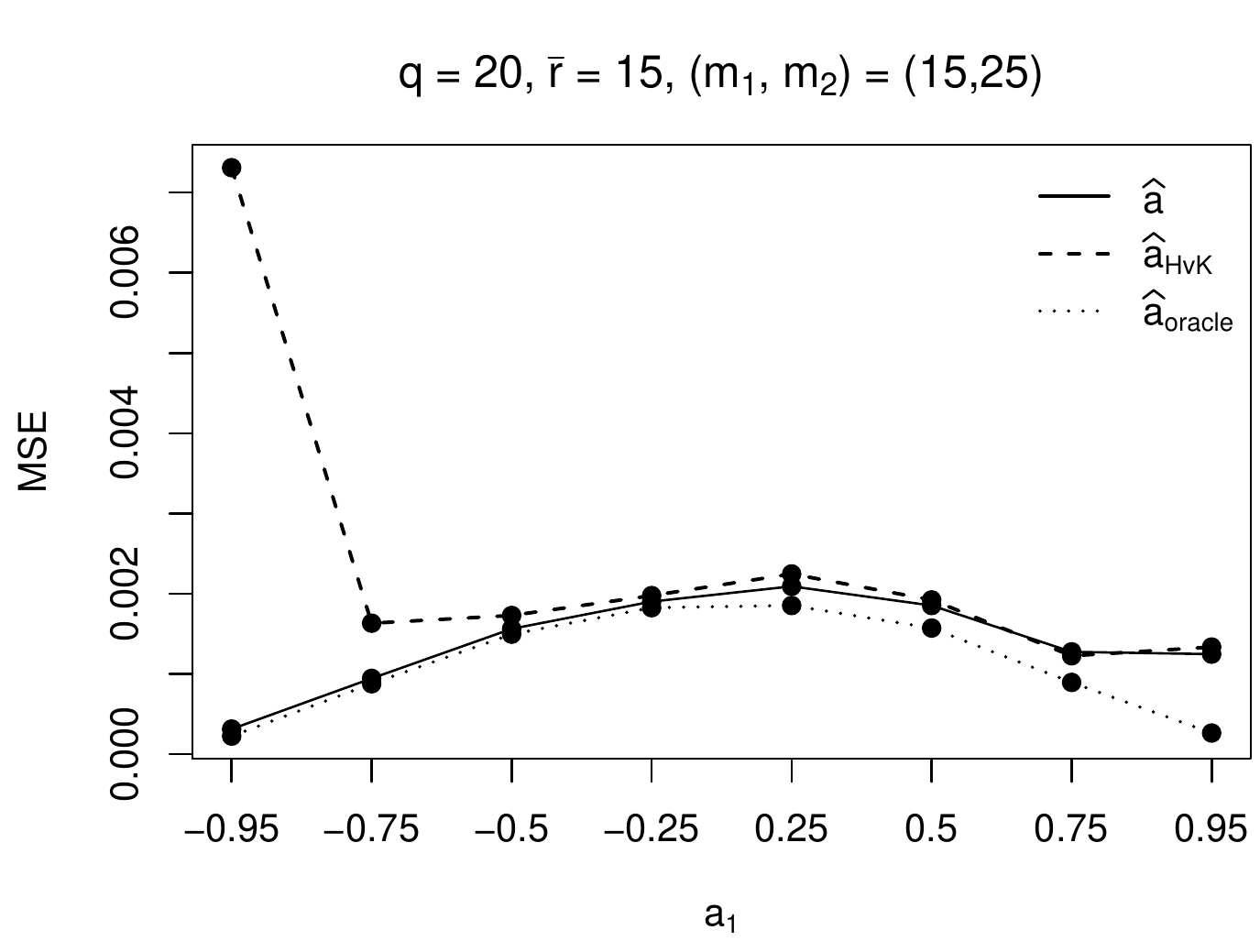}
\end{subfigure}

\begin{subfigure}[b]{0.45\textwidth}
\includegraphics[width=\textwidth]{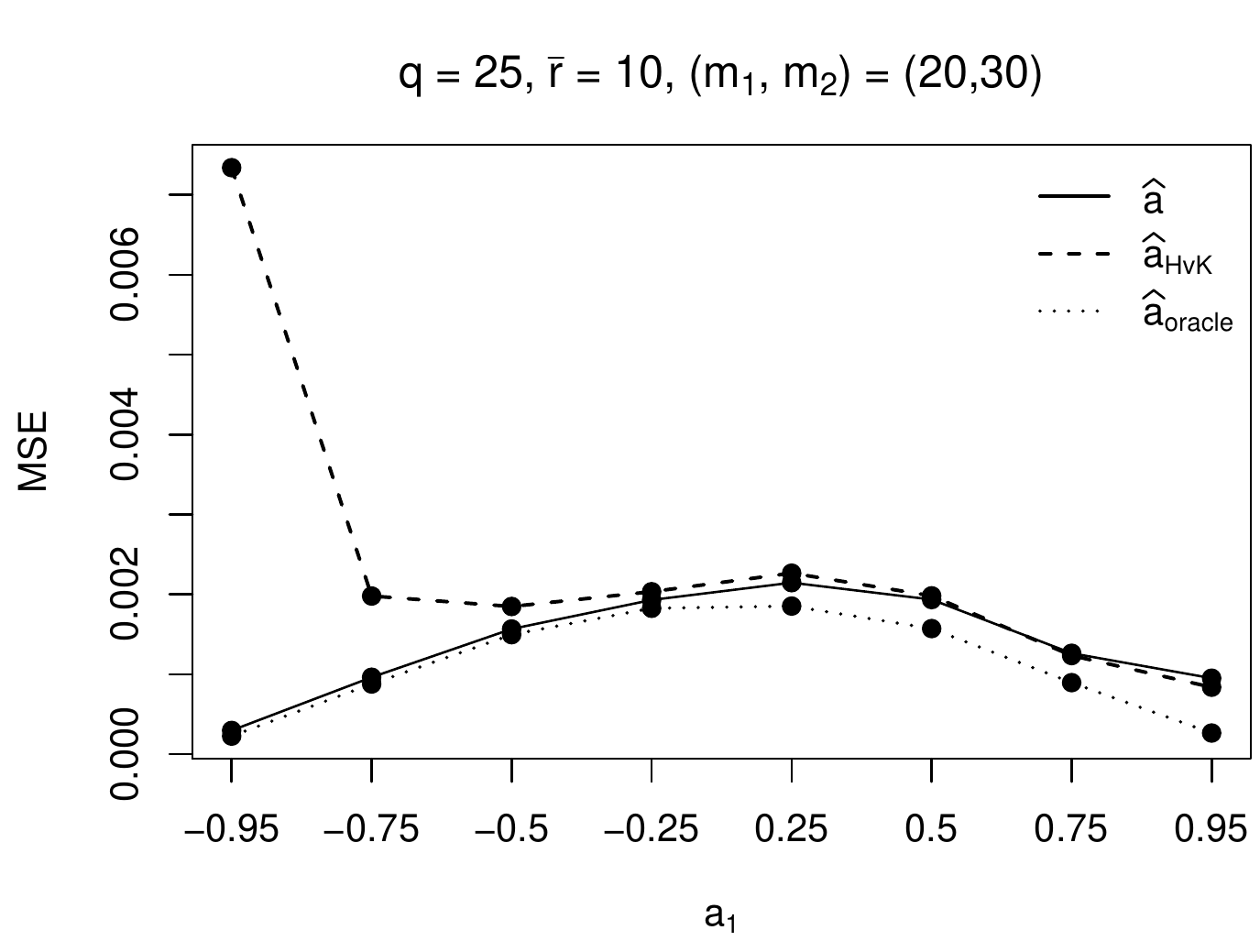}
\end{subfigure}
\hspace{0.25cm}
\begin{subfigure}[b]{0.45\textwidth}
\includegraphics[width=\textwidth]{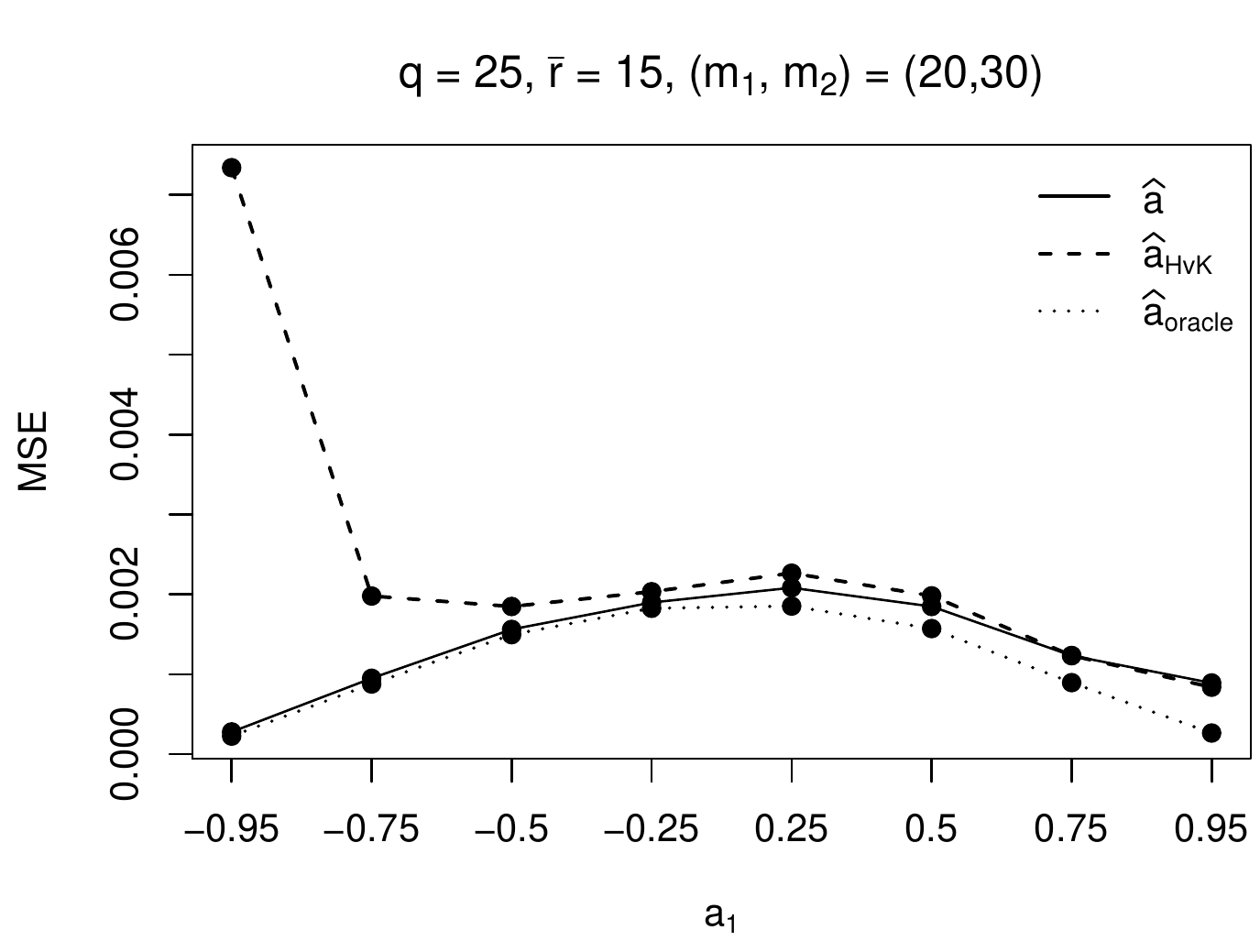}
\end{subfigure}

\begin{subfigure}[b]{0.45\textwidth}
\includegraphics[width=\textwidth]{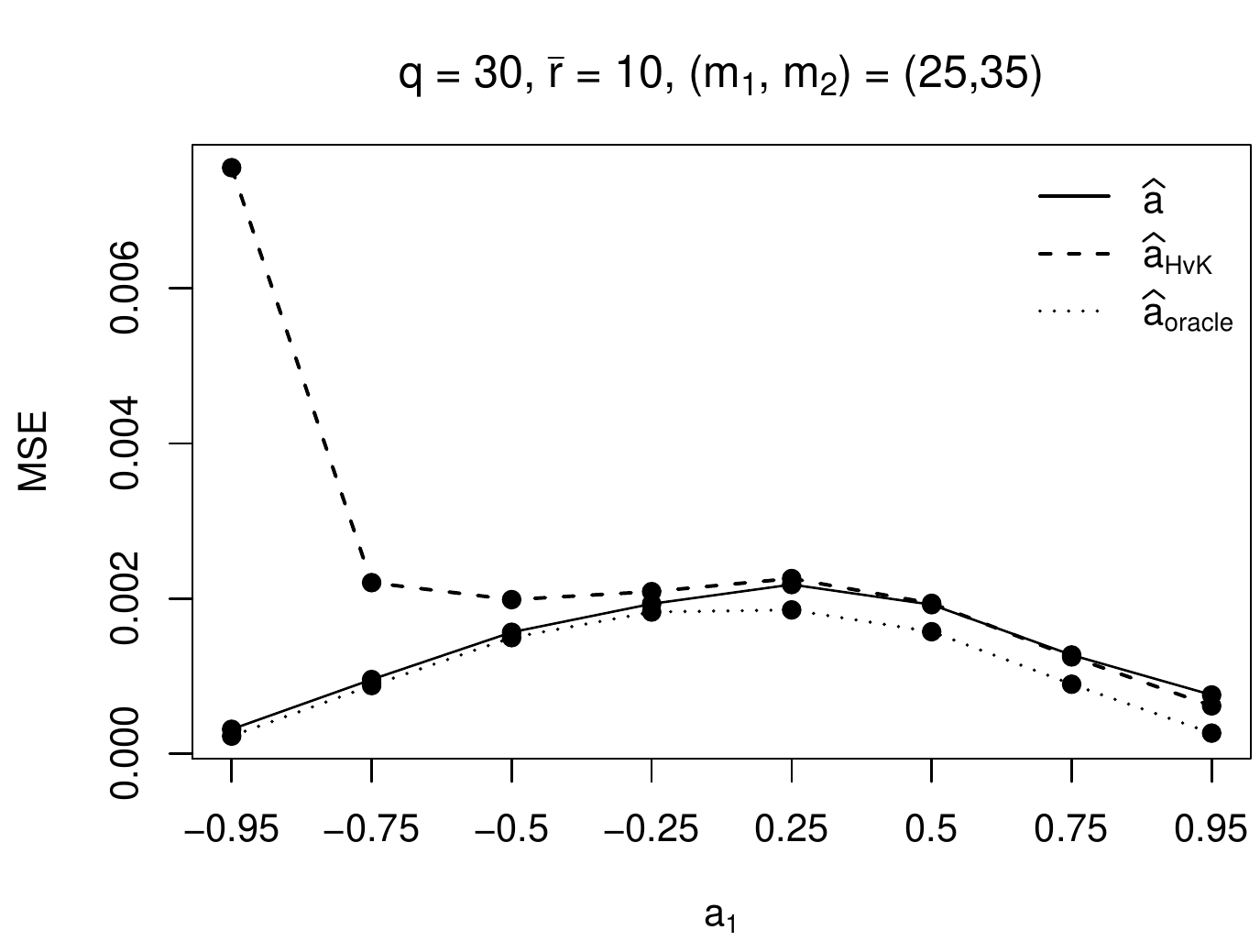}
\end{subfigure}
\hspace{0.25cm}
\begin{subfigure}[b]{0.45\textwidth}
\includegraphics[width=\textwidth]{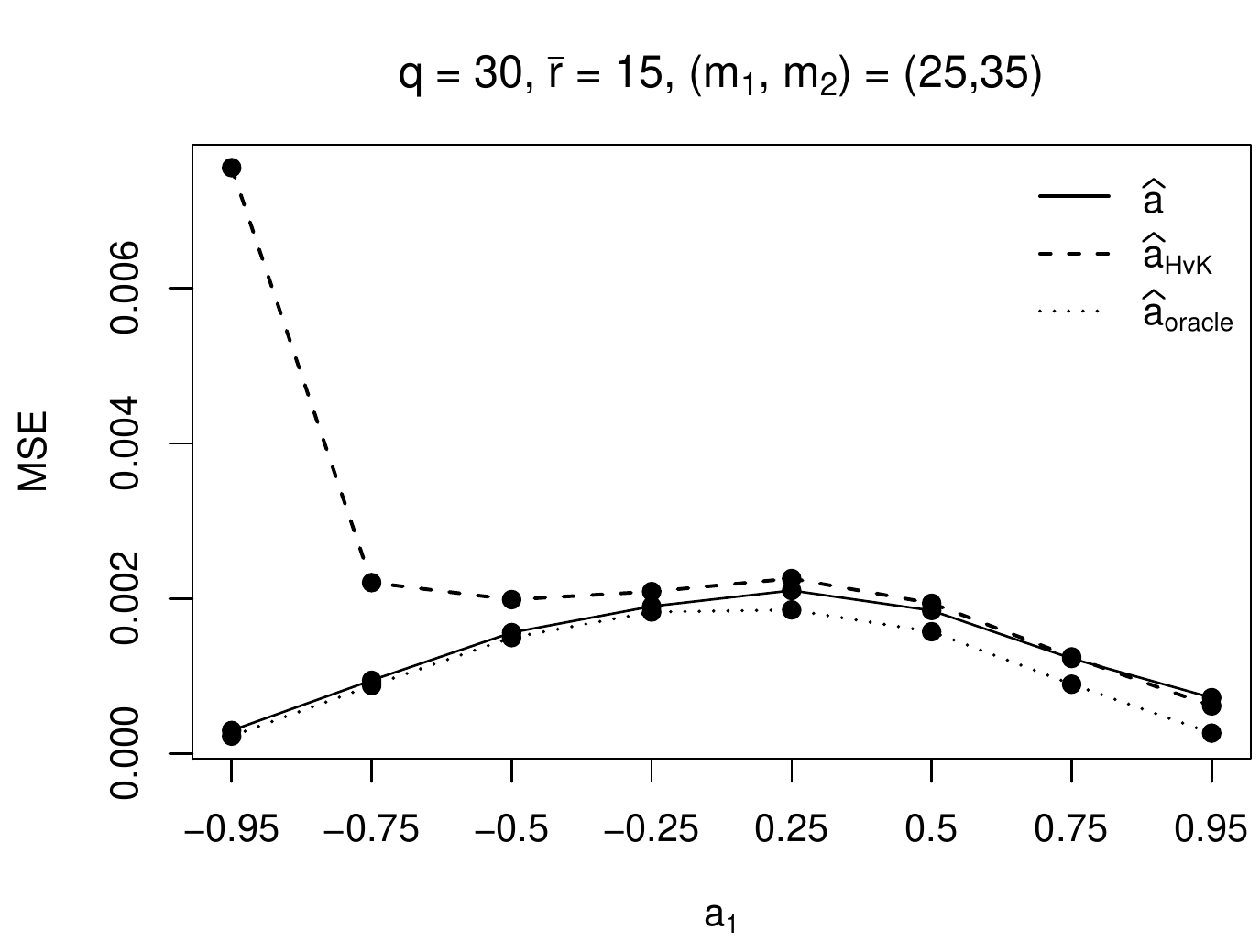}
\end{subfigure}

\begin{subfigure}[b]{0.45\textwidth}
\includegraphics[width=\textwidth]{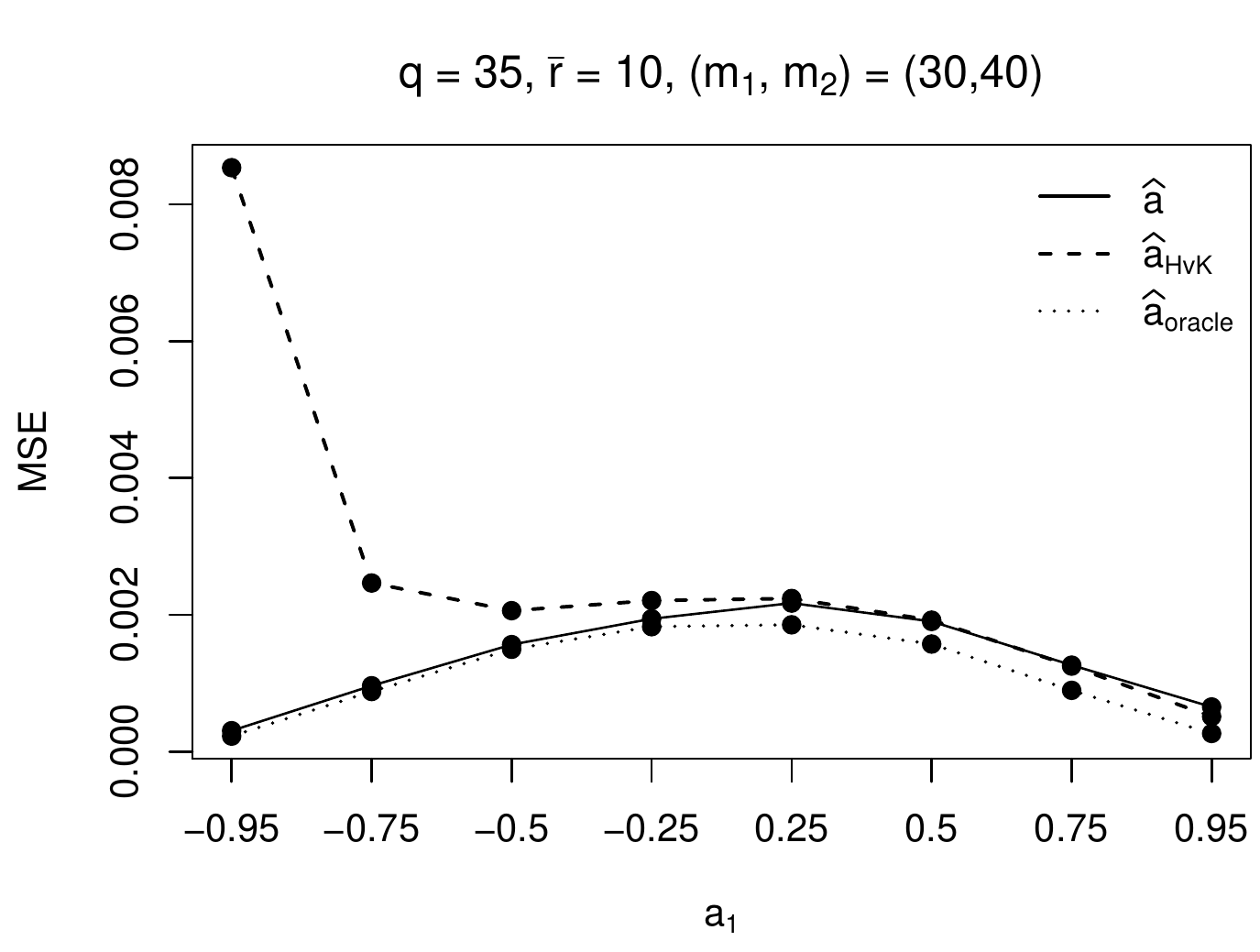}
\end{subfigure}
\hspace{0.25cm}
\begin{subfigure}[b]{0.45\textwidth}
\includegraphics[width=\textwidth]{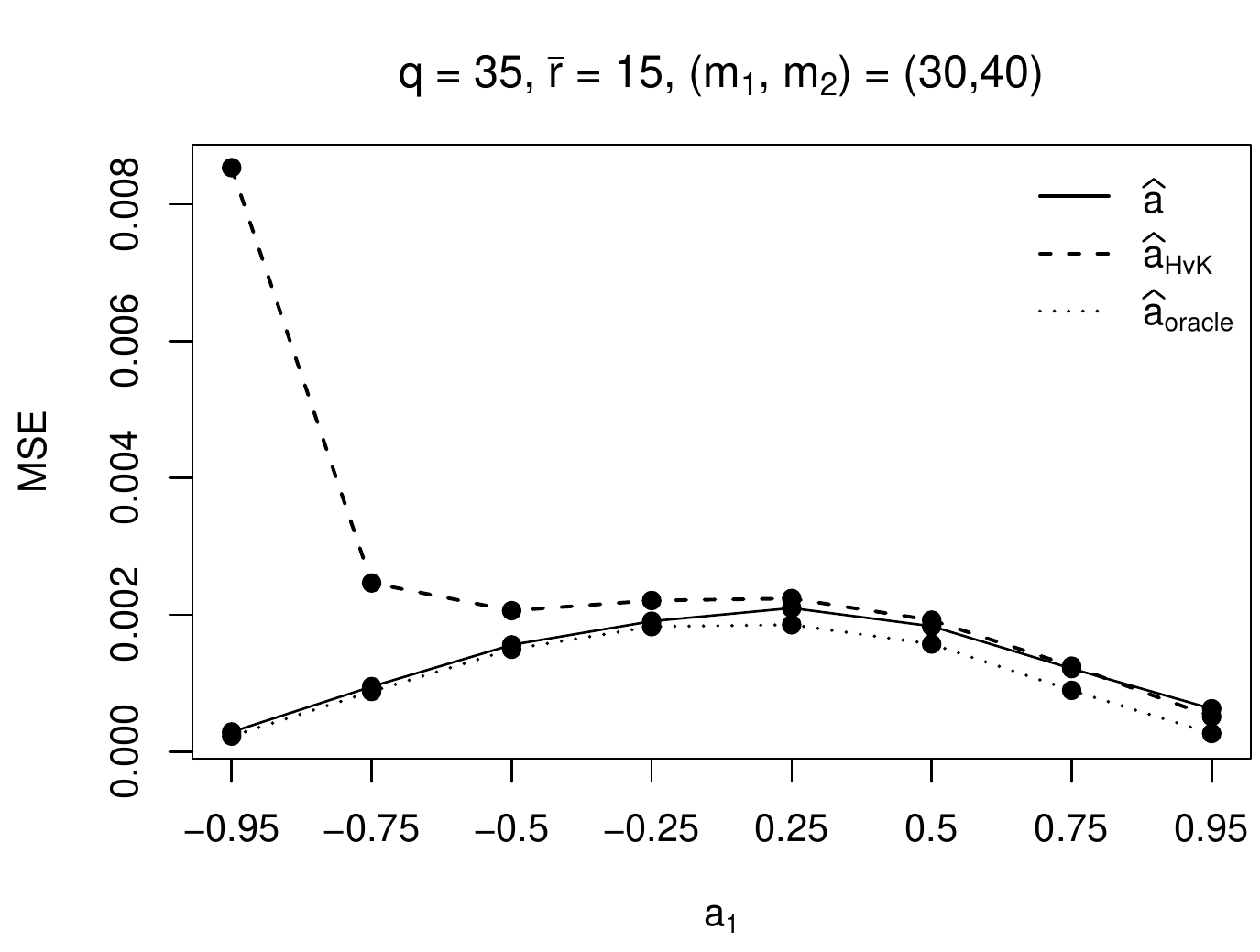}
\end{subfigure}
\caption{MSE values for the estimators $\widehat{a}$, $\widehat{a}_{\text{HvK}}$ and $\widehat{a}_{\text{oracle}}$ in the scenario with a moderate trend ($s_\beta=1$).}\label{fig:MSE_slope1_AR_robust} 
\end{figure}

\begin{figure}[p]
\begin{subfigure}[b]{0.45\textwidth}
\includegraphics[width=\textwidth]{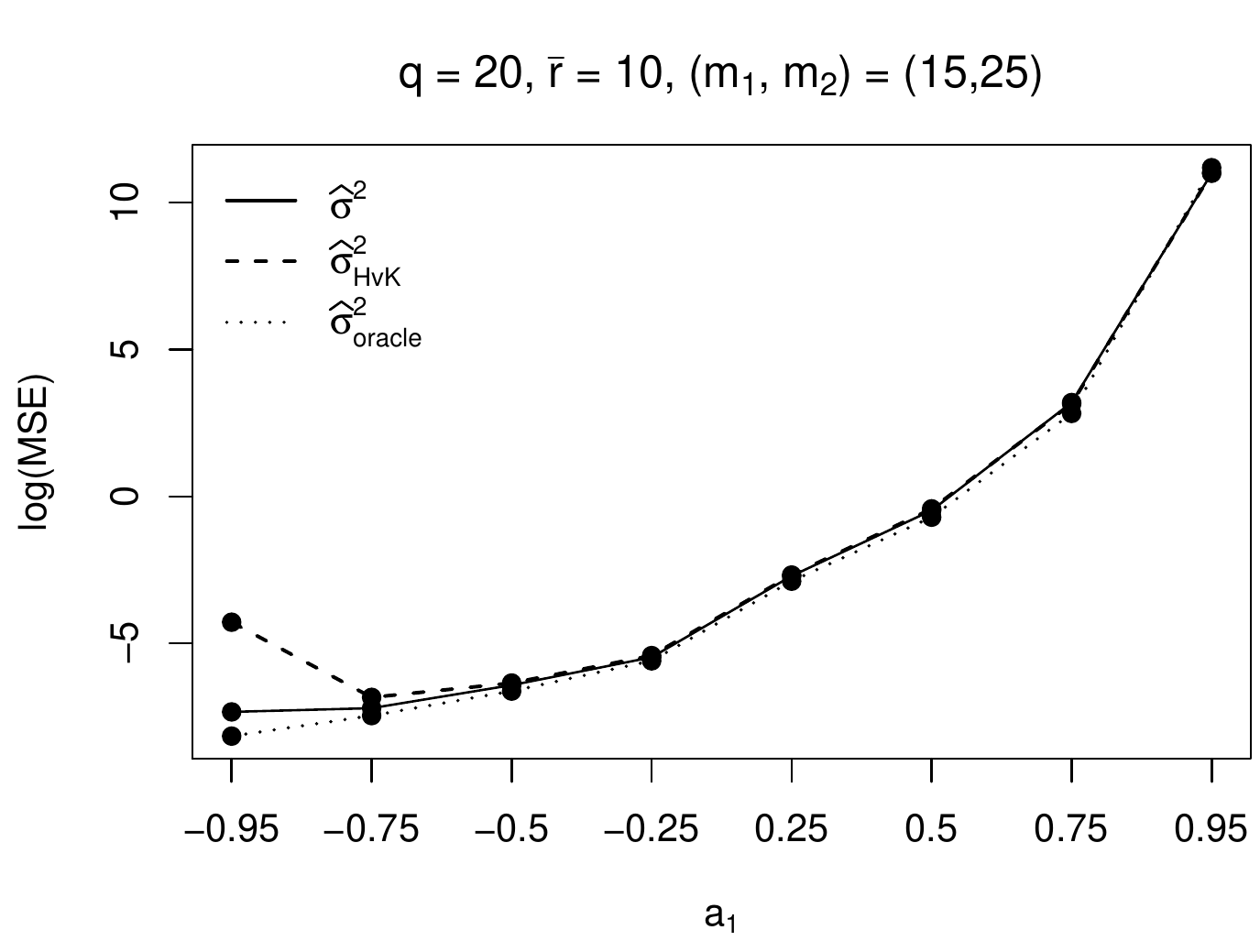}
\end{subfigure}
\hspace{0.25cm}
\begin{subfigure}[b]{0.45\textwidth}
\includegraphics[width=\textwidth]{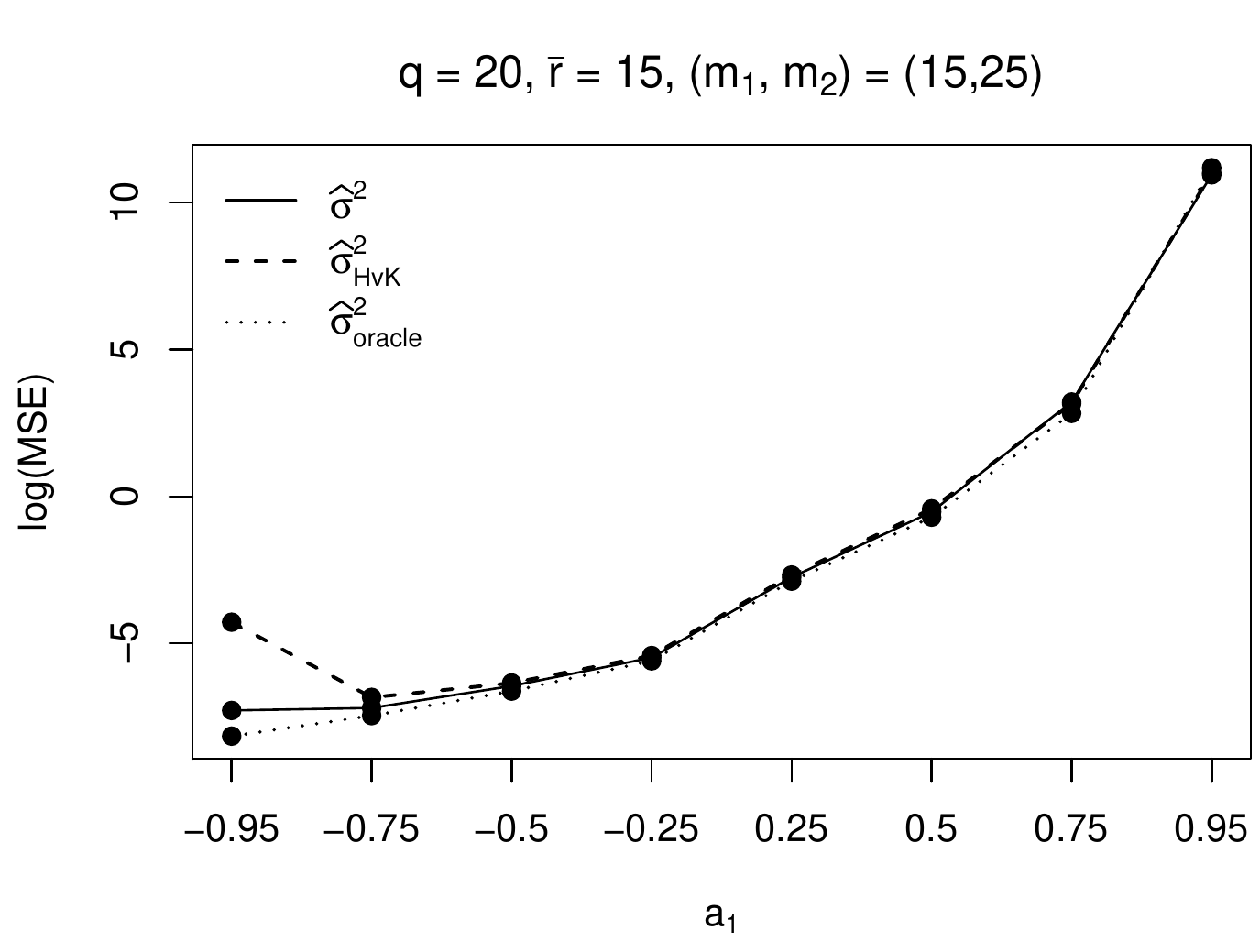}
\end{subfigure}

\begin{subfigure}[b]{0.45\textwidth}
\includegraphics[width=\textwidth]{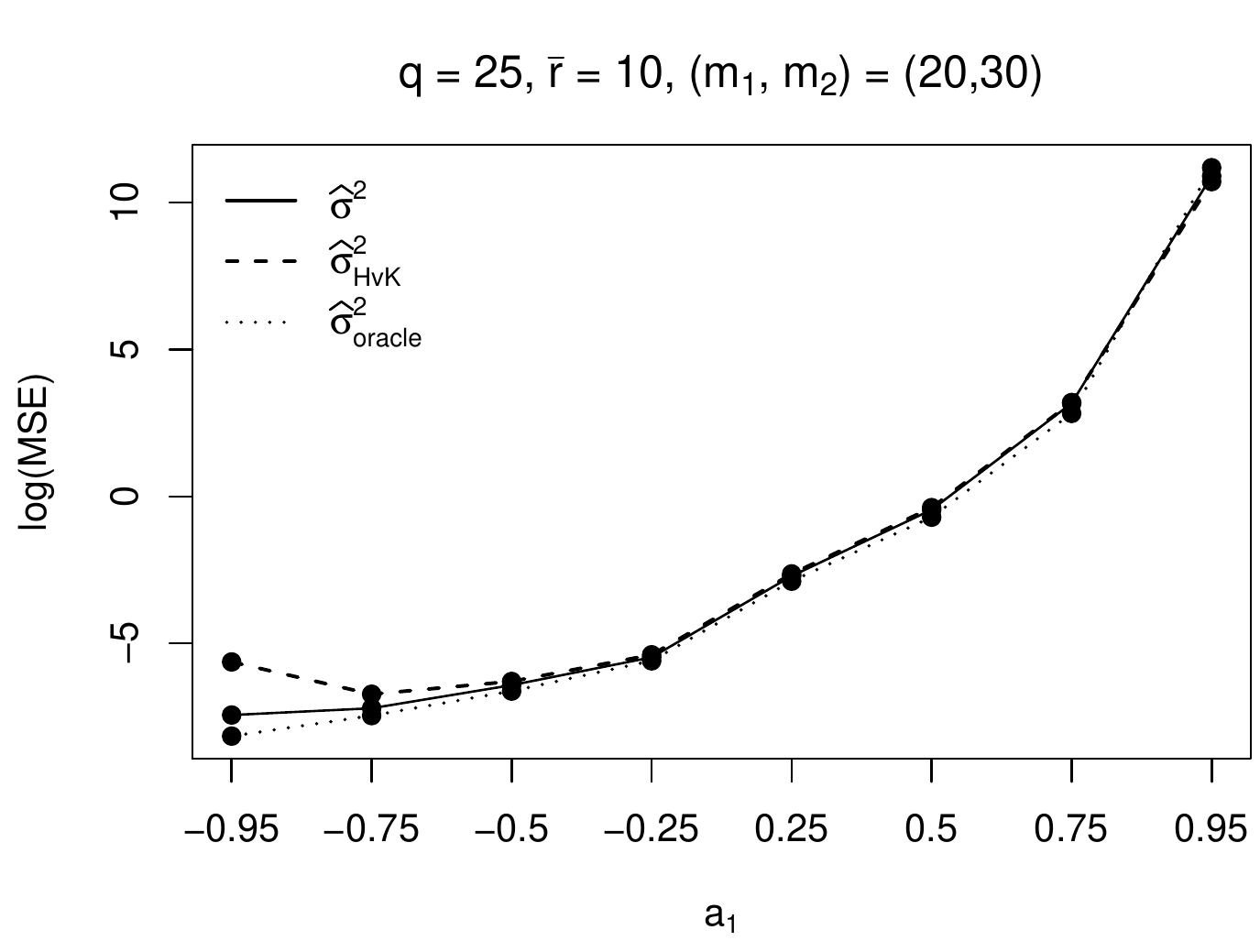}
\end{subfigure}
\hspace{0.25cm}
\begin{subfigure}[b]{0.45\textwidth}
\includegraphics[width=\textwidth]{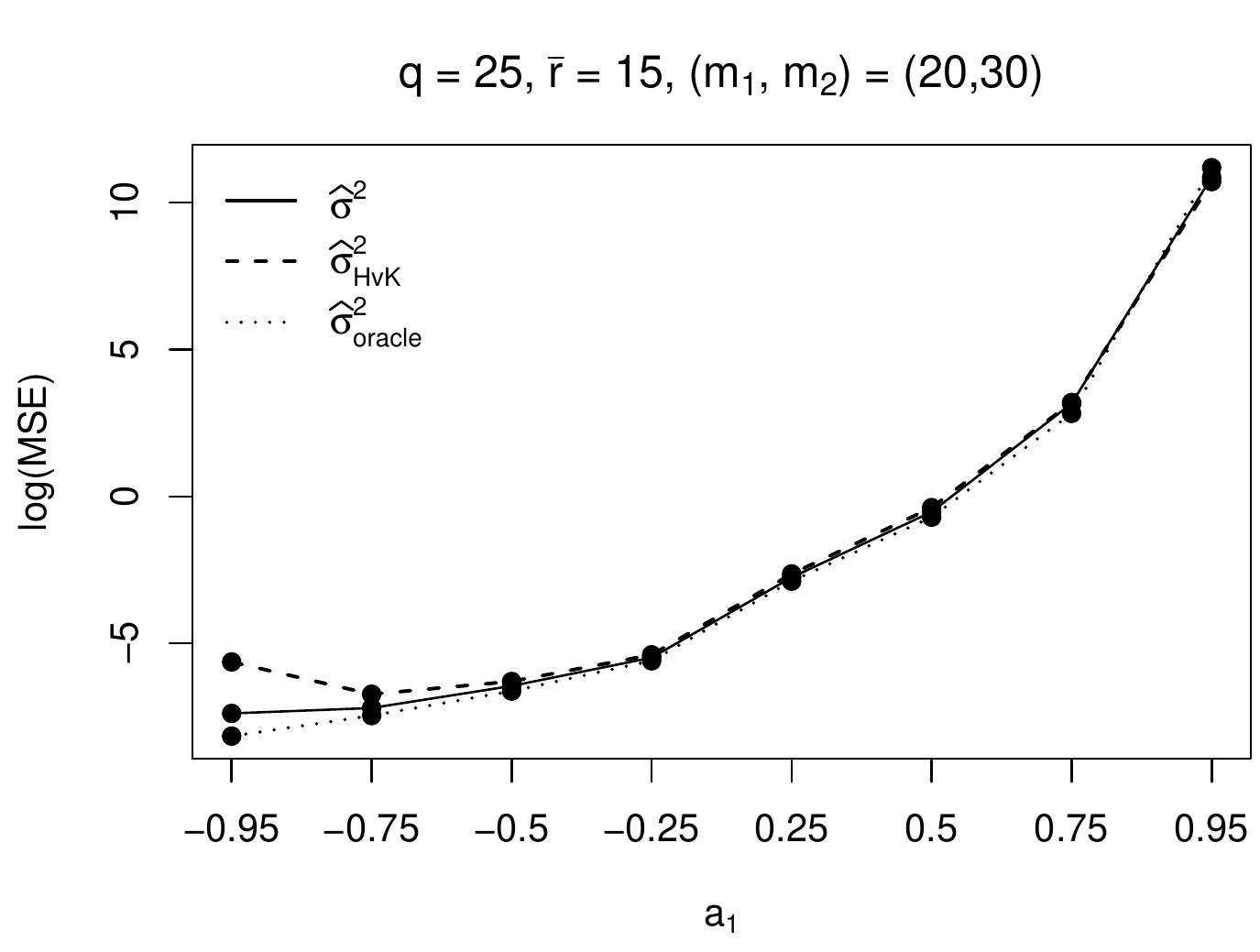}
\end{subfigure}

\begin{subfigure}[b]{0.45\textwidth}
\includegraphics[width=\textwidth]{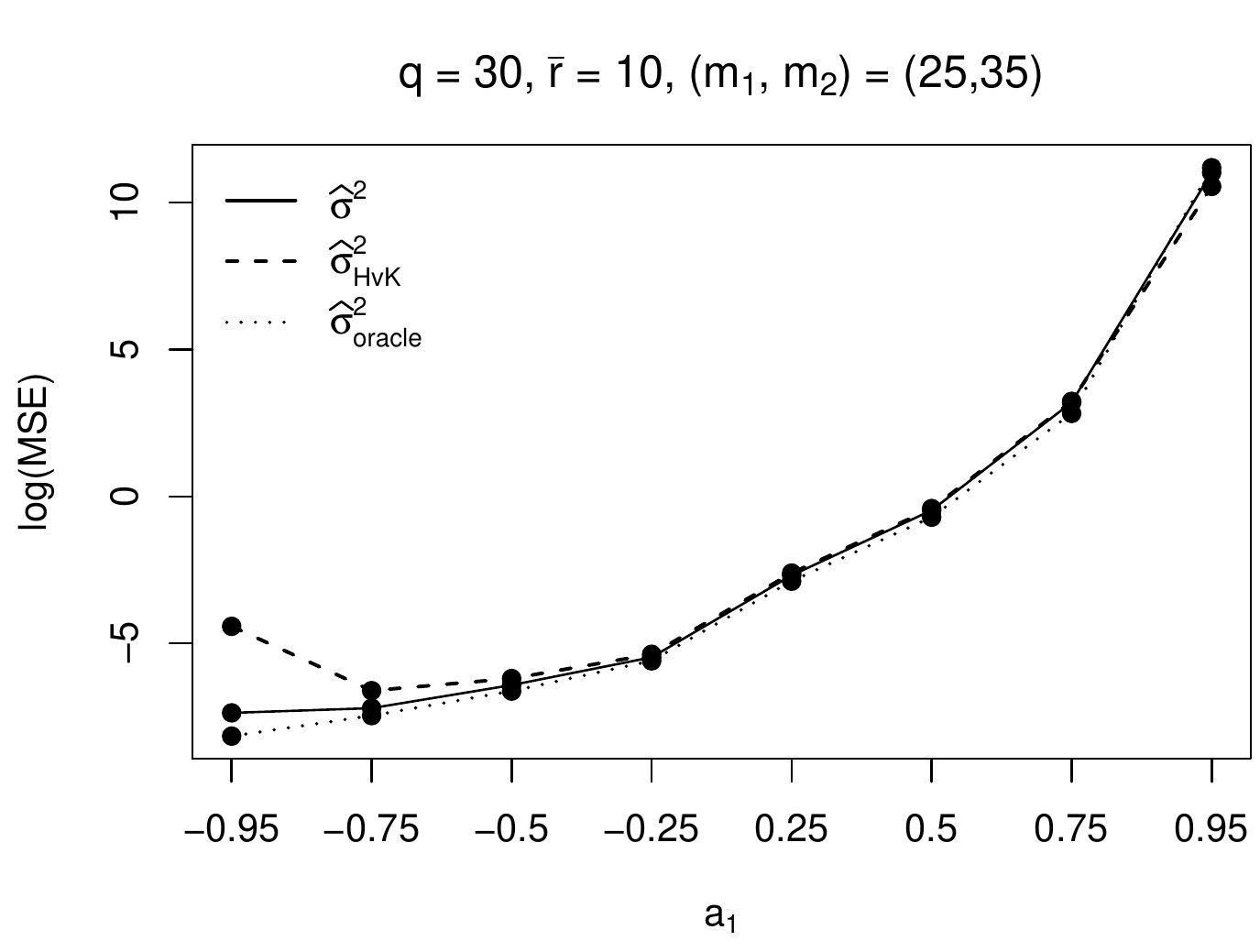}
\end{subfigure}
\hspace{0.25cm}
\begin{subfigure}[b]{0.45\textwidth}
\includegraphics[width=\textwidth]{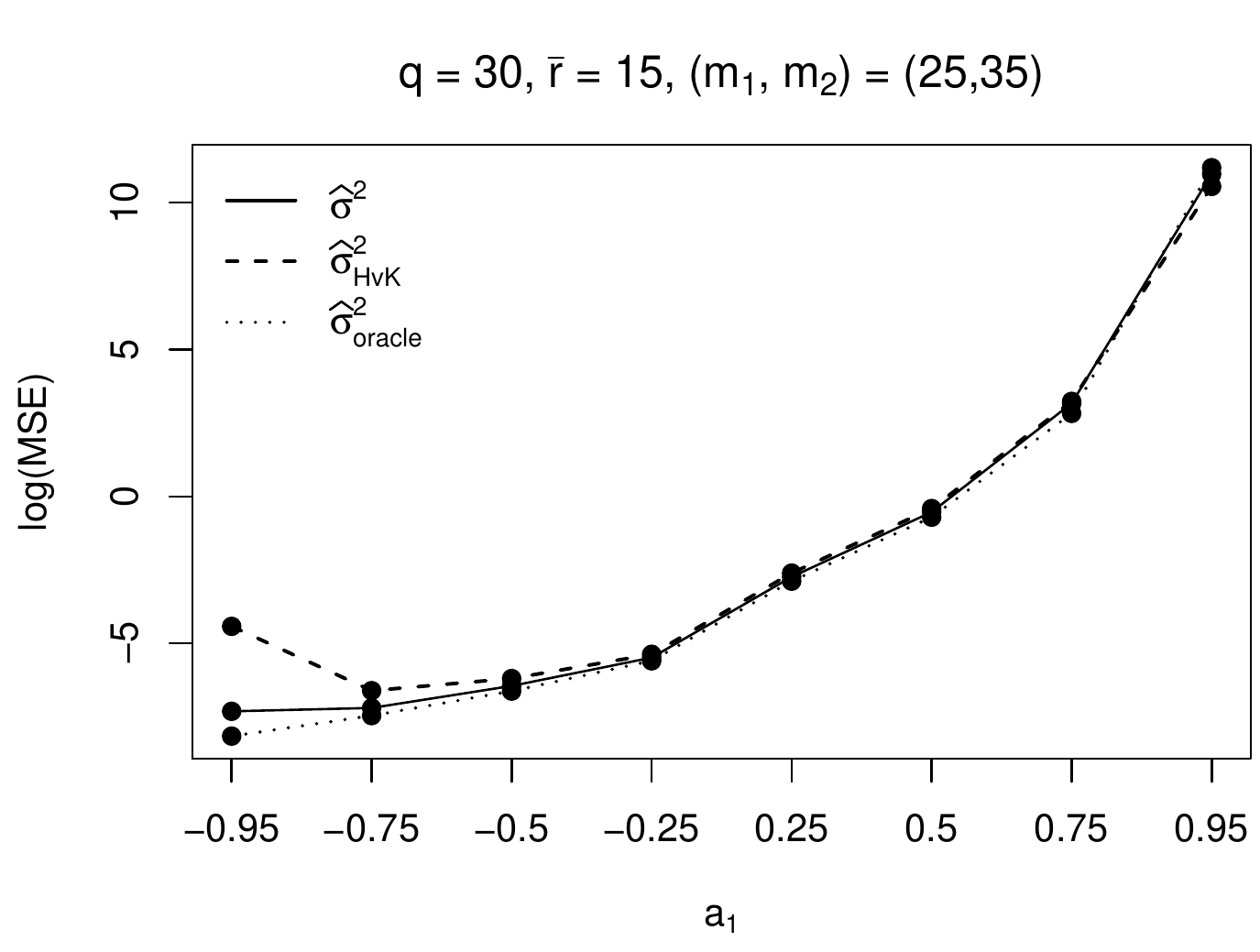}
\end{subfigure}

\begin{subfigure}[b]{0.45\textwidth}
\includegraphics[width=\textwidth]{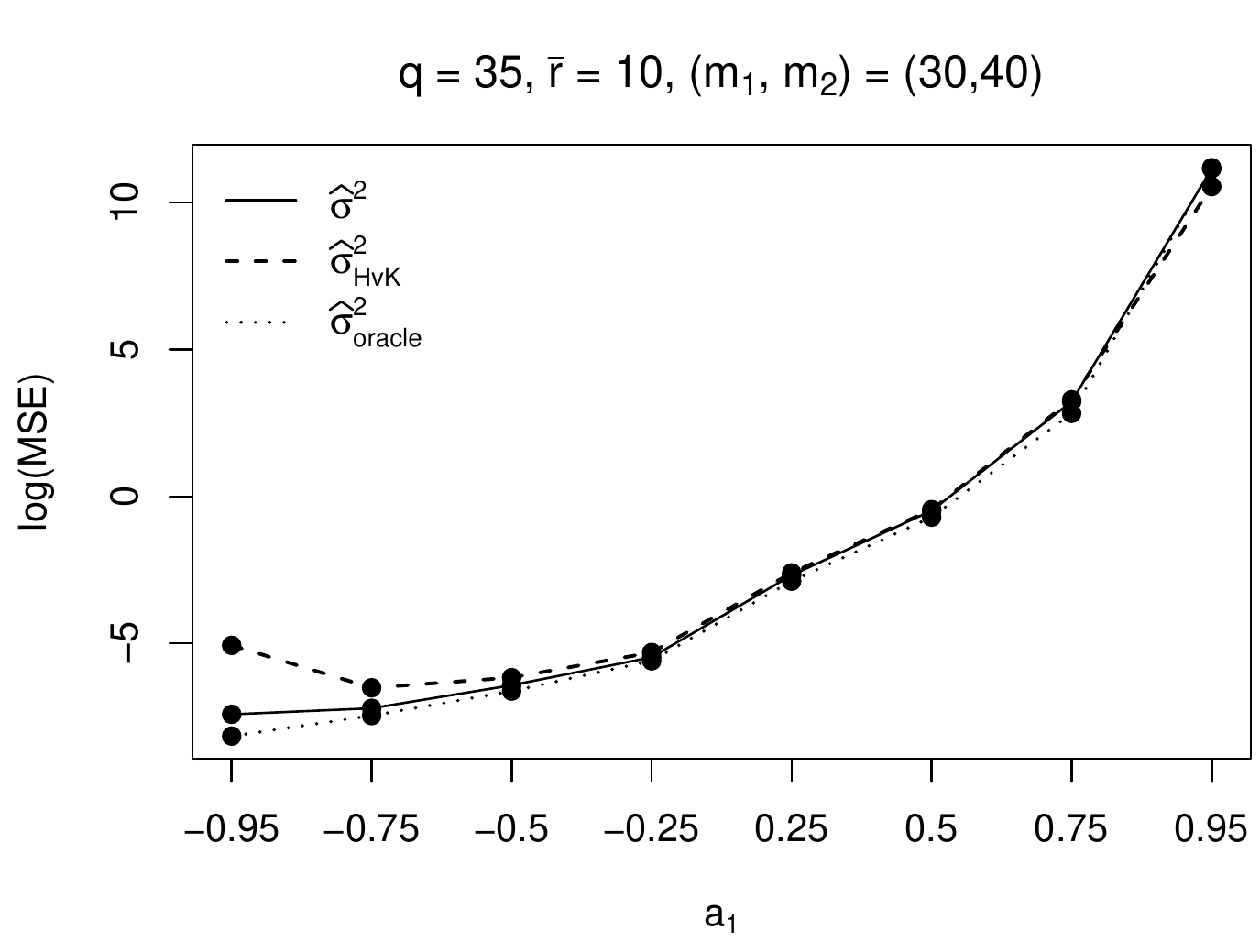}
\end{subfigure}
\hspace{0.25cm}
\begin{subfigure}[b]{0.45\textwidth}
\includegraphics[width=\textwidth]{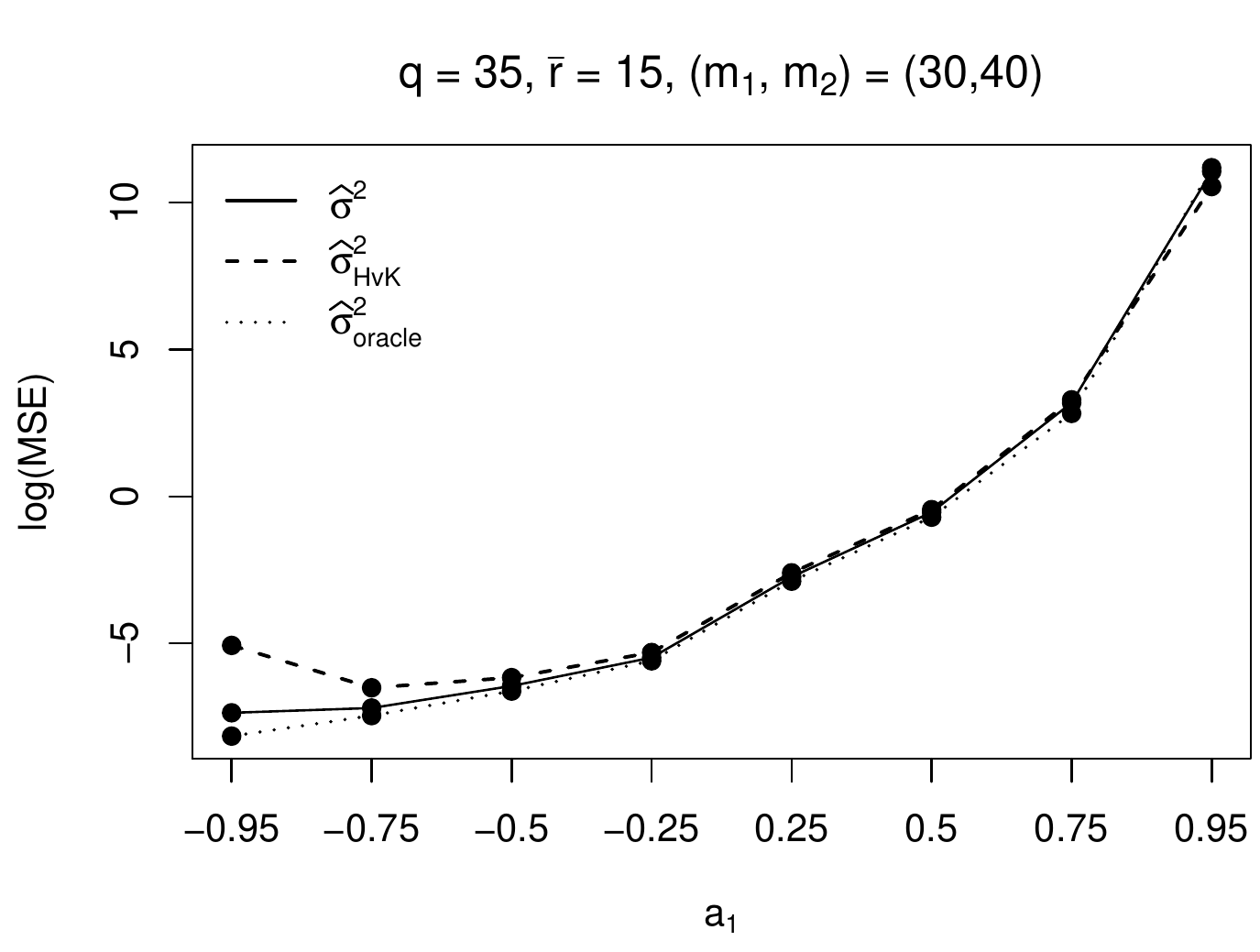}
\end{subfigure}
\caption{Logarithmic MSE values for the estimators $\widehat{\sigma}^2$, $\widehat{\sigma}^2_{\text{HvK}}$ and $\widehat{\sigma}^2_{\text{oracle}}$ in the scenario with a moderate trend ($s_\beta=1$).}\label{fig:MSE_slope1_lrv_robust}
\end{figure}

\begin{figure}[p]
\begin{subfigure}[b]{0.45\textwidth}
\includegraphics[width=\textwidth]{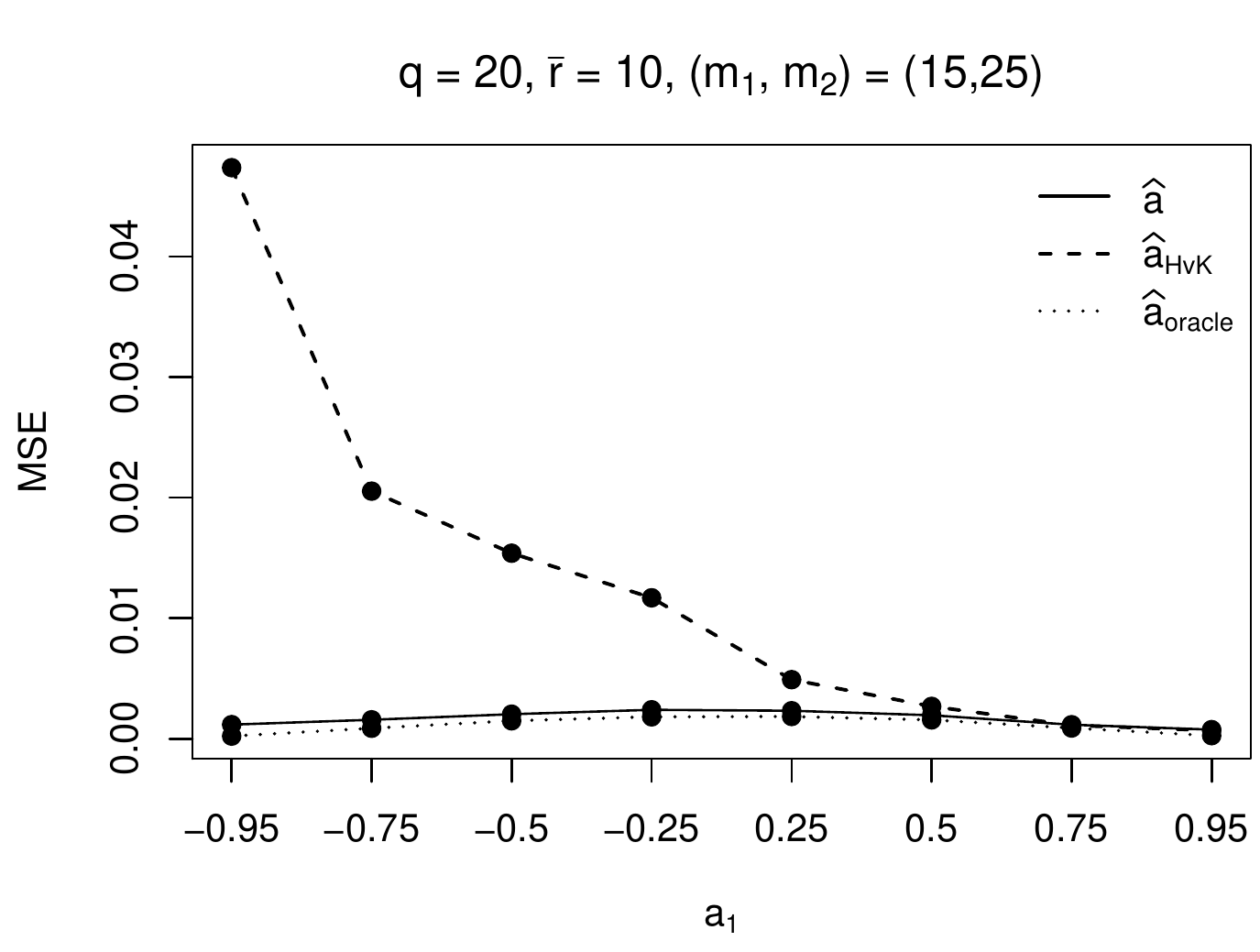}
\end{subfigure}
\hspace{0.25cm}
\begin{subfigure}[b]{0.45\textwidth}
\includegraphics[width=\textwidth]{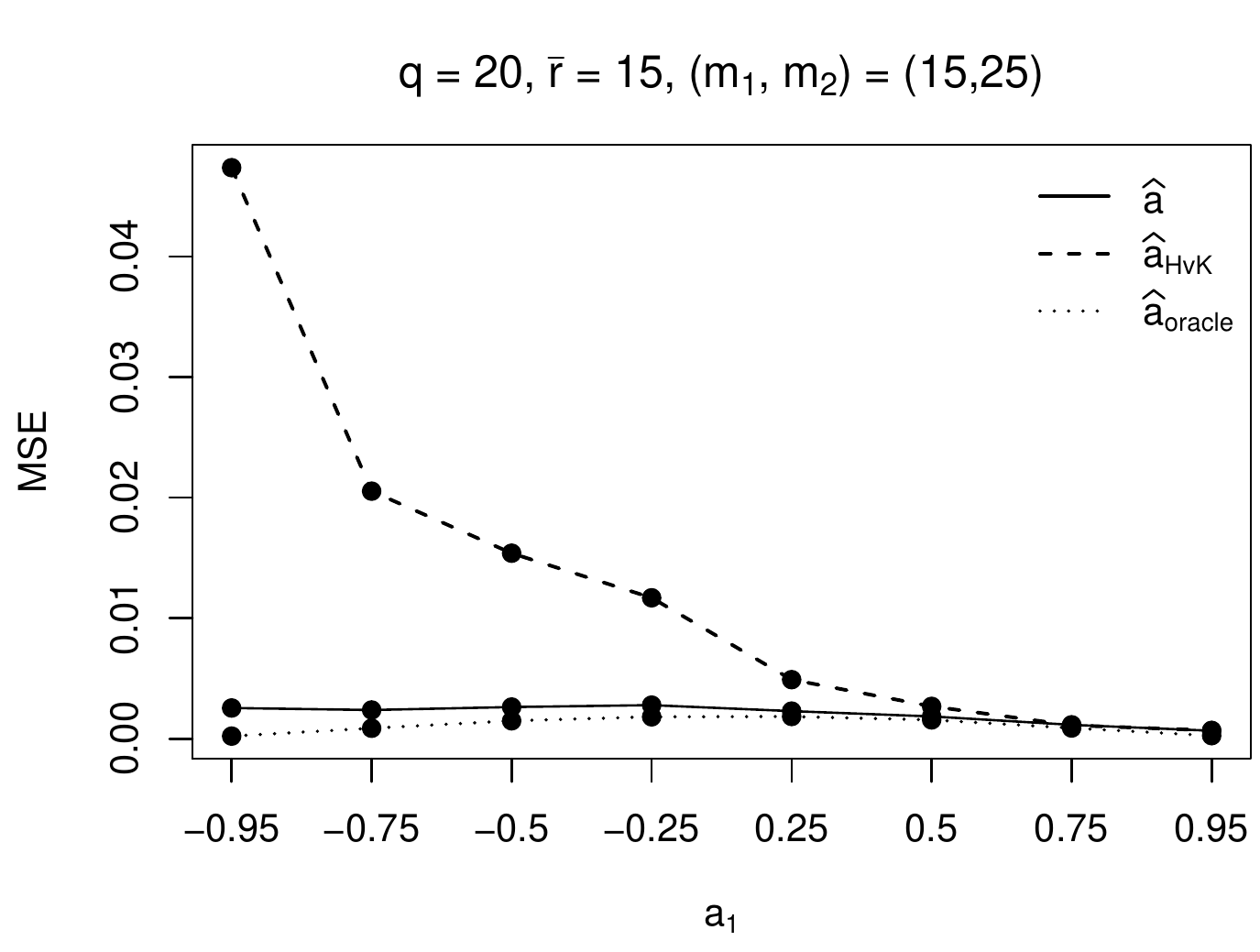}
\end{subfigure}

\begin{subfigure}[b]{0.45\textwidth}
\includegraphics[width=\textwidth]{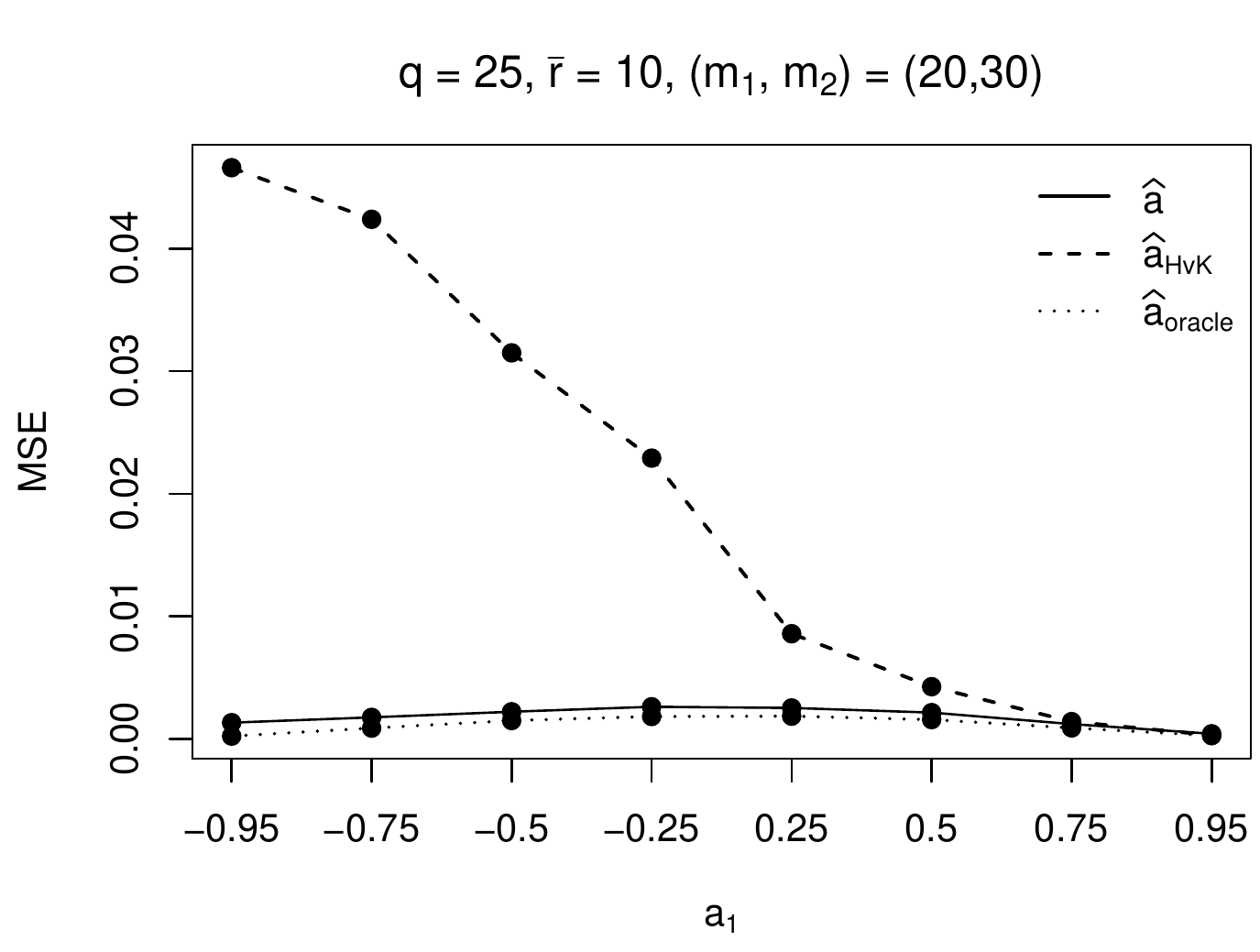}
\end{subfigure}
\hspace{0.25cm}
\begin{subfigure}[b]{0.45\textwidth}
\includegraphics[width=\textwidth]{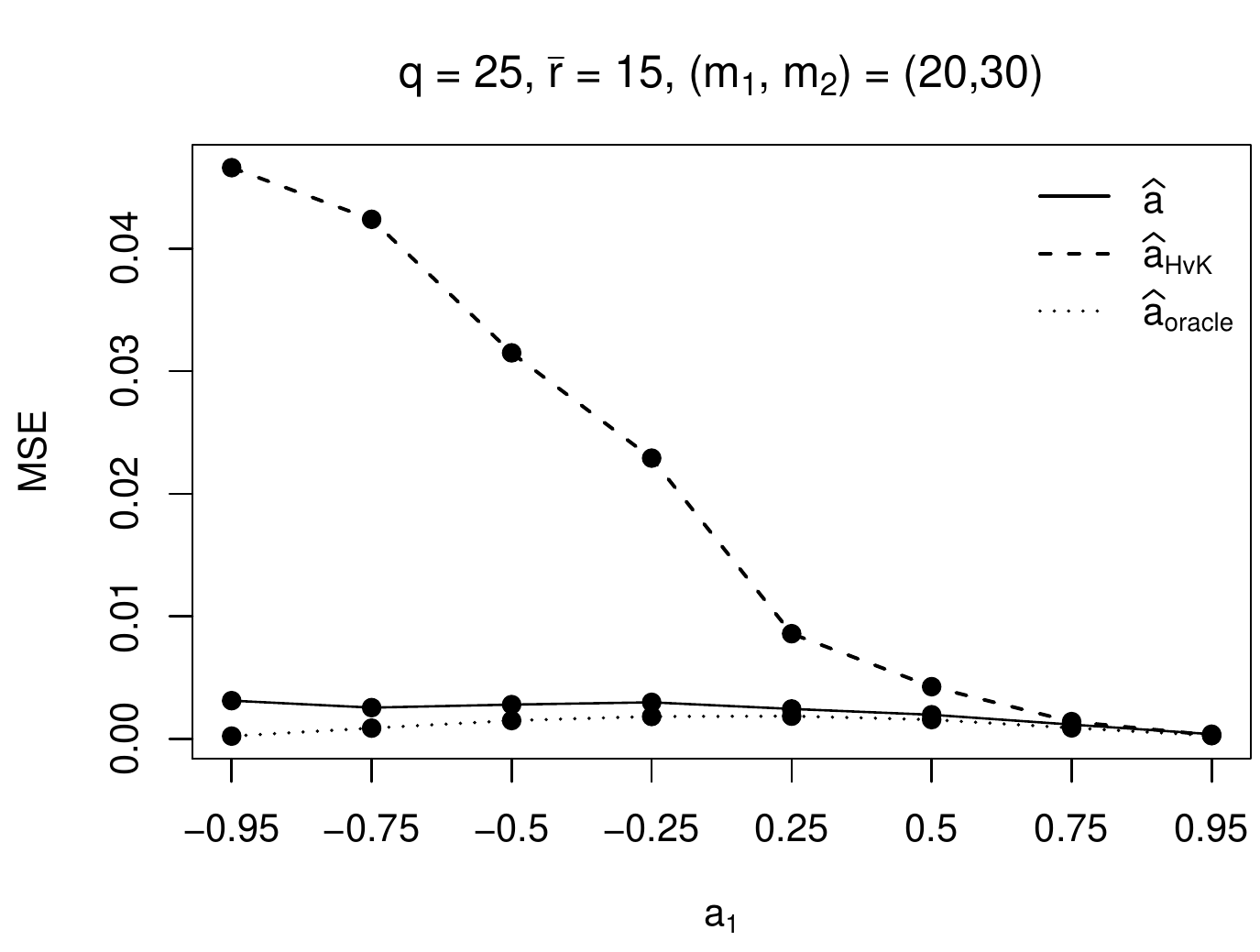}
\end{subfigure}

\begin{subfigure}[b]{0.45\textwidth}
\includegraphics[width=\textwidth]{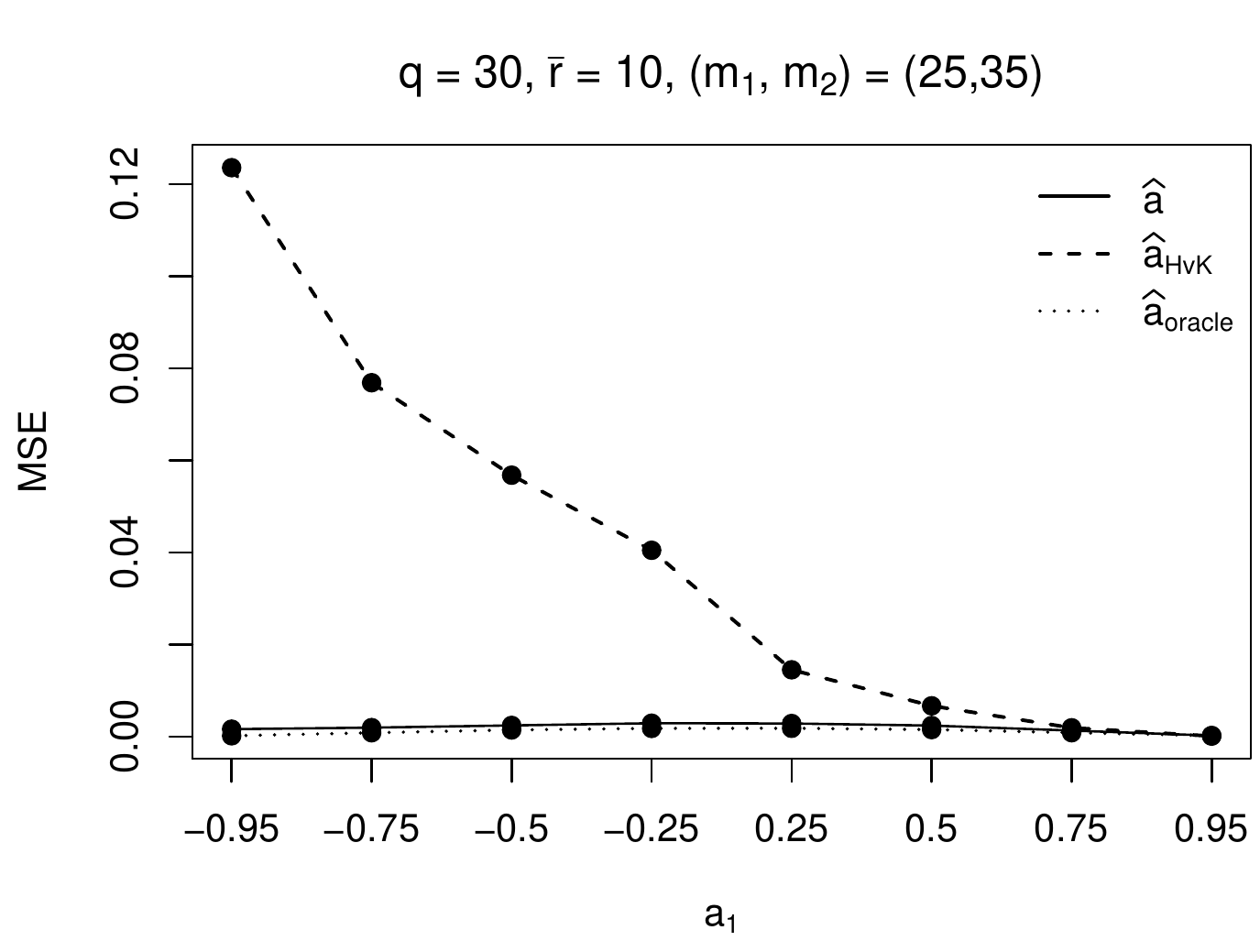}
\end{subfigure}
\hspace{0.25cm}
\begin{subfigure}[b]{0.45\textwidth}
\includegraphics[width=\textwidth]{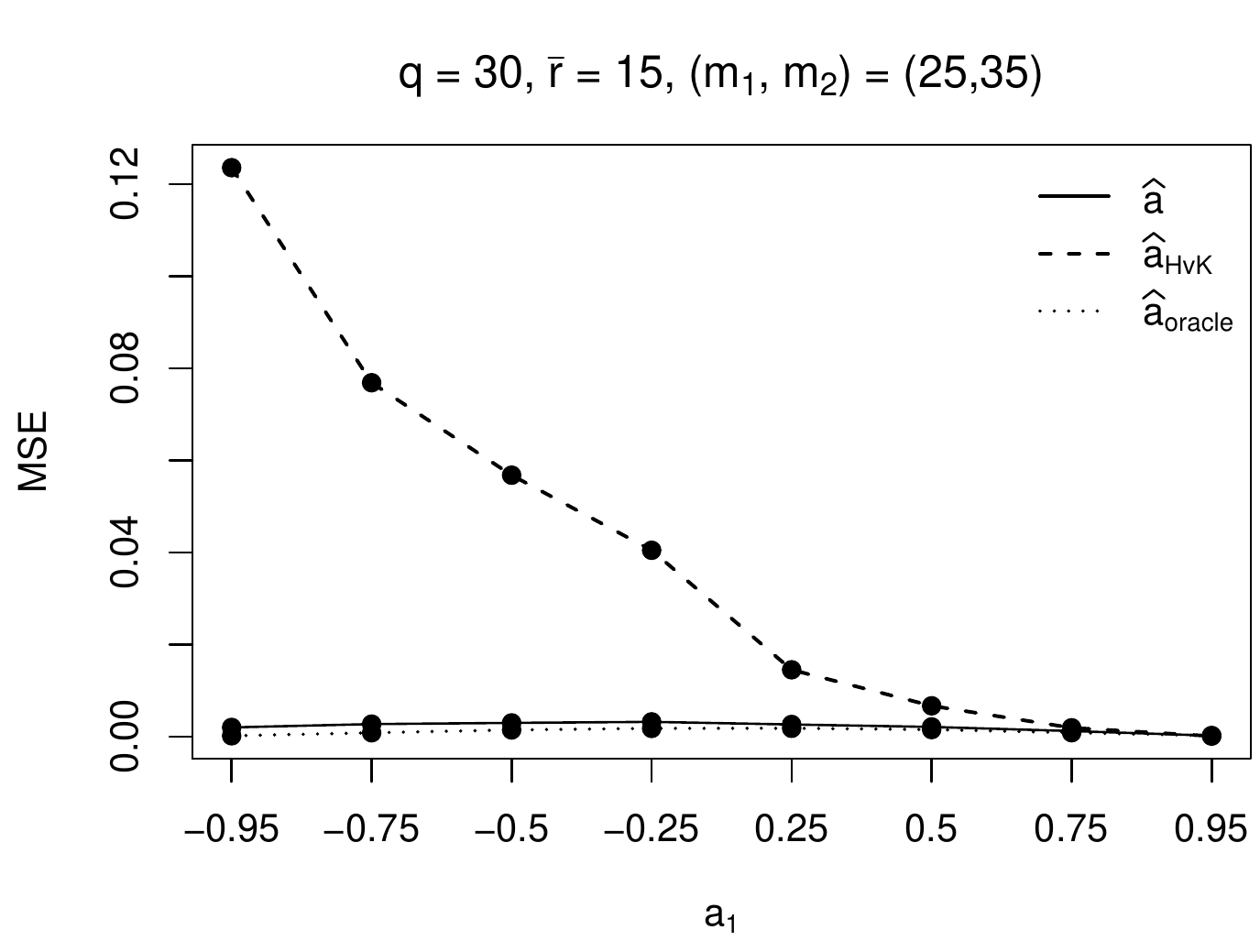}
\end{subfigure}

\begin{subfigure}[b]{0.45\textwidth}
\includegraphics[width=\textwidth]{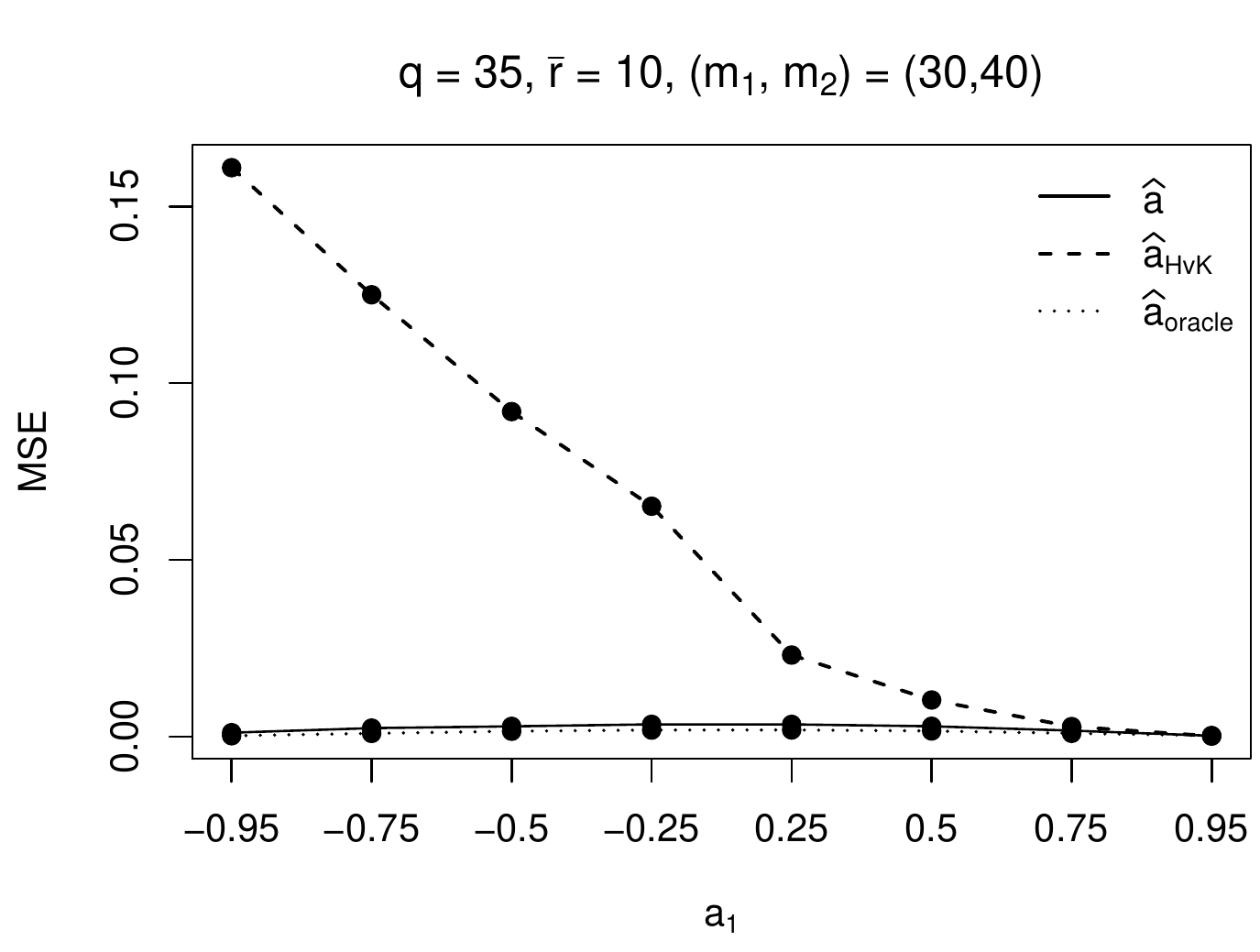}
\end{subfigure}
\hspace{0.25cm}
\begin{subfigure}[b]{0.45\textwidth}
\includegraphics[width=\textwidth]{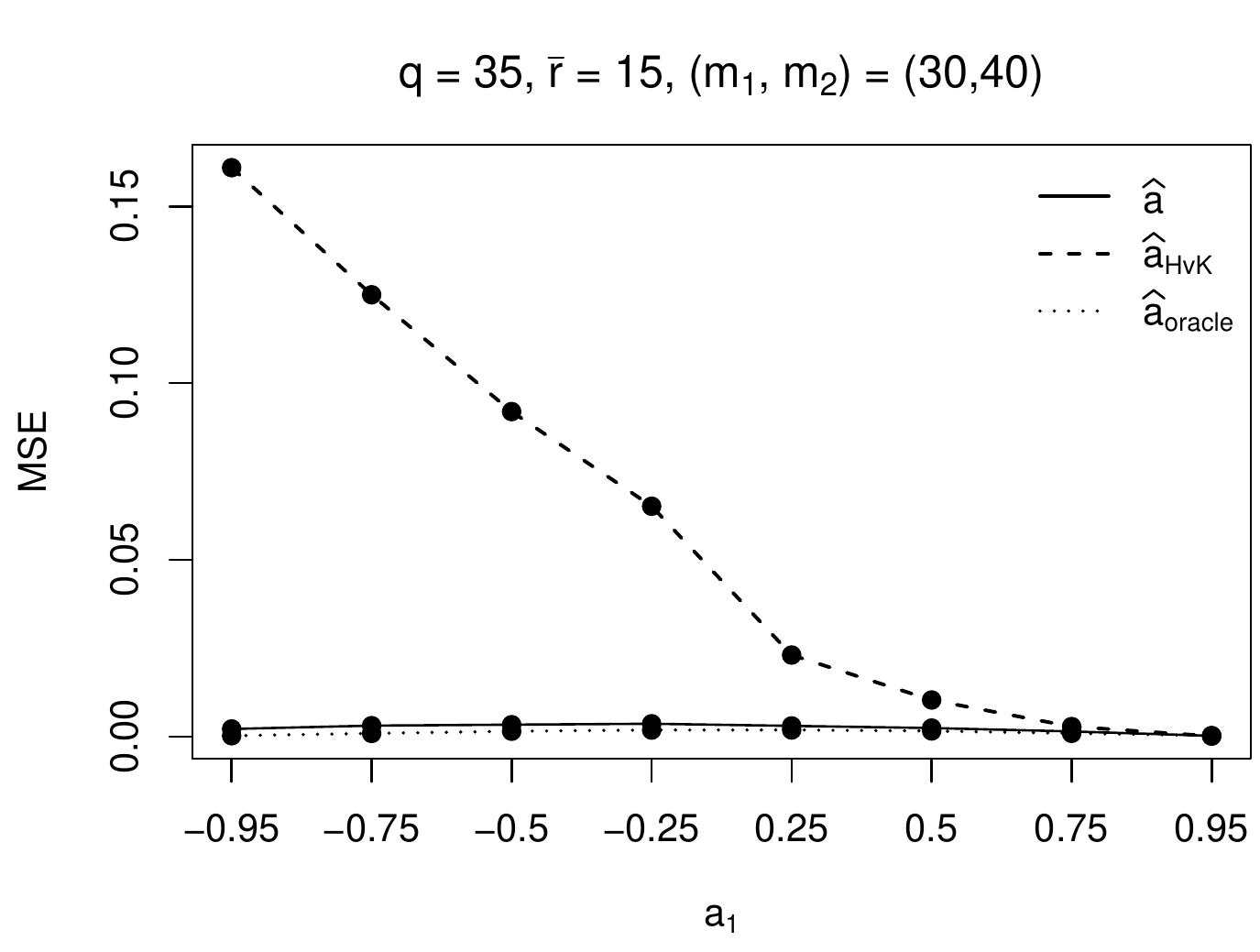}
\end{subfigure}
\caption{MSE values for the estimators $\widehat{a}$, $\widehat{a}_{\text{HvK}}$ and $\widehat{a}_{\text{oracle}}$ in the scenario with a pronounced trend ($s_\beta=10$).}\label{fig:MSE_slope10_AR_robust} 
\end{figure}

\begin{figure}[p]
\begin{subfigure}[b]{0.45\textwidth}
\includegraphics[width=\textwidth]{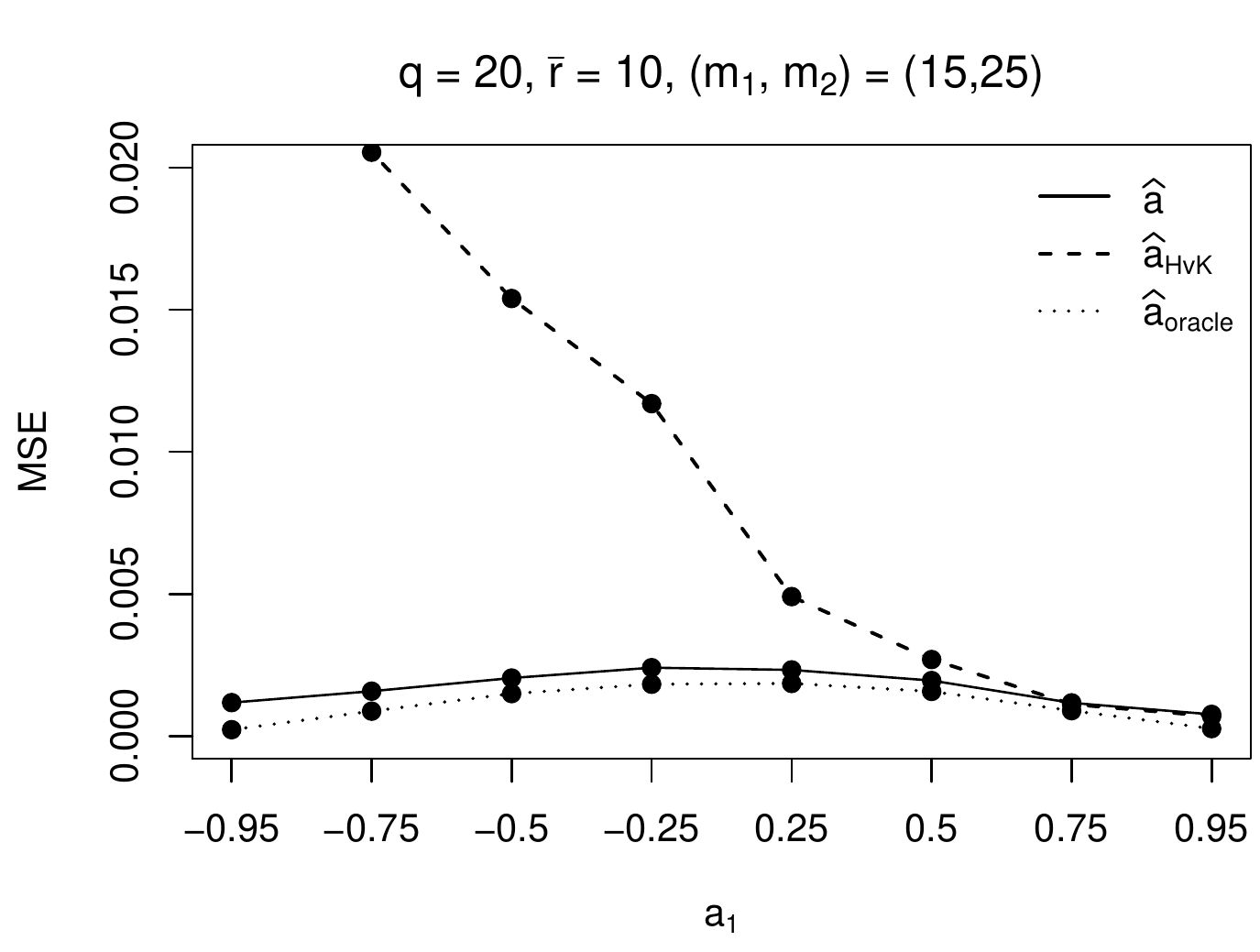}
\end{subfigure}
\hspace{0.25cm}
\begin{subfigure}[b]{0.45\textwidth}
\includegraphics[width=\textwidth]{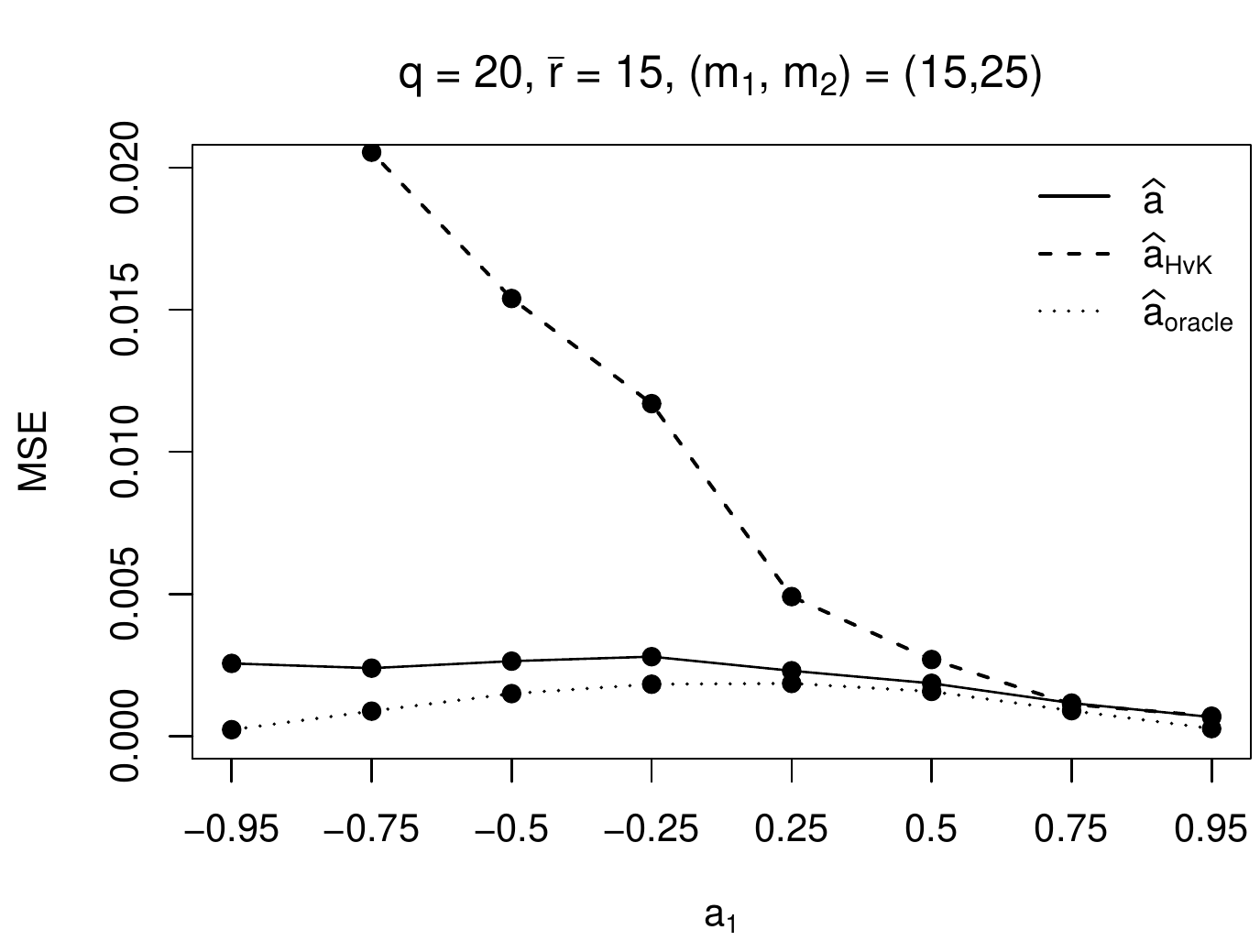}
\end{subfigure}

\begin{subfigure}[b]{0.45\textwidth}
\includegraphics[width=\textwidth]{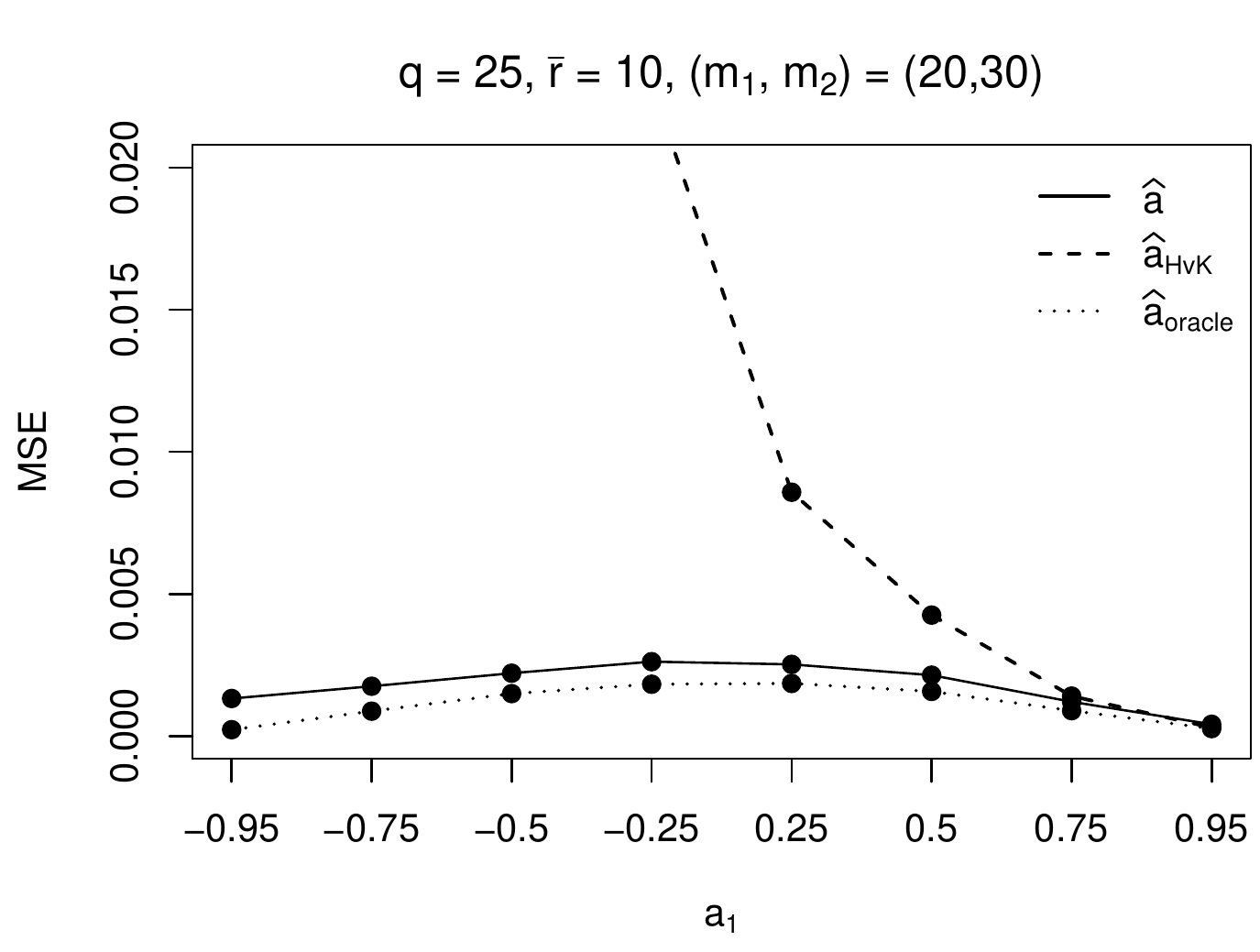}
\end{subfigure}
\hspace{0.25cm}
\begin{subfigure}[b]{0.45\textwidth}
\includegraphics[width=\textwidth]{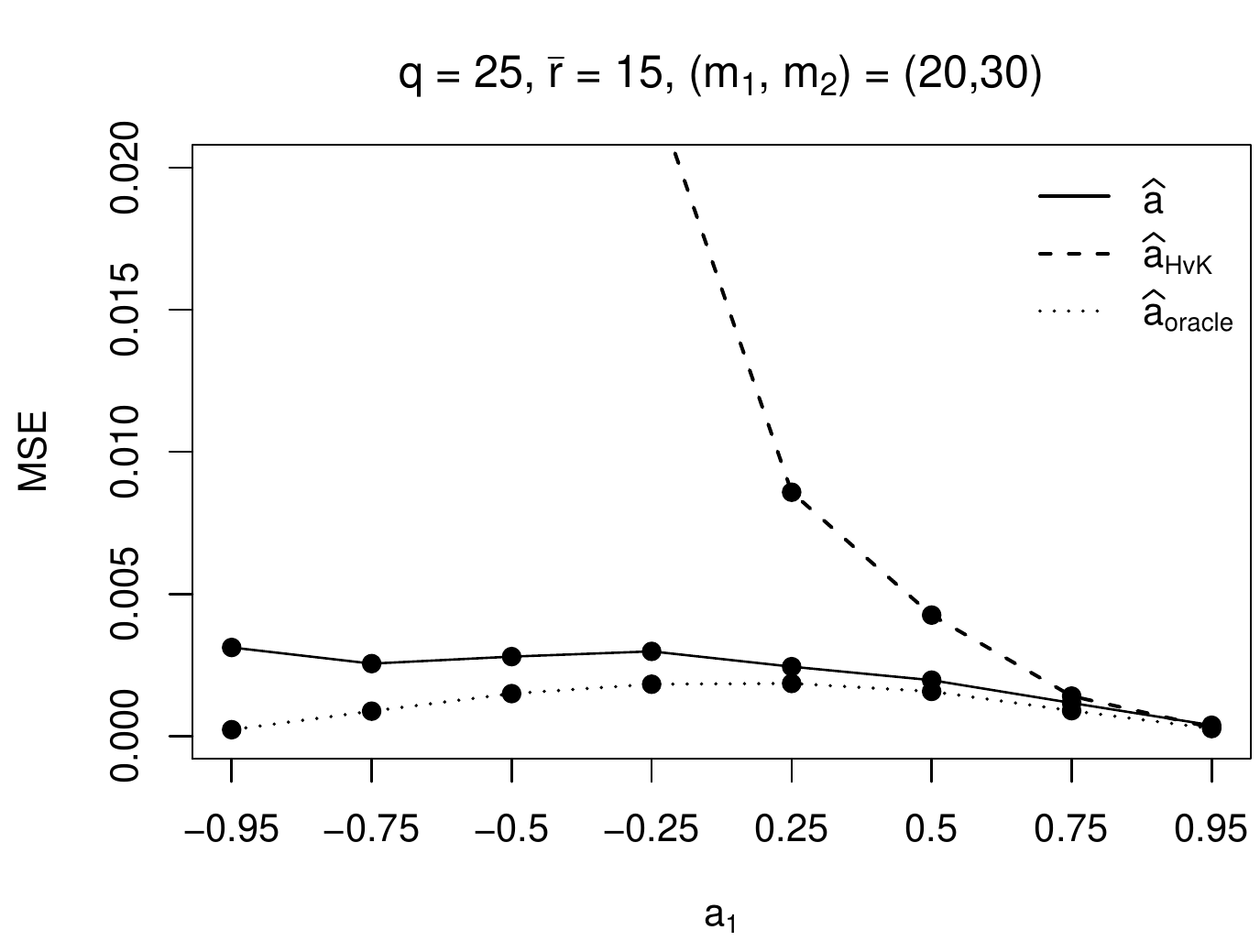}
\end{subfigure}

\begin{subfigure}[b]{0.45\textwidth}
\includegraphics[width=\textwidth]{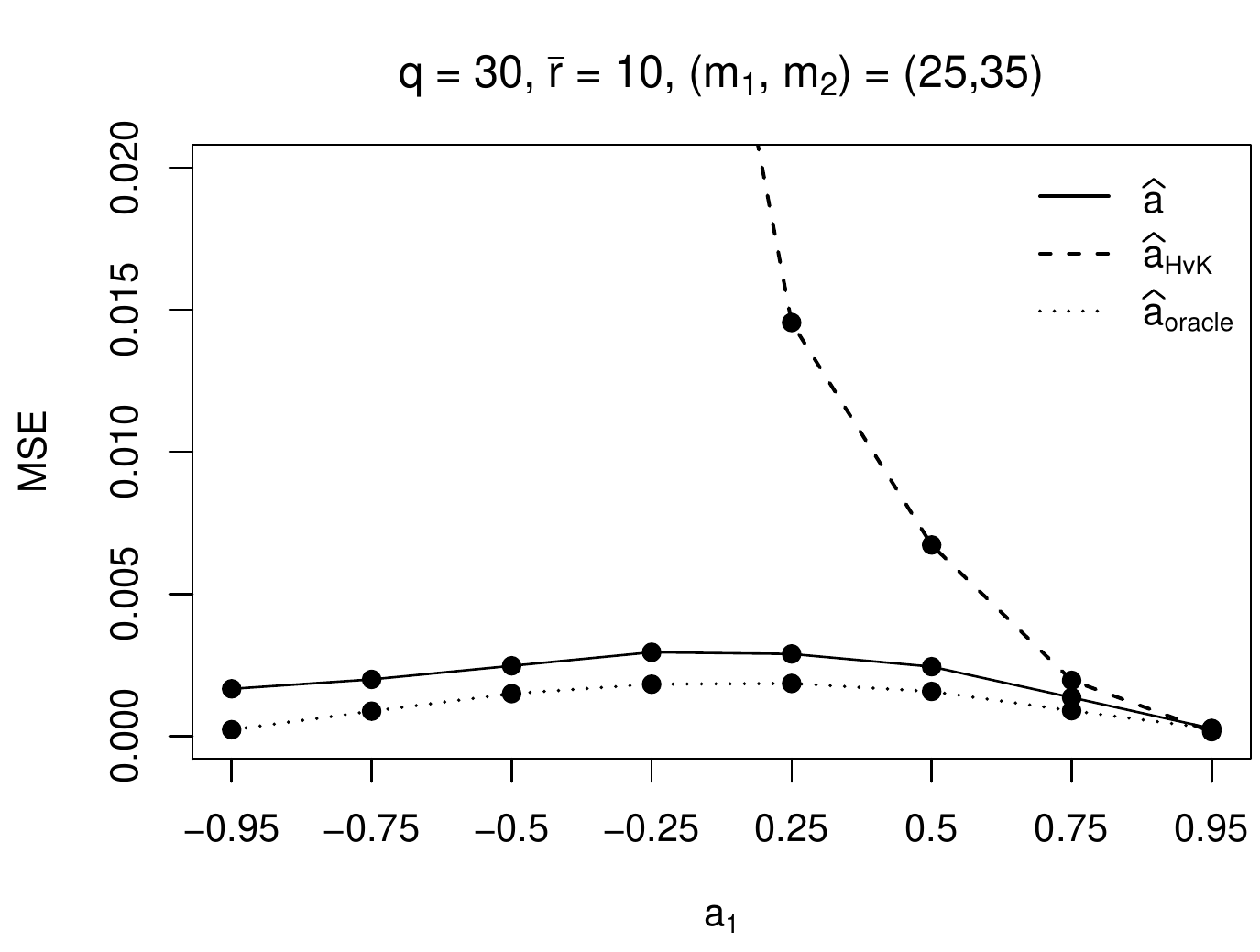}
\end{subfigure}
\hspace{0.25cm}
\begin{subfigure}[b]{0.45\textwidth}
\includegraphics[width=\textwidth]{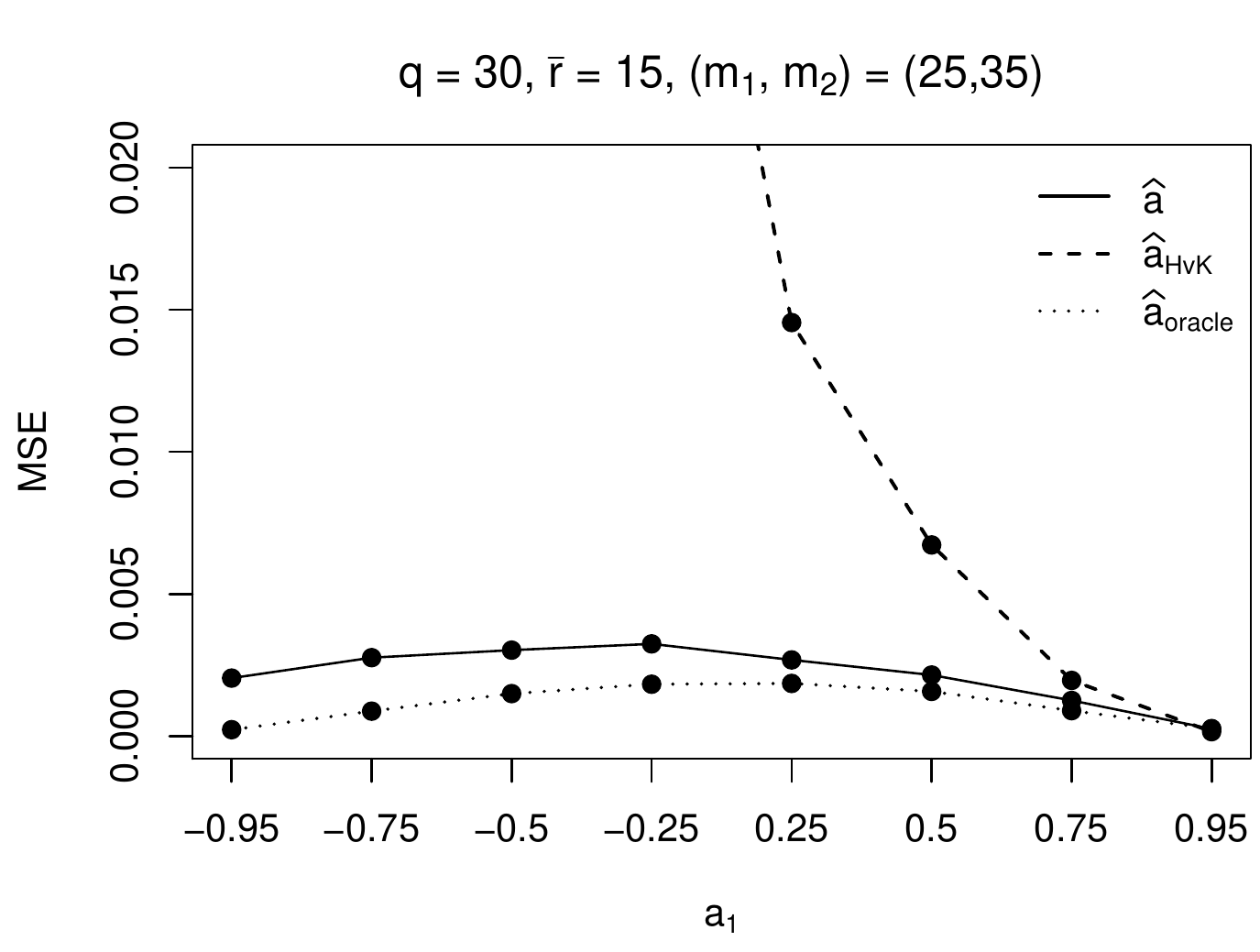}
\end{subfigure}

\begin{subfigure}[b]{0.45\textwidth}
\includegraphics[width=\textwidth]{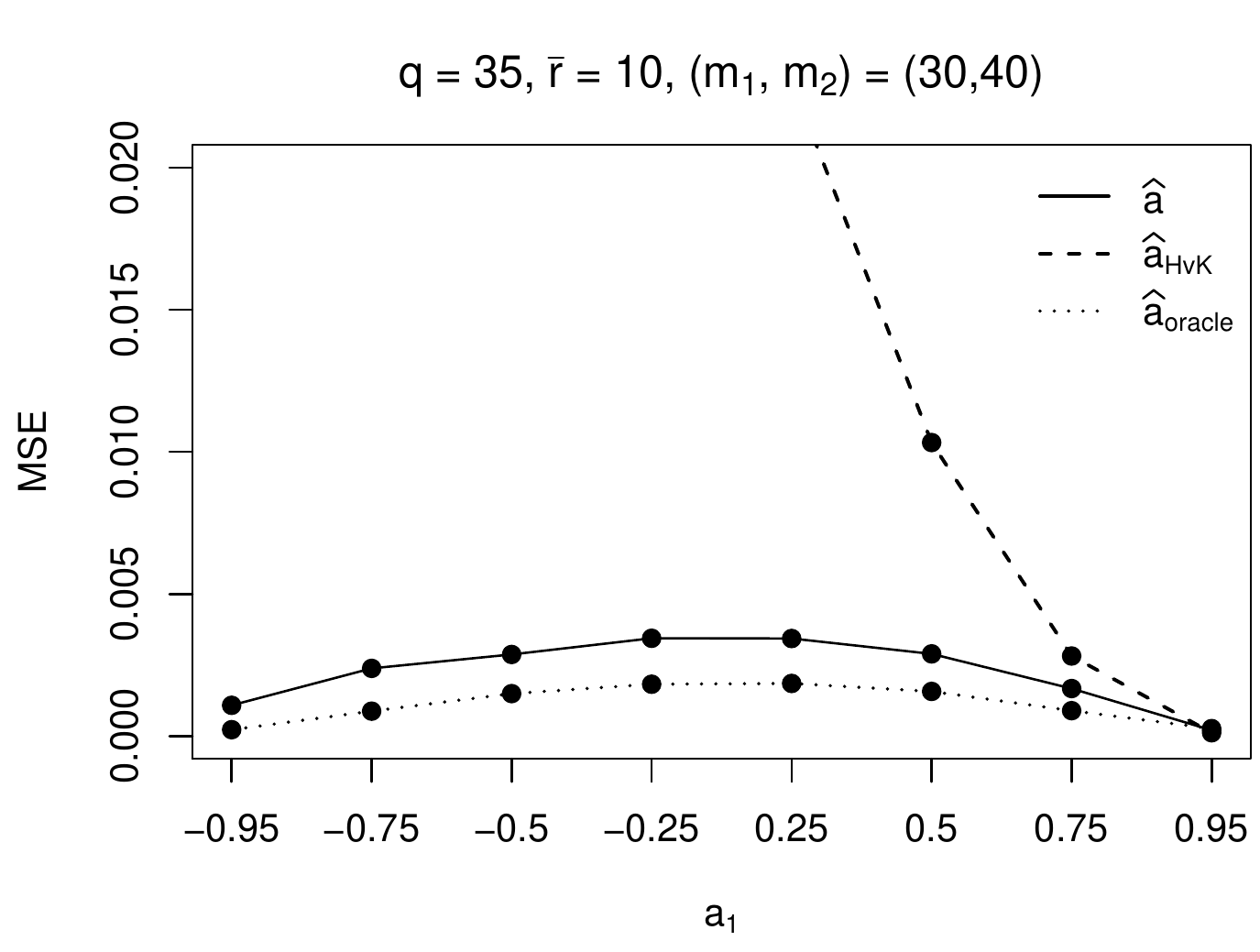}
\end{subfigure}
\hspace{0.25cm}
\begin{subfigure}[b]{0.45\textwidth}
\includegraphics[width=\textwidth]{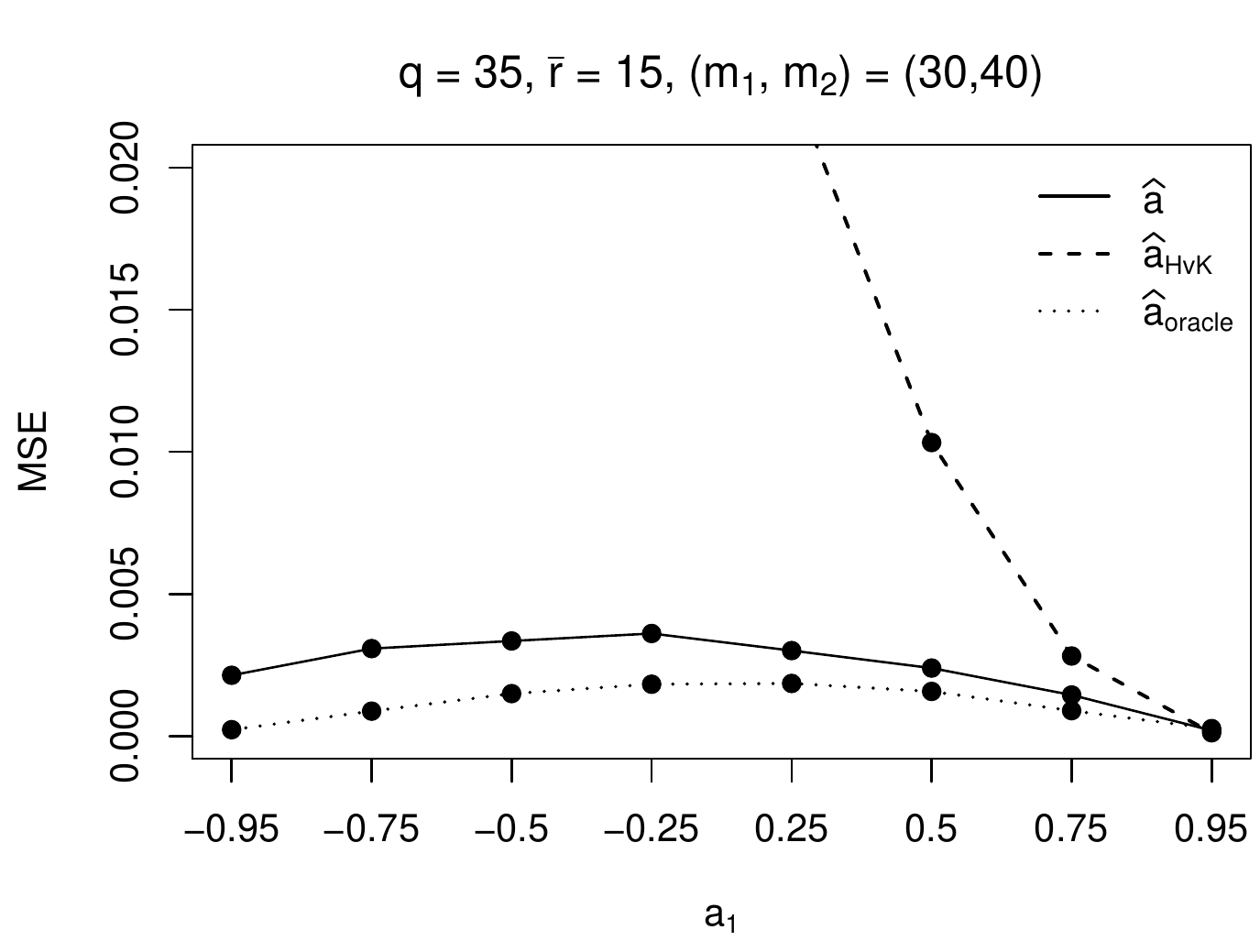}
\end{subfigure}
\caption{MSE values for the estimators $\widehat{a}$, $\widehat{a}_{\text{HvK}}$ and $\widehat{a}_{\text{oracle}}$ in the scenario with a pronounced trend ($s_\beta=10$). The plots are zoomed-in versions of the respective plots in Figure \ref{fig:MSE_slope10_AR_robust}.}\label{fig:MSE_slope10_AR_zoom_robust}
\end{figure}

\begin{figure}[p]
\begin{subfigure}[b]{0.45\textwidth}
\includegraphics[width=\textwidth]{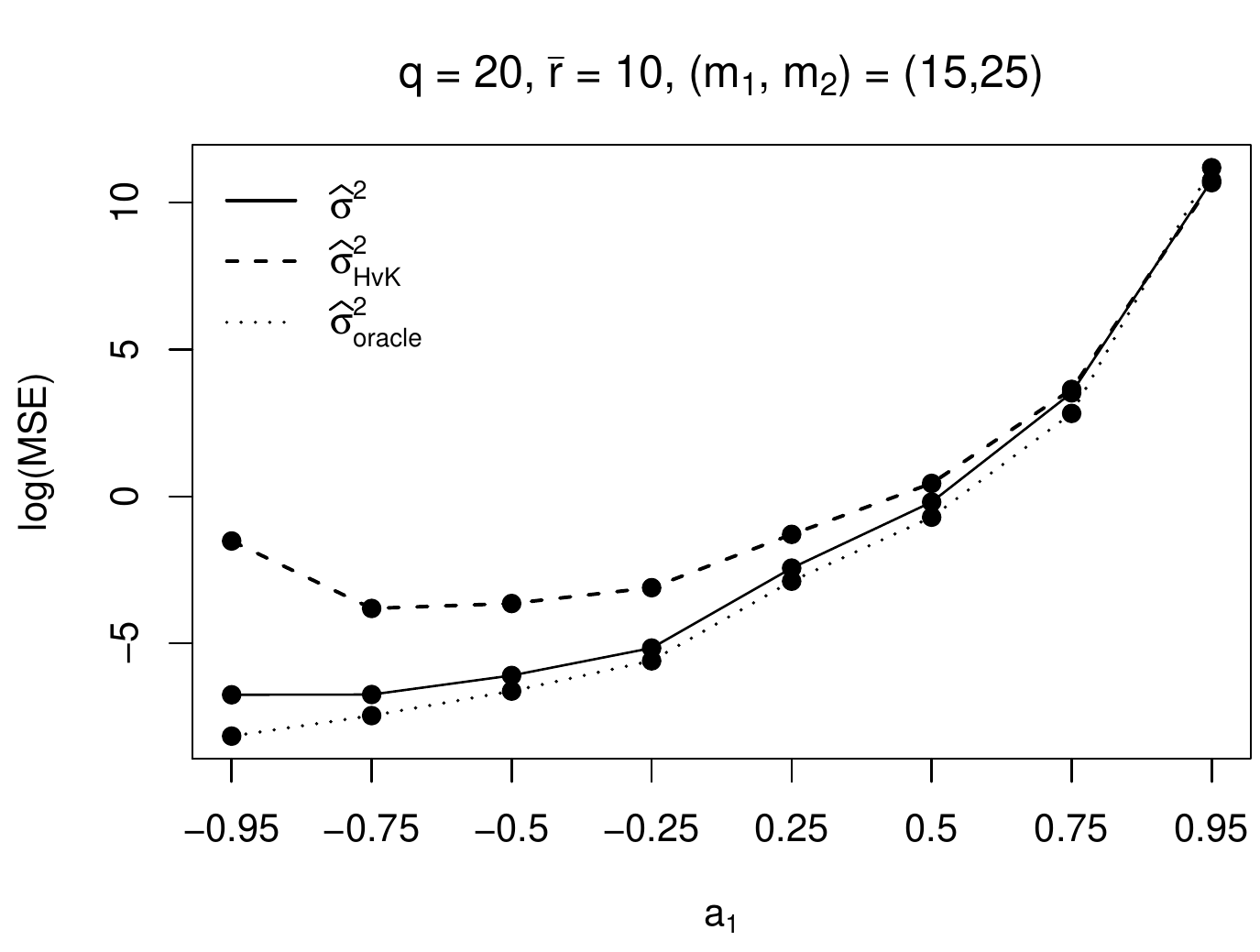}
\end{subfigure}
\hspace{0.25cm}
\begin{subfigure}[b]{0.45\textwidth}
\includegraphics[width=\textwidth]{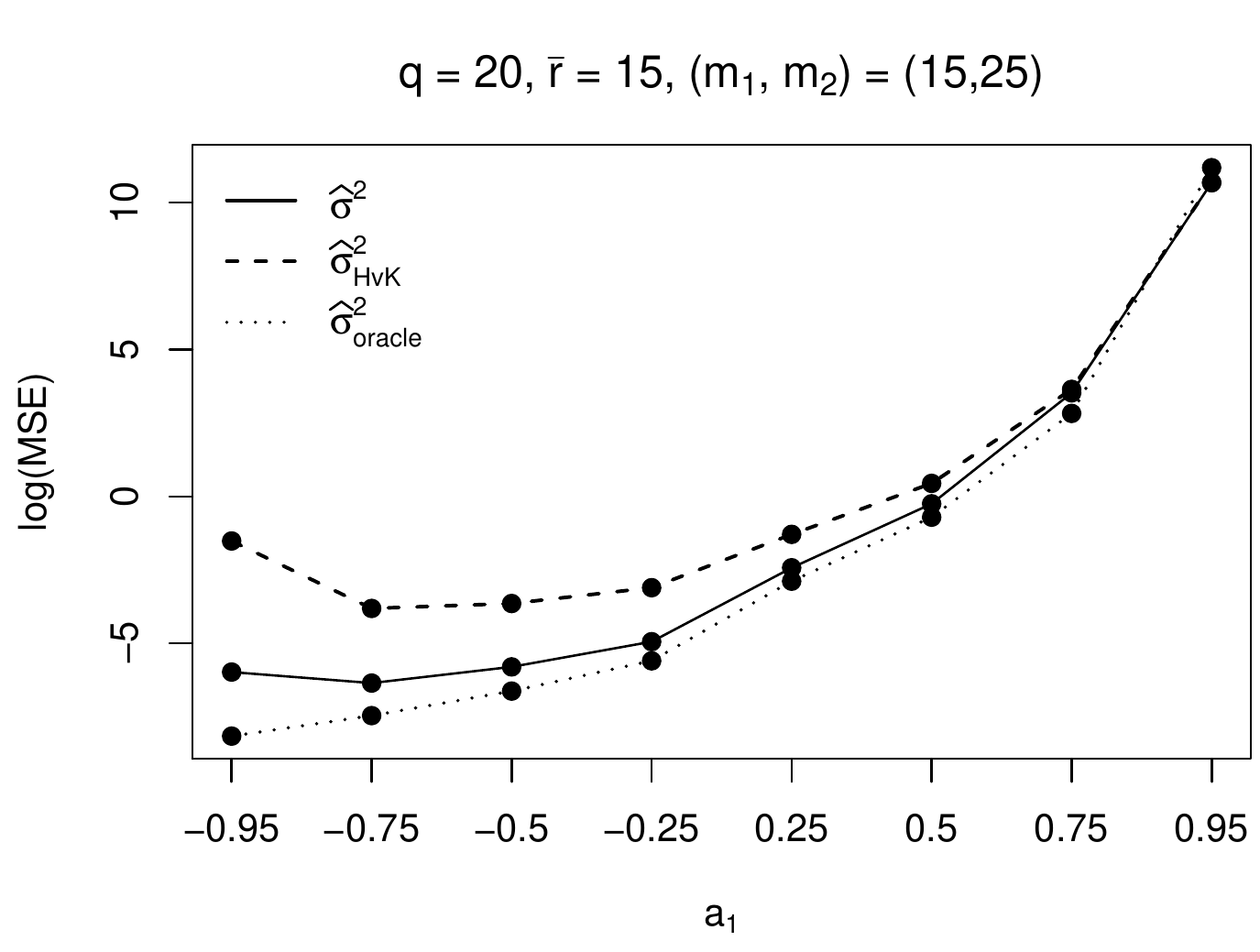}
\end{subfigure}

\begin{subfigure}[b]{0.45\textwidth}
\includegraphics[width=\textwidth]{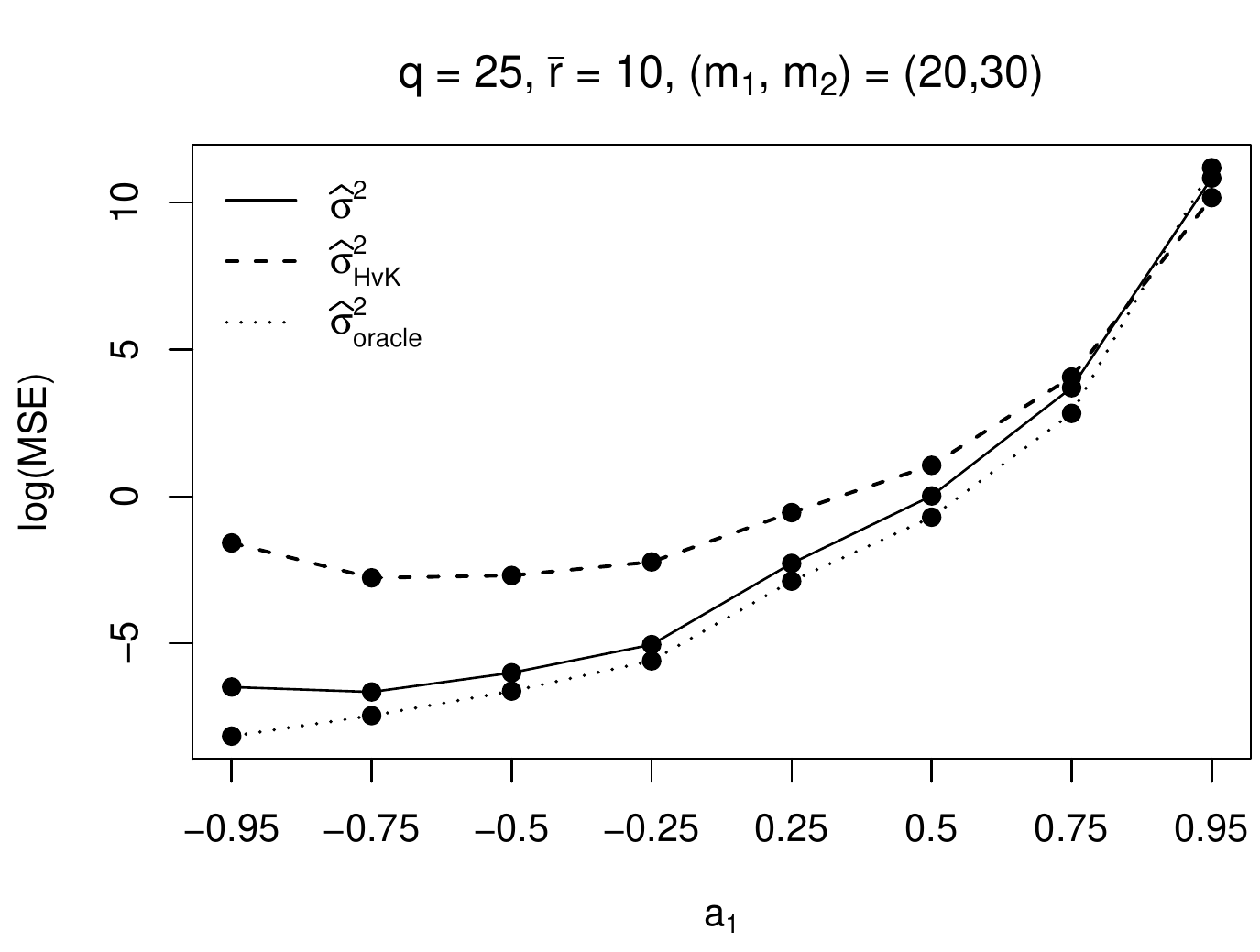}
\end{subfigure}
\hspace{0.25cm}
\begin{subfigure}[b]{0.45\textwidth}
\includegraphics[width=\textwidth]{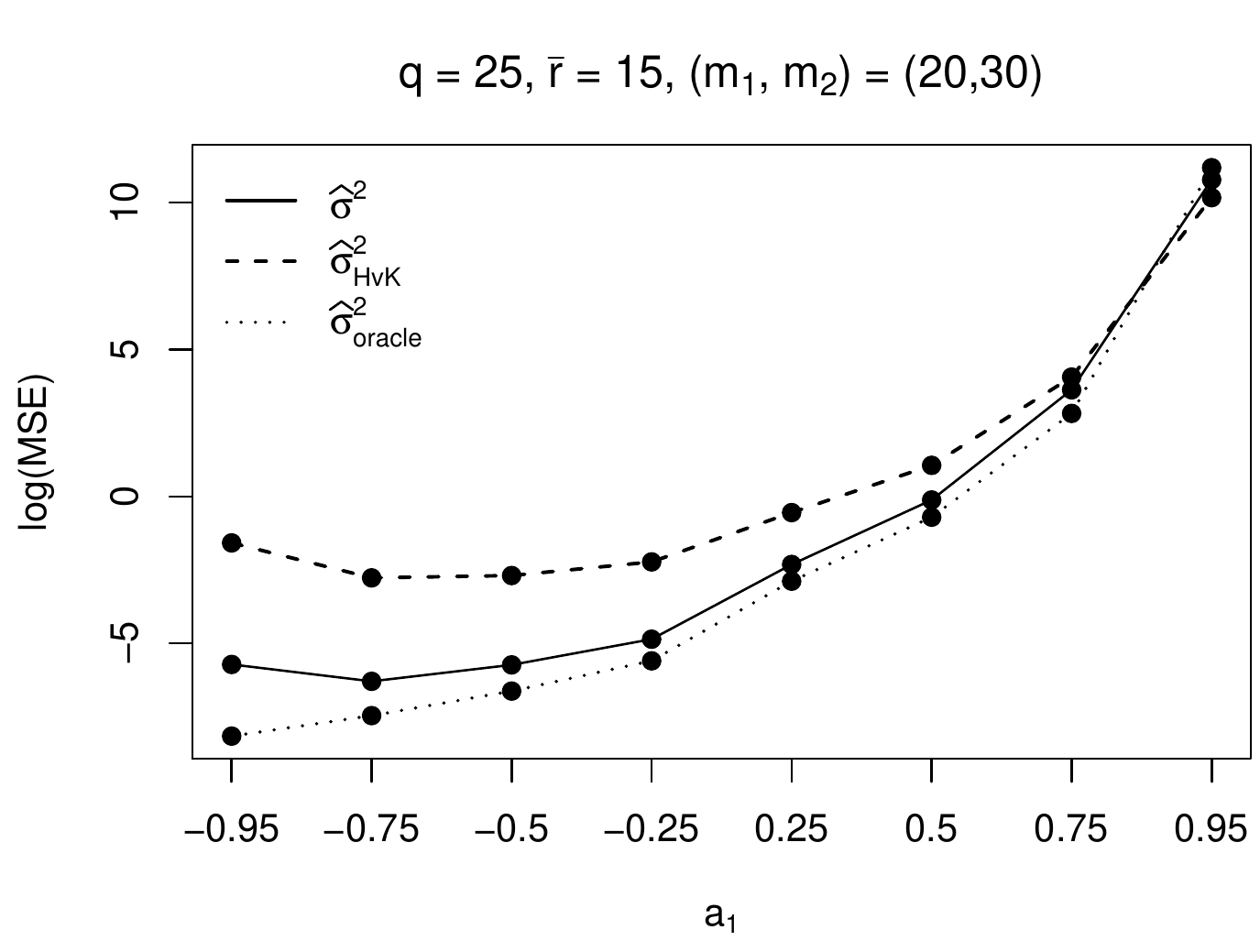}
\end{subfigure}

\begin{subfigure}[b]{0.45\textwidth}
\includegraphics[width=\textwidth]{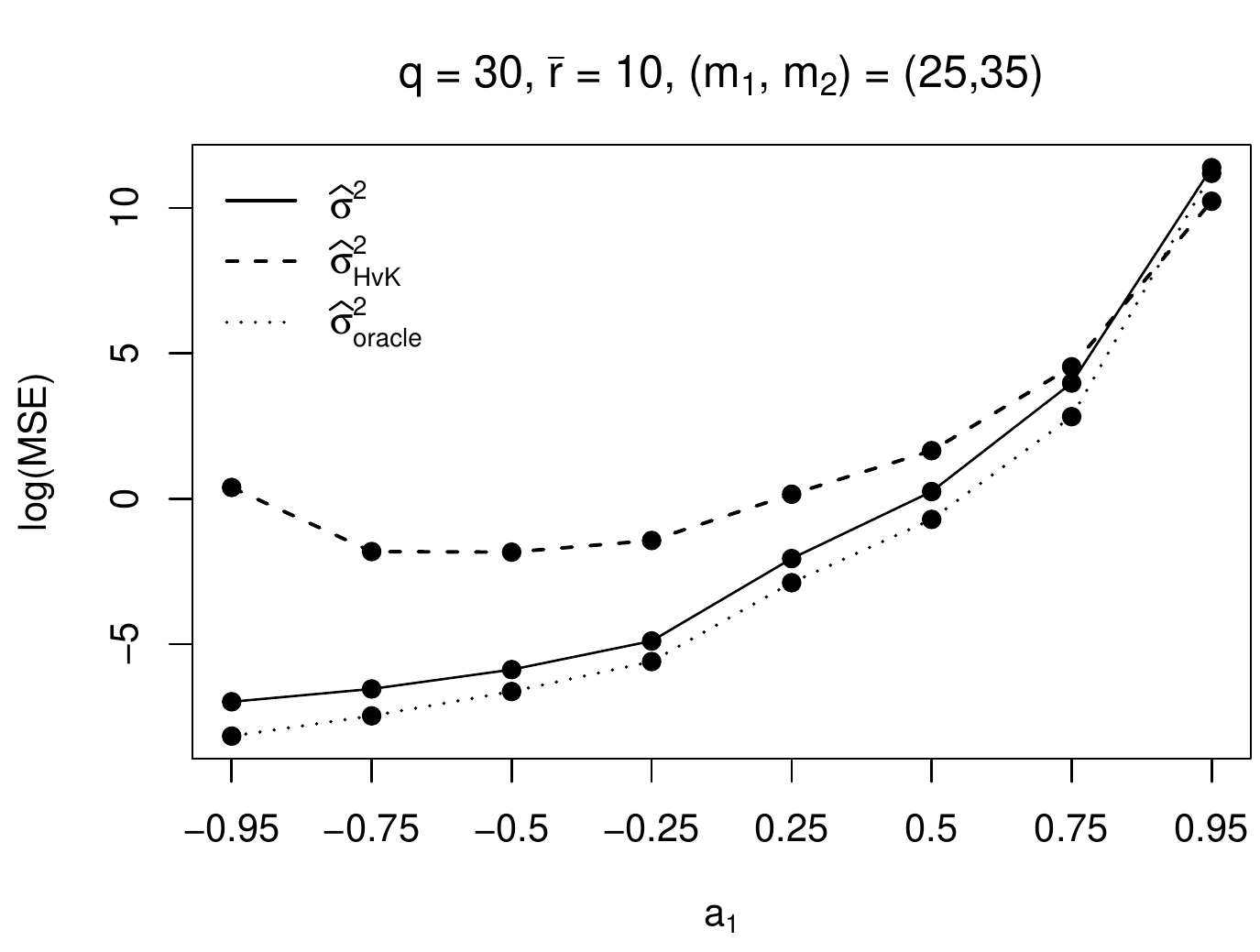}
\end{subfigure}
\hspace{0.25cm}
\begin{subfigure}[b]{0.45\textwidth}
\includegraphics[width=\textwidth]{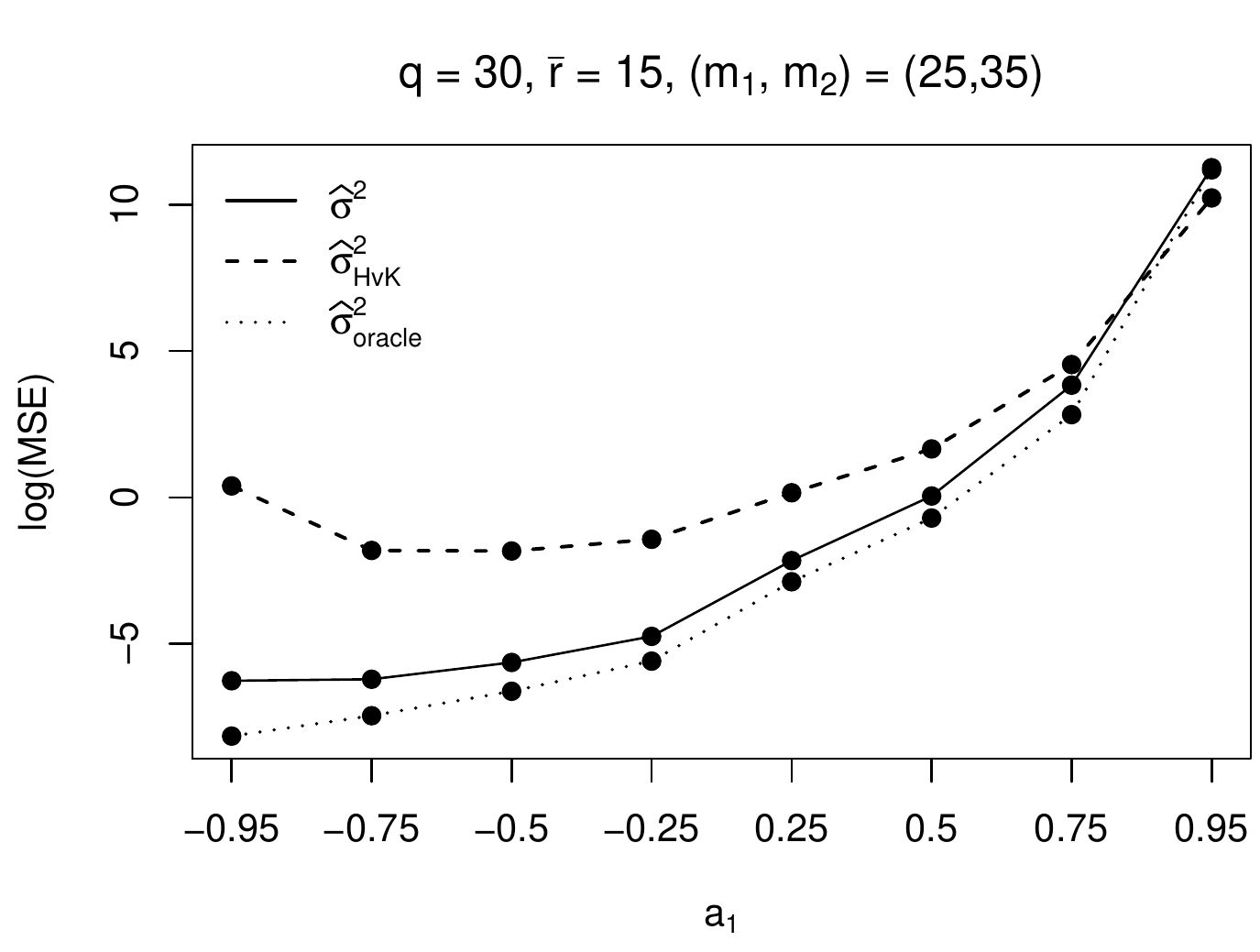}
\end{subfigure}

\begin{subfigure}[b]{0.45\textwidth}
\includegraphics[width=\textwidth]{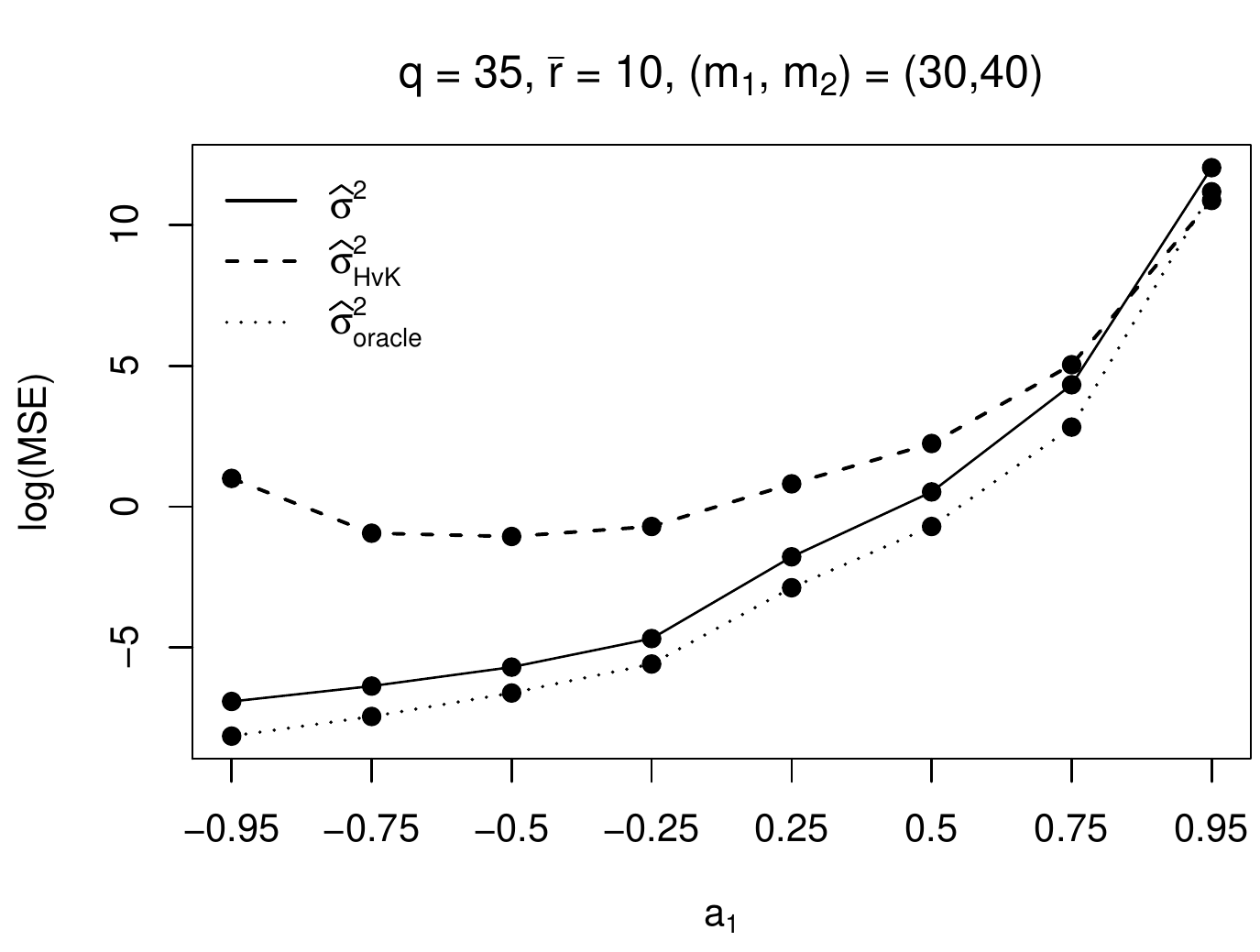}
\end{subfigure}
\hspace{0.25cm}
\begin{subfigure}[b]{0.45\textwidth}
\includegraphics[width=\textwidth]{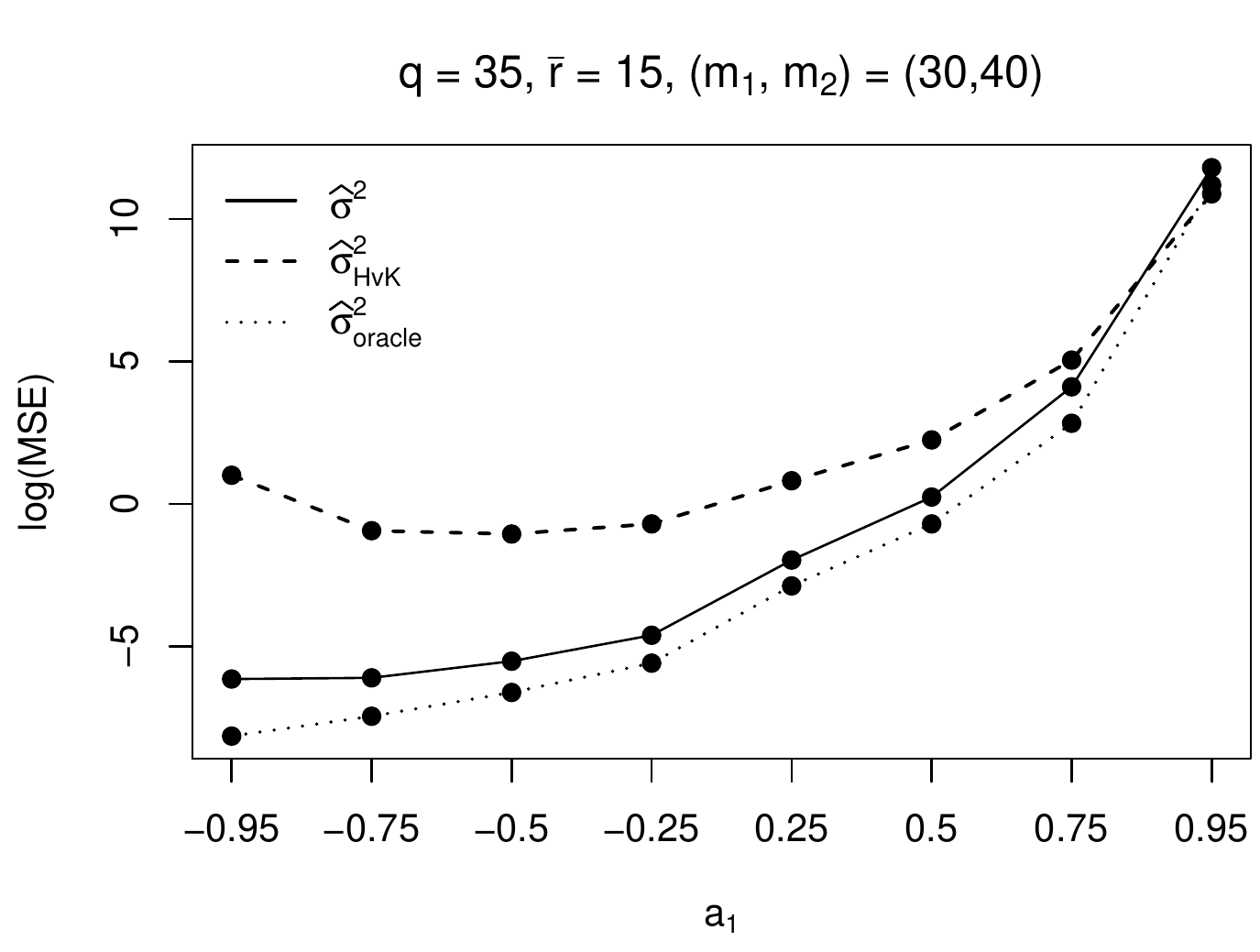}
\end{subfigure}
\caption{Logarithmic MSE values for the estimators $\widehat{\sigma}^2$, $\widehat{\sigma}^2_{\text{HvK}}$ and $\widehat{\sigma}^2_{\text{oracle}}$ in the scenario with a pronounced trend ($s_\beta=10$).}\label{fig:MSE_slope10_lrv_robust} 
\end{figure}

\FloatBarrier
\newpage
\subsection*{Implementation of SiZer in Section \ref{subsec-sim-2}}

The SiZer methods for the comparison study in Section \ref{subsec-sim-2} are implemented as follows:
\begin{enumerate}[leftmargin=0.7cm,label=(\alph*)]

\item Computation of the grid $\mathcal{G}_T^*$:

To start with, we compute the variance of $\bar{Y} = T^{-1} \sum_{t=1}^T Y_{t,T}$, which is given by
\[ \var(\bar{Y}) = \frac{\gamma_\varepsilon(0)}{T} + \frac{2}{T} \sum_{k = 1}^{T-1} \Big(1 - \frac{k}{T}\Big)\gamma_\varepsilon(k). \]
Since the autocovariance function $\gamma_{\varepsilon}(\cdot)$ is known by assumption, we can calculate the value of $\var(\bar{Y})$ by using the formula $\gamma_\varepsilon(k) = \nu^2 a_1^{|k|} / (1 - a_1^2)$ together with the true para\-meters $a_1$ and $\nu^2 = \ex[\eta_t^2]$. We next compute 
\[ T^* = \frac{\gamma_\varepsilon(0)}{\var(\bar{Y})}, \]
which can be interpreted as a measure of information in the data. For each point $(u,h) \in \mathcal{G}_T$ from \eqref{grid-sim-app}, we finally calculate the effective sample size for dependent data 
\[ \text{ESS}^*(u, h) = \frac{T^*}{T} \frac{\sum_{t=1}^T K_h(t/T - u)}{K_h(0)} \]
with $K_h(v) = h^{-1} K(v/h)$ and set $\mathcal{G}_T^* = \{ (u,h) \in \mathcal{G}_T: \text{ESS}^*(u, h) \ge 5 \}$. 

\item Computation of the local linear estimators and their standard deviations: 

For each $(u,h) \in \mathcal{G}_T^*$, we compute a standard local linear estimator $\widehat{m}^\prime_h(u)$ of the derivative $m^\prime(u)$ together with its standard deviation $\text{sd}(\widehat{m}^\prime_h(u))$. The latter is given by $\text{sd}(\widehat{m}^\prime_h(u)) = \{ \var(\widehat{m}^\prime_h(u)) \}^{1/2}$, where $\var(\widehat{m}^\prime_h(u)) = e^\top V e$ with $e = (\begin{matrix} 0 & 1 \end{matrix})^\top$ and 
\[ V = (X^T W X)^{-1} (X^T \Sigma X) (X^T W X)^{-1}. \]
The matrices $X$, $W$ and $\Sigma$ are defined as follows: $\Sigma$ is a $T \times T$ matrix with the elements
\[ \Sigma_{st} = \gamma_\varepsilon(s-t) K_h\Big( \frac{s}{T} - u \Big) K_h\Big( \frac{t}{T} - u \Big), \]
$W$ is a $T \times T$ diagonal matrix with the diagonal entries $K_h(t/T-u)$ and 
\[ X =   
\begin{pmatrix}
1 & (1/T - u)   \\
1 & (2/T - u)   \\
\vdots & \vdots \\
1 & (1 - u)     \\
\end{pmatrix}. \]

\newpage
\item Computation of the confidence intervals:  

For a given confidence level $\alpha$ and for each bandwidth value $h$ with $(u,h) \in \mathcal{G}_T^*$, we compute the quantile 
\[ q(h) = \Phi^{-1} \Big( \Big( 1 - \frac{\alpha}{2} \Big)^{1/(\theta g)} \Big), \]
where $\Phi$ is the distribution function of a standard normal random variable, $g$ is the number of locations $u$ in the grid $\mathcal{G}_T$, and the cluster index $\theta$ is defined on p.1519 in \cite{ParkHannigKang2009}. The confidence interval of $\widehat{m}^\prime_h(u)$ is then computed as $[\widehat{m}^\prime_h(u) - q(h) \, \text{sd}(\widehat{m}^\prime_h(u)),\widehat{m}^\prime_h(u) + q(h) \, \text{sd}(\widehat{m}^\prime_h(u))]$. 

\end{enumerate}

\end{document}